\newtheorem{theorem}{Theorem}[section]
\newtheorem{lemma}[theorem]{Lemma}
\newtheorem{corollary}[theorem]{Corollary}
\newtheorem{remark}[theorem]{Remark}
\newtheorem{definition}[theorem]{Definition}
\renewcommand \theequation {%
\ifnum \c@section>\z@ \@arabic\c@section.%
\fi\@arabic\c@equation} \@addtoreset{equation}{section}
\providecommand{\ud}[1]{\mathrm{d}{#1}}
\providecommand{\abs}[1]{\left\vert#1\right\vert}
\providecommand{\nm}[1]{\left\Vert#1\right\Vert}
\providecommand{\br}[1]{\left\langle #1 \right\rangle}
\providecommand{\tm}[2]{\left\Vert#1\right\Vert_{L^2(#2)}}
\providecommand{\im}[2]{\left\Vert#1\right\Vert_{L^{\infty}(#2)}}
\providecommand{\lnnm}[1]{{\left\Vert#1\right\Vert}_{L^{\infty}L^{\infty}}}
\providecommand{\lnm}[1]{\left\Vert#1\right\Vert_{L^{\infty}}}
\providecommand{\tnm}[1]{\left\Vert#1\right\Vert_{L^{2}}}
\providecommand{\tnnm}[1]{{\left\Vert#1\right\Vert}_{L^{2}L^2}}
\def\p{\partial}
\def\ls{\lesssim}
\def\half{\frac{1}{2}}
\def\rt{\rightarrow}
\def\no{\nonumber}
\def\ue{\mathrm{e}}
\def\ds{\displaystyle}
\def\u{U^{\e}}
\def\ub{\mathscr{U}^{\e}}
\def\bu{\bar U^{\e}}
\def\bub{\bar{\mathscr{U}}^{\e}}
\def\e{\epsilon}
\def\s{\mathcal{S}}
\def\vx{\vec x}
\def\vw{\vec w}
\def\nx{\nabla_{x}}
\def\vn{\vec\nu}
\def\px{\p_{\eta}}
\def\l{\lambda}
\def\ll{\mathcal{L}}
\def\k{\kappa}
\def\q{Q}
\def\qb{\mathscr{Q}}
\def\v{\mathscr{V}}
\def\w{\mathscr{W}}
\def\d{\delta}
\def\t{\mathcal{T}}
\def\k{\mathcal{K}}
\def\pp{\mathcal{P}}
\def\xc{X_{cl}}
\def\wc{W_{cl}}
\def\vt{\vec t}
\def\vr{\vec r}
\def\id{{\bf{1}}}
\def\tf{\tilde F}
\def\tv{\tilde V}
\def\a{\mathscr{A}}
\def\b{\mathscr{B}}
\def\rk{R}
\def\rr{\mathscr{R}}
\begin{document}
\title{Regularity of Milne Problem with Geometric Correction in 3D}

\author[Yan Guo]{Yan Guo}
\address[Yan Guo]{\newline\indent
Division of Applied Mathematics, Brown University,
\newline\indent Providence, RI 02912, USA }
\email{Yan\_Guo@brown.edu}

\author[Lei Wu]{Lei Wu}
\address[Lei Wu]{
   \newline\indent Department of Mathematical Sciences, Carnegie Mellon University
\newline\indent Pittsburgh, PA 15213, USA}
\email{lwu2@andrew.cmu.edu}

\thanks{
L. Wu is supported by NSF grant
0967140. Y. Guo is supported in part by NSFC grant 10828103, NSF grant 1209437, Simon Research Fellowship and BICMR.}

\subjclass[2010]{35L65, 82B40, 34E05}

\begin{abstract}
Consider the Milne problem with geometric correction in a 3D convex domain. Via bootstrapping arguments, we establish $W^{1,\infty}$ regularity for its solutions. Combined with a uniform $L^6$ estimate, such regularity leads to the validity of diffusive expansion for the neutron transport equation with diffusive boundary conditions.\\
\textbf{Keywords:} Geometric correction; Bootstrapping; $W^{1,\infty}$ estimates.
\end{abstract}

\maketitle

\tableofcontents

\newpage


\pagestyle{myheadings} \thispagestyle{plain} \markboth{YAN GUO AND LEI WU}{REGULARITY OF MILNE PROBLEM IN 3D}

\section{Introduction}

\subsection{Motivation and Formulation}

Milne problem is the main tool to study boundary layer effect in kinetic equations. Here to motivate 3D $\e$-Milne problem with geometric correction, we consider the steady neutron transport equation in a
three-dimensional convex domain with diffusive boundary. In the space
domain $\vx=(x_1,x_2,x_3)\in\Omega$ where $\p\Omega\in C^3$ and the velocity domain
$\vw=(w_1,w_2,w_3)\in\s^2$, the neutron density $u^{\e}(\vx,\vw)$
satisfies
\begin{eqnarray}\label{transport}
\left\{
\begin{array}{rcl}\displaystyle
\e \vw\cdot\nabla_x u^{\e}+u^{\e}-\bar u^{\e}&=&0\ \ \text{in}\ \ \Omega,\\
u^{\e}(\vx_0,\vw)&=&\pp[u^{\e}](\vx_0)+\e g(\vx_0,\vw)\ \ \text{for}\
\ \vw\cdot\vn<0\ \ \text{and}\ \ \vx_0\in\p\Omega,
\end{array}
\right.
\end{eqnarray}
where
\begin{eqnarray}\label{average}
\bar u^{\e}(\vx)=\frac{1}{4\pi}\int_{\s^2}u^{\e}(\vx,\vw)\ud{\vw},
\end{eqnarray}
\begin{eqnarray}\label{diffusive}
\pp[u^{\e}](\vx_0)=\frac{1}{4\pi}\int_{\vw\cdot\vn>0}u^{\e}(\vx_0,\vw)(\vw\cdot\vn)\ud{\vw},
\end{eqnarray}
$\vn$ is the outward unit normal vector, with the Knudsen number $0<\e<<1$. Also, $u^{\e}$ satisfies the normalization condition
\begin{eqnarray}\label{normalization}
\int_{\Omega\times\s^2}u^{\e}(\vx,\vw)\ud{\vw}\ud{\vx}=0,
\end{eqnarray}
and $g$ satisfies the compatibility condition
\begin{eqnarray}\label{compatibility}
\int_{\p\Omega}\int_{\vw\cdot\vn<0}g(\vx_0,\vw)(\vw\cdot\vn)\ud{\vw}\ud{\vx_0}=0.
\end{eqnarray}

A classical problem is to study the diffusive limit of (\ref{transport}) as $\e\rt0$.
Generally speaking, the solution $u^{\e}$ varies smoothly and slowly in the interior of $\Omega$, and behaves like $u^{\e}-\bar u^{\e}=0$ which ignores $\e\vw\cdot\nabla_xu^{\e}$. However, its value changes severely when approaching boundary $\p\Omega$ and $\e\vw\cdot\nabla_xu^{\e}$ becomes non-negligible. The smaller $\e$ is, the more severe $u^{\e}$ changes. This indicates that $u^{\e}$ actually contains two separate parts with different scalings, i.e. interior solution and boundary layer. In particular, the boundary layer is a function with scaled variable defined in a thin layer of thickness $O(\e)$ close to boundary in $\Omega$.

In the region of boundary layer, assume $\mu$ denotes the distance to the boundary in the inward normal direction and $(\tau_1,\tau_2)$ denote a local orthogonal curvilinear coordinate system for $\p\Omega$. Then we have
\begin{eqnarray}
\e\vw\cdot\nabla_x= -\e(\vw\cdot\vn)\frac{\p}{\p\mu}+\frac{\e}{R_1-\mu}(\vw\cdot\vt_1)\frac{\p}{\p\tau_1}+\frac{\e}{R_2-\mu}(\vw\cdot\vt_2)\frac{\p}{\p\tau_2},
\end{eqnarray}
where $(\vt_1,\vt_2)$ are orthogonal tangential vectors associated with $(\tau_1,\tau_2)$, and $\Big(R_1(\tau_1,\tau_2),R_2(\tau_1,\tau_2)\Big)$ denote two radium of principle curvature. Since $\vw\in\s^2$, we define the spherical velocity substitution as
\begin{eqnarray}
\left\{
\begin{array}{rcl}
-\vw\cdot\vn&=&\sin\phi,\\
\vw\cdot\vt_1&=&\cos\phi\sin\psi,\\
\vw\cdot\vt_2&=&\cos\phi\cos\psi.
\end{array}
\right.
\end{eqnarray}
With the rescaled distance $\eta=\dfrac{\mu}{\e}$, we may represent
\begin{eqnarray}\label{it 01}
\e\vw\cdot\nabla_x=\sin\phi\frac{\p
}{\p\eta}-\e\bigg(\dfrac{\sin^2\psi}{R_1-\e\eta}+\dfrac{\cos^2\psi}{R_2-\e\eta}\bigg)\cos\phi\frac{\p
}{\p\phi}+\text{higher-order}\ \text{terms}.
\end{eqnarray}
Therefore, in order to construct boundary layer, it suffices to study the 3D $\e$-Milne problem with geometric correction for $f^{\e}(\eta,\phi,\psi)$ in $(\eta,\phi,\psi)\in [0,L]\times[-\pi/2,\pi/2]\times[-\pi,\pi]$ as
\begin{eqnarray}\label{mt}
\left\{ \begin{array}{rcl}\displaystyle \sin\phi\frac{\p
f^{\e}}{\p\eta}-\e\bigg(\dfrac{\sin^2\psi}{R_1-\e\eta}+\dfrac{\cos^2\psi}{R_2-\e\eta}\bigg)\cos\phi\frac{\p
f^{\e}}{\p\phi}+f^{\e}-\bar f^{\e}&=&S^{\e}(\eta,\phi,\psi),\\
f^{\e}(0,\phi,\psi)&=& h^{\e}(\phi,\psi)\ \ \text{for}\
\ \sin\phi>0,\\
f^{\e}(L,\phi,\psi)&=&f^{\e}(L,-\phi,\psi),
\end{array}
\right.
\end{eqnarray}
where $h^{\e}$ and $S^{\e}$ are two functions given a priori and
\begin{eqnarray}
\bar f^{\e}(\eta)=\frac{1}{4\pi}\int_{-\pi}^{\pi}\int_{-\pi/2}^{\pi/2}f^{\e}(\eta,\phi,\psi)\cos\phi\ud{\phi}\ud{\psi}
\end{eqnarray}
in which $\cos\phi$ shows up as the Jacobian of spherical coordinates in integration, and $L=\e^{-n}$ for some $n$ to be specified later. Note that actually $f^{\e}$, $S^{\e}$, $h^{\e}$ and $R_1$, $R_2$ are all related to $(\tau_1,\tau_2)$. Since boundary layer is defined locally on $\p\Omega$ and our analysis focuses on the case for fixed $(\tau_1,\tau_2)$, we do not need to specify such dependence explicitly unless necessary.

\subsection{Main Result}

Define norms
\begin{eqnarray}
\lnnm{f}&=&\sup_{(\eta,\phi,\psi)}\abs{f(\eta,\phi,\psi)},\\
\lnm{f}(\eta)&=&\sup_{(\phi,\psi)\ \text{with}\ \sin\phi>0}\abs{f(\eta,\phi,\psi)},
\end{eqnarray}
\begin{eqnarray}
\tnnm{f}&=&\bigg(\int_0^{L}\int_{-\pi}^{\pi}\int_{-\pi/2}^{\pi/2}\abs{f(\eta,\phi,\psi)}^2\cos\phi\ \ud{\phi}\ud{\psi}\ud{\eta}\bigg)^{1/2},\\
\tnm{f}(\eta)&=&\bigg(\int_{-\pi}^{\pi}\int_{-\pi/2}^{\pi/2}\abs{f(\eta,\phi,\psi)}^2\cos\phi\ \ud{\phi}\ud{\psi}\bigg)^{1/2},
\end{eqnarray}
and the inner product as
\begin{eqnarray}
\br{f,g}(\eta)&=&\int_{-\pi}^{\pi}\int_{-\pi/2}^{\pi/2}f(\eta,\phi,\psi)g(\eta,\phi,\psi)\cos\phi\
\ud{\phi}\ud{\psi}.
\end{eqnarray}

\begin{theorem}(Well-Posedness and Decay)\label{main 1}
Assume $0<n<\dfrac{2}{5}$ and there exist some constants $M,K>0$ uniform in $\e$, such that
\begin{eqnarray}
\lnm{h^{\e}}\leq M,
\end{eqnarray}
and
\begin{eqnarray}
\lnnm{\ue^{K\eta}S^{\e}}&\leq& M.\no
\end{eqnarray}
Then for $K_0>0$ sufficiently small, there exists a constant $f_L^{\e}$ and the solution $f^{\e}(\eta,\phi,\psi)$ to the
$\e$-Milne problem (\ref{mt}) satisfies
\begin{eqnarray}
\lnnm{\ue^{K_0\eta}(f^{\e}-f_L^{\e})}\leq C.
\end{eqnarray}
Here $C\geq0$ denotes a universal constant independent of $\e$.
\end{theorem}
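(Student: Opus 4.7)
My plan is to construct $f^\e$ by a penalization/iteration scheme, isolate the limiting constant $f_L^\e$ from a compatibility/flux condition, establish a weighted $L^2$ estimate on the microscopic part $f^\e-\bar f^\e$ together with the macroscopic excess $\bar f^\e - f_L^\e$, and upgrade to weighted $L^\infty$ via the mild formulation along characteristics of the transport operator
$$\t \;=\; \sin\phi\,\p_\eta \;-\; \e\,G(\eta,\psi)\cos\phi\,\pf,\qquad G(\eta,\psi)\;=\;\dfrac{\sin^2\psi}{R_1-\e\eta}+\dfrac{\cos^2\psi}{R_2-\e\eta}.$$

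For construction I would replace $f-\bar f$ by $(1+\alpha)f^\e_\alpha - \bar f^\e_\alpha$ with $\alpha>0$, so that the transport problem becomes strictly dissipative; for each trial $\bar g$, solving the linear transport problem along characteristics of $\t$ defines a map $\bar g \mapsto \bar f^\e_\alpha[\bar g]$ that is a contraction thanks to the $(1+\alpha)$ coefficient. Passing $\alpha\to 0$ with the bounds below recovers $f^\e$; the constant $f_L^\e$ emerges as the limit of the spatial average, or equivalently as the value forced by combining the specular reflection at $\eta=L$ with the flux-conservation identity obtained by integrating (\ref{mt}) against $\cos\phi\,d\phi\,d\psi$.

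The $L^2$ estimate comes from multiplying (\ref{mt}) by $f^\e$ and integrating against $\cos\phi\,d\phi\,d\psi\,d\eta$. The transport term produces boundary contributions at $\eta=0,L$: the specular condition kills the $\eta=L$ contribution (integrand odd in $\phi$), while at $\eta=0$ the incoming part is bounded via $\lnm{h^\e}$ and the outgoing part contributes a favorable trace $\int_{\sin\phi<0}|\sin\phi|\,|f^\e(0)|^2\cos\phi$. Integrating by parts the $\pf$-term against $\cos\phi$ leaves a remainder of size $O(\e L)=O(\e^{1-n})$ times $\tnnm{f^\e}^2$, absorbable for small $\e$, and the net outcome is $\tnnm{f^\e-\bar f^\e}\ls$ data. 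To pin down the macroscopic excess I would test against a function built from the characteristic invariant $\cos\phi\,\exp\!\bigl(-\e\!\int_0^\eta\! G\bigr)$ (a curved-geometry analogue of the classical $\sin\phi$ test function): combined with the flux identity this converts the microscopic bound into a control of $\tnm{\bar f^\e-f_L^\e}$ on $[0,L]$. Redoing this computation with multiplier $\ue^{2K_0\eta}f^\e$ produces an extra commutator $-2K_0\sin\phi\,\ue^{2K_0\eta}$ absorbable by the microscopic coercivity once $K_0$ is chosen small, giving the weighted versions.

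The $L^\infty$ bound follows by writing (\ref{mt}) in mild form along $\t$-characteristics: for each $(\eta,\phi,\psi)$, trace backwards under $\t$ to the first boundary hit (either $\eta=0$ for the incoming data or $\eta=L$ followed by specular reflection) to obtain
$$f^\e \;=\; \ue^{-|s|}f^\e\big|_{\text{hit}} \;+\; \int_0^{|s|}\ue^{-(|s|-t)}\bigl(\bar f^\e + S^\e\bigr)\big|_{\text{char.}}\,dt.$$
The $S^\e$-integral is controlled directly by $\lnnm{\ue^{K\eta}S^\e}$; the $\bar f^\e-f_L^\e$-integral is estimated via Cauchy--Schwarz and the weighted $L^2$ bound. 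The principal obstacle throughout is the interaction between the curved characteristics and every test-function/trace argument: in the classical 1D Milne problem $\phi$ is constant along characteristics so that $\sin\phi$-weighted tests decouple cleanly, but here the geometric drift mixes $\phi$-weights and every trace identity picks up geometric corrections of size $\e L = \e^{1-n}$. The restriction $0<n<\tfrac{2}{5}$ is precisely what balances those geometric errors against the algebraic $L$-losses incurred when passing from the weighted $L^2$ control of $\bar f^\e-f_L^\e$ to the pointwise conclusion through the mild representation, thereby closing the bootstrap.
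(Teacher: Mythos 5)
Your overall architecture (approximation scheme, energy identity, a moment identity for the hydrodynamic part, mild formulation for $L^{\infty}$, weighted multiplier for decay) parallels the paper's, but the step where you close the $L^2$ estimate contains a genuine gap, and it is precisely the point on which the 3D problem turns. After the integration by parts in $\phi$, the leftover force contribution is of the form $\int_0^L G(y)\br{f\cos^2\psi,f\sin\phi}(y)\ud{y}$ with $G\sim \e (R_1-R_2)$; you bound this by $O(\e^{1-n})\tnnm{f}^2$ and call it ``absorbable for small $\e$,'' concluding $\tnnm{f-\bar f}\ls$ data. But the coercive term on the left of the energy identity controls only the microscopic part $\tnnm{f-\bar f}$, while $\tnnm{f}$ contains the hydrodynamic part $\bar f$ (and the constant $f_L$, whose $L^2(0,L)$ norm alone is of size $\e^{-n/2}\abs{f_L}$); there is nothing on the left into which such a term can be absorbed. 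Because the force depends on $\psi$ when $R_1\neq R_2$, neither the exact closure of the ODE for $\br{f,f\sin\phi}$ nor the orthogonality $\br{\sin\phi,f}=0$ of the 2D case survives, so the microscopic and hydrodynamic estimates are genuinely coupled: one only gets $\tnnm{r}^2\ls 1+\e\tnnm{f}^2$ (with $r=f-\bar f$), and from the $\sin\phi$-moment, via a quasi-orthogonality relation in which $\br{\sin\phi,f}(\eta)$ equals an integral of $G\br{\sin\phi\cos^2\psi,f}$ over $[\eta,L]$ rather than zero, an estimate like $\tnnm{q-q_L}^2\ls 1+\e^{2-5n}\tnnm{r}^2$, together with a separate absorption argument to extract and bound $f_L$ from $\br{\sin^2\phi,f}(L)$. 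The closure comes from subtracting $f_L$, tracking the small factors $\e^{1/2}$ and $\e^{2-5n}$, and bootstrapping; this is exactly where the hypothesis $0<n<\tfrac{2}{5}$ enters. Your sketch instead attributes $n<\tfrac{2}{5}$ to the passage from weighted $L^2$ to $L^{\infty}$ through the mild representation, which is not where the restriction arises.

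The same difficulty recurs in your decay step: the commutator $K_0\br{\cdot,\cdot\sin\phi}$ generated by the weight $\ue^{2K_0\eta}$ is not controlled by microscopic coercivity alone, since the cross term $2q\,\br{r,\sin\phi}$ does not vanish in 3D and must again be handled through quasi-orthogonality and the coupled argument. Nor does testing with the characteristic invariant $\ue^{-V(\eta,\psi)}\cos\phi$ rescue an exact orthogonality: because it depends on $\psi$, it is not orthogonal to the range of $f\mapsto f-\bar f$, so the collision term produces new non-vanishing couplings of the same kind. In short, the decomposition $F=\tf+G\cos^2\psi$ into a $\psi$-independent 2D-like force plus a small source, the quasi-orthogonality relation, the control of the constant $f_L$, and the bootstrap exploiting $L=\e^{-n}$ with $n<\tfrac{2}{5}$ constitute the missing core of the proof; without them your $L^2$ closure is circular, and the subsequent $L^{\infty}$ and decay steps rest on an estimate that has not been established.
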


\begin{theorem}(Weighted Regularity)\label{main 2}
Assume $0<n<\dfrac{2}{5}$ and there exist some constants $M,K>0$ uniform in $\e$, such that
\begin{eqnarray}
\lnm{h^{\e}}+\lnm{\frac{\p h^{\e}}{\p\phi}}\leq M,
\end{eqnarray}
and
\begin{eqnarray}
\lnnm{\ue^{K\eta}S^{\e}}+\lnnm{\ue^{K\eta}\frac{\p S^{\e}}{\p\eta}}
+\lnnm{\ue^{K\eta}\frac{\p S^{\e}}{\p\phi}}&\leq& M.\no
\end{eqnarray}
Then for $K_0>0$ sufficiently small, we have
\begin{eqnarray}
\lnnm{\ue^{K_0\eta}\zeta\frac{\p(f^{\e}-f_L^{\e})}{\p\eta}}+\lnnm{\ue^{K_0\eta}\zeta\frac{\p(f^{\e}-f_L^{\e})}{\p\phi}}\leq C\abs{\ln(\e)}^8,
\end{eqnarray}
where the weight function
\begin{eqnarray}\label{weight}
\zeta(\eta,\phi,\psi)=\Bigg(1-\bigg(\frac{\rk_1-\e\eta}{\rk_1}\bigg)^{2\sin^2\psi}
\bigg(\frac{\rk_2-\e\eta}{\rk_2}\bigg)^{2\cos^2\psi}\cos^2\phi\Bigg)^{1/2}.
\end{eqnarray}
If further for $i=1,2$,
\begin{eqnarray}
\lnm{\frac{\p h^{\e}}{\p\psi}}+\lnm{\frac{\p h^{\e}}{\p\tau_i}}+\lnnm{\ue^{K\eta}\frac{\p S^{\e}}{\p\psi}}
+\lnnm{\ue^{K\eta}\frac{\p S^{\e}}{\p\tau_i}}&\leq& M,
\end{eqnarray}
we have
\begin{eqnarray}\label{it 001}
\lnnm{\ue^{K_0\eta}\frac{\p(f^{\e}-f_L^{\e})}{\p\psi}}+\lnnm{\ue^{K_0\eta}\frac{\p(f^{\e}-f_L^{\e})}{\p\tau_i}}\leq C\abs{\ln(\e)}^8.
\end{eqnarray}
\end{theorem}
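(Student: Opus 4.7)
The plan is a bootstrap scheme based on differentiating equation \eqref{mt} and exploiting an invariance property of the weight $\zeta$. Write $T := \sin\phi\, \partial_\eta - \e A(\eta,\psi)\cos\phi\, \partial_\phi$ with $A(\eta,\psi) := \sin^2\psi/(R_1-\e\eta) + \cos^2\psi/(R_2-\e\eta)$, so that \eqref{mt} reads $T f^\e + f^\e - \bar f^\e = S^\e$. A direct computation shows $T\zeta = 0$: the weight $\zeta$ is constant along characteristics of $T$, and in fact $\zeta(\eta,\phi,\psi) = |\sin\phi_0|$ where $\phi_0$ is the angle at the boundary $\eta = 0$ along the backward characteristic through $(\eta,\phi,\psi)$. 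This identifies $\zeta$ as the natural transverse-velocity weight that will compensate grazing singularities in the derivative estimates.

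Commuting $T$ with $\partial_\eta$ and $\partial_\phi$, applying to $f^\e$, and using the original equation yields
\begin{align*}
T(\partial_\eta f^\e) + \partial_\eta f^\e - \partial_\eta \bar f^\e &= \partial_\eta S^\e + \e(\partial_\eta A)\cos\phi\,\partial_\phi f^\e, \\
T(\partial_\phi f^\e) + \partial_\phi f^\e &= \partial_\phi S^\e - \cos\phi\,\partial_\eta f^\e - \e A\sin\phi\,\partial_\phi f^\e.
\end{align*}
Since $T\zeta = 0$, multiplying by $\zeta$ gives transport equations for $\zeta\partial_\eta f^\e$ and $\zeta\partial_\phi f^\e$. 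The coupling is asymmetric: the cross-term in the first equation carries a factor $\e$, while the one in the second, namely $\cos\phi\,\partial_\eta f^\e$, does not. I will close the system by first estimating $\zeta\partial_\eta f^\e$ via characteristic integration (the $\e$-small cross-term can be absorbed on the left), and then substituting this bound into the $\zeta\partial_\phi f^\e$ equation; once $\zeta\partial_\eta f^\e$ is in $L^\infty$, the singular-looking term $\zeta\cos\phi\,\partial_\eta f^\e = \cos\phi\cdot(\zeta\partial_\eta f^\e)$ is trivially controlled, which is precisely why $\zeta$ is the right weight.

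A separate and more delicate step is the estimate of $\partial_\eta \bar f^\e$, which appears as a source in the $\partial_\eta$-equation. Integrating the original equation against $\cos\phi$ in velocity and using integration by parts produces an identity relating $\partial_\eta$ of the flux $\langle f^\e,\sin\phi\rangle$ to averages of $f^\e$ and $S^\e$. Combined with the exponential decay of $f^\e - f_L^\e$ from Theorem \ref{main 1} and the $L^6$ control for diffusive averages, this gives a quantitative bound for $\partial_\eta \bar f^\e$. The logarithmic losses enter here through near-grazing characteristic integrations (where the $1/\sin\phi$ singularity is integrated against the weight $\zeta$) and through the passage between $L^6$ and $L^\infty$; these losses iterate through the two derivative equations, the coupling, and the $\bar f^\e$ closure to produce the cumulative $|\ln\e|^8$ factor.

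With the $\eta$- and $\phi$-derivative bounds established, the $\psi$- and $\tau_i$-derivative estimates \eqref{it 001} follow by differentiating \eqref{mt} in these variables. Since $A$ depends on $\psi$ only through the explicit $\sin^2\psi, \cos^2\psi$ and on $\tau_i$ only through $R_1, R_2$, the commutators $[T,\partial_\psi]$ and $[T,\partial_{\tau_i}]$ produce source terms of the form $\e(\cdots)\,\partial_\phi f^\e$; these are absorbed by the weighted bounds just established, with the extra factor $\e$ defeating the degeneracy so that no $\zeta$-weight is needed on the left. The main obstacle throughout is the grazing set $\{\zeta = 0\}$: the precise choice of $\zeta$ is tailored to balance both the $\sin\phi$-degeneracy of $T$ and the $\cos\phi$ produced by $[T,\partial_\phi]$, and every bootstrap step that crosses the grazing region costs one $|\ln\e|$ factor, which is the quantitative origin of the final $|\ln\e|^8$ loss.
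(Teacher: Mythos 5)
Your overall architecture is sound and matches the paper in several respects: the invariance $\sin\phi\,\partial_\eta\zeta+F\cos\phi\,\partial_\phi\zeta=0$, the identification $\zeta=|\sin\phi_0|$ along characteristics, the commuted equations for $\zeta\partial_\eta f^{\e}$ and $\zeta\partial_\phi f^{\e}$ with the asymmetric coupling ($\e$-small in one direction, $O(1)$ but harmless after weighting in the other), and the reduction of the $\psi$- and $\tau_i$-derivatives to Milne problems whose commutator sources carry an extra $\e$ against $\cos\phi\,\partial_\phi f^{\e}$ are all exactly the paper's skeleton. But the core of the theorem --- the weighted $L^\infty$ bound for the normal derivative in the presence of the non-local term $\partial_\eta\bar f^{\e}$ --- is not actually proved in your proposal, and the route you sketch for it would fail. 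Multiplying the equation by $\cos\phi$ and integrating in velocity yields an ODE for the flux $\br{f^{\e},\sin\phi}$, not for $\bar f^{\e}$; any moment identity of this type expresses $\partial_\eta$ of a velocity average in terms of other averages and of $\partial_\eta$ of the microscopic part, so it is circular as a bound for $\partial_\eta\bar f^{\e}$ unless one already controls $\partial_\eta f^{\e}$. Moreover the ``$L^6$ control for diffusive averages'' you invoke belongs to the remainder estimate on the bulk domain $\Omega$ (Appendix A) and has no bearing on the half-line Milne problem; it cannot supply a pointwise bound on $\partial_\eta\bar f^{\e}$.

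In the paper this term is where all the work lies: writing $\a=\zeta\partial_\eta(f^{\e}-f^{\e}_L)$, the averaged derivative enters the mild formulation as a non-local kernel $\int\frac{\zeta(\eta,\phi,\psi)}{\zeta(\eta,\phi_\ast,\psi)}\a(\eta,\phi_\ast,\psi)\cos\phi_\ast$, and its estimate requires (i) preliminary pointwise bounds on $\sin\phi\,\partial_\eta\v$ away from grazing at cost $\d_0^{-3}$, proved along characteristics and fed into an iteration because the needed bound $|\partial_\eta\bar\v|\le C(1+|\ln\e|+|\ln\eta|)$ is not known a priori; (ii) for $|\sin\phi_\ast|$ bounded below, trading $\partial_\eta\v$ for zeroth-order quantities through the equation itself plus an integration by parts in $\phi_\ast$, so that only $\lnnm{\v}$ and $\lnnm{S}$ appear; (iii) for near-grazing $\phi_\ast$, the two-sided bounds $C\sqrt{\e\eta'}\le\zeta\le C(\sqrt{\e\eta'}+\sin\phi')$ and a case analysis in $\sqrt{\e\eta'}$ versus $\sin\phi'$ and $\sin^2\phi$ versus $\e(\eta-\eta')$, producing terms $C\d\lnnm{\a}$ and $C(1+|\ln\e|)\sqrt{\d_0}\lnnm{\a}$; and (iv) the absorption with $\d_0=\d|\ln\e|^{-2}$, which is the precise origin of the $|\ln\e|^8$ (through the $\d_0^{-4}$ prefactor), not a ``one log per bootstrap step'' count. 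Without an argument of this type --- or some substitute that breaks the circularity between $\partial_\eta\bar f^{\e}$ and $\partial_\eta f^{\e}$ and quantifies the grazing-set contribution of the kernel --- your first step (the $L^\infty$ bound for $\zeta\partial_\eta f^{\e}$) is unsupported, and everything downstream, including the stated power of $|\ln\e|$, remains conjectural.
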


\begin{remark}
It is easy to see $\zeta\geq\abs{\sin\phi}$. Then Theorem \ref{main 2} naturally implies
\begin{eqnarray}
\lnnm{\ue^{K_0\eta}\sin\phi\frac{\p(f^{\e}-f_L^{\e})}{\p\eta}}\leq C\abs{\ln(\e)}^8,
\end{eqnarray}
which is bounded away from the grazing set. More importantly, due to the half-line geometry of the Milne problem, the tangential derivatives are bounded in (\ref{it 001}) up to the grazing set. This is a sharp contrast to \cite{Guo.Kim.Tonon.Trescases2013} in a bounded domain.
\end{remark}
\ \\
As an application, thanks to the uniform bounds for tangential derivatives (\ref{it 001}), we finally establish the diffusive limit of neutron transport equation.
\begin{corollary}\label{main 3}
Assume $g(\vx_0,\vw)\in C^2(\Gamma^-)$ satisfying (\ref{compatibility}). Then for the steady neutron
transport equation (\ref{transport}), there exists a unique solution
$u^{\e}(\vx,\vw)\in L^{\infty}(\Omega\times\s^2)$ satisfying (\ref{normalization}). Moreover, for any $0<\d<<1$, the solution obeys the estimate
\begin{eqnarray}
\im{u^{\e}(\vx,\vw)-\u_0(\vx)}{\Omega\times\s^2}\leq C(\d,\Omega)\e^{\frac{1}{3}-\d},
\end{eqnarray}
where $\u_0(\vx)$ satisfies
\begin{eqnarray}
\left\{
\begin{array}{rcl}
\Delta_x\u_0&=&0\ \ \text{in}\
\ \Omega,\\\rule{0ex}{2em}\dfrac{\p\u_0}{\p\vec
\nu}&=&\dfrac{1}{\pi^2}\displaystyle
\int_{\vw\cdot\vn<0}g(\vx,\vw)\abs{\vw\cdot\vn}\ud{\vw}\ \ \text{on}\ \
\p\Omega,\\\rule{0ex}{1em}
\displaystyle\int_{\Omega}\u_0(\vx)\ud{\vx}&=&0,
\end{array}
\right.
\end{eqnarray}
in which $C(\d,\Omega)>0$ denotes a constant that depends on $\d$ and $\Omega$.
\end{corollary}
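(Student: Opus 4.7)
The plan is to construct a multi-scale asymptotic expansion of $u^\e$ combining an interior (diffusive) expansion with a boundary-layer correction drawn from the $\e$-Milne problem \eqref{mt}, and then close an $L^2$--$L^6$--$L^\infty$ chain of bounds on the remainder, using the regularity supplied by Theorems \ref{main 1} and \ref{main 2}.

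First I would construct an approximate solution
\[
u^\e_a(\vx,\vw)=\u_0(\vx)+\e\,\u_1(\vx,\vw)+f^\e\!\left(\tfrac{\mu}{\e},\phi,\psi;\tau_1,\tau_2\right),
\]
expressed in the boundary-fitted coordinates $(\mu,\tau_1,\tau_2,\phi,\psi)$ from the motivation. The interior leading term $\u_0$ is the unique solution of the Laplace--Neumann system stated in the corollary (whose Neumann datum arises as the solvability/compatibility condition of the Milne problem), $\u_1$ is a standard interior corrector solving a Poisson equation driven by $\vw\cdot\nx\u_0$, and $f^\e$ is the Milne boundary layer whose boundary datum $h^\e$ is chosen to cancel the residual mismatch between $\u_0|_{\p\Omega}+\e g$ and the reflection $\pp[u^\e]$ at leading order. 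Theorem \ref{main 1} furnishes existence and exponential decay of $f^\e$ away from $\p\Omega$, while Theorem \ref{main 2} supplies the weighted normal and, crucially, tangential derivative bounds with only a $\abs{\ln\e}^8$ loss, which are what allow $f^\e$ to be differentiated in the Cartesian variables.

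Substituting $u^\e_a$ into \eqref{transport}, the remainder $R^\e := u^\e - u^\e_a$ satisfies a transport equation of the same structure, with source terms coming from (i) the higher-order geometric corrections dropped in \eqref{it 01}, (ii) the truncation of the boundary layer at $\eta=L=\e^{-n}$, and (iii) the $O(\e^2)$ residual boundary mismatch. I would first derive an $L^2$ energy estimate for $R^\e$, then upgrade to the uniform $L^6$ bound alluded to in the abstract (obtained by testing against a carefully chosen auxiliary elliptic problem, exploiting the 3D Sobolev embedding $H^1\hookrightarrow L^6$ for the macroscopic density), and finally run a double-Duhamel / stochastic-cycles argument along the kinetic characteristics to promote $L^6$ to $L^\infty$.

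The main obstacle will be this last $L^\infty$ step, because $\nx f^\e$ carries an $\e^{-1}$ factor from the rescaling $\eta = \mu/\e$, so the source terms involving boundary-layer derivatives are nominally singular. The tangential bound \eqref{it 001} is precisely what keeps these contributions integrable along characteristics as they glide near $\p\Omega$, and the $L^6$ (rather than merely $L^2$) control is what absorbs the $\abs{\ln\e}^8$ loss of Theorem \ref{main 2}. Interpolating between $L^2$, $L^6$ and $L^\infty$ against the admissible range $n<\tfrac{2}{5}$ of the truncation parameter produces the exponent $\tfrac{1}{3}-\d$ stated in the claim. Existence and uniqueness of $u^\e$ subject to the normalization \eqref{normalization} follow by applying the same a priori estimates to the linear transport operator via a standard contraction argument.
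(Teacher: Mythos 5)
Your overall strategy is the paper's: interior expansion plus a geometric-correction Milne layer, the Neumann datum of $\u_0$ coming from the Milne compatibility condition, and a remainder closed through the $L^2$--$L^6$--$L^\infty$ machinery of the appendix, with Theorem \ref{main 2} controlling the layer's tangential/angular derivatives. The genuine gap is that your ansatz $\u_0+\e\u_1+f^{\e}$ is too short to close at the rate $\e^{\frac13-\d}$. The interior truncation error is $\ll[\u_0+\e\u_1]=\e^2\,\vw\cdot\nx\u_1$, which is of size exactly $\e^2$ in \emph{every} norm (it enjoys no boundary-layer decay), while the $L^{2m}$ estimate of Theorem \ref{LN estimate} carries the factor $\e^{-2}\nm{\cdot}_{L^{\frac{2m}{2m-1}}}$ and the $L^\infty$ step a further $\e^{-\frac{3}{2m}}\nm{\bar R}_{L^{2m}}$; feeding an $O(\e^2)$ source through this chain produces $O(\e^{-\frac12})$ at $m=3$, i.e.\ no convergence at all. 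The paper avoids this by including the second-order interior corrector $\e^2\u_2$ (with trivial Neumann data, accepting an $O(\e^3)$ boundary error), so the interior residual becomes $O(\e^3)$, and by carrying the layer to $\e\ub_1$ with $\ub_0=0$, so the remainder's boundary mismatch is $-\e^2\big(\vw\cdot\nx\u_1-\pp[\vw\cdot\nx\u_1]\big)=O(\e^2)$. You would need the same (or an equivalent) lengthening of the expansion; note also that since $\u_0$ is isotropic the leading boundary mismatch vanishes, so your layer should enter at order $\e$, not at order one, if you want to quote Theorems \ref{main 1}--\ref{main 2} with data bounded uniformly in $\e$.

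Two smaller inaccuracies. First, the $\e^{-1}$-singular normal derivative of the layer never appears in the remainder source: it is absorbed exactly by the Milne equation together with the geometric correction; what survives in $\ll[\qb]$ are the tangential and $\psi$-derivative transport terms, which already carry a factor $\e$ and are bounded (up to $\abs{\ln(\e)}^8$) by Theorem \ref{main 2}, giving $\nm{\ll[\qb]}_{L^\infty}\ls\e^2\abs{\ln(\e)}^8$ and, using the thinness of the layer, $\nm{\ll[\qb]}_{L^2}\ls\e^{\frac52}\abs{\ln(\e)}^8$, $\nm{\ll[\qb]}_{L^{6/5}}\ls\e^{3-\frac16}\abs{\ln(\e)}^8$. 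Second, the exponent $\tfrac13$ does not come from the restriction $n<\tfrac25$ (which only enters the well-posedness/decay of the Milne problem); it comes from choosing $m=3$ in the $L^{2m}$ framework, i.e.\ the final bound $C\e^{1-\frac{2}{m}}\abs{\ln(\e)}^8$, the logarithmic loss being absorbed into $\e^{-\d}$.
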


\subsection{Background and Methods}

At the core of boundary layer analysis, the study of Milne problem is consistent with the development of asymptotic analysis of kinetic equations in bounded domains. Since 1960s, people have discovered several methods to study the well-posedness of Milne problem, and apply them to asymptotic expansion. We refer to the references \cite{Esposito.Guo.Kim.Marra2015}, \cite{Arkeryd.Esposito.Marra.Nouri2011}, \cite{Arkeryd.Nouri2007}, \cite{Yang2012}, \cite{Huang.Wang.Yang2010}, \cite{Ukai.Yang.Yu2003}, \cite{Duan.Yang2013}, \cite{Coron.Golse.Sulem1988}, \cite{Aoki.Bardos.Takata2003}, \cite{Bardos.Yang2012}, \cite{Chen.Liu.Yang2004}, \cite{Golse.Perthame.Sulem1988}, \cite{Huang.Qin2009}, \cite{Wang.Yang.Yang2006},
\cite{Arkeryd.Nouri2013}, \cite{Arkeryd.Esposito.Marra.Nouri2010}, \cite{Cercignani.Marra.Esposito1998},
\cite{Larsen1974=}, \cite{Larsen1974}, \cite{Larsen.D'Arruda1976}, and \cite{Larsen.Zweifel1976} for more details. In 1979, diffusive limit of steady neutron transport equation was systematically investigated in \cite{Bensoussan.Lions.Papanicolaou1979} (see also \cite{Bardos.Caflisch.Nicolaenko1986} and \cite{Bardos.Santos.Sentis1984}).

The key idea of \cite{Bensoussan.Lions.Papanicolaou1979}, \cite{Bardos.Caflisch.Nicolaenko1986} and \cite{Bardos.Santos.Sentis1984} is to study the classical Milne problem as
\begin{eqnarray}\label{it 002}
\left\{ \begin{array}{rcl}\displaystyle \sin\phi\frac{\p
f^{\e}}{\p\eta}+f^{\e}-\bar f^{\e}&=&S^{\e}(\eta,\phi,\psi),\\
f^{\e}(0,\phi,\psi)&=& h^{\e}(\phi,\psi)\ \ \text{for}\
\ \sin\phi>0,\\
\lim_{\eta\rt\infty}f^{\e}(\eta,\phi,\psi)&=&f^{\e}_{\infty}.
\end{array}
\right.
\end{eqnarray}
In \cite{Bensoussan.Lions.Papanicolaou1979}, the authors proved that $f^{\e}$ is well-posed and decays exponentially fast to some constant $f^{\e}_{\infty}$ in $L^{\infty}$.

Unfortunately, as discovered recently in \cite{AA003}, the lack of regularity of such classical Milne problem (\ref{it 002}) has been overlooked for non-flat bounded domains. The solutions of (\ref{it 002}) are singular in the normal direction, which leads to singularity in the tangential directions, resulting in break-down of diffusive expansion with classical Milne boundary layers.

The regularity of the Milne problem is the central issue. In \cite{AA003} and \cite{AA006}, a new approach with geometric correction from the next-order diffusive expansion has been introduced to ensure regularity in the cases of 2D plate and annulus, i.e. to solve for $f^{\e}(\eta,\phi)$ satisfying
\begin{eqnarray}\label{mt2}
\left\{ \begin{array}{rcl}\displaystyle \sin\phi\frac{\p
f^{\e}}{\p\eta}-\dfrac{\e}{R-\e\eta}\cos\phi\frac{\p
f^{\e}}{\p\phi}+f^{\e}-\bar f^{\e}&=&S^{\e}(\eta,\phi),\\
f^{\e}(0,\phi)&=& h^{\e}(\phi)\ \ \text{for}\
\ \sin\phi>0,\\
f^{\e}(L,\phi)&=&f^{\e}(L,-\phi),
\end{array}
\right.
\end{eqnarray}
where $R$ denotes the radius of curvature. Also, in \cite{AA007}, weighted $W^{1,\infty}$ estimates was proved to treat more general 2D convex domains.

There are three main ingredients to generalize our previous results to 3D convex domains.

The first difficulty is the lack of conserved energy. Consider the simplest case that $S=0$ and we omit $\e$ temporarily. Assume
\begin{eqnarray}
F(\eta,\psi)=-\e\bigg(\dfrac{\sin^2\psi}{R_1-\e\eta}+\dfrac{\cos^2\psi}{R_2-\e\eta}\bigg).
\end{eqnarray}
Taking inner product with $f$ on both sides of (\ref{mt}), we obtain
\begin{eqnarray}
\frac{1}{4}\frac{\p}{\p\eta}\br{f,f\sin(2\phi)}+\half\br{F(\eta,\psi),\frac{\p
(f^2)}{\p\phi}\cos^2\phi}
+\tnm{f-\bar f}^2&=&0.
\end{eqnarray}
We may integrate by parts to get
\begin{eqnarray}
\half\br{F(\eta,\psi),\frac{\p
(f^2)}{\p\phi}\cos^2\phi}&=&\half\br{F(\eta,\psi)f,f\sin(2\phi)}.
\end{eqnarray}
Since $F(\eta,\psi)$
depends on $\psi$ when $R_1\neq R_2$ in 3D, we cannot further pull $F$ out of the integral, i.e.
\begin{eqnarray}\label{it 003}
\half\br{F(\eta,\psi)f,f\sin(2\phi)}\overset{?}{=}\half F(\eta,\psi)\br{f,f\sin(2\phi)}.
\end{eqnarray}
This important equality is true in (\ref{mt2}) for 2D domains and yields an ordinary differential equation for $\ds\br{f,f\sin(2\phi)}$, leading to the closure of the $L^2$ estimate of the microscopic part $f-\bar f$. Similarly, taking inner product with $1$ on both sides of (\ref{mt2}) and integrating by parts, we easily get the orthogonality relation \begin{eqnarray}\label{it 004}
\br{f,\sin\phi}=0,
\end{eqnarray}
which plays a crucial role in estimating hydrodynamical part $\bar f$. Unfortunately, both (\ref{it 003}) and (\ref{it 004}) break down in 3D domains.

To circumvent these two major difficulties, as Lemma \ref{Milne finite LT} reveals, we decompose
\begin{eqnarray}
F(\eta,\psi)\cos\phi\frac{\p f}{\p\phi}=\tf(\eta)\cos\phi\frac{\p
f}{\p\phi}+G(\eta)\cos^2\psi\cos\phi\frac{\p
f}{\p\phi},
\end{eqnarray}
where
\begin{eqnarray}
\tf(\eta)&=&-\frac{\e}{R_1-\e\eta},\\
G(\eta)&=&-\frac{\e(R_1-R_2)}{(R_1-\e\eta)(R_2-\e\eta)},
\end{eqnarray}
in which $\tf$ behaves like 2D force (independent of $\psi$) and $G$ can be regarded as a source term. Roughly speaking, taking inner product with $f$ in (\ref{mt}), we obtain
\begin{eqnarray}\label{it 005}
\tnnm{f-\bar f}^2&\ls& C\abs{\int_0^LG(y)\br{f\cos^2\psi,f\sin\phi}(y)\ud{y}}+\text{lower-order}\ \text{terms}\\
&\ls& C+C\lnnm{G}\tnnm{f}^2,\no
\end{eqnarray}
which means we cannot close the estimate for $f-\bar f$ alone without invoking $\tnnm{\bar f}$. On the other hand, taking inner product with $\sin\phi$ in (\ref{mt}) indicates
\begin{eqnarray}\label{it 006}
\tnnm{\bar f}^2&\ls& C\int_0^L\abs{\int_s^L\int_z^LG(y)\br{\cos^2\psi,(f-\bar f)\sin\phi}(y)\ud{y}\ud{z}}^2\ud{s}+\text{lower-order}\ \text{terms}\\
&\ls& C+CL^3\tnnm{G}\tnnm{f-\bar f}.\no
\end{eqnarray}
(\ref{it 005}) and (\ref{it 006}) form a coupled system for $f-\bar f$ and $\bar f$ and require careful analysis of the interplay between microscopic and hydrodynamic parts. Also, we have to delicately choose $L=\e^{-n}$ with $0<n<\dfrac{2}{5}$ to create a small constant such that an intricate bootstrapping argument can finally close the $L^2$ estimates.

The second key ingredient in our analysis is to establish the regularity estimate of $\e$-Milne problem. Proving diffusive limit in transport equations requires boundary layer expansion higher than leading-order term, which means we need $L^{\infty}$ estimate of the tangential derivatives
\begin{eqnarray}
\frac{\p f}{\p\tau_1},\ \ \frac{\p f}{\p\tau_2},\ \ \text{and}\ \ \frac{\p f}{\p\psi}.
\end{eqnarray}
In the case when $R_1=R_2$ are constant independent of $\tau_1$ and $\tau_2$, as in a perfect ball $\Big\{\abs{\vx}=1\Big\}$, $\dfrac{\p f}{\p\tau_i}$ for $i=1,2$ is smooth, since the tangential derivative commutes with the equation. On the other hand, when $R_1$ and $R_2$ are functions of $\tau_i$, then $\dfrac{\p f}{\p\tau_i}$ relates to the normal derivative $\dfrac{\p f}{\p\eta}$, and $\dfrac{\p f}{\p\psi}$ relates to the velocity derivative $\dfrac{\p f}{\p\phi}$.

Our main contribution is to show $\dfrac{\p f}{\p\tau_i}$ and $\dfrac{\p f}{\p\psi}$ are bounded even if $R_1$ and $R_2$ are not identical constant for a general convex domain(see Theorem \ref{Milne tangential}). Our proof is intricate and delicate, which relies on the weighted $L^{\infty}$ estimates for the normal derivative with detailed analysis along the characteristic curves in the presence of non-local operator $\bar f$. The convexity and invariant kinetic distance $\zeta$ defined in (\ref{weight}) play the key role.

The third ingredient is a new $L^6-L^{\infty}$ framework developed to improve remainder estimates in
\begin{eqnarray}
\left\{
\begin{array}{rcl}\displaystyle
\e\vw\cdot\nx u+u-\bar
u&=&S(\vx,\vw)\ \ \text{in}\ \ \Omega,\\
u(\vx_0,\vw)&=&\pp[u](\vx_0)+h(\vx_0,\vw)\ \ \text{for}\ \
\vw\cdot\vn<0\ \ \text{and}\ \ \vx_0\in\p\Omega,
\end{array}
\right.
\end{eqnarray}
The main idea is to introduce special test functions in the weak formulation to treat kernel and non-kernel parts separately. In principle, we get $L^6$ estimate
\begin{eqnarray}
&&\frac{1}{\e^{\frac{1}{2}}}\nm{(1-\pp)[u]}_{L^2(\Gamma^+)}+\nm{
\bar u}_{L^{6}(\Omega\times\s^2)}+\frac{1}{\e}\nm{u-\bar
u}_{L^2(\Omega\times\s^2)}\\
&\leq&
C\bigg(o(1)\e^{\frac{1}{2}}\nm{u}_{L^{\infty}(\Gamma^+)}+\frac{1}{\e}\tm{S}{\Omega\times\s^2}+
\frac{1}{\e^2}\nm{S}_{L^{\frac{6}{5}}(\Omega\times\s^2)}+\frac{1}{\e}\nm{h}_{L^2(\Gamma^-)}+\nm{h}_{L^{4}(\Gamma^-)}\bigg),\nonumber
\end{eqnarray}
where $o(1)$ denotes a sufficiently small constant (see Theorem \ref{LN estimate}). The proof relies on a careful analysis using sharp interpolation and Young's inequality. Finally, the utilization of modified double Duhamel's principle and a bootstrapping argument yield the $L^{\infty}$ estimate as
\begin{eqnarray}
\im{u}{\Omega\times\s^2}\leq C\bigg(
\frac{1}{\e^{\frac{1}{2}}}\nm{\bar u}_{L^{6}(\Omega\times\s^2)}+\im{S}{\Omega\times\s^2}+\im{g}{\Gamma^-}\bigg).
\end{eqnarray}

Our methods are currently being applied to the study of hydrodynamic limit of Boltzmann equation in the bounded domains with boundary layer corrections.

\subsection{Notation and Structure}

Throughout this paper, unless specified, $C>0$ denotes a universal constant which does not depend on the data and
can change from one inequality to another.
When we write $C(z)$, it means a positive constant depending
on the quantity $z$.

Our paper is organized as follows: in Section 2, we present the asymptotic analysis of the equation (\ref{transport});  in Section 3, we prove the well-posedness and decay of $\e$-Milne problem, i.e. Theorem \ref{main 1}; in Section 4, we prove the weighted $W^{1,\infty}$ estimates in $\e$-Milne problem, i.e. Theorem \ref{main 2}; finally, in appendix, we prove the improved $L^{\infty}$ estimate of remainder equation and the diffusive limit, i.e. Corollary \ref{main 3}.

\newpage

\section{Asymptotic Analysis}

\subsection{Interior Expansion}

We first try to approximate the solution of neutron transport equation (\ref{transport}). We define the interior expansion as follows:
\begin{eqnarray}\label{interior expansion}
\u(\vx,\vw)\sim\u_0(\vx,\vw)+\e\u_1(\vx,\vw)+\e^2\u_2(\vx,\vw),
\end{eqnarray}
where $\u_k$ can be determined by comparing the order of $\e$ by
plugging (\ref{interior expansion}) into the equation
(\ref{transport}). Thus we have
\begin{eqnarray}
\u_0-\bu_0&=&0,\label{expansion temp 1}\\
\u_1-\bu_1&=&-\vw\cdot\nx\u_0,\label{expansion temp 2}\\
\u_2-\bu_2&=&-\vw\cdot\nx\u_1.\label{expansion temp 3}
\end{eqnarray}
Plugging (\ref{expansion temp 1}) into (\ref{expansion temp 2}),
we obtain
\begin{eqnarray}
\u_1=\bu_1-\vw\cdot\nx\bu_0.\label{expansion temp 4}
\end{eqnarray}
Plugging (\ref{expansion temp 4}) into (\ref{expansion temp 3}),
we get
\begin{eqnarray}\label{expansion temp 5}
\u_2-\bu_2=-\vw\cdot\nx(\bu_1-\vw\cdot\nx\bu_0)=-\vw\cdot\nx\bu_1+w_1^2\p_{x_1x_1}\bu_0+w_2^2\p_{x_2x_2}\bu_0+2w_1w_2\p_{x_1x_2}\bu_0.
\end{eqnarray}
Integrating (\ref{expansion temp 5}) over $\vw\in\s^1$, we achieve
the final form
\begin{eqnarray}
\Delta_x\bu_0=0.
\end{eqnarray}
which further implies $\u_0(\vx,\vw)$ satisfies the equation
\begin{eqnarray}\label{interior 1}
\left\{ \begin{array}{rcl} \u_0&=&\bu_0,\\
\Delta_x\bu_0&=&0.
\end{array}
\right.
\end{eqnarray}
In a similar fashion, for $k=1,2$, $\u_k$ satisfies
\begin{eqnarray}\label{interior 2}
\left\{ \begin{array}{rcl} \u_k&=&\bu_k-\vw\cdot\nx\u_{k-1},\\
\Delta_x\bu_k&=&\displaystyle-\int_{\s^2}\vw\cdot\nx\u_{k-1}\ud{\vw}.\end{array}
\right.
\end{eqnarray}
It is easy to see $\bu_k$ satisfies an elliptic equation. However, the boundary condition of $\bu_k$ is unknown at this stage, since generally $\u_k$ does not necessarily satisfy the diffusive boundary condition of (\ref{transport}). Therefore, we have to resort to boundary layer.

\subsection{Local Coordinate System}

Basically, we use two types of coordinate systems: Cartesian coordinate
system for interior solution, which is stated above, and a local coordinate system in a neighborhood of the boundary for boundary layer. We need several substitution to describe solution near boundary.\\
\ \\
Substitution 1: spacial substitution:\\
We consider the three-dimensional transport operator $\vw\cdot\nx$. In the boundary surface, locally we can always define an orthogonal curvilinear coordinates system $(\tau_1,\tau_2)$ and the surface is described as $\vr(\tau_1,\tau_2)$. From the differential geometry, we know $\p_1\vr$ and $\p_2\vr$ denote two orthogonal tangential vectors. Then assume the outward unit normal vector is
\begin{eqnarray}
\vn=\frac{\p_1\vr\times\p_2\vr}{\abs{\p_1\vr\times\p_2\vr}}.
\end{eqnarray}
Here $\abs{\cdot}$ denotes the length and $\p_i$ denotes derivative with respect to $\tau_i$. Let
\begin{eqnarray}
P=\abs{\p_1\vr\times\p_2\vr}=\abs{\p_1\vr}\abs{\p_2\vr}=P_1P_2,
\end{eqnarray}
with the unit tangential vectors are
\begin{eqnarray}
\vt_1=\frac{\p_1\vr}{P_1},\ \ \vt_2=\frac{\p_2\vr}{P_2}.
\end{eqnarray}
Then in the new coordinates $(\mu,\tau_1,\tau_2)$ where $\mu$ denotes the normal distance to boundary surface, we have
\begin{eqnarray}
\vx=\vr-\mu\vn.
\end{eqnarray}
which further implies the operator becomes
\begin{eqnarray}
\vw\cdot\nx&=&-\frac{\bigg(\left(\p_1\vr-\mu\p_1\vn\right)\times\left(\p_2\vr-\mu\p_2\vn\right)\bigg)\cdot\vw}
{\bigg(\left(\p_1\vr-\mu\p_1\vn\right)\times\left(\p_2\vr-\mu\p_2\vn\right)\bigg)\cdot\vn}\frac{\p f}{\p\mu}\\
&&+\frac{\bigg(\left(\p_2\vr-\mu\p_2\vn\right)\times\vn\bigg)\cdot\vw}{\bigg(\left(\p_1\vr-\mu\p_1\vn\right)\times\left(\p_2\vr-\mu\p_2\vn\right)\bigg)\cdot\vn}\frac{\p f}{\p\tau_1}-\frac{\bigg(\left(\p_1\vr-\mu\p_1\vn\right)\times\vn\bigg)\cdot\vw}{\bigg(\left(\p_1\vr-\mu\p_1\vn\right)\times\left(\p_2\vr-\mu\p_2\vn\right)\bigg)\cdot\vn}\frac{\p f}{\p\tau_2}.\no
\end{eqnarray}
Based on differential geometry, we define the first fundamental form as $(E,F,G)$ and second fundamental form as $(L,M,N)$, then we have $F=M=0$ and the principal curvatures are given by
\begin{eqnarray}
\kappa_1=\frac{L}{E},\ \ \kappa_2=\frac{N}{G},
\end{eqnarray}
and also
\begin{eqnarray}
\p_1\vn=\kappa_1\p_1\vr,\ \ \p_2\vn=\kappa_2\p_2\vr.
\end{eqnarray}
Hence, we know
\begin{eqnarray}
\bigg(\left(\p_1\vr-\mu\p_1\vn\right)\times\left(\p_2\vr-\mu\p_2\vn\right)\bigg)\cdot\vn&=&\bigg(\kappa_1\kappa_2\mu^2-(\kappa_1+\kappa_2)\mu+1\bigg)P\\
&=&\bigg((\kappa_1\mu-1)(\kappa_2\mu-1)\bigg)P.\no
\end{eqnarray}
\begin{eqnarray}
\bigg(\left(\p_1\vr-\mu\p_1\vn\right)\times\left(\p_2\vr-\mu\p_2\vn\right)\bigg)\cdot\vw&=&\bigg(\kappa_1\kappa_2\mu^2-(\kappa_1+\kappa_2)\mu+1\bigg)P(\vn\cdot\vw)\\
&=&\bigg((\kappa_1\mu-1)(\kappa_2\mu-1)\bigg)P(\vw\cdot\vn).\no
\end{eqnarray}
\begin{eqnarray}
\bigg(\left(\p_2\vr-\mu\p_2\vn\right)\times\vn\bigg)\cdot\vw&=&(1-\kappa_2\mu)\frac{P_2}{P_1}(\vw\cdot\p_1\vr).
\end{eqnarray}
\begin{eqnarray}
\bigg(\left(\p_1\vr-\mu\p_1\vn\right)\times\vn\bigg)\cdot\vw&=&-(1-\kappa_1\mu)\frac{P_1}{P_2}(\vw\cdot\p_2\vr).
\end{eqnarray}
Hence, we have the transport operator as
\begin{eqnarray}
\vw\cdot\nx=-(\vw\cdot\vn)\frac{\p}{\p\mu}-\frac{\vw\cdot\vt_1}{P_1(\kappa_1\mu-1)}\frac{\p}{\p\tau_1}-\frac{\vw\cdot\vt_2}{P_2(\kappa_2\mu-1)}\frac{\p}{\p\tau_2}.
\end{eqnarray}
Therefore, under substitution $(x_1,x_2,x_3)\rt(\mu,\tau_1,\tau_2)$, the equation (\ref{transport}) is transformed into
\begin{eqnarray}
\left\{
\begin{array}{rcl}\displaystyle
\e\bigg(-(\vw\cdot\vn)\frac{\p u^{\e}}{\p\mu}-\frac{\vw\cdot\vt_1}{P_1(\kappa_1\mu-1)}\frac{\p u^{\e}}{\p\tau_1}-\frac{\vw\cdot\vt_2}{P_2(\kappa_2\mu-1)}\frac{\p u^{\e}}{\p\tau_2}\bigg)+u^{\e}-\bar u^{\e}=0\ \ \text{in}\ \ \Omega,\\\rule{0ex}{2.0em}
u^{\e}(0,\tau_1,\tau_2,\vw)=\pp[u^{\e}](0,\tau_1,\tau_2)+\e g(\tau_1,\tau_2,\vw)\ \ \text{for}\
\ \vw\cdot\vn<0,
\end{array}
\right.
\end{eqnarray}
where
\begin{eqnarray}
\pp[u^{\e}](0,\tau_1,\tau_2)=\frac{1}{2\pi}\int_{\vw\cdot\vn>0}u^{\e}(0,\tau_1,\tau_2,\vw)(\vw\cdot\vn)\ud{\vw},
\end{eqnarray}
\ \\
Substitution 2: velocity substitution.\\
Define the orthogonal velocity substitution
\begin{eqnarray}
\left\{
\begin{array}{rcl}
-\vw\cdot\vn&=&\sin\phi,\\
\vw\cdot\vt_1&=&\cos\phi\sin\psi,\\
\vw\cdot\vt_2&=&\cos\phi\cos\psi.
\end{array}
\right.
\end{eqnarray}
Then we have
\begin{eqnarray}
\frac{\p}{\p\tau_1}&\rt&\frac{\p}{\p\tau_1}-\kappa_1P_1\sin\psi\frac{\p}{\p\phi}
+\bigg(\frac{(\p_2\vr\times(\p_{21}\vr\times\p_2\vr))\cdot\vt_2}{P_2^3}-\kappa_1P_1\tan\phi\cos\psi\bigg)\frac{\p}{\p\psi},\\
\frac{\p}{\p\tau_2}&\rt&\frac{\p}{\p\tau_2}-\kappa_2P_2\cos\psi\frac{\p}{\p\phi}
+\bigg(\frac{(\p_1\vr\times(\p_{12}\vr\times\p_1\vr))\cdot\vt_1}{P_1^3}+\kappa_2P_2\tan\phi\sin\psi\bigg)\frac{\p}{\p\psi}.
\end{eqnarray}
Then the transport operator is as
\begin{eqnarray}
\vw\cdot\nx&=&\sin\phi\frac{\p}{\p\mu}-\bigg(\frac{\sin^2\psi}{R_1-\mu}+\frac{\cos^2\psi}{R_2-\mu}\bigg)\cos\phi\frac{\p}{\p\phi}\\
&&+\bigg(\frac{\cos\phi\sin\psi}{P_1(1-\kappa_1\mu)}\frac{\p}{\p\tau_1}+\frac{\cos\phi\cos\psi}{P_2(1-\kappa_2\mu)}\frac{\p}{\p\tau_2}\bigg)\no\\
&&+\bigg(\frac{\sin\psi}{1-\kappa_1\mu}(\vt_2\times(\p_{21}\vr\times\vt_2))\cdot\vt_2
+\frac{\cos\psi}{1-\kappa_2\mu}(\vt_1\times(\p_{12}\vr\times\vt_1))\cdot\vt_1\bigg)\frac{\cos\phi}{P_1P_2}\frac{\p}{\p\psi},\no
\end{eqnarray}
where $R_1=\dfrac{1}{\kappa_1}$ and $R_2=\dfrac{1}{\kappa_2}$. Hence, under substitution $(w_1,w_2,w_3)\rt(\phi,\psi)$,
the equation (\ref{transport}) is transformed into
\begin{eqnarray}\label{transport.}
\left\{
\begin{array}{l}\displaystyle
\e\sin\phi\frac{\p u^{\e}}{\p\mu}-\e\bigg(\dfrac{\sin^2\psi}{R_1-\mu}+\dfrac{\cos^2\psi}{R_2-\mu}\bigg)\cos\phi\dfrac{\p u^{\e}}{\p\phi}\\\rule{0ex}{2.0em}
+\e\bigg(\dfrac{\cos\phi\sin\psi}{P_1(1-\kappa_1\mu)}\dfrac{\p u^{\e}}{\p\tau_1}+\dfrac{\cos\phi\cos\psi}{P_2(1-\kappa_2\mu)}\dfrac{\p u^{\e}}{\p\tau_2}\bigg)\\\rule{0ex}{2.0em}
+\e\bigg(\dfrac{\sin\psi}{1-\kappa_1\mu}(\vt_2\times(\p_{21}\vr\times\vt_2))\cdot\vt_2
+\dfrac{\cos\psi}{1-\kappa_2\mu}(\vt_1\times(\p_{12}\vr\times\vt_1))\cdot\vt_1\bigg)\dfrac{\cos\phi}{P_1P_2}\dfrac{\p u^{\e}}{\p\psi}+u^{\e}-\bar u^{\e}=0,\\\rule{0ex}{2.0em}
u^{\e}(0,\tau_1,\tau_2,\phi,\psi)=\pp[u^{\e}](0,\tau_1,\tau_2)+\e g(\tau_1,\tau_2,\phi,\psi)\ \
\text{for}\ \ \sin\phi>0,
\end{array}
\right.
\end{eqnarray}
where
\begin{eqnarray}
\pp[u^{\e}](0,\tau_1,\tau_2)=\frac{1}{4\pi}\iint_{\sin\phi>0}u^{\e}(0,\tau_1,\tau_2,\phi,\psi)\sin\phi\cos\phi\ud{\phi}\ud{\psi},
\end{eqnarray}
due to Jacobian $J=\cos\phi$, in a neighborhood of the boundary. \\
\ \\
Substitution 3: scaling substitution.\\
We define the scaled variable $\eta=\dfrac{\mu}{\e}$, which implies $\dfrac{\p}{\p\mu}=\dfrac{1}{\e}\dfrac{\p}{\p\eta}$. Then, under the substitution $\mu\rt\eta$, the equation (\ref{transport}) is transformed into
\begin{eqnarray}\label{transport temp}
\left\{
\begin{array}{l}\displaystyle
\sin\phi\frac{\p u^{\e}}{\p\eta}-\e\bigg(\dfrac{\sin^2\psi}{R_1-\e\eta}+\dfrac{\cos^2\psi}{R_2-\e\eta}\bigg)\cos\phi\dfrac{\p u^{\e}}{\p\phi}\\\rule{0ex}{2.0em}
+\e\bigg(\dfrac{\cos\phi\sin\psi}{P_1(1-\e\kappa_1\eta)}\dfrac{\p u^{\e}}{\p\tau_1}+\dfrac{\cos\phi\cos\psi}{P_2(1-\e\kappa_2\eta)}\dfrac{\p u^{\e}}{\p\tau_2}\bigg)\\\rule{0ex}{2.0em}
+\e\bigg(\dfrac{\sin\psi}{1-\e\kappa_1\eta}(\vt_2\times(\p_{21}\vr\times\vt_2))\cdot\vt_2
+\dfrac{\cos\psi}{1-\e\kappa_2\eta}(\vt_1\times(\p_{12}\vr\times\vt_1))\cdot\vt_1\bigg)\dfrac{\cos\phi}{P_1P_2}\dfrac{\p u^{\e}}{\p\psi}+u^{\e}-\bar u^{\e}=0,\\\rule{0ex}{2.0em}
u^{\e}(0,\tau_1,\tau_2,\phi,\psi)=\pp[u^{\e}](0,\tau_1,\tau_2)+\e g(\tau_1,\tau_2,\phi,\psi)\ \
\text{for}\ \ \sin\phi>0,
\end{array}
\right.
\end{eqnarray}
where
\begin{eqnarray}
\pp[u^{\e}](0,\tau_1,\tau_2)=\frac{1}{4\pi}\iint_{\sin\phi>0}u^{\e}(0,\tau_1,\tau_2,\phi,\psi)\sin\phi\cos\phi\ud{\phi}\ud{\psi},
\end{eqnarray}
in a neighborhood of the boundary.

\subsection{Boundary Layer Expansion with Geometric Correction}

We define the boundary layer expansion as follows:
\begin{eqnarray}\label{boundary layer expansion}
\ub(\eta,\tau_1,\tau_2,\phi,\psi)\sim\ub_0(\eta,\tau_1,\tau_2,\phi,\psi)+\e\ub_1(\eta,\tau_1,\tau_2,\phi,\psi),
\end{eqnarray}
where $\ub_k$ can be defined by comparing the order of $\e$ via
plugging (\ref{boundary layer expansion}) into the equation
(\ref{transport temp}). Thus, in a neighborhood of the boundary, we have
\begin{eqnarray}
\sin\phi\frac{\p\ub_0}{\p\eta}-\e\bigg(\dfrac{\sin^2\psi}{R_1-\e\eta}+\dfrac{\cos^2\psi}{R_2-\e\eta}\bigg)\cos\phi\dfrac{\p \ub_0}{\p\phi}+\ub_0-\bub_0&=&0,\label{expansion temp 6}\\
\sin\phi\frac{\p\ub_1}{\p\eta}-\e\bigg(\dfrac{\sin^2\psi}{R_1-\e\eta}+\dfrac{\cos^2\psi}{R_2-\e\eta}\bigg)\cos\phi\dfrac{\p \ub_1}{\p\phi}+\ub_1-\bub_1&=&-G_0,\label{expansion temp 7}
\end{eqnarray}
where
\begin{eqnarray}
G_{0}&=&\bigg(\dfrac{\cos\phi\sin\psi}{P_1(1-\e\kappa_1\eta)}\dfrac{\p \ub_0}{\p\tau_1}+\dfrac{\cos\phi\cos\psi}{P_2(1-\e\kappa_2\eta)}\dfrac{\p \ub_0}{\p\tau_2}\bigg)\\
&&+\bigg(\dfrac{\sin\psi}{1-\e\kappa_1\eta}(\vt_2\times(\p_{21}\vr\times\vt_2))\cdot\vt_2
+\dfrac{\cos\psi}{1-\e\kappa_2\eta}(\vt_1\times(\p_{12}\vr\times\vt_1))\cdot\vt_1\bigg)\dfrac{\cos\phi}{P_1P_2}\dfrac{\p\ub_0}{\p\psi},\no
\end{eqnarray}
and
\begin{eqnarray}
\bub_k(\eta,\tau_1,\tau_2)=\frac{1}{4\pi}\int_{-\pi}^{\pi}\int_{-\pi/2}^{\pi/2}\ub_k(\eta,\tau_1,\tau_2,\phi,\psi)\cos\phi\ud{\phi}\ud{\psi}.
\end{eqnarray}
Note that this formulation is always valid locally on the boundary. By open covering theorem, we can find finite open domains to cover the whole surface. For convenience, we will not change our notation in each open domain.

\subsection{Matching of Interior Solution and Boundary Layer}

The bridge between the interior solution and the boundary layer
solution is the boundary condition of (\ref{transport}), so we
first consider the boundary condition expansion:
\begin{eqnarray}
(\u_0+\ub_0)&=&\pp[\u_0+\ub_0],\\
(\u_1+\ub_1)&=&\pp[\u_1+\ub_1]+g.
\end{eqnarray}
Note the fact that $\bu_k=\pp[\bu_k]$, we can simplify above
conditions as follows:
\begin{eqnarray}
\ub_0&=&\pp[\ub_0],\\
\ub_1&=&\pp[\ub_1]+(\vw\cdot\u_0-\pp[\vw\cdot\u_0])+g.
\end{eqnarray}
The construction of $\u_k$ and $\ub_k$ are as follows:\\
\ \\
Step 0: Preliminaries.\\
Assume the cut-off function $\Upsilon_0\in C^{\infty}[0,\infty)$ are defined as
\begin{eqnarray}\label{cut-off}
\Upsilon_0(\mu)=\left\{
\begin{array}{ll}
1&0\leq\mu\leq\dfrac{1}{4}R_{\min},\\
0&\dfrac{1}{2}R_{\min}\leq\mu\leq\infty.
\end{array}
\right.
\end{eqnarray}
where
\begin{eqnarray}
R_{\min}=\min_{\tau_1,\tau_2}\{R_1(\tau_1,\tau_2),R_2(\tau_1,\tau_2)\}.
\end{eqnarray}
Define the length of boundary layer $L=\e^{-n}$ for $0<n<\dfrac{2}{5}$ and the force as
\begin{eqnarray}\label{force}
F(\e;\eta,\psi)=-\e\bigg(\dfrac{\sin^2\psi}{R_1-\e\eta}+\dfrac{\cos^2\psi}{R_2-\e\eta}\bigg).
\end{eqnarray}
Also, denote $\rr\phi=-\phi$.\\
\ \\
Step 1: Construction of $\ub_0$.\\
Define the zeroth-order boundary layer as
\begin{eqnarray}\label{expansion temp 9}
\left\{
\begin{array}{rcl}
\ub_0(\eta,\tau_1,\tau_2,\phi,\psi)&=&\Upsilon_0(\e^{n}\eta)\bigg(f_0^{\e}(\eta,\tau_1,\tau_2,\phi,\psi)-f_{0,L}^{\e}(\tau_1,\tau_2)\bigg),\\
\sin\phi\dfrac{\p f_0^{\e}}{\p\eta}+F(\e;\eta,\psi)\cos\phi\dfrac{\p
f_0^{\e}}{\p\phi}+f_0^{\e}-\bar f_0^{\e}&=&0,\\\rule{0ex}{1em}
f_0^{\e}(0,\tau_1,\tau_2,\phi,\psi)&=&\pp[f_0^{\e}](0,\tau_1,\tau_2)\ \ \text{for}\ \
\sin\phi>0,\\\rule{0ex}{1em}
f_0^{\e}(L,\tau_1,\tau_2,\phi,\psi)&=&f_0^{\e}(L,\tau_1,\tau_2,\rr\phi,\psi),
\end{array}
\right.
\end{eqnarray}
with
\begin{eqnarray}
\pp[f_0^{\e}](0,\tau_1,\tau_2)=0.
\end{eqnarray}
By Theorem \ref{Milne theorem 3.}, $\ub_0$ is well-defined. It is
obvious to see $f_0^{\e}=f_{0,L}^{\e}=0$ is the only solution.\\
\ \\
Step 2: Construction of $\ub_1$ and $\u_0$.\\
Define the first-order boundary layer as
\begin{eqnarray}\label{expansion temp 10}
\left\{
\begin{array}{rcl}
\ub_1(\eta,\tau_1,\tau_2,\phi,\psi)&=&\Upsilon_0(\e^{n}\eta)\bigg(f_1^{\e}(\eta,\tau_1,\tau_2,\phi,\psi)-f_{1,L}^{\e}(\tau_1,\tau_2)\bigg),\\
\sin\phi\dfrac{\p f_1^{\e}}{\p\eta}+F(\e;\eta,\psi)\cos\phi\dfrac{\p
f_1^{\e}}{\p\phi}+f_1^{\e}-\bar
f_1^{\e}&=&-G_0,\\\rule{0ex}{1.0em} f_1^{\e}(0,\tau_1,\tau_2,\phi,\psi)&=&\pp
[f_1^{\e}](0,\tau_1,\tau_2)+g_1(\tau_1,\tau_2,\phi,\psi)\ \ \text{for}\ \
\sin\phi>0,\\\rule{0ex}{1em}
f_1^{\e}(L,\tau_1,\tau_2,\phi,\psi)&=&f_1^{\e}(L,\tau_1,\tau_2,\rr\phi,\psi),
\end{array}
\right.
\end{eqnarray}
with
\begin{eqnarray}
\pp[f_1^{\e}](0,\tau_1,\tau_2)=0,
\end{eqnarray}
where
\begin{eqnarray}
g_1&=&(\vw\cdot\nx\u_0(\vx_0)-\pp[\vw\cdot\nx\u_0(\vx_0)])+g,
\end{eqnarray}
with $\vx_0$ is the same boundary point as $(0,\tau_1,\tau_2)$.
To solve (\ref{expansion temp 10}), we require the compatibility
condition (\ref{Milne compatibility condition}) for the boundary
data
\begin{eqnarray}
\\
\iint_{\sin\phi>0}\bigg(g+\vw\cdot\nx\u_0(\vx_0)-\pp[\vw\cdot\nx\u_0(\vx_0)]\bigg)\sin\phi\cos\phi\ud{\phi}\ud{\psi}
-\int_0^{L}\int_{-\pi}^{\pi}\int_{0}^{\pi/2}\ue^{-V(s)}G_0\cos\phi\ud{\phi}\ud{\psi}\ud{s}\nonumber\\
=0,&&\nonumber
\end{eqnarray}
where $V(0)=0$ and $\dfrac{\p V}{\p\eta}=-F$. Note the fact $\vw=(\sin\phi)\vn+(\cos\phi\sin\psi)\vt_1+(\cos\phi\cos\psi)\vt_2$ and
\begin{eqnarray}
&&\iint_{\sin\phi>0}\bigg(\vw\cdot\nx\u_0(\vx_0)-\pp[\vw\cdot\nx\u_0(\vx_0)]\bigg)\sin\phi\cos\phi\ud{\phi}\ud{\psi}\\
&=&
\iint_{\sin\phi>0}(\vw\cdot\nx\u_0(\vx_0))\sin\phi\cos\phi\ud{\phi}\ud{\psi}-2\pi\pp[\vw\cdot\nx\u_0(\vx_0)]\nonumber\\
&=&\iint_{\sin\phi>0}(\vw\cdot\nx\u_0(\vx_0))\sin\phi\cos\phi\ud{\phi}\ud{\psi}+\iint_{\sin\phi<0}(\vw\cdot\nx\u_0(\vx_0))\sin\phi\cos\phi\ud{\phi}\ud{\psi}\nonumber\\
&=&\int_{-\pi}^{\pi}\int_{-\pi/2}^{\pi/2}(\vw\cdot\nx\u_0(\vx_0))\sin\phi\cos\phi\ud{\phi}\ud{\psi}\nonumber\\
&=&\int_{-\pi}^{\pi}\int_{-\pi/2}^{\pi/2}(\vn\cdot\nx\u_0(\vx_0))\sin\phi\cos\phi\ud{\phi}\ud{\psi}\nonumber\\
&=&-\pi^2\nx\bu_0(\vx_0)\cdot\vn=-\pi^2\frac{\p\bu_0(\vx_0)}{\p\vn}.\nonumber
\end{eqnarray}
We can simplify the compatibility condition as follows:
\begin{eqnarray}
\iint_{\sin\phi>0}g(\tau_1,\tau_2,\phi,\psi)\sin\phi\cos\phi\ud{\phi}\ud{\psi}-\pi^2\frac{\p\bu_0(\vx_0)}{\p\vn} -\int_0^{L}\int_{-\pi}^{\pi}\int_{-\pi/2}^{\pi/2}\ue^{-V(s)}G_0\cos\phi\ud{\phi}\ud{\psi}\ud{s}=0.
\end{eqnarray}
Then we have
\begin{eqnarray}
\frac{\p\bu_0(\vx_0)}{\p\vec
n}&=&\frac{1}{\pi^2}\iint_{\sin\phi>0}g(\tau_1,\tau_2,\phi,\psi)\sin\phi\cos\phi\ud{\phi}\ud{\psi}
+\frac{1}{\pi^2}\int_0^{L}\int_{-\pi}^{\pi}\int_{-\pi/2}^{\pi/2}\ue^{-V(s)}G_0\cos\phi\ud{\phi}\ud{\psi}\ud{s}\\
&=&\frac{1}{\pi^2}\iint_{\sin\phi>0}g(\tau_1,\tau_2,\phi,\psi)\sin\phi\cos\phi\ud{\phi}\ud{\psi}.\nonumber
\end{eqnarray}
Hence, we define the zeroth order interior solution $\u_0(\vx,\vw)$ as
\begin{eqnarray}\label{expansion temp 11}
\left\{
\begin{array}{rcl}
\u_0&=&\bu_0 ,\\\rule{0ex}{1em} \Delta_x\bu_0&=&0\ \ \text{in}\
\ \Omega,\\\rule{0ex}{1em}\dfrac{\p\bu_0}{\p\vn}&=&\displaystyle\dfrac{1}{\pi^2}\iint_{\sin\phi>0}g(\tau_1,\tau_2,\phi,\psi)\sin\phi\cos\phi\ud{\phi}\ud{\psi}\ \ \text{on}\ \
\p\Omega,\\\rule{0ex}{1em}
\ds\int_{\Omega}\bu_0(\vx_0)\ud{\vx_0}&=&0.
\end{array}
\right.
\end{eqnarray}
\ \\
Step 3: Construction of $\u_1$.\\
We do not expand the boundary layer to $\ub_2$ and just terminate at $\ub_1$. Then we define the first order interior solution $\u_1(\vx)$ as
\begin{eqnarray}\label{expansion temp 12.}
\left\{
\begin{array}{rcl}
\u_1&=&\bu_1-\vw\cdot\nx\u_0,\\\rule{0ex}{1em}
\Delta_x\bu_1&=&-\displaystyle\int_{\s^2}(\vw\cdot\nx\u_{0})\ud{\vw}\
\ \text{in}\ \ \Omega,\\\rule{0ex}{1.0em} \dfrac{\p\bu_1}{\p\vn}&=&-\displaystyle\int_{\Omega}\int_{\s^2}(\vw\cdot\nx\u_{0})\ud{\vw}\ud{\vx}\ \ \text{on}\ \
\p\Omega,\\\rule{0ex}{1.5em}
\displaystyle\int_{\Omega}\bu_1(\vx)\ud{\vx}&=&\ds\int_{\Omega}\int_{\s^2}(\vw\cdot\nx\u_0-\ub_1)\ud{\vw}\ud{\vx}.
\end{array}
\right.
\end{eqnarray}
Note that here we only require the trivial boundary condition since we cannot resort to the compatibility condition in $\e$-Milne problem with geometric correction. Based on \cite{AA003}, this might lead to $O(\e^2)$ error to the boundary approximation. Thanks to the improved remainder estimate, this error is acceptable.\\
\ \\
Step 4: Construction of $\u_2$.\\
By a similar fashion, we define the second order interior solution as
\begin{eqnarray}
\left\{
\begin{array}{rcl}
\u_{2}&=&\bu_{2}-\vw\cdot\nx\u_{1},\\\rule{0ex}{1em}
\Delta_x\bu_{2}&=&-\displaystyle\int_{\s^2}(\vw\cdot\nx\u_{1})\ud{\vw}\
\ \text{in}\ \ \Omega,\\\rule{0ex}{1.0em} \dfrac{\p\bu_{2}}{\p\vn}&=&-\displaystyle\int_{\Omega}\int_{\s^2}(\vw\cdot\nx\u_{1})\ud{\vw}\ud{\vx}\ \ \text{on}\ \
\p\Omega,\\\rule{0ex}{1.5em}
\displaystyle\int_{\Omega}\bu_2(\vx)\ud{\vx}&=&\ds\int_{\Omega}\int_{\s^2}\vw\cdot\nx\u_1\ud{\vw}\ud{\vx}.
\end{array}
\right.
\end{eqnarray}
As the case of $\u_1$, we might have $O(\e^3)$ error in this step due to the trivial boundary data. However, it will not affect the diffusive limit.

\newpage

\section{Well-Posedness and Decay of $\e$-Milne Problem}

We consider the $\e$-Milne problem for $f^{\e}(\eta,\phi,\psi)$ in
the domain $(\eta,\phi,\psi)\in[0,L]\times[-\pi/2,\pi/2]\times[-\pi,\pi]$ as
\begin{eqnarray}\label{Milne problem}
\left\{ \begin{array}{rcl}\displaystyle \sin\phi\frac{\p
f^{\e}}{\p\eta}+F(\e;\eta,\psi)\cos\phi\frac{\p
f^{\e}}{\p\phi}+f^{\e}-\bar f^{\e}&=&S^{\e}(\eta,\phi,\psi),\\
f^{\e}(0,\phi,\psi)&=& h^{\e}(\phi,\psi)\ \ \text{for}\
\ \sin\phi>0,\\
f^{\e}(L,\phi,\psi)&=&f^{\e}(L,\rr\phi,\psi),
\end{array}
\right.
\end{eqnarray}
where
\begin{eqnarray}
\bar f^{\e}(\eta)=\frac{1}{4\pi}\int_{-\pi}^{\pi}\int_{-\pi/2}^{\pi/2}f^{\e}(\eta,\phi,\psi)\cos\phi\ud{\phi}\ud{\psi}
\end{eqnarray}
in which $\cos\phi$ shows up as the Jacobian of spherical coordinates in integration. Note that for $\phi\in[-\pi/2,\pi/2]$, we always have $\cos\phi\geq0$, which means this will not destroy the positivity of integral. Also, we have
\begin{eqnarray}
F(\e;\eta,\psi)=-\e\bigg(\dfrac{\sin^2\psi}{R_1-\e\eta}+\dfrac{\cos^2\psi}{R_2-\e\eta}\bigg).
\end{eqnarray}
for $R_1$ and $R_2$ radium of two principle curvature, and $L=\e^{-n}$ for some $n>0$ which will be specified later.

In this section, for convenience, we temporarily ignore the superscript $\e$. Note that all the estimates we get will be uniform in $\e$. We define the norms in the
space $(\eta,\phi,\psi)\in[0,L]\times[-\pi/2,\pi/2]\times[-\pi,\pi)$ as follows:
\begin{eqnarray}
\tnnm{f}&=&\bigg(\int_0^{L}\int_{-\pi}^{\pi}\int_{-\pi/2}^{\pi/2}\abs{f(\eta,\phi,\psi)}^2\cos\phi\ud{\phi}\ud{\psi}\ud{\eta}\bigg)^{1/2},\\
\lnnm{f}&=&\sup_{(\eta,\phi,\psi)}\abs{f(\eta,\phi,\psi)}.
\end{eqnarray}
Also, we define the inner product as
\begin{eqnarray}
\br{f,g}(\eta)&=&\int_{-\pi}^{\pi}\int_{-\pi/2}^{\pi/2}f(\eta,\phi,\psi)g(\eta,\tau_1,\tau_2,\phi,\psi)\cos\phi
\ud{\phi}\ud{\psi}.
\end{eqnarray}
Similarly, we can define the norm at in-flow boundary as
\begin{eqnarray}
\tnm{f}(\eta)&=&\bigg(\iint_{\sin\phi>0}\abs{f(\eta,\phi,\psi)}^2\cos\phi\ud{\phi}\ud{\psi}\bigg)^{1/2},\\
\lnm{f}(\eta)&=&\sup_{(\phi,\psi)\ \text{with}\ \sin\phi>0}\abs{f(\eta,\phi,\psi)},
\end{eqnarray}
We further assume
\begin{eqnarray}\label{Milne bounded}
\lnm{h}\leq M,
\end{eqnarray}
and
\begin{eqnarray}\label{Milne decay}
\lnnm{\ue^{K\eta}S}&\leq& M,\no
\end{eqnarray}
for $M>0$ and $K>0$ uniform in $\e$.

\subsection{$L^2$ Estimates}

\subsubsection{$L^2$ Estimates when $\bar S=0$}

Consider the equation
\begin{eqnarray}\label{LT Milne problem}
\left\{ \begin{array}{rcl}\displaystyle \sin\phi\frac{\p
f}{\p\eta}+F(\eta,\psi)\cos\phi\frac{\p
f}{\p\phi}+f-\bar f&=&S(\eta,\phi,\psi),\\
f(0,\phi,\psi)&=& h(\phi,\psi)\ \ \text{for}\
\ \sin\phi>0,\\
f(L,\phi,\psi)&=&f(L,\rr\phi,\psi).
\end{array}
\right.
\end{eqnarray}
where
\begin{eqnarray}
F(\eta,\psi)\cos\phi\frac{\p f}{\p\phi}=\tf(\eta)\cos\phi\frac{\p
f}{\p\phi}+G(\eta)\cos^2\psi\cos\phi\frac{\p
f}{\p\phi},
\end{eqnarray}
for
\begin{eqnarray}
\tf(\eta)&=&-\frac{\e}{R_1-\e\eta},\\
G(\eta)&=&-\frac{\e(R_1-R_2)}{(R_1-\e\eta)(R_2-\e\eta)}.
\end{eqnarray}
Also, $\rr\phi=-\phi$.
We may decompose the solution
\begin{eqnarray}
f(\eta,\phi,\psi)=q(\eta)+r(\eta,\phi,\psi),
\end{eqnarray}
where the hydrodynamical part $q$ is in the null space of the
operator $f-\bar f$, and the microscopic part $r$ is
the orthogonal complement, i.e.
\begin{eqnarray}\label{hydro}
q(\eta)=\frac{1}{4\pi}\int_{-\pi}^{\pi}\int_{-\pi/2}^{\pi/2}f(\eta,\phi,\psi)\cos\phi\ud{\phi}\ud{\psi},\quad
r(\eta,\phi,\psi)=f(\eta,\phi,\psi)-q(\eta).
\end{eqnarray}
Furthermore, we define a potential function $\tv(\eta)$ satisfying $\tv(0)=0$ and $\dfrac{\p \tv}{\p\eta}=-\tf(\eta)$. It is easy to compute $\tv(\eta)=\ln\left(\dfrac{R_1}{R_1-\e\eta}\right)$.

\begin{lemma}\label{Milne finite LT}
Assume $\bar S=0$ satisfying (\ref{Milne bounded}) and (\ref{Milne decay}) and $0<n<\dfrac{2}{5}$. There exists a solution $f(\eta,\phi,\psi)$ to the equation
(\ref{LT Milne problem}), satisfying for some constant $\abs{f_L}\leq C$,
\begin{eqnarray}\label{LT estimate}
\tnnm{f-f_L}\leq C.
\end{eqnarray}
The solution is unique among functions such that (\ref{LT estimate}) holds.
\end{lemma}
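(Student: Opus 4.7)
The plan is to construct the solution through an approximation scheme and then obtain the uniform $L^2$ bound (\ref{LT estimate}) by closing a coupled system of estimates for the hydrodynamic part $q$ and the microscopic part $r = f - q$ of the decomposition (\ref{hydro}).

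For existence, I would set up a penalized or iterative problem that decouples the non-local term $\bar f$ from the transport part. Concretely, consider the iteration
\[
\sin\phi\,\p_\eta f^{k+1} + F\cos\phi\,\p_\phi f^{k+1} + f^{k+1} = S + \bar f^k,
\]
which is a pure transport equation along the characteristics of the operator $\sin\phi\,\p_\eta + F\cos\phi\,\p_\phi$ (whose first integral is precisely the invariant $\zeta$ of (\ref{weight})). This can be solved explicitly via the method of characteristics with the specular condition at $\eta = L$ and the in-flow data at $\eta = 0$, producing an $L^\infty$ iterate. Compactness, together with the uniform $L^2$ bound below, yields a solution in the limit, and the constant $f_L$ is identified with a suitable limiting value (e.g.\ the value of $\bar f$ at $\eta=L$ in the limit).

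The heart of the argument is the uniform $L^2$ estimate, which I would close in two steps. Testing (\ref{LT Milne problem}) against $f\cos\phi$ and integrating over $(\phi,\psi)$, I use the decomposition
\[
F\cos\phi\,\p_\phi f = \tf(\eta)\cos\phi\,\p_\phi f + G(\eta)\cos^2\psi\cos\phi\,\p_\phi f.
\]
Integration by parts in $\phi$ allows the $\tf$ contribution, being $\psi$-independent, to combine with the $\eta$-derivative through the integrating factor $\ue^{\tv(\eta)}$, producing a conservative flux exactly as in the 2D case of \cite{AA007}. The $G$ contribution, which has no analogue in 2D, is the main obstruction; bounding it by Cauchy-Schwarz yields (\ref{it 005}), so that $\tnnm{r}^2 \ls C + C\lnnm{G}\tnnm{f}^2$, and since $\lnnm{G} \ls \e$, the microscopic norm is controlled up to a small coupling to $\tnnm{q}$. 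For the hydrodynamic part, the 2D orthogonality (\ref{it 004}) fails in 3D, so I would multiply the equation by $\sin\phi$ and integrate in $(\phi,\psi)$, producing an ODE in $\eta$ for the flux $\br{f,\sin\phi}$ whose forcing contains a $G$-term. Integrating this ODE twice in $\eta$, anchoring the free constant at $\eta = L$ via the specular condition, and applying Cauchy-Schwarz deliver the companion bound (\ref{it 006}), namely $\tnnm{q}^2 \ls C + CL^3\tnnm{G}\tnnm{r}$.

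The combination of (\ref{it 005}) and (\ref{it 006}) forms a coupled system in $\tnnm{r}$ and $\tnnm{q}$. With $L = \e^{-n}$, the coupling constants produced by $\|G\|$ and powers of $L$ scale as $\e$ and $L^3\tnnm{G}$; the threshold $n < 2/5$ is precisely what ensures both are small enough for a bootstrap to close and yield (\ref{LT estimate}). Uniqueness is then immediate: any two solutions differ by a solution of the homogeneous problem with $S = 0$ and $h = 0$, and the same estimate forces the difference to vanish. The main obstacle will be the tight interplay between the microscopic and hydrodynamic estimates, since neither is dissipative on its own and both generate couplings through $G$; the delicate exponent $n < 2/5$ is dictated exactly by what is needed for the bootstrap to produce a geometric contraction in $(\tnnm{r},\tnnm{q})$.
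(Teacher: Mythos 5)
Your overall skeleton coincides with the paper's: the splitting $F\cos\phi\,\p_\phi f=\tf(\eta)\cos\phi\,\p_\phi f+G(\eta)\cos^2\psi\cos\phi\,\p_\phi f$, an energy estimate for the microscopic part $r$ with the integrating factor $\ue^{2\tv}$ and the specular condition at $\eta=L$, a moment estimate for the hydrodynamic part, and a bootstrap whose smallness comes from $L=\e^{-n}$ with $n<\frac25$. However, two points that are essential to the statement are missing or would fail as written.

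First, the hydrodynamic estimate cannot be obtained from a single ``ODE for the flux $\br{f,\sin\phi}$'': since $\br{\sin\phi,q}=0$, the flux equals $\br{\sin\phi,r}$ identically and carries no information about $q$. The paper needs two moment identities: integrating the equation against $1$ (with the Jacobian $\cos\phi$) gives the quasi-orthogonality relation $\br{\sin\phi,f}(\eta)=-2\int_{\eta}^{L}\ue^{2\tv(\eta)-\tv(y)}G(y)\br{\sin\phi\cos^2\psi,r}(y)\,\ud{y}$, while integrating against $\sin\phi$ gives an ODE for $\beta(\eta)=\br{\sin^2\phi,f}(\eta)$, which is the quantity proportional to $q$ up to an $r$-correction; substituting the first relation for the dangerous forcing $-\br{\sin\phi,r}$ in the second is precisely what produces the double integral and the small factor $L^3\tnnm{G}^2\ls\e^{2-5n}$. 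Bounding $\br{\sin\phi,r}$ directly instead yields a factor $L^2\tnnm{r}^2$ and destroys uniformity. Second, you phrase the closure in terms of $\tnnm{q}$ and $\tnnm{f}$, but with $L=\e^{-n}\rt\infty$ these are not $O(1)$: $\tnnm{q}\gs\abs{q_L}L^{1/2}$, so the claimed bound $\tnnm{q}^2\ls C+CL^3\tnnm{G}\tnnm{r}$ cannot hold literally, and your microscopic bound closes through $\e\tnnm{f}^2\sim\e^{1-n}\abs{f_L}^2$ only if $\abs{f_L}\le C$ is already known. The lemma therefore requires defining $f_L=q_L=\beta(L)/\tnm{\sin\phi}^2$, running all estimates on $f-f_L$ and $q-q_L$, and proving $\abs{f_L}\le C$ by a separate absorption (the paper controls $\beta(L)$ via the integrated $\beta$-ODE together with a boundary-trace bound on $\beta(0)$ coming from $\alpha(0)$, then absorbs the $\e^{\frac12-\frac{n}{2}}\abs{f_L}$ term). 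Relatedly, for uniqueness the estimate alone only gives boundedness of the difference; one must note that the $O(1)$ constants come solely from the data, so zero data yields $\tnnm{f'-f'_L}\le C\e^{1/2}\tnnm{f'-f'_L}$, hence $f'$ is the constant $f'_L$, and the zero in-flow condition then forces $f'=0$.
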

\begin{proof}
As in \cite[Section 6]{AA003}, the existence can be proved using a standard approximation argument, so we will only focus on the a priori estimates. We divide the proof into several steps:\\
\ \\
Step 1: $r$ Estimates.\\
Multiplying $f\cos\phi$
on both sides of (\ref{LT Milne problem}) and
integrating over $(\phi,\psi)\in[-\pi/2,\pi/2]\times[-\pi,\pi)$, we get the energy estimate
\begin{eqnarray}\label{mt 01}
\half\frac{\ud{}}{\ud{\eta}}\br{f
,f\sin\phi}(\eta)&=&-\tnm{r(\eta)}^2-\tf(\eta)\br{\frac{\p
f}{\p\phi},f\cos\phi}(\eta)\\
&&-G(\eta)\br{\frac{\p
f}{\p\phi}\cos^2\psi,f\cos\phi}(\eta)+\br{S,f}(\eta).\no
\end{eqnarray}
A further integration by parts in $\phi$ reveals
\begin{eqnarray}
-\tf(\eta)\br{\frac{\p
f}{\p\phi},f\cos\phi}(\eta)&=&-\tf(\eta)
\br{f,f\sin\phi}(\eta),\\
-G(\eta)\br{\frac{\p
f}{\p\phi}\cos^2\psi,f\cos\phi}(\eta)&=&-G(\eta)\br{f\cos^2\psi,f\sin\phi}(\eta).
\end{eqnarray}
Hence, we can simplify (\ref{mt 01}) as
\begin{eqnarray}\label{mt 02}
\half\frac{\ud{}}{\ud{\eta}}\br{
f,f\sin\phi}(\eta)&=&-\tnm{r(\eta)}^2-
\tf(\eta)\br{
f,f\sin\phi}(\eta)\\
&&-G(\eta)\br{f\cos^2\psi,f\sin\phi}(\eta)+\br{S,f}(\eta).\no
\end{eqnarray}
Define
\begin{eqnarray}
\alpha(\eta)=\half\br{f,f\sin\phi}(\eta).
\end{eqnarray}
Then (\ref{mt 02}) can be rewritten as
\begin{eqnarray}
\frac{\ud{\alpha}}{\ud{\eta}}=-\tnm{r(\eta)}^2-
2\tf(\eta)\alpha(\eta)-G(\eta)\br{f\cos^2\psi,f\sin\phi}(\eta)+\br{S,f}(\eta).
\end{eqnarray}
We can solve above in $[\eta,L]$ and $[0,\eta]$ respectively to obtain
\begin{eqnarray}
\label{mt 03}
\\
\alpha(\eta)&=&\ue^{2\tv(\eta)-2\tv(L)}\alpha(L)+\int_{\eta}^L\ue^{2\tv(\eta)-2\tv(y)}
\bigg(\tnm{r(y)}^2+G(y)\br{f\cos^2\psi,f\sin\phi}(y)-\br{S,f}(y)\bigg)\ud{y},\no\\
\label{mt 04}
\\
\alpha(\eta)&=&\ue^{2\tv(\eta)}\alpha(0)+\int_{0}^{\eta}\ue^{2\tv(\eta)-2\tv(y)}
\bigg(-\tnm{r(y)}^2-G(y)\br{f\cos^2\psi,f\sin\phi}(y)+\br{S,f}(y)\bigg)\ud{y}.\no
\end{eqnarray}
The specular reflexive boundary $f(L,\phi)=f(L,\rr\phi)$
ensures $\alpha(L)=0$. Hence, based on (\ref{mt 03}), we have
\begin{eqnarray}
\alpha(\eta)\geq\int_{\eta}^L\ue^{2\tv(\eta)-2\tv(y)}\bigg(G(y)\br{f\cos^2\psi,f\sin\phi}(y)-\br{S,f}(y)\bigg)\ud{y}.
\end{eqnarray}
Also, (\ref{mt 04}) implies
\begin{eqnarray}
\alpha(\eta)&\leq&C\alpha(0)+\int_{0}^{\eta}\ue^{2\tv(\eta)-2\tv(y)}
\bigg(-G(y)\br{f\cos^2\psi,f\sin\phi}(y)+\br{S,f}(y)\bigg)\ud{y}\\
&\leq&C\tnm{h}^2+\int_{0}^{\eta}\ue^{2\tv(\eta)-2\tv(y)}
\bigg(-G(y)\br{f\cos^2\psi,f\sin\phi}(y)+\br{S,f}(y)\bigg)\ud{y}\nonumber,
\end{eqnarray}
due to the fact
\begin{eqnarray}
\alpha(0)=\half\br{f\sin\phi
,f}(0)\leq\half\bigg(\int_{\sin\phi>0}h(\phi)^2\sin\phi\cos\phi
\ud{\phi}\bigg)\leq \half\tnm{h}^2.
\end{eqnarray}
Then in (\ref{mt 04}) taking $\eta=L$, from $\alpha(L)=0$, we have
\begin{eqnarray}
&&\int_{0}^L\ue^{-2\tv(y)}\tnm{r(y)}^2\ud{y}\\
&\leq&\alpha(0)+\int_{0}^L\ue^{-2\tv(y)}\bigg(-G(y)\br{f\cos^2\psi,f\sin\phi}(y)+\br{S,f}(y)\bigg)\ud{y}\no\\
&\leq&
C\tnm{h}^2+\int_{0}^L\ue^{-2\tv(y)}\bigg(-G(y)\br{f\cos^2\psi,f\sin\phi}(y)+\br{S,f}(y)\bigg)\ud{y}\nonumber.
\end{eqnarray}
On the other hand, we can directly
estimate
\begin{eqnarray}
\int_{0}^L\ue^{-2\tv(y)}\tnm{r(y)}^2\ud{y}\geq C\tnnm{r}^2.
\end{eqnarray}
Combining above yields
\begin{eqnarray}
\tnnm{r}^2&\leq&
C\Bigg(\tnm{h}^2+\int_{0}^L\ue^{-2\tv(y)}\bigg(-G(y)\br{f\cos^2\psi,f\sin\phi}(y)+\br{S,f}(y)\bigg)\ud{y}\Bigg).
\end{eqnarray}
Since $\br{S,f}=\br{S,r}$ due to $\bar S=0$,
by Cauchy's inequality, we have
\begin{eqnarray}
\abs{\int_{0}^L\ue^{-2\tv(y)}\br{S,r}(y)\ud{y}}&\leq&C'\tnnm{r}^2+C\tnnm{S}^2,
\end{eqnarray}
for $C'>0$ sufficiently small. Therefore, absorbing $C'\tnnm{r}^2$ term, we deduce
\begin{eqnarray}
\tnnm{r}^2&\leq&C\left(
\tnm{h}^2+\tnnm{S}^2+\abs{\int_{0}^LG(y)\br{f\cos^2\psi,f\sin\phi}(y)\ud{y}}\right)\\
&\leq&C\left(1+\abs{\int_0^LG(y)\br{f\cos^2\psi,f\sin\phi}(y)\ud{y}}\right)\no\\
&\leq&C\left(1+\lnnm{G}\tnnm{f}^2\right)\no\\
&\leq&C\left(1+\e\tnnm{f}^2\right).\no
\end{eqnarray}
Note that this estimate is not closed since it depends on $f$.\\
\ \\
Step 2: Quasi-Orthogonality relation.\\
Multiplying $\cos\phi$ on both sides of (\ref{LT Milne problem}) and integrating over $(\phi,\psi)\in[-\pi/2,\pi/2)\times[-\pi,\pi)$ imply
\begin{eqnarray}
\frac{\ud{}}{\ud{\eta}}\br{\sin\phi,f}(\eta)&=&-\tf\br{\cos\phi,\frac{\ud{f}}{\ud{\phi}}}(\eta)
-G\br{\cos\phi\cos^2\psi,\frac{\ud{f}}{\ud{\phi}}}(\eta)
+\bar S(\eta)\\
&=&-2\tf\br{\sin\phi,f}(\eta)-2G\br{\sin\phi\cos^2\psi,f}(\eta).\no
\end{eqnarray}
The specular reflexive boundary
$f(L,\phi)=f(L,\rr\phi)$ implies
$\br{\sin\phi,f}(L)=0$. Then we have
\begin{eqnarray}
\br{\sin\phi,f}(\eta)=-2\int_{\eta}^L\ue^{2\tv(\eta)-\tv(y)}G(y)\br{\sin\phi\cos^2\psi,f}(y)\ud{y}.
\end{eqnarray}
It is easy to see
\begin{eqnarray}
\br{\sin\phi,q}(\eta)=0.
\end{eqnarray}
Hence, we may derive
\begin{eqnarray}
\br{\sin\phi,r}(\eta)&=&-2\int_{\eta}^L\ue^{2\tv(\eta)-\tv(y)}G(y)\br{\sin\phi\cos^2\psi,f}(y)\ud{y},\\
&=&-2\int_{\eta}^L\ue^{2\tv(\eta)-\tv(y)}G(y)\br{\sin\phi\cos^2\psi,r}(y)\ud{y}.\no
\end{eqnarray}
\ \\
Step 3: $q$ Estimates.\\
Multiplying $\sin\phi\cos\phi$ on
both sides of (\ref{LT Milne problem}) and
integrating over $(\phi,\psi)\in[-\pi/2,\pi/2)\times[-\pi,\pi)$ lead to
\begin{eqnarray}
\label{mt 05}
\frac{\ud{}}{\ud{\eta}}\br{\sin^2\phi,f}(\eta)&=&
-\br{\sin\phi,r}(\eta)-\tf(\eta)\br{\sin\phi\cos\phi,\frac{\p
f}{\p\phi}}(\eta)\\
&&-G(\eta)\br{\sin\phi\cos\phi\cos^2\psi,\frac{\p
f}{\p\phi}}(\eta)+\br{\sin\phi,S}(\eta).\no
\end{eqnarray}
We can further integrate by parts in $\phi$ as
\begin{eqnarray}
-\tf(\eta)\br{\sin\phi\cos\phi,\frac{\p
f}{\p\phi}}(\eta)&=&\tf(\eta)\br{1-3\sin^2\phi,f}(\eta)=\tf(\eta)\br{1-3\sin^2\phi,r}(\eta),\\
\\
-G(\eta)\br{\sin\phi\cos\phi\cos^2\psi,\frac{\p
f}{\p\phi}}(\eta)&=&G(\eta)\br{1-3\sin^2\phi,f\cos^2\psi}(\eta)=G(\eta)\br{1-3\sin^2\phi,r\cos^2\psi}(\eta),\no
\end{eqnarray}
to obtain
\begin{eqnarray}
\frac{\ud{}}{\ud{\eta}}\br{\sin^2\phi,f}(\eta)&=&
-\br{\sin\phi,r}(\eta)+\tf(\eta)\br{1-3\sin^2\phi,r}(\eta)\\
&&+G(\eta)\br{1-3\sin^2\phi,r\cos^2\psi}(\eta)+\br{\sin\phi,S}(\eta).\no
\end{eqnarray}
Define
\begin{eqnarray}
\beta(\eta)=\br{\sin^2\phi,f}(\eta).
\end{eqnarray}
Then we can simplify (\ref{mt 05}) as
\begin{eqnarray}\label{mt 06}
\frac{\ud{\beta}}{\ud{\eta}}=D(\eta),
\end{eqnarray}
where
\begin{eqnarray}
D(\eta)&=&-\br{\sin\phi,r}(\eta)+\tf(\eta)\br{1-3\sin^2\phi,r}(\eta)\\
&&+G(\eta)\br{1-3\sin^2\phi,r\cos^2\psi}(\eta)+\br{\sin\phi,S}(\eta).\no
\end{eqnarray}
We can integrate over $[0,\eta]$ in (\ref{mt 06}) to obtain
\begin{eqnarray}
\beta(\eta)=\beta(0)+\int_0^{\eta}D(y)\ud{y}.
\end{eqnarray}
The quasi-orthogonal relation implies
\begin{eqnarray}
D(\eta)&=&2\int_{\eta}^L\ue^{2\tv(\eta)-\tv(y)}G(y)\br{\sin\phi\cos^2\psi,f}(y)\ud{y}+\tf(\eta)\br{1-3\sin^2\phi,r}(\eta)\\
&&+G(\eta)\br{1-3\sin^2\phi,r\cos^2\psi}(\eta)+\br{\sin\phi,S}(\eta).\no
\end{eqnarray}
Hence, we deduce
\begin{eqnarray}
\beta(\eta)-\beta(0)&=&2\int_0^{\eta}\int_{z}^L\ue^{2\tv(z)-2\tv(y)}G(y)\br{\sin\phi\cos^2\psi,r}\ud{y}\ud{z}
+\int_0^{\eta}\tf(y)\br{1-3\sin^2\phi,r}(y)\ud{y}\\
&&+\int_0^{\eta}G(y)\br{1-3\sin^2\phi,r\cos^2\psi}(y)\ud{y}+\int_0^{\eta}\br{\sin\phi,S}(y)\ud{y}.\no
\end{eqnarray}
For the boundary data,
\begin{eqnarray}
\beta(0)=\br{\sin^2\phi,f}(0)\leq \bigg(\br{
f,f\abs{\sin\phi}}(0)\bigg)^{1/2}\tnm{\sin\phi}^{3/2}\leq C
\bigg(\br{ f,f\abs{\sin\phi}}(0)\bigg)^{1/2}.
\end{eqnarray}
Obviously, we have
\begin{eqnarray}
\br{f, f\abs{\sin\phi}
}(0)=\int_{\sin\phi>0}h^2(\phi)\sin\phi\cos\phi
\ud{\phi}-\int_{\sin\phi<0}\bigg(f(0,\phi)\bigg)^2\sin\phi\cos\phi\ud{\phi}.
\end{eqnarray}
However, based on the definition of $\alpha(\eta)$, we can obtain
\begin{eqnarray}
&&\int_{\sin\phi>0} h^2(\phi)\sin\phi\cos\phi\ud{\phi}+\int_{\sin\phi<0}
\bigg(f(0,\phi)\bigg)^2\sin\phi\cos\phi\ud{\phi}=2\alpha(0)\\
&\geq&
2\int_{0}^L\ue^{-2\tv(y)}\bigg(G(y)\br{f\cos^2\psi,f\sin\phi}(y)-\br{S,r}(y)\bigg)\ud{y}.\nonumber
\end{eqnarray}
Hence, we can deduce
\begin{eqnarray}
&&-\int_{\sin\phi<0}
\bigg(f(0,\phi)\bigg)^2\sin\phi\cos\phi\ud{\phi}\\
&\leq&\int_{\sin\phi>0}
h^2(\phi)\sin\phi\cos\phi\ud{\phi}-2\int_0^L\ue^{-2\tv(y)}\bigg(G(y)\br{f\cos^2\psi,f\sin\phi}(y)-\br{S,r}(y)\bigg)\ud{y}\no\\
&\leq&
\tnm{h}^2+C\abs{\int_0^L\bigg(G(y)\br{f\cos^2\psi,f\sin\phi}(y)-\br{S,r}(y)\bigg)\ud{y}}\nonumber.
\end{eqnarray}
Hence, we obtain
\begin{eqnarray}
\Big(\beta(0)\Big)^2&\leq& \br{f,f\abs{\sin\phi}}(0)\leq
C\tnm{h}^2+C\abs{\int_0^L\bigg(G(y)\br{f\cos^2\psi,f\sin\phi}(y)-\br{S,r}(y)\bigg)\ud{y}}.
\end{eqnarray}
Note that $\lnnm{G}\leq C\e$. Since $\tf\in L^1[0,L]\cap
L^2[0,L]$, $r\in L^2([0,L]\times[-\pi,\pi))$, and $S$
exponentially decays, using Cauchy's inequality, we have
\begin{eqnarray}
\abs{\beta(L)}&\leq&C\tnm{h}+C\abs{\int_0^LG(y)\br{f\cos^2\psi,f\sin\phi}(y)\ud{y}}^{1/2}+C\abs{\int_0^L\br{S,r}(y)\ud{y}}^{1/2}\\
&&+\abs{\int_0^L\tf(y)\br{1-3\sin^2\phi,r}(y)\ud{y}}
+\abs{\int_0^{L}\int_{z}^L\ue^{2\tv(z)-2\tv(y)}G(y)\br{\sin\phi\cos^2\psi,r}\ud{y}\ud{z}}\no\\
&&\abs{\int_0^{L}G(y)\br{1-3\sin^2\phi,r\cos^2\psi}(y)\ud{y}}+\abs{\int_0^{L}\br{\sin\phi,S}(y)\ud{y}}\no\\
&\leq&C\abs{\int_0^LG(y)\br{f\cos^2\psi,f\sin\phi}(y)\ud{y}}^{1/2}+C\tnm{h}+C\tnnm{r}\no\\
&&+C\tnnm{\tf}
\tnnm{r}+\tnnm{G}\tnnm{r}+L\tnnm{G}\tnnm{r}+\tnnm{S}\no\\
&\leq&C+C(1+\e^{1-\frac{3n}{2}})\tnnm{r}+\abs{\int_0^LG(y)\br{f\cos^2\psi,f\sin\phi}(y)\ud{y}}^{1/2}\no\\
&\leq&C(1+\e^{1-\frac{3n}{2}})\left(1+\abs{\int_0^LG(y)\br{f\cos^2\psi,f\sin\phi}(y)\ud{y}}^{1/2}\right)\no\\
&\leq&C(1+\e^{1-\frac{3n}{2}})\left(1+\e^{\frac{1}{2}}\tnnm{f}\right)\no\\
&\leq&C(1+\e^{1-\frac{3n}{2}})\left(1+\e^{\frac{1}{2}}\tnnm{f-f_L}+\e^{\frac{1}{2}}\tnnm{f_L}\right)\no\\
&\leq&C(1+\e^{1-\frac{3n}{2}})\left(1+\e^{\frac{1}{2}}\tnnm{f-f_L}+\e^{\frac{1}{2}-\frac{n}{2}}\abs{f_L}\right)\no,
\end{eqnarray}
where we define
\begin{eqnarray}
f_L=q_L=\frac{\beta(L)}{\tnm{\sin\phi}^2}.
\end{eqnarray}
Therefore, for $0<n<\dfrac{2}{3}$ and $\e$ sufficiently small, absorbing $\abs{f_L}$, we have
\begin{eqnarray}
\abs{f_L}\leq C+C\e^{\frac{1}{2}}\tnnm{f-f_L}.
\end{eqnarray}
Thus, we naturally have
\begin{eqnarray}
\tnnm{r}&\leq&C\left(1+\e^{\frac{1}{2}}\tnnm{f}\right)\\
&\leq&C\left(1+\e^{\frac{1}{2}}\tnnm{f-f_L}+\e^{\frac{1}{2}}\tnnm{f_L}\right)\no\\
&\leq&C\left(1+\e^{\frac{1}{2}}\tnnm{f-f_L}\right)\no,
\end{eqnarray}
Furthermore, we have
\begin{eqnarray}
\beta(L)-\beta(\eta)&=&\int_{\eta}^LD(y)\ud{y}\\
&=&\abs{\int_{\eta}^L\tf(y)\br{1-3\sin^2\phi,r}(y)\ud{y}}
+\abs{\int_{\eta}^{L}\int_{z}^L\ue^{2\tv(z)-2\tv{y}}G(y)\br{\sin\phi\cos^2\psi,r}\ud{y}\ud{z}}\no\\
&&\abs{\int_{\eta}^{L}G(y)\br{1-3\sin^2\phi,r\cos^2\psi}(y)\ud{y}}+\abs{\int_{\eta}^{L}\br{\sin\phi,S}(y)\ud{y}}\no
\end{eqnarray}
Note
\begin{eqnarray}
\beta(\eta)=\br{\sin^2\phi,f}(\eta)=\br{\sin^2\phi,q}(\eta)+\br{\sin^2\phi,r}(\eta)
=q(\eta)\tnm{\sin\phi}^2+\br{\sin^2\phi,r}(\eta).
\end{eqnarray}
Thus considering $S$ decays exponentially, we can estimate
\begin{eqnarray}
\tnnm{q-q_L}^2&\leq&C\tnnm{r}^2+C\tnnm{\beta(\eta)-\beta(L)}^2\\
&\leq&C\tnnm{r}^2+\int_0^L\abs{\int_{\eta}^L\tf(y)\br{1-3\sin^2\phi,r}(y)\ud{y}}^2\ud{\eta}\no\\
&&+\int_0^L\abs{\int_{\eta}^{L}\int_{z}^L\ue^{2\tv(z)-2\tv{y}}G(y)\br{\sin\phi\cos^2\psi,r}\ud{y}\ud{z}}^2\ud{\eta}\no\\
&&+\int_0^L\abs{\int_{\eta}^{L}G(y)\br{1-3\sin^2\phi,r\cos^2\psi}(y)\ud{y}}^2\ud{\eta}+\int_0^L\abs{\int_{\eta}^{L}\br{\sin\phi,S}(y)\ud{y}}^2\ud{\eta}\no\\
&\leq&C\tnnm{r}^2+\tnnm{r}^2\int_0^L\int_{\eta}^L\abs{\tf(y)}^2\ud{y}\ud{\eta}+L^3\tnnm{G}^2\tnnm{r}^2+L\tnnm{G}^2\tnnm{r}^2\no\\
&&+\int_0^L\abs{\int_{\eta}^{L}S(y)\ud{y}}^2\ud{\eta}\no\\
&\leq&C(1+\e^{2-5n}\tnnm{r}^2)\no\\
&\leq&C(1+\e^{2-5n})\left(1+\e^{\frac{1}{2}}\tnnm{f-f_L}\right).\no
\end{eqnarray}
Therefore, for $0<n<\dfrac{2}{5}$, we have
\begin{eqnarray}
\tnnm{q-q_L}\leq C\left(1+\e^{\frac{1}{2}}\tnnm{f-f_L}\right).
\end{eqnarray}
\ \\
Step 4: Synthesis.\\
For $0<n<\dfrac{2}{5}$, we have
\begin{eqnarray}
\tnnm{r}&\leq& C\left(1+\e^{\frac{1}{2}}\tnnm{f-f_L}\right),\\
\tnnm{q-q_L}&\leq& C\left(1+\e^{\frac{1}{2}}\tnnm{f-f_L}\right),
\end{eqnarray}
which further implies
\begin{eqnarray}
\tnnm{f-f_L}\leq \tnnm{r}+\tnnm{q-q_L}\leq C\left(1+\e^{\frac{1}{2}}\tnnm{f-f_L}\right).
\end{eqnarray}
Hence, for $\e$ sufficiently small, we have $\abs{f_L}\leq C$ and
\begin{eqnarray}
\tnnm{f-f_L}\leq C.
\end{eqnarray}
In order to show the uniqueness of the solution, we assume there are
two solutions $f_1$ and $f_2$ to the equation (\ref{LT Milne problem}) satisfying above estimates. Then $f'=f_1-f_2$ satisfies the equation
\begin{eqnarray}
\left\{
\begin{array}{rcl}\displaystyle
\sin\phi\frac{\p f'}{\p\eta}+F(\eta)\cos\phi\frac{\p
f'}{\p\phi}+f'-\bar f'&=&0,\\
f'(0,\phi,\psi)&=&0\ \ \text{for}\ \ \sin\phi>0,\\
f'(L,\phi,\psi)&=&f'(L,\rr\phi,\psi).
\end{array}
\right.
\end{eqnarray}
Assume $\abs{f'_L}\leq C$ and
\begin{eqnarray}
\tnnm{f'-f'_L}\leq C.
\end{eqnarray}
Then we can repeat the proof procedure and obtain
\begin{eqnarray}
\tnnm{f'-f'_L}\leq C+C\e^{\frac{1}{2}}\tnnm{f'-f'_L}.
\end{eqnarray}
Note that in this proof, $O(1)$ term $C$ purely comes from the boundary data and source term. Since all data are zero in $f'$ equation, we have
\begin{eqnarray}
\tnnm{f'-f'_L}\leq C\e^{\frac{1}{2}}\tnnm{f'-f'_L},
\end{eqnarray}
which implies $f'=f'_L$ is a constant. Then based on zero boundary data, we must have $f'=0$.

\end{proof}

\subsubsection{$\bar S\neq0$ Case}

Consider the $\e$-Milne problem for $f(\eta,\phi)$ in $(\eta,\phi,\psi)\in[0,L]\times[-\pi/2,\pi/2)\times[-\pi,\pi)$ with a general source term
\begin{eqnarray}\label{LT Milne problem.}
\left\{
\begin{array}{rcl}\displaystyle
\sin\phi\frac{\p f}{\p\eta}+F(\eta)\cos\phi\frac{\p
f}{\p\phi}+f-\bar f&=&S(\eta,\phi,\psi),\\
f(0,\phi,\psi)&=&h(\phi,\psi)\ \ \text{for}\ \ \sin\phi>0,\\
f(L,\phi,\psi)&=&f(L,\rr\phi,\psi),
\end{array}
\right.
\end{eqnarray}
where $F=\tf+G$.
\begin{lemma}\label{Milne finite LT.}
Assume $S=0$ satisfying (\ref{Milne bounded}) and (\ref{Milne decay}) and $0<n<\dfrac{2}{5}$. There exists a solution $f(\eta,\phi,\psi)$ of the problem
(\ref{LT Milne problem}), satisfying for some constant $\abs{f_L}\leq C$,
\begin{eqnarray}\label{LT estimate.}
\tnnm{f-f_L}&\leq&C.
\end{eqnarray}
The solution is unique among functions such that (\ref{LT estimate.}) holds
\end{lemma}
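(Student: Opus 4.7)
The plan is to adapt the proof of Lemma \ref{Milne finite LT} by tracking the extra contributions produced by the hydrodynamic part $\bar S$ of the source, and showing they are subordinate to the leading terms thanks to the exponential decay $|\bar S(\eta)|\leq C\ue^{-K\eta}$ inherited from the assumption $\lnnm{\ue^{K\eta}S}\leq M$. The decomposition $f=q+r$ and the auxiliary functions $\alpha(\eta)=\tfrac12\br{f,f\sin\phi}$, $\beta(\eta)=\br{\sin^2\phi,f}$, and $\tv(\eta)=\ln(R_1/(R_1-\e\eta))$ are used as before, and the argument is split into the same four steps.

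In Step 1 ($r$-estimate), the energy identity obtained by testing against $f\cos\phi$ is unchanged except that $\br{S,f}(\eta)=\br{S,r}(\eta)+4\pi\,q(\eta)\bar S(\eta)$. Integrating against $\ue^{-2\tv(y)}$ on $[0,L]$ and applying Cauchy's inequality, the extra term is bounded by $\delta'\tnnm{q-q_L}^2+\delta'|q_L|^2\,L+C_{\delta'}\int_0^L|\bar S|^2\,\ud y\leq \delta'\tnnm{q-q_L}^2+C(1+\e^{-n}|q_L|^2)$, using $\int_0^L|\bar S|^2\leq CM^2/K$. In Step 2 (quasi-orthogonality), testing against $\cos\phi$ now yields $\frac{\ud}{\ud\eta}\br{\sin\phi,f}=-2\tf\br{\sin\phi,f}-2G\br{\sin\phi\cos^2\psi,f}+4\pi\bar S$, so the representation for $\br{\sin\phi,r}(\eta)$ gains an additive term $-4\pi\int_\eta^L\ue^{2\tv(\eta)-2\tv(y)}\bar S(y)\,\ud y$, which is uniformly $O(1)$ in $\eta$ and $\e$ by the exponential decay of $\bar S$. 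In Step 3 ($q$-estimate), testing against $\sin\phi\cos\phi$ produces a source contribution $\br{\sin\phi,S}(\eta)$; since $\int_{-\pi/2}^{\pi/2}\sin\phi\cos\phi\,\ud\phi=0$, this equals $\br{\sin\phi,S-\bar S}(\eta)$, so its $L^2$ contribution is controlled exactly as in Lemma \ref{Milne finite LT} with no new small-parameter loss.

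Step 4 synthesizes the above. The modifications add to the two coupled bounds of Lemma \ref{Milne finite LT} only terms that are either absorbable (of the form $\delta'\tnnm{q-q_L}^2$ and $\delta'\tnnm{r}^2$) or constants depending on $M$ and $K$. Consequently the same bootstrap yields
\begin{equation*}
\tnnm{r}+\tnnm{q-q_L}\leq C\bigl(1+\e^{1/2}\tnnm{f-f_L}\bigr),
\end{equation*}
and for $\e$ sufficiently small and $0<n<2/5$ one absorbs the $\e^{1/2}\tnnm{f-f_L}$ term to conclude $|f_L|\leq C$ and $\tnnm{f-f_L}\leq C$. Existence follows from the same penalized/approximation argument cited in \cite{AA003}, and uniqueness is obtained by applying the closed a priori estimate to the difference of two admissible solutions, which satisfies the homogeneous problem ($S=0$, $h=0$, and hence $\bar S=0$), reducing exactly to the uniqueness argument already given in Lemma \ref{Milne finite LT}.

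The main obstacle is handling the interplay at Step 1: the new coupling $\br{S,q}=4\pi q\bar S$ forces an $L^2$ bound on $q$ inside the $r$-estimate, whereas in Lemma \ref{Milne finite LT} the $r$-estimate only involved $\br{S,r}$. Controlling this coupling requires splitting $q=q-q_L+q_L$ and paying attention to the $L^{1/2}=\e^{-n/2}$ loss from the constant piece $q_L$; this is acceptable precisely because $n<2/5$ forces $\e^{(1-5n/2)/2}\to 0$, mirroring the exponent budget that already appears in the Lemma \ref{Milne finite LT} bootstrap.
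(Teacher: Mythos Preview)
Your approach is genuinely different from the paper's. The paper does not redo the energy estimates with $\bar S\neq 0$; instead it uses linearity to split $S=S_Q+S_R$ with $S_Q=\bar S$ and $\bar S_R=0$, applies Lemma~\ref{Milne finite LT} directly to the $S_R$ part, and handles $S_Q$ by an explicit ansatz $f^{2}=a(\eta)\sin\phi$, where $a$ solves a first-order linear ODE engineered so that the residual of $f^{2}$ has zero angular mean. This reduces the general case to two applications of the $\bar S=0$ lemma plus one explicit quadrature, sidestepping entirely the $q$--$r$ coupling you identify as the main obstacle. Your direct route is more self-contained but forces you to close a larger coupled system.

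There is, however, a concrete gap in your Step~1 handling of the constant piece $q_L$. You bound $\int_0^L q_L\,\bar S(y)\,\ud y$ by pointwise Cauchy, producing $\delta'|q_L|^2 L=\delta'\e^{-n}|q_L|^2$. After feeding in the eventual bound $|q_L|\leq C+C\e^{1/2}\tnnm{f-f_L}$, this leaves a constant contribution $C\e^{-n}$ in the $r$-estimate that diverges as $\e\to0$; the bootstrap then delivers only $\tnnm{f-f_L}\leq C\e^{-n/2}$, not the uniform bound claimed. The exponent bookkeeping you invoke at the end ($\e^{(1-5n/2)/2}\to0$) controls the $\tnnm{f-f_L}^2$ piece but cannot kill this free-standing constant. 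The repair is simple: integrate first rather than apply Cauchy pointwise. Since $\int_0^L|\bar S(y)|\,\ud y\leq M/K$ uniformly in $\e$, one has
\[
\Bigl|\,q_L\!\int_0^L\bar S(y)\,\ud y\,\Bigr|\leq C|q_L|\leq \delta'|q_L|^2+C_{\delta'},
\]
with no $L$ factor at all. With this correction your coupled system closes exactly as you describe, and the rest of the proposal is sound.
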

\begin{proof}
We can utilize superposition property for this linear problem, i.e.
write $S=\bar S+(S-\bar S)=S_Q+S_R$. Then we solve the problem by
the following steps. \\
\ \\
Step 1: Construction of auxiliary function $f^1$.\\
We first solve $f^1$ as the solution to
\begin{eqnarray}
\left\{
\begin{array}{rcl}\displaystyle
\sin\phi\frac{\p f^1}{\p\eta}+F(\eta)\cos\phi\frac{\p
f^1}{\p\phi}+f^1-\bar f^1&=&S_R(\eta,\phi,\psi),\\
f^1(0,\phi,\psi)&=&h(\phi,\psi)\ \ \text{for}\ \ \sin\phi>0,\\
f^1(L,\phi,\psi)&=&f^1(L,\rr\phi,\psi).
\end{array}
\right.
\end{eqnarray}
Since $\bar S_R=0$, by Lemma \ref{Milne finite LT}, we know there
exists a unique solution $f^1$
satisfying the $L^2$ estimate.\\
\ \\
Step 2: Construction of auxiliary function $f^2$.\\
We seek a function $f^{2}$ satisfying
\begin{eqnarray}\label{mt 07}
-\frac{1}{4\pi}\int_{-\pi}^{\pi}\int_{-\pi/2}^{\pi/2}\bigg(\sin\phi\frac{\p
f^{2}}{\p\eta}+F(\eta)\cos\phi\frac{\p
f^{2}}{\p\phi}\bigg)\cos\phi\ud{\phi}\ud{\psi}+S_Q=0.
\end{eqnarray}
The following analysis shows this type of function can always be
found. An integration by parts transforms the equation (\ref{mt 07}) into
\begin{eqnarray}\label{mt 08}
-\int_{-\pi}^{\pi}\int_{-\pi/2}^{\pi/2}\frac{\p
f^{2}}{\p\eta}\sin\phi\cos\phi\ud{\phi}\ud{\psi}-\int_{-\pi}^{\pi}\int_{-\pi/2}^{\pi/2}F(\eta)f^{2}\sin\phi
\cos\phi\ud{\phi}\ud{\psi}+4\pi S_Q=0.
\end{eqnarray}
Setting
\begin{eqnarray}
f^{2}(\phi,\eta)=a(\eta)\sin\phi.
\end{eqnarray}
and plugging this ansatz into (\ref{mt 08}), we have
\begin{eqnarray}
-\frac{\ud{a}}{\ud{\eta}}\int_{-\pi}^{\pi}\int_{-\pi/2}^{\pi/2}\sin^2\phi \cos\phi\ud{\phi}\ud{\psi}-a(\eta)\int_{-\pi}^{\pi}\int_{-\pi/2}^{\pi/2}F(\eta)\sin^2\phi\cos\phi\ud{\phi}\ud{\psi}+4\pi
S_Q=0.
\end{eqnarray}
Hence, we have
\begin{eqnarray}
-\frac{\ud{a}}{\ud{\eta}}-\bar F(\eta)a(\eta)+2S_Q=0,
\end{eqnarray}
where
\begin{eqnarray}
\bar F(\eta)=\int_{-\pi}^{\pi}\int_{-\pi/2}^{\pi/2}F(\eta)\sin^2\phi\cos\phi\ud{\phi}\ud{\psi}\sim\bigg(\dfrac{\e}{R_1-\e\eta}+\dfrac{\e}{R_2-\e\eta}\bigg)
\end{eqnarray}
This is a first order linear ordinary differential equation, which
possesses infinite solutions. We can directly solve it to obtain
\begin{eqnarray}
a(\eta)=\ue^{-\int_0^{\eta}\bar F(y)\ud{y}}\bigg(a(0)+\int_0^{\eta}\ue^{\int_0^y\bar F(z)\ud{z}}2S_Q(y)\ud{y}\bigg).
\end{eqnarray}
We may take
\begin{eqnarray}
a(0)=-\int_0^{L}\ue^{\int_0^y\bar F(z)\ud{z}}2S_Q(y)\ud{y}.
\end{eqnarray}
Based on the exponential decay of $S_Q$, we can directly verify
$a(\eta)$ decays exponentially to zero as $\eta\rt L$ and $f^2$
satisfies the $L^2$ estimate.\\
\ \\
Step 3: Construction of auxiliary function $f^3$.\\
Based on above construction, we can directly verify
\begin{eqnarray}\label{mt 09}
\int_{-\pi}^{\pi}\int_{-\pi/2}^{\pi/2}\bigg(-\sin\phi\frac{\p
f^{2}}{\p\eta}-F(\eta)\cos\phi\frac{\p f^{2}}{\p\phi}-f^{2}+\bar
f^{2}+S_Q\bigg)\cos\phi\ud{\phi}\ud{\psi}=0.
\end{eqnarray}
Then we can solve $f^3$ as the solution to
\begin{eqnarray}
\left\{
\begin{array}{rcl}\displaystyle
\sin\phi\frac{\p f^3}{\p\eta}+F(\eta)\cos\phi\frac{\p
f^3}{\p\phi}+f^3-\bar f^3&=&-\sin\phi\dfrac{\p
f^{2}}{\p\eta}-F(\eta)\cos\phi\dfrac{\p
f^{2}}{\p\phi}-f^{2}+\bar f^{2}+S_Q,\\
f^3(0,\phi,\psi)&=&-a(0)\sin\phi\ \ \text{for}\ \ \sin\phi>0,\\
f^3(L,\phi,\psi)&=&f^3(L,\rr\phi,\psi).
\end{array}
\right.
\end{eqnarray}
By (\ref{mt 09}), we can apply Lemma \ref{Milne finite LT}
to obtain a unique solution $f^3$
satisfying the $L^2$ estimate.\\
\ \\
Step 4: Construction of auxiliary function $f^4$.\\
We now define $f^4=f^2+f^3$ and an explicit verification shows
\begin{eqnarray}
\left\{
\begin{array}{rcl}\displaystyle
\sin\phi\frac{\p f^4}{\p\eta}+F(\eta)\cos\phi\frac{\p
f^4}{\p\phi}+f^4-\bar f^4&=&S_Q(\eta,\phi,\psi),\\
f^4(0,\phi,\psi)&=&0\ \ \text{for}\ \ \sin\phi>0,\\
f^4(L,\phi,\psi)&=&f^4(L,\rr\phi,\psi),
\end{array}
\right.
\end{eqnarray}
and $f^4$
satisfies the $L^2$ estimate.\\
\ \\
In summary, we deduce that $f^1+f^4$ is the solution of (\ref{LT Milne
problem.}) and satisfies the $L^2$ estimate.
\end{proof}
Combining all above, we have the following theorem.
\begin{theorem}\label{Milne LT Theorem}
For the $\e$-Milne problem (\ref{Milne problem}), there exists a unique
solution $f(\eta,\phi,\psi)$ satisfying the estimates
\begin{eqnarray}
\tnnm{f-f_L}\leq C
\end{eqnarray}
for some constant $f_L$ satisfying
\begin{eqnarray}
\abs{f_L}\leq C.
\end{eqnarray}
\end{theorem}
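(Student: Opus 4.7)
The plan is to recognize that Theorem \ref{Milne LT Theorem} is essentially a cleaner restatement of Lemma \ref{Milne finite LT.} with the $\e$ superscript restored; the nontrivial work has already been done in Lemma \ref{Milne finite LT} (the $\bar S = 0$ case) and Lemma \ref{Milne finite LT.} (the general source case). So my proof would be a brief synthesis step rather than a new argument. I will simply invoke Lemma \ref{Milne finite LT.} applied to the data $h^{\e}$ and $S^{\e}$ in (\ref{Milne problem}), which yields both existence and uniqueness of a solution $f^{\e}$ together with the desired constant $f_L^{\e}$ and the bound $\tnnm{f^{\e} - f_L^{\e}} \le C$.

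For completeness, the assembly inside Lemma \ref{Milne finite LT.} is by superposition: decompose $S^{\e} = \bar S^{\e} + (S^{\e} - \bar S^{\e}) = S_Q + S_R$. First, solve the microscopic-source problem with full boundary data $h^{\e}$ via Lemma \ref{Milne finite LT} (since $\bar S_R = 0$), producing $f^1$. Second, compensate the hydrodynamic source $S_Q$ by an ansatz $f^2(\eta,\phi) = a(\eta)\sin\phi$ chosen so that the $\phi$-average of the operator applied to $f^2$ kills $S_Q$; this reduces to a first-order linear ODE for $a(\eta)$, solvable explicitly, with $a(\eta)$ inheriting exponential decay from $S_Q$ provided $a(0)$ is chosen as the integrated compensation. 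Third, define $f^3$ as the solution of the residual equation with microscopic right-hand side (guaranteed by construction (\ref{mt 09})), again via Lemma \ref{Milne finite LT}. Setting $f^4 = f^2 + f^3$ handles $S_Q$ with zero inflow data, and $f^{\e} = f^1 + f^4$ solves (\ref{Milne problem}) with the asserted bound.

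For the constant $f_L^{\e}$: each of $f^1, f^3$ is decomposed as microscopic-plus-constant by Lemma \ref{Milne finite LT}, and $f^2$ vanishes at $\eta = L$ by our choice of $a(0)$; summing the two constants produces $f_L^{\e}$ with $|f_L^{\e}| \le C$. Uniqueness reduces, by linearity, to showing that any solution with zero data and bounded $\tnnm{\cdot - f'_L}$ must be constant; this is exactly what the last paragraph of the proof of Lemma \ref{Milne finite LT} establishes by noting that the $O(1)$ contributions in the bootstrapping estimate come solely from $h$ and $S$, so the homogeneous version forces $f' = f'_L$, and then the zero inflow condition forces $f'_L = 0$.

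The only step that would require attention, if one were to redo this from scratch rather than citing the lemmas, is ensuring that the smallness factor $\e^{1/2}$ in the bootstrap closure (requiring $0 < n < 2/5$) survives the superposition step; but since Lemma \ref{Milne finite LT.} is stated uniformly in $\e$ under exactly the same hypothesis $0 < n < 2/5$, this bookkeeping is already absorbed, and there is no genuine obstacle in passing from the lemmas to the theorem.
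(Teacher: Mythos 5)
Your proposal matches the paper's own treatment: the theorem is obtained by combining Lemma \ref{Milne finite LT} and Lemma \ref{Milne finite LT.} (the superposition via $S=S_Q+S_R$, the ansatz $f^2=a(\eta)\sin\phi$, and the residual problem $f^3$), with uniqueness handled exactly as in the last paragraph of the proof of Lemma \ref{Milne finite LT}. This is the same route as the paper, so no further comment is needed.
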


\subsection{$L^{\infty}$ Estimates}

This section is similar to Section 3 of \cite{AA007} with obvious modifications, so we omit the proof here and only present the main results.

\begin{theorem}\label{Milne LT-LI Theorem}
The solution $f(\eta,\phi,\psi)$ to the Milne problem (\ref{Milne
problem}) satisfies
\begin{eqnarray}
\lnnm{f-f_L}\leq C\bigg(1+\tnnm{f-f_L}\bigg).
\end{eqnarray}
\end{theorem}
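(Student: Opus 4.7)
Set $g = f - f_L$, which satisfies the same transport equation with source $S$, boundary datum $h - f_L$ on $\{\sin\phi > 0\}$, specular condition at $\eta = L$, and hydrodynamic constant $0$ (so $\overline{g - 0} = \bar g$). I work along characteristics of the transport operator, parameterized by $s$ via $\ud\eta/\ud s = \sin\phi$, $\ud\phi/\ud s = F(\eta,\psi)\cos\phi$, $\ud\psi/\ud s = 0$; note $\psi$ is conserved. The equation along characteristics becomes $\ud g/\ud s + g = \bar g + S$, so Duhamel gives
\[
g(\eta,\phi,\psi) = e^{-s_\ast}g_b + \int_0^{s_\ast} e^{-s}\bigl(\bar g(\eta_{-s}) + S(\eta_{-s},\phi_{-s},\psi)\bigr)\,\ud s,
\]
where $(s_\ast, g_b)$ record the first backward crossing of the boundary: for $\sin\phi > 0$, $g_b = (h-f_L)(\phi_0,\psi)$ with $\phi_0$ the angle at $\eta = 0$; for $\sin\phi < 0$, the backward characteristic first hits $\eta = L$, the specular law $\phi \mapsto -\phi$ applies, and after one bounce the trajectory reaches $\eta = 0$.

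The boundary contribution $e^{-s_\ast}g_b$ is trivially bounded by $\lnm{h} + \abs{f_L} \leq C$, and the source integral by $\lnnm{e^{K\eta}S}\int_0^\infty e^{-s}\,\ud s \leq C$. The entire burden is therefore to control
\[
I(\eta,\phi,\psi) := \int_0^{s_\ast} e^{-s}\bar g(\eta_{-s})\,\ud s,
\]
for which the trivial bound $\abs{\bar g} \leq \lnnm{g}$ would produce a tautology. My plan is to apply Duhamel a second time inside $\bar g$: rewriting $\bar g(\eta_{-s}) = \tfrac{1}{4\pi}\iint g(\eta_{-s},\phi',\psi')\cos\phi'\,\ud\phi'\,\ud\psi'$ and substituting the same mild formula at each $(\eta_{-s},\phi',\psi')$. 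The boundary and source contributions again produce a universal constant, and what remains is the iterated ``double Duhamel'' piece
\[
J = \int_0^{s_\ast} e^{-s}\frac{1}{4\pi}\iint \cos\phi'\int_0^{s'_\ast}e^{-s'}\bar g(\eta'_{-s'})\,\ud s'\,\ud\phi'\,\ud\psi'\,\ud s.
\]

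To bound $J$ by $\tnnm{g}$, I change variables in the innermost integral from the characteristic parameter $s'$ to the spatial coordinate $y = \eta'_{-s'}$, whose Jacobian is $\abs{\sin\phi'(s')}^{-1}$. Applying Cauchy--Schwarz in $y$ together with the pointwise bound $\abs{\bar g}(y) \leq C\tnm{g}(y)$ (itself an immediate consequence of the definition of $\bar g$) extracts $\tnnm{g}$, leaving behind an angular integral over $(\phi',\psi')$ of the singular kernel $\cos\phi'/\abs{\sin\phi'}$-type that must be verified to be finite. The main technical obstacle is the grazing region $\abs{\sin\phi'} \approx 0$ in 3D: because $F(\eta,\psi)$ depends on $\psi$, the grazing locus is a genuinely two-dimensional surface in $(\phi',\psi')$ and the Jacobian is singular there. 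I expect to handle this by splitting $(\phi',\psi')$ into a non-grazing region $\{\abs{\sin\phi'} \geq \delta\}$, on which Cauchy--Schwarz with the controlled Jacobian yields $C(\delta)\tnnm{g}$, and a grazing strip of measure $O(\delta)$, on which the trivial bound $\abs{\bar g} \leq \lnnm{g}$ produces $O(\delta)\lnnm{g}$; choosing $\delta$ sufficiently small allows the latter to be absorbed into the left-hand side. The specular bounce at $\eta = L$ lengthens the $s$-range at most by a factor of two but does not destroy the estimate because of the $e^{-s}$ weight. This mirrors the argument in \cite{AA007} for the 2D problem, the ``obvious modification'' being the extra $\psi$-integration and the two-dimensional grazing locus.
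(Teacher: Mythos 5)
Your overall strategy --- mild formulation along the characteristics of $\sin\phi\,\p_\eta+F\cos\phi\,\p_\phi$, a second application of Duhamel inside $\bar g$, conversion of the inner time integral into a spatial integral with Jacobian $1/\abs{\sin\phi'}$, Cauchy--Schwarz against $\tnnm{g}$, and absorption of an $O(\d)\lnnm{g}$ grazing contribution --- is exactly the $L^2$--$L^{\infty}$ argument of \cite{AA003}, \cite{AA007} that the paper invokes (it omits the proof of Theorem \ref{Milne LT-LI Theorem} and cites Section 3 of \cite{AA007}). However, two steps in your plan would fail as written. First, your description of the backward characteristics for $\sin\phi<0$ is incorrect on part of the phase space: the conserved quantity is $E=\ue^{-V(\eta,\psi)}\cos\phi$ with $V$ increasing in $\eta$, so tracing backward $\cos\phi'$ increases, and whenever $\abs{E(\eta,\phi,\psi)}\geq\ue^{-V(L,\psi)}$ the trajectory never reaches $\eta=L$; it turns around at the point $\eta^+<L$ where $\cos\phi'=1$ and returns to $\eta=0$ with no specular bounce (Region III in Section 4 of the paper). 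Your mild formulation, which assumes every backward trajectory with $\sin\phi<0$ reflects at $L$, must be supplemented by this turning-point case; conveniently, on that region $\abs{\sin\phi}\leq\e^{\frac{1}{2}-\frac{n}{2}}$, so for $\e$ small those points lie inside your grazing strip, which is how the repaired argument closes.

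Second, you split into grazing/non-grazing according to the starting angle $(\phi',\psi')$ of the secondary characteristic, but the Jacobian you must control is $1/\abs{\sin\phi'(s')}$ along the entire backward trajectory, and for $\sin\phi'<0$ the quantity $\abs{\sin\phi'(s')}$ decreases as the trajectory climbs toward $\eta=L$ (or toward a turning point), so non-degeneracy is not automatic from $\abs{\sin\phi'}\geq\d$ at the start. It does hold, but only via conservation of $E$ together with $\ue^{V(L,\psi)}-1=O(\e^{1-n})$, which gives $\abs{\sin\phi'(s')}^2\geq\d^2-C\e^{1-n}\geq\d^2/2$ for $\e$ sufficiently small (depending on the fixed $\d$); this is precisely the computation the paper carries out in Step 1 of its Region II analysis in Section 4. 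Without this energy-conservation input and the Region III correction, the claimed bound $C(\d)\tnnm{g}$ on the non-grazing piece does not follow as stated; with them, your proof coincides with the argument the paper refers to.
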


\begin{theorem}\label{Milne LI Theorem}
There exists a unique solution $f(\eta,\phi,\psi)$ to the $\e$-Milne problem
(\ref{Milne problem}) satisfying
\begin{eqnarray}
\lnnm{f-f_L}\leq C.
\end{eqnarray}
\end{theorem}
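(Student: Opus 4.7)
The plan is to obtain this result by directly chaining the two preceding theorems, namely Theorem \ref{Milne LT Theorem} (uniform $L^2$ well-posedness) and Theorem \ref{Milne LT-LI Theorem} ($L^2$-to-$L^\infty$ bootstrap). All the hard work is already packaged into those two results; Theorem \ref{Milne LI Theorem} is essentially their synthesis.

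Concretely, I would first invoke Theorem \ref{Milne LT Theorem} to produce a solution $f$ of (\ref{Milne problem}) together with a constant $f_L$ satisfying $\abs{f_L}\leq C$ and $\tnnm{f-f_L}\leq C$, both bounds uniform in $\e$; this simultaneously settles existence and $L^2$ uniqueness. Next, I would feed this particular $f$ into Theorem \ref{Milne LT-LI Theorem}, which yields
\begin{equation*}
\lnnm{f-f_L}\;\leq\; C\bigl(1+\tnnm{f-f_L}\bigr)\;\leq\; C,
\end{equation*}
giving exactly the desired $L^\infty$ estimate.

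For uniqueness in $L^\infty$, I would note that the domain $[0,L]\times[-\pi/2,\pi/2]\times[-\pi,\pi]$ is bounded for each fixed $\e$ (since $L=\e^{-n}$ is finite), so any $L^\infty$ solution automatically lies in $L^2$ on this slab, and therefore falls inside the uniqueness class of Theorem \ref{Milne LT Theorem}. Consequently any two $L^\infty$ solutions must agree. The only conceivable obstacle here would be if the $L^\infty$ class were strictly larger than the $L^2$ class used for uniqueness, but the finiteness of $L$ rules this out; the genuine difficulties were all in the earlier steps, particularly the coupled $L^2$ bootstrap in Lemma \ref{Milne finite LT} where the microscopic/hydrodynamic interplay driven by the non-closable geometric term $G(\eta)$ forced the restriction $0<n<\tfrac{2}{5}$.
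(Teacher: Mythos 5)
Your proposal is correct and matches the paper's intended argument: the paper itself obtains Theorem \ref{Milne LI Theorem} exactly by combining Theorem \ref{Milne LT Theorem} with the $L^2$-to-$L^\infty$ bootstrap of Theorem \ref{Milne LT-LI Theorem} (the details being deferred to \cite{AA007} with obvious modifications). Your additional remark that any $L^\infty$ solution on the finite slab $[0,L]$ lies in the $L^2$ uniqueness class is a valid way to settle uniqueness and is consistent with the paper's treatment.
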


\subsection{Exponential Decay}

In this section, we prove the spatial decay of the solution to the
Milne problem.
\begin{theorem}\label{Milne Decay Theorem}
Assume (\ref{Milne bounded}) and (\ref{Milne decay}) hold and $0<n<\dfrac{2}{5}$. For $K_0>0$ sufficiently small, the solution $f(\eta,\phi,\psi)$ to the
$\e$-Milne problem (\ref{Milne problem}) satisfies
\begin{eqnarray}
\lnnm{\ue^{K_0\eta}(f-f_L)}\leq C.
\end{eqnarray}
\end{theorem}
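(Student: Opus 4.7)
The strategy is to introduce the exponentially-weighted unknown $g(\eta,\phi,\psi):=\ue^{K_0\eta}(f-f_L)$ and to rerun the $L^2$/$L^\infty$ machinery of Lemmas \ref{Milne finite LT}, \ref{Milne finite LT.} and Theorems \ref{Milne LT Theorem}, \ref{Milne LT-LI Theorem}, \ref{Milne LI Theorem} for $g$, treating the new weight-induced source term as a perturbation absorbable for $K_0$ small. Since $f_L$ is constant, $f-f_L$ solves the Milne equation with source $S$, inflow data $h-f_L$ (bounded in $L^\infty$ by $M+C$ thanks to Theorem \ref{Milne LT Theorem}), and the same specular condition at $\eta=L$. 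Using $\ue^{K_0\eta}\sin\phi\,\p_\eta(f-f_L)=\sin\phi\,\p_\eta g-K_0\sin\phi\cdot g$, the function $g$ satisfies
\begin{equation*}
\sin\phi\,\p_\eta g+F(\eta,\psi)\cos\phi\,\p_\phi g+g-\bar g=\ue^{K_0\eta}S+K_0\sin\phi\cdot g,
\end{equation*}
with $g(0,\phi,\psi)=h(\phi,\psi)-f_L$ for $\sin\phi>0$ and $g(L,\phi,\psi)=g(L,\rr\phi,\psi)$ (the factor $\ue^{K_0 L}$ cancels from both sides of the specular condition).

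Provided $K_0<K$, the source piece $\ue^{K_0\eta}S$ still decays exponentially at rate $K-K_0>0$ and therefore satisfies the hypothesis of Lemma \ref{Milne finite LT}. The only genuinely new ingredient is the contribution $K_0\sin\phi\cdot g$, which is not in the microscopic complement. I would then retrace the three-step bootstrap: testing against $g\cos\phi$, $\cos\phi$, and $\sin\phi\cos\phi$ respectively, every inner product against $K_0\sin\phi\cdot g$ produces either an extra term of the form $CK_0\tnnm{g-g_L}^2$ in the energy identity or $CK_0(1+L^{3/2})\tnnm{g-g_L}$ in the hydrodynamic identity. Fixing $K_0$ sufficiently small (independently of $\e$), these perturbations absorb on the left-hand side in exactly the same manner as the $\e^{1/2}\tnnm{f-f_L}$ terms in the original proof; the outcome is $\tnnm{g-g_L}\le C$ with $|g_L|\le C$ for a suitable asymptotic constant $g_L$ introduced as in Step 3 of Lemma \ref{Milne finite LT}. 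Applying the analog of Theorem \ref{Milne LT-LI Theorem} to the $g$-equation then yields $\lnnm{g-g_L}\le C(1+\tnnm{g-g_L})\le C$, and combining with the bound on $g_L$ gives $\lnnm{g}\le C$, which is the desired exponential decay.

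The main technical obstacle is that $g(L,\cdot,\cdot)=\ue^{K_0 L}(f(L,\cdot,\cdot)-f_L)$ is a priori of size $\ue^{K_0\e^{-n}}$, blowing up as $\e\to 0$; thus the theorem is not a direct corollary of Theorem \ref{Milne LI Theorem} applied to $g$. The uniform $L^\infty$ bound must instead come from the interplay between the specular reflection at $\eta=L$ (which forces the $\sin\phi$-antisymmetric projection to vanish there) and the smallness of $K_0$. Ensuring that the new source $K_0\sin\phi\cdot g$ neither destroys the delicate $\e^{1/2}$-absorption used throughout Lemma \ref{Milne finite LT} nor spoils the quasi-orthogonality relation controlling the hydrodynamic mode is the technical heart of the argument and dictates how small $K_0$ must be chosen.
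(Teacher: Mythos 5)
Your overall strategy (conjugating by the weight and re-running the $L^2$--$L^\infty$ machinery) is the same as the paper's, which multiplies the equation for $\v=f-f_L$ by $\ue^{2K_0\eta}\v\cos\phi$ rather than changing the unknown to $g=\ue^{K_0\eta}\v$; the two bookkeepings are equivalent in the energy step. But the way you propose to close the estimate has a genuine gap: you claim the weight-induced term $K_0\sin\phi\,g$ can be ``absorbed on the left-hand side'' after fixing $K_0$ small independently of $\e$, and you yourself record that in the hydrodynamic identity this term contributes $CK_0(1+L^{3/2})\tnnm{g-g_L}$. Since $L=\e^{-n}$, the factor $K_0(1+L^{3/2})=K_0(1+\e^{-3n/2})$ blows up as $\e\to0$ for any fixed $K_0>0$, so no $\e$-independent choice of $K_0$ makes this perturbation absorbable. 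This is not a cosmetic issue: in the original Lemma \ref{Milne finite LT} every term that gets multiplied by a power of $L$ in the $q$-estimate carries a compensating factor $\e$ coming from $\tf$ or $G$ (that is exactly what the restriction $n<\frac{2}{5}$ is calibrated for), whereas your new term carries only the bare constant $K_0$. Treating $K_0\sin\phi\,g$ as a source in the $\beta$-identity and in the quasi-orthogonality relation therefore destroys the closure.

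The paper avoids this by never treating the weight perturbatively in the hydrodynamic step: it derives the unweighted identities for $\v$ (so the $K_0$-term simply does not appear there), and only afterwards inserts the weight, using $\ue^{K_0\eta}\le\ue^{K_0 y}$ for $y\ge\eta$ inside $\int_\eta^L(\cdots)\ud{y}$ to convert everything into $\tnnm{\ue^{K_0\eta}r_{\v}}$; equivalently, in your formulation one must solve the ODEs for $\br{\sin\phi,g}$ and $\beta_g$ exactly with the integrating factor $\ue^{-K_0\eta}$ rather than absorbing $K_0\beta_g$. The smallness of $K_0$ is used only where it genuinely suffices: in the energy identity, $K_0\br{r_{\v},r_{\v}\sin\phi}$ is absorbed by $\br{r_{\v},r_{\v}}$ for $K_0<\min\{1/2,K\}$, while the cross term $K_0 q_{\v}\br{\sin\phi,r_{\v}}$ is controlled through the quasi-orthogonality relation, which supplies a factor $G=O(\e)$ so that the resulting $CL\tnnm{G}$ is of size $\e^{1-\frac{3}{2}n}$. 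You also leave two smaller points untreated: the reduction from general $S$ to $\bar S=0$ (the auxiliary functions of Lemma \ref{Milne finite LT.} must themselves be checked to obey the weighted bound), and the meaning of your asymptotic constant $g_L$ --- for $g=\ue^{K_0\eta}\v$ there is no reason $g$ converges at $\eta=L$, and the correct target is a bound on $\tnnm{\ue^{K_0\eta}\v}$ with $(q_{\v})_L=0$, as in the paper.
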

\begin{proof}
Let $\v=f-f_L$. Then $\v$ satisfies
\begin{eqnarray}\label{decay equation}
\left\{
\begin{array}{rcl}\displaystyle
\sin\phi\frac{\p \v}{\p\eta}+F(\eta,\psi)\cos\phi\frac{\p
\v}{\p\phi}+\v-\bar\v&=&S,\\
\v(0,\phi,\psi)&=&p(\phi,\psi)=h(\phi,\psi)-f_L\ \ \text{for}\ \ \sin\phi>0,\\
\v(L,\phi,\psi)&=&\v(L,\rr\phi,\psi).
\end{array}
\right.
\end{eqnarray}
We divide the analysis into several steps:\\
\ \\
Step 1: $L^2$ Estimates.\\
Assume $\bar S=0$. We continue using the notation $F=\tf+G$ and the decomposition $\v=r_{\v}+q_{\v}$. Now we naturally have $(q_{\v})_{L}=0$. The quasi-orthogonal property
reveals
\begin{eqnarray}
\br{\v,\v\sin\phi}_{\phi}(\eta)&=&\br{r_{\v},r_{\v}\sin\phi}_{\phi}(\eta)+2\br{r_{\v},q_{\v}\sin\phi}_{\phi}(\eta)+\br{q_{\v},q_{\v}\sin\phi}_{\phi}(\eta)\\
&=&\br{r_{\v},r_{\v}\sin\phi}_{\phi}(\eta)-4q_{\v}(\eta)\int_{\eta}^L\ue^{2\tv(\eta)-\tv(y)}G(y)\br{\sin\phi\cos^2\psi,r_{\v}}(y)\ud{y}.\no
\end{eqnarray}
Multiplying $\ue^{2K_0\eta}\v\cos\phi$ on both sides of equation (\ref{decay equation}) and integrating over $(\phi,\psi)\in[-\pi/2,\pi/2)\times[-\pi,\pi)$, we obtain
\begin{eqnarray}\label{mt 91}
&&\half\frac{\ud{}}{\ud{\eta}}\bigg(\ue^{2K_0\eta}\br{\v,\v\sin\phi}_{\phi}(\eta)\bigg)
+F(\eta)\bigg(\ue^{2K_0\eta}\br{\v,\v\sin\phi}_{\phi}(\eta)\bigg)\\
&=&\ue^{2K_0\eta}K_0\br{\v,\v\sin\phi}_{\phi}(\eta)-\br{r_{\v},
r_{\v}}_{\phi}(\eta)\no\\
&&-G(\eta)\bigg(\ue^{2K_0\eta}\br{\v\cos^2\psi,\v\sin\phi}_{\phi}(\eta)\bigg)+\ue^{2K_0\eta}\br{S,r_{\v}}_{\phi}(\eta)\nonumber\\
&=&\ue^{2K_0\eta}\bigg(K_0\br{r_{\v},r_{\v}\sin\phi}_{\phi}(\eta)-\br{r_{\v},
r_{\v}}_{\phi}(\eta)\bigg)\no\\
&&+4\ue^{2K_0\eta}K_0q_{\v}(\eta)\int_{\eta}^L\ue^{2\tv(\eta)-\tv(y)}G(y)\br{\sin\phi\cos^2\psi,r_{\v}}(y)\ud{y}\no\\
&&-G(\eta)\bigg(\ue^{2K_0\eta}\br{\v\cos^2\psi,\v\sin\phi}_{\phi}(\eta)\bigg)+\ue^{2K_0\eta}\br{S,r_{\v}}_{\phi}(\eta)\nonumber.
\end{eqnarray}
For $K_0<\min\{1/2,K\}$, we have
\begin{eqnarray}\label{mt 92}
\frac{3}{2}\tnm{r_{\v}(\eta)}^2\geq-K_0\br{r_{\v},r_{\v}\sin\phi}_{\phi}(\eta)+\br{r_{\v},r_{\v}}_{\phi}(\eta)\geq
\half\tnm{r_{\v}(\eta)}^2.
\end{eqnarray}
Similar to the proof of Lemma \ref{Milne finite LT}, formula as (\ref{mt 91}) and
(\ref{mt 92}) imply
\begin{eqnarray}\label{mt 93}
\tnnm{\ue^{K_0\eta}r_{\v}}^2&\leq&\abs{\int_0^L4\ue^{2K_0\eta}K_0q_{\v}(\eta)\int_{\eta}^L\ue^{2\tv(\eta)-\tv(y)}G(y)\br{\sin\phi\cos^2\psi,r_{\v}}(y)\ud{y}\ud{\eta}}\\
&&+\abs{\int_0^LG(\eta)\bigg(\ue^{2K_0\eta}\br{\v\cos^2\psi,\v\sin\phi}_{\phi}(\eta)\bigg)\ud{\eta}}+\abs{\int_0^L\ue^{2K_0\eta}\br{S,r_{\v}}_{\phi}(\eta)\ud{\eta}}\no\\
&\leq&CL\tnnm{G}\tnnm{\ue^{K_0\eta}r_{\v}}\tnnm{\ue^{K_0\eta}q_{\v}}+C\e\tnnm{\ue^{K_0\eta}\v}^2\no\\
&&+C\tnnm{\ue^{K_0\eta}S}\tnnm{\ue^{K_0\eta}r_{\v}}\no\\
&\leq&C\e\tnnm{\ue^{K_0\eta}q_{\v}}^2+C\e^{1-\frac{3}{2}n}\tnnm{\ue^{K_0\eta}r_{\v}}^2+\e\tnnm{\ue^{K_0\eta}\v}^2+C\tnnm{S}^2\no\\
&\leq&C+C\tnnm{\ue^{K_0\eta}q_{\v}}^2+C\e^{1-\frac{3}{2}n}\tnnm{\ue^{K_0\eta}r_{\v}}^2+\e\tnnm{\ue^{K_0\eta}\v}^2.\no
\end{eqnarray}
Hence, for $\e$ sufficiently small, we know
\begin{eqnarray}
\tnnm{\ue^{K_0\eta}r_{\v}}^2&\leq&C+C\e\tnnm{\ue^{K_0\eta}q_{\v}}^2+C\e\tnnm{\v}^2.
\end{eqnarray}
Then similar to the proof of Lemma \ref{Milne finite LT}, we deduce
\begin{eqnarray}
&&\tnnm{\ue^{K_0\eta}q_{\v}}^2\\
&\leq&\tnnm{\ue^{K_0\eta}r_{\v}}^2+\int_0^L\ue^{2K_0\eta}\abs{\int_{\eta}^L\tf(y)\br{1-3\sin^2\phi,r_{\v}}(y)\ud{y}}^2\ud{\eta}\no\\
&&+\int_0^L\ue^{2K_0\eta}\abs{\int_{\eta}^{L}\int_{z}^L\ue^{2\tv(z)-2\tv{y}}G(y)\br{\sin\phi\cos^2\psi,r_{\v}}\ud{y}\ud{z}}^2\ud{\eta}\no\\
&&+\int_0^L\ue^{2K_0\eta}\abs{\int_{\eta}^{L}G(y)\br{1-3\sin^2\phi,r_{\v}\cos^2\psi}(y)\ud{y}}^2\ud{\eta}
+\int_0^L\ue^{2K_0\eta}\abs{\int_{\eta}^{L}\br{\sin\phi,S}(y)\ud{y}}^2\ud{\eta}\no\\
&\leq&C+C\tnnm{\ue^{K_0\eta}r_{\v}}^2+C\tnnm{\ue^{K_0\eta}r_{\v}}
\bigg(\int_0^{L}\int_{\eta}^{L}\ue^{2K_0(\eta-y)}F^2(y)\ud{y}\ud{\eta}\bigg)\no\\
&&+L^3\tnnm{G}^2\tnnm{\ue^{K_0\eta}r_{\v}}^2+L\tnnm{G}^2\tnnm{\ue^{K_0\eta}r_{\v}}^2
+\int_0^{L}\ue^{2K_0\eta}\bigg(\int_{\eta}^{L}\lnm{S(y)}\ud{y}\bigg)^2\ud{\eta}
\nonumber\\
&\leq&C+C(1+\e^{2-5n})\tnnm{\ue^{K_0\eta}r_{\v}}^2\no\\
&\leq&C+C\tnnm{\ue^{K_0\eta}r_{\v}}^2\no\\
&\leq&C+C\e\tnnm{\ue^{K_0\eta}q_{\v}}^2+C\e\tnnm{\ue^{K_0\eta}\v}^2,
\end{eqnarray}
which implies
\begin{eqnarray}
\tnnm{\ue^{K_0\eta}q_{\v}}^2\leq C+C\e\tnnm{\ue^{K_0\eta}\v}^2.
\end{eqnarray}
In summary, we have
\begin{eqnarray}
\tnnm{\ue^{K_0\eta}\v}^2&\leq&\tnnm{\ue^{K_0\eta}q_{\v}}^2+\tnnm{\ue^{K_0\eta}r_{\v}}^2\leq C+C\e\tnnm{\ue^{K_0\eta}\v}^2,
\end{eqnarray}
which yields
\begin{eqnarray}\label{mt 94}
\tnnm{\ue^{K_0\eta}\v}&\leq&C.
\end{eqnarray}
This is the desired result when $\bar S=0$.
By the method introduced in Lemma \ref{Milne finite LT.},
we can extend above $L^2$ estimates to the general $S$ case. Note
all the auxiliary functions
constructed in Lemma \ref{Milne finite LT.} satisfy the desired estimates. \\
\ \\
Step 2: $L^{\infty}$ Estimates.\\
This is similar to the proof of exponential decay in \cite{AA007}, so we omit the details here. We have
\begin{eqnarray}\label{mt 95}
\lnnm{\ue^{K_0\eta}\v}\leq C+C\tnnm{\ue^{K_0\eta}\v}.
\end{eqnarray}
Combining (\ref{mt 94}) and (\ref{mt 95}), we
deduce the desired result
\begin{eqnarray}
\lnnm{\ue^{K_0\eta}(f-f_L)}\leq C.
\end{eqnarray}
\end{proof}

\subsection{Diffusive Boundary}

In this subsection, we consider the $\e$-Milne problem with diffusive boundary as
\begin{eqnarray}\label{Milne problem.}
\left\{ \begin{array}{rcl}\displaystyle \sin\phi\frac{\p
f}{\p\eta}+F(\eta,\psi)\cos\phi\frac{\p
f}{\p\phi}+f-\bar f&=&S(\eta,\phi,\psi),\\
f(0,\phi,\psi)&=& h(\phi,\psi)+\pp[f](0)\ \ \text{for}\
\ \sin\phi>0,\\
f(L,\phi,\psi)&=&f(L,\rr\phi,\psi),
\end{array}
\right.
\end{eqnarray}
where
\begin{eqnarray}
\pp
[f](0)=-\frac{1}{4\pi}\iint_{\sin\phi<0}f(0,\phi,\psi)\sin\phi\cos\phi\ud{\phi}\ud{\psi},
\end{eqnarray}
Similar to \cite[Section 6]{AA003}, we can easily prove that
\begin{lemma}\label{Milne lemma 1.}
In order for the equation (\ref{Milne problem.}) to have a solution
$f(\eta,\phi,\psi)\in L^{\infty}([0,L]\times[-\pi,\pi)\times[-\pi,\pi)\times[-\pi/2,\pi/2))$, the boundary data $h$
and the source term $S$ must satisfy the compatibility condition
\begin{eqnarray}\label{Milne compatibility condition}
\iint_{\sin\phi>0}h(\phi,\psi)\sin\phi\cos\phi\ud{\phi}\ud{\psi}
+\int_0^{L}\int_{-\pi}^{\pi}\int_{-\pi/2}^{\pi/2}\ue^{-V(s)}S(s,\phi,\psi)\cos\phi\ud{\phi}\ud{\psi}\ud{s}=0.
\end{eqnarray}
In particular, if $S=0$, then the compatibility condition reduces to
\begin{eqnarray}\label{Milne reduced compatibility condition}
\iint_{\sin\phi>0}h(\phi,\psi)\sin\phi\cos\phi\ud{\phi}\ud{\psi}=0.
\end{eqnarray}
\end{lemma}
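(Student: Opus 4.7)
The plan is to derive the necessary compatibility condition by testing the equation against a cleverly chosen weight and applying integration by parts, in direct analogy with the 2D derivation in \cite[Section 6]{AA003}. Concretely, I would multiply (\ref{Milne problem.}) by $\ue^{-V(s)}\cos\phi$, where the potential $V$ is chosen so that $V(0)=0$ and $V_\eta$ is proportional to $-F$ (so that the interior IBP contributions in $\eta$ cancel the force term after a $\phi$-IBP), and integrate over the full domain $(\eta,\phi,\psi)\in[0,L]\times[-\pi/2,\pi/2]\times[-\pi,\pi]$. The factor $\cos\phi$ plays a dual role: it is the spherical Jacobian, and it enables the $\phi$-IBP on the force term to close cleanly, since $\cos^2(\pm\pi/2)=0$ eliminates boundary contributions at the velocity endpoints.

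Next, I would perform IBP in $\phi$ on $F\cos\phi\,\p_\phi f$, producing a multiple of $\int Ff\sin\phi\cos\phi\,\ud{\phi}$, and IBP in $\eta$ on $\sin\phi\,\p_\eta f$, which generates the $\eta$-boundary terms $\left[\ue^{-V}\iint f\sin\phi\cos\phi\,\ud{\phi}\ud{\psi}\right]_0^L$ together with an interior term from differentiating $\ue^{-V}$. With the appropriate $V$, these two interior contributions cancel, and the $(f-\bar f)$ term vanishes upon $(\phi,\psi)$-integration because $\iint(f-\bar f)\cos\phi\,\ud{\phi}\ud{\psi}=0$. What remains is the identity
\[
\left[\ue^{-V(\eta)}\iint f\sin\phi\cos\phi\,\ud{\phi}\ud{\psi}\right]_0^L = \iiint \ue^{-V(s)}S\cos\phi\,\ud{\phi}\ud{\psi}\ud{s}.
\]
At $\eta=L$, the specular condition $f(L,\phi,\psi)=f(L,\rr\phi,\psi)$ combined with the odd factor $\sin\phi$ forces that boundary integrand to vanish identically.

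Finally, I evaluate the remaining term at $\eta=0$ by splitting $\iint f(0)\sin\phi\cos\phi\,\ud{\phi}\ud{\psi}$ into the pieces over $\sin\phi>0$ (where the diffusive BC prescribes $f=h+\pp[f](0)$) and $\sin\phi<0$ (where we invoke the definition of $\pp[f](0)$). The contributions proportional to the constant $\pp[f](0)$ cancel by the built-in normalization of $\pp$, leaving exactly $\iint_{\sin\phi>0}h\sin\phi\cos\phi\,\ud{\phi}\ud{\psi}$, and collecting all pieces produces (\ref{Milne compatibility condition}). Setting $S=0$ then reduces to (\ref{Milne reduced compatibility condition}). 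The main technical obstacle is the interplay between the $\psi$-dependence of $F$ (and hence of $V$) and the $\phi$-independence required for the $(f-\bar f)$ cancellation to hold; this is essentially a bookkeeping issue but must be tracked carefully, since the 3D geometry introduces $\psi$-dependence that is absent in the 2D case of \cite{AA003}.
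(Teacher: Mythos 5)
Your overall route is the intended one (the paper itself gives no proof and defers to \cite[Section 6]{AA003}): integrate the equation against a weighted $\cos\phi$-moment, kill the $\eta=L$ boundary term by the specular condition and oddness in $\phi$, and evaluate the $\eta=0$ term with the diffusive condition and the definition of $\pp[f](0)$. However, two of the cancellations you assert fail in 3D. First, because of the spherical Jacobian, the $\phi$-integration by parts gives $\iint F\cos^2\phi\,\p_\phi f\,\ud\phi\ud\psi=2\iint Ff\sin\phi\cos\phi\,\ud\phi\ud\psi$, while differentiating $\ue^{-V}$ in the $\eta$-integration by parts produces only one factor of $F$; with the paper's definition $\p_\eta V=-F$ the interior terms therefore do not cancel but leave $+\int\ue^{-V}Ff\sin\phi\cos\phi$. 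The integrating factor for the flux $\br{\sin\phi,f}$ is $\ue^{-2V}$, exactly as in the paper's own computation $\frac{\ud}{\ud\eta}\br{\sin\phi,f}=-2\tf\br{\sin\phi,f}-2G\br{\sin\phi\cos^2\psi,f}$ in Step 2 of Lemma \ref{Milne finite LT}. Your hedge ``$V_\eta$ proportional to $-F$'' does not resolve this: whichever constant you pick must be reconciled with the exponent actually appearing in (\ref{Milne compatibility condition}).

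Second, and more seriously, since $F$ and hence $V$ depend on $\psi$ when $R_1\neq R_2$, your weight $\ue^{-V(\eta,\psi)}\cos\phi$ is not of the form $c(\eta)\cos\phi$, so the step ``the $(f-\bar f)$ term vanishes because $\iint(f-\bar f)\cos\phi\,\ud\phi\ud\psi=0$'' is invalid: $\iint\ue^{-V}(f-\bar f)\cos\phi\,\ud\phi\ud\psi\neq0$ in general. No choice of exponent repairs this, because the collision term forces a $\psi$-independent weight $c(\eta)\cos\phi$ while the transport cancellation forces $c'=2Fc$ with $F$ depending on $\psi$ --- this is precisely the breakdown of $\br{f,\sin\phi}=0$ and of (\ref{it 003}) that the introduction identifies as the main 3D obstruction, so it is not ``bookkeeping.'' Running your argument with the $\psi$-independent part $\tf$ of $F$ (weight $\ue^{-2\tv}\cos\phi$) leaves a solution-dependent remainder of the form $2\int_0^L\ue^{-2\tv(y)}G(y)\br{\sin\phi\cos^2\psi,f}(y)\ud{y}$, so it does not produce the stated data-only identity without a further argument handling (or explicitly acknowledging) this correction. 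A minor additional point: the cancellation of the $\pp[f](0)$ contributions at $\eta=0$ uses $\iint_{\sin\phi>0}\sin\phi\cos\phi\,\ud\phi\ud\psi=\pi$, so it requires the $1/\pi$ normalization of $\pp$ rather than the $1/(4\pi)$ written in (\ref{Milne problem.}); this should be checked rather than attributed to ``built-in normalization.''
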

It is easy to see if $f$ is a solution to (\ref{Milne problem.}),
then $f+C$ is also a solution for any constant $C$. Hence, in order
to obtain a unique solution, we need a normalization condition
\begin{eqnarray}\label{Milne normalization}
\pp[f](0)=0.
\end{eqnarray}
The following lemma in \cite[Section 6]{AA003} tells us the problem (\ref{Milne problem.}) can
be reduced to the $\e$-Milne problem with in-flow boundary (\ref{Milne problem}).
\begin{lemma}\label{Milne lemma 2.}
If the boundary data $h$ and $S$ satisfy the compatibility condition
(\ref{Milne compatibility condition}), then the solution $f$ to the
$\e$-Milne problem (\ref{Milne problem}) with in-flow boundary as
$f=h$ on $\sin\phi>0$ is also a solution to the $\e$-Milne problem
(\ref{Milne problem.}) with diffusive boundary, which satisfies the
normalization condition (\ref{Milne normalization}). Furthermore,
this is the unique solution to (\ref{Milne problem.}) among the
functions satisfying (\ref{Milne normalization}) and
$\tnnm{f(\eta,\phi,\psi)-f_L}\leq C$.
\end{lemma}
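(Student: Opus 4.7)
The plan is to apply Theorem \ref{Milne LT Theorem} directly: let $f$ denote the solution of the in-flow problem (\ref{Milne problem}) with boundary datum $h$ produced there. The key claim is that under the compatibility hypothesis (\ref{Milne compatibility condition}) this $f$ automatically satisfies the diffusive boundary condition and the normalization, because $\pp[f](0) = 0$. Once this is shown, existence is immediate: the diffusive boundary condition $f(0,\phi,\psi) = h(\phi,\psi) + \pp[f](0)$ reduces to the in-flow condition $f(0,\phi,\psi) = h(\phi,\psi)$ that $f$ satisfies by construction, and (\ref{Milne normalization}) holds by definition.

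The crux is therefore proving $\pp[f](0) = 0$. I would do this via a global conservation argument analogous to \cite[Section 6]{AA003}: multiply the Milne equation by $\ue^{-V(\eta,\psi)}\cos\phi$, recalling $V(0,\psi)=0$ and $\p_\eta V = -F$, and integrate over $[0,L]\times(-\pi/2,\pi/2)\times(-\pi,\pi)$. Integrating by parts in $\eta$, the boundary contribution at $\eta = L$ vanishes by the specular reflection $f(L,\phi,\psi)=f(L,-\phi,\psi)$ together with the oddness of $\sin\phi$; integrating by parts in $\phi$ produces no boundary contribution since $\cos(\pm\pi/2)=0$. After the dust settles the identity should collapse to
\[
-\iint f(0,\phi,\psi)\sin\phi\cos\phi\,\ud\phi\,\ud\psi \;=\; \int_0^L\iint \ue^{-V(s,\psi)} S(s,\phi,\psi)\cos\phi\,\ud\phi\,\ud\psi\,\ud s.
\]
Splitting the boundary integral according to the sign of $\sin\phi$, using $f(0,\phi,\psi) = h(\phi,\psi)$ on $\{\sin\phi > 0\}$ and rewriting the $\{\sin\phi < 0\}$ piece as $-4\pi\,\pp[f](0)$ by the definition of $\pp$, and invoking the compatibility hypothesis (\ref{Milne compatibility condition}), I obtain $\pp[f](0) = 0$.

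For the uniqueness assertion, let $f_1,f_2$ be two solutions of (\ref{Milne problem.}) satisfying the normalization and $\tnnm{f_i-f_{i,L}}\leq C$. Their difference $g = f_1-f_2$ solves the homogeneous problem with $\pp[g](0)=0$, so its diffusive boundary condition degenerates to $g(0,\phi,\psi) = 0$ on $\{\sin\phi>0\}$. Thus $g$ solves the homogeneous in-flow problem in the required class, and Theorem \ref{Milne LT Theorem} forces $g\equiv g_L$ for some constant $g_L$. An elementary computation $\pp[g_L](0) = -\frac{g_L}{4\pi}\iint_{\sin\phi<0}\sin\phi\cos\phi\,\ud\phi\,\ud\psi = g_L/4$ combined with $\pp[g](0)=0$ then gives $g_L = 0$.

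The main obstacle is the integration-by-parts identity in the second paragraph. In 3D the potential $V(\eta,\psi)$ inherits genuine $\psi$-dependence from the unequal principal radii $R_1,R_2$, so the term $\iint(f-\bar f)\ue^{-V}\cos\phi\,\ud\phi\,\ud\psi$ does not vanish merely from the defining property of $\bar f$, in sharp contrast to the 2D setting. Verifying that the $\psi$-dependent correction terms produced by the $\eta$-IBP (through $\p_\eta\ue^{-V} = F\ue^{-V}$) and the $\phi$-IBP (through $\p_\phi\cos^2\phi = -2\sin\phi\cos\phi$) do cancel against one another to leave precisely the compatibility form is the delicate computation, but it parallels the argument of \cite[Section 6]{AA003} in the simpler flat geometry.
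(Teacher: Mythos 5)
The paper itself gives no proof of this lemma (it is quoted from \cite[Section 6]{AA003}), so what you are really doing is transporting the 2D argument to 3D, and that is exactly where your proposal breaks down. The crux of your plan is the identity obtained by multiplying the equation by $\ue^{-V(\eta,\psi)}\cos\phi$ and integrating; you acknowledge the $\psi$-dependent corrections as "the delicate computation" and assert they cancel. They do not. Carrying out the two integrations by parts with the weight $w=\ue^{-V(\eta,\psi)}\cos\phi$: the $\eta$-IBP (via $\p_\eta\ue^{-V}=F\ue^{-V}$) leaves $-\int_0^L\iint F\,\ue^{-V}f\sin\phi\cos\phi$, the $\phi$-IBP leaves $+2\int_0^L\iint F\,\ue^{-V}f\sin\phi\cos\phi$, so their sum is $+\int_0^L\iint F\,\ue^{-V}f\sin\phi\cos\phi\neq0$; in addition the collision term $\int_0^L\iint(f-\bar f)\,\ue^{-V}\cos\phi$ survives, because $\ue^{-V(\eta,\psi)}$ depends on $\psi$ and cannot be pulled out of the average defining $\bar f$. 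More structurally: a weight $w$ turns the transport part into a pure $\eta$-flux iff $\sin\phi\,\p_\eta w+F\cos\phi\,\p_\phi w=F\sin\phi\,w$, i.e. $w=\ue^{-V}g(E,\psi)$ with $E=\ue^{-V}\cos\phi$; the collision term vanishes for all $f$ iff $w=c(\eta)\cos\phi$; requiring both forces $c'(\eta)=2F(\eta,\psi)c(\eta)$, which is impossible when $F$ depends on $\psi$, i.e. whenever $R_1\neq R_2$. This is precisely the breakdown of the orthogonality relation (\ref{it 004}) that the introduction emphasizes, and it is why Step 2 of Lemma \ref{Milne finite LT} only yields the \emph{quasi}-orthogonality $\br{\sin\phi,f}(\eta)=-2\int_\eta^L\ue^{2\tv(\eta)-2\tv(y)}G(y)\br{\sin\phi\cos^2\psi,f}(y)\ud{y}$ rather than $\br{\sin\phi,f}=0$. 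Consequently your claimed identity $-\iint f(0)\sin\phi\cos\phi=\int_0^L\iint\ue^{-V}S\cos\phi$ is false in general, and with it the deduction $\pp[f](0)=0$ from (\ref{Milne compatibility condition}) — the entire existence half of your argument — collapses; any correct treatment has to confront the extra $G(y)\br{\sin\phi\cos^2\psi,f}$ contribution (or redefine the compatibility condition accordingly), not assume it cancels.

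The uniqueness half of your proposal is essentially fine and matches the mechanism in the paper: the difference of two normalized solutions has vanishing incoming data, so the uniqueness statement for the in-flow problem (Lemma \ref{Milne finite LT} / Theorem \ref{Milne LT Theorem}) forces it to be a constant, and the constant is killed either directly by the zero in-flow data or, as you do, by $\pp[g_L](0)=g_L/4=0$. But that part cannot rescue the existence argument, whose key cancellation is not available in the 3D geometry.
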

In summary, based on above analysis, we can utilize the known result
for $\e$-Milne problem (\ref{Milne problem}) to obtain the
desired results of the solution to the
$\e$-Milne problem (\ref{Milne problem.}).
\begin{theorem}\label{Milne theorem 1.}
There exists a unique solution $f(\eta,\phi,\psi)$ to the $\e$-Milne problem
(\ref{Milne problem.}) with the normalization condition (\ref{Milne
normalization}) satisfying for some constant $\abs{f_L}<C$,
\begin{eqnarray}
\tnnm{f(\eta,\phi,\psi)-f_L}\leq C.
\end{eqnarray}
\end{theorem}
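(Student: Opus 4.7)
The plan is to reduce the diffusive boundary problem (\ref{Milne problem.}) to the in-flow boundary problem (\ref{Milne problem}), whose well-posedness has already been established in this section. First I would note that for any $L^{\infty}$ solution to exist, the compatibility condition (\ref{Milne compatibility condition}) on $h$ and $S$ is forced by Lemma \ref{Milne lemma 1.}; I therefore take it as a standing assumption. Granted this, Lemma \ref{Milne lemma 2.} provides the crucial identification: the in-flow solution $f$ with data $h$ on $\sin\phi>0$ automatically satisfies $\pp[f](0)=0$, and hence solves (\ref{Milne problem.}) together with the normalization (\ref{Milne normalization}).

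Existence together with the quantitative estimate then follows directly from the chain already built for (\ref{Milne problem}). Invoking Theorem \ref{Milne LT Theorem} for the $L^{2}$ control and Theorem \ref{Milne LI Theorem} for its $L^{\infty}$ upgrade (and Theorem \ref{Milne Decay Theorem} if one wants the sharper exponentially weighted statement) on the same $f$, one obtains a constant $f_L$ with $\abs{f_L}\leq C$ and $\tnnm{f-f_L}\leq C$, which is exactly the bound claimed. For uniqueness inside the class of solutions satisfying (\ref{Milne normalization}) and the $L^{2}$ growth bound, any two such $f,f'$ both have $\pp[f](0)=\pp[f'](0)=0$, so the diffusive boundary reduces for each of them to the in-flow boundary $f(0,\phi,\psi)=h(\phi,\psi)$ on $\sin\phi>0$; by the uniqueness clause inside Theorem \ref{Milne LT Theorem} (which is what Lemma \ref{Milne lemma 2.} is built on) we conclude $f=f'$.

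In effect, the theorem is a packaging statement that transports the machinery developed for the in-flow $\epsilon$-Milne problem into the diffusive boundary formulation. No essentially new estimate needs to be produced at this stage; the one delicate point — that the in-flow solution one constructs actually has vanishing reflection integral $\pp[f](0)=0$ — is precisely what Lemma \ref{Milne lemma 2.} asserts, and that in turn rests on the flux identity giving the compatibility condition of Lemma \ref{Milne lemma 1.}. Thus the main obstacle has already been handled upstream, and the remaining work is simply to quote the in-flow well-posedness and recognize that the normalized diffusive solution is obtained for free.
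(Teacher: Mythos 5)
Your proposal is correct and is essentially the paper's own argument: the paper also obtains Theorem \ref{Milne theorem 1.} by invoking the compatibility condition of Lemma \ref{Milne lemma 1.}, reducing the diffusive-boundary problem to the in-flow problem via Lemma \ref{Milne lemma 2.} (which supplies both $\pp[f](0)=0$ and the uniqueness clause), and then quoting Theorem \ref{Milne LT Theorem} for the $L^2$ bound and $\abs{f_L}\leq C$. The only superfluous element is your mention of Theorems \ref{Milne LI Theorem} and \ref{Milne Decay Theorem}, which are not needed for this particular $L^2$ statement.
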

\begin{theorem}\label{Milne theorem 2.}
The unique solution $f(\eta,\phi,\psi)$ to the $\e$-Milne problem
(\ref{Milne problem.}) with the normalization condition (\ref{Milne
normalization}) satisfying for some constant $\abs{f_L}<C$,
\begin{eqnarray}
\lnnm{f(\eta,\phi,\psi)-f_L}\leq C.
\end{eqnarray}
\end{theorem}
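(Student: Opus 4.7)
The plan is to reduce the diffusive boundary problem (\ref{Milne problem.}) to the in-flow boundary problem (\ref{Milne problem}) and invoke the $L^\infty$ estimate already established in Theorem \ref{Milne LI Theorem}. This is exactly the same pathway that was used to obtain the $L^2$ bound in Theorem \ref{Milne theorem 1.}; only the quantitative input changes.

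First, under the standing hypotheses (\ref{Milne bounded})--(\ref{Milne decay}) on $h$ and $S$, together with the compatibility condition (\ref{Milne compatibility condition}), Lemma \ref{Milne lemma 2.} asserts that the unique solution of the in-flow problem (\ref{Milne problem}) with boundary data $h$ on $\sin\phi>0$ is simultaneously a solution of the diffusive problem (\ref{Milne problem.}) enjoying the normalization (\ref{Milne normalization}). Call this function $f$. Applying Theorem \ref{Milne LI Theorem} directly to $f$ delivers a constant $f_L$ with $\abs{f_L}\le C$ such that
\begin{equation*}
\lnnm{f(\eta,\phi,\psi)-f_L}\le C,
\end{equation*}
which is the asserted bound.

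Second, I would verify uniqueness of $f$ inside the stated class. Suppose $\tilde f$ is another solution of (\ref{Milne problem.}) satisfying (\ref{Milne normalization}) and $\lnnm{\tilde f-\tilde f_L}\le C$. Then $\tilde f$ automatically satisfies the weaker bound $\tnnm{\tilde f-\tilde f_L}\le C$ on the bounded slab $[0,L]\times[-\pi/2,\pi/2]\times[-\pi,\pi]$, so by the uniqueness clause of Theorem \ref{Milne theorem 1.} we must have $\tilde f=f$ and $\tilde f_L=f_L$. This closes the argument.

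No genuine obstacle is expected at this step: all the delicate analysis — the $L^2$ bootstrapping through the coupled bounds (\ref{it 005})--(\ref{it 006}), the passage from $L^2$ to $L^\infty$ along characteristics, and the reduction Lemma \ref{Milne lemma 2.} — has already been carried out in the preceding subsections. The proof of Theorem \ref{Milne theorem 2.} is simply the $L^\infty$ analogue of Theorem \ref{Milne theorem 1.}, obtained by substituting Theorem \ref{Milne LI Theorem} in place of Theorem \ref{Milne LT Theorem} within the same reduction scheme.
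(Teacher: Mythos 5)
Your proposal is correct and follows essentially the same route as the paper: the paper gives no separate argument for this theorem, obtaining it exactly by combining the reduction Lemma \ref{Milne lemma 2.} with the in-flow $L^{\infty}$ result of Theorem \ref{Milne LI Theorem}, with uniqueness inherited from the $L^2$ uniqueness class. The only minor imprecision is that an $L^{\infty}$ bound on the slab yields $\tnnm{\tilde f-\tilde f_L}\leq CL^{1/2}$ rather than $\leq C$ uniformly in $\e$, but since the uniqueness mechanism only needs a finite $L^2$ norm for each fixed $\e$, this does not affect the argument.
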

\begin{theorem}\label{Milne theorem 3.}
There exists $K_0>0$ such that the solution $f(\eta,\phi,\psi)$ to the
$\e$-Milne problem (\ref{Milne problem.}) with the normalization
condition (\ref{Milne normalization}) satisfies
\begin{eqnarray}
\lnnm{\ue^{K_0\eta}\bigg(f(\eta,\phi,\psi)-f_L\bigg)}\leq C.
\end{eqnarray}
\end{theorem}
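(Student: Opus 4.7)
The plan is to reduce the diffusive-boundary Milne problem (\ref{Milne problem.}) to the in-flow problem (\ref{Milne problem}) already treated in Theorem \ref{Milne Decay Theorem}. By Lemma \ref{Milne lemma 1.}, for a solution of (\ref{Milne problem.}) to exist at all, the data $h$ and $S$ must satisfy the compatibility condition (\ref{Milne compatibility condition}), which I take as a standing assumption. Under this condition, Lemma \ref{Milne lemma 2.} identifies the unique solution of the in-flow problem (\ref{Milne problem}) with inward data $f=h$ on $\sin\phi>0$ with a solution of the diffusive problem satisfying the normalization $\pp[f](0)=0$, and further asserts uniqueness of the latter among functions with $\tnnm{f-f_L}\leq C$. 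So the solution whose exponential decay we want to establish is literally the same function as the one produced by Theorem \ref{Milne Decay Theorem}.

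Given this identification, the hypotheses (\ref{Milne bounded})--(\ref{Milne decay}) on $h$ and $S$, together with the parameter constraint $0<n<2/5$, transfer verbatim from the diffusive problem to the associated in-flow problem. I would therefore invoke Theorem \ref{Milne Decay Theorem} directly to produce a constant $f_L$ with $\abs{f_L}\leq C$ and a universal $K_0>0$ such that
\[
\lnnm{\ue^{K_0\eta}(f-f_L)}\leq C.
\]
Since the $f$ on the left is the very function that solves (\ref{Milne problem.}) with the normalization (\ref{Milne normalization}), by Lemma \ref{Milne lemma 2.}, this is exactly the claimed estimate.

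There is essentially no new analytic obstacle in this last step. The genuine difficulty — closing the coupled $L^2$ system for the microscopic part $f-\bar f$ and the hydrodynamic part $\bar f$ via the delicate choice $L=\e^{-n}$ with $0<n<2/5$, absorbing the $G(\eta)$ coupling terms, upgrading to $L^\infty$, and then extracting exponential decay with a small rate $K_0$ — has already been carried out in the in-flow framework of Theorem \ref{Milne Decay Theorem}. Thus Theorem \ref{Milne theorem 3.} stands to that theorem exactly as Theorems \ref{Milne theorem 1.} and \ref{Milne theorem 2.} stand to Theorems \ref{Milne LT Theorem} and \ref{Milne LI Theorem} respectively: a direct corollary obtained by the equivalence lemma, with no further estimation required.
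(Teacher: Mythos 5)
Your proposal is correct and coincides with the paper's own treatment: the paper proves Theorem \ref{Milne theorem 3.} (together with Theorems \ref{Milne theorem 1.} and \ref{Milne theorem 2.}) precisely by invoking Lemma \ref{Milne lemma 1.} and Lemma \ref{Milne lemma 2.} to reduce the diffusive-boundary problem (\ref{Milne problem.}) with normalization (\ref{Milne normalization}) to the in-flow problem (\ref{Milne problem}), and then applying the decay result of Theorem \ref{Milne Decay Theorem}. No further estimation is needed, exactly as you argue.
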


\newpage

\section{Regularity of $\e$-Milne Problem}

We continue studying the $\e$-Milne problem with in-flow boundary as
\begin{eqnarray}
\left\{ \begin{array}{rcl}\displaystyle \sin\phi\frac{\p
f}{\p\eta}+F(\eta,\psi)\cos\phi\frac{\p
f}{\p\phi}+f-\bar f&=&S(\eta,\phi,\psi),\\
f(0,\phi,\psi)&=& h(\phi,\psi)\ \ \text{for}\
\ \sin\phi>0,\\
f(L,\phi,\psi)&=&f(L,\rr\phi,\psi).
\end{array}
\right.
\end{eqnarray}
Here we already omit the superscript $\e$ and dependence on $(\tau_1,\tau_2)$.
Besides (\ref{Milne bounded}) and (\ref{Milne decay}), we further assume
\begin{eqnarray}\label{Milne bounded.}
\lnm{\frac{\p h}{\p\phi}}+\lnm{\frac{\p h}{\p\psi}}+\lnm{\frac{\p h}{\p\tau_1}}+\lnm{\frac{\p h}{\p\tau_2}}\leq M,
\end{eqnarray}
and
\begin{eqnarray}\label{Milne decay.}
\lnnm{\ue^{K\eta}\frac{\p S}{\p\eta}}
+\lnnm{\ue^{K\eta}\frac{\p S}{\p\phi}}+\lnnm{\ue^{K\eta}\frac{\p S}{\p\psi}}
+\lnnm{\ue^{K\eta}\frac{\p S}{\p\tau_1}}+\lnnm{\ue^{K\eta}\frac{\p S}{\p\tau_2}}&\leq& M,
\end{eqnarray}
for some $M,K>0$. Define a potential function $V(\eta,\psi)$ satisfying $V(0,\psi)=0$ and $\dfrac{\p V}{\p\eta}=-F(\eta,\psi)$. Also, we know $L=\e^{-n}$ for $0<n<\dfrac{2}{5}$.
\begin{lemma}\label{rt lemma 1}
We have
$\ue^{-V(0,\psi)}=1$ and
\begin{eqnarray}
\ue^{-V(L,\psi)}=\bigg(1-\frac{\e^{1-n}}{\rk_1}\bigg)^{\sin^2\psi}
\bigg(1-\frac{\e^{1-n}}{\rk_2}\bigg)^{\cos^2\psi}.
\end{eqnarray}
Also, for $\rk=\max\{R_1, R_2\}$ and $\rk'=\min\{R_1, R_2\}$ which are the maximum and minimum of $\rk_1$ and $\rk_2$, we have
\begin{eqnarray}
\frac{\rk'-\e\eta}{\rk'}\leq\ue^{-V(\eta,\psi)}\leq\frac{\rk-\e\eta}{\rk}.
\end{eqnarray}
\end{lemma}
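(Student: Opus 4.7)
The plan is direct integration followed by a weighted geometric mean observation; this lemma is essentially a computational verification, so there is no serious obstacle, only book-keeping.

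First I would write out $F$ from its definition as
\begin{eqnarray*}
-F(\eta,\psi)=\e\bigg(\frac{\sin^2\psi}{R_1-\e\eta}+\frac{\cos^2\psi}{R_2-\e\eta}\bigg),
\end{eqnarray*}
and integrate the relation $\p_\eta V=-F$ from $0$ to $\eta$ using $V(0,\psi)=0$. Since $\int_0^\eta \frac{\e}{R_i-\e y}\ud{y}=\ln\big(R_i/(R_i-\e\eta)\big)$, this yields the closed form
\begin{eqnarray*}
V(\eta,\psi)=\sin^2\psi\,\ln\bigg(\frac{R_1}{R_1-\e\eta}\bigg)+\cos^2\psi\,\ln\bigg(\frac{R_2}{R_2-\e\eta}\bigg),
\end{eqnarray*}
hence
\begin{eqnarray*}
\ue^{-V(\eta,\psi)}=\bigg(\frac{R_1-\e\eta}{R_1}\bigg)^{\sin^2\psi}\bigg(\frac{R_2-\e\eta}{R_2}\bigg)^{\cos^2\psi}.
\end{eqnarray*}

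Next I would just evaluate at the endpoints. Substituting $\eta=0$ gives $1^{\sin^2\psi}\cdot 1^{\cos^2\psi}=1$, and substituting $\eta=L=\e^{-n}$ gives $\e\eta=\e^{1-n}$, which matches the stated expression for $\ue^{-V(L,\psi)}$ after factoring $R_i$ out of the numerator. This handles the first two assertions.

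For the two-sided bound, the key observation is that because $\sin^2\psi+\cos^2\psi=1$, the quantity $\ue^{-V(\eta,\psi)}$ is a weighted geometric mean of the two positive numbers $a_i=(R_i-\e\eta)/R_i$, with weights summing to $1$. Since the map $R\mapsto (R-\e\eta)/R=1-\e\eta/R$ is monotonically increasing in $R$ (for fixed $\eta\ge 0$ in the relevant range $\e\eta<R_{\min}$), we have $a_{\rk'}=\min(a_1,a_2)$ and $a_{\rk}=\max(a_1,a_2)$. A weighted geometric mean always lies between the minimum and maximum of its entries, so
\begin{eqnarray*}
\frac{\rk'-\e\eta}{\rk'}=\min(a_1,a_2)\le a_1^{\sin^2\psi}a_2^{\cos^2\psi}\le\max(a_1,a_2)=\frac{\rk-\e\eta}{\rk},
\end{eqnarray*}
which is exactly the stated inequality. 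The only thing to verify along the way is that $R_i-\e\eta>0$ throughout $[0,L]$, which follows from $\e L=\e^{1-n}\ll R_{\min}$ once $\e$ is small enough, so all the logarithms and powers are well-defined. This completes the plan.
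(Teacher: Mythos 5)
Your proposal is correct and follows essentially the same route as the paper: integrate $\p_\eta V=-F$ with $V(0,\psi)=0$ to get the explicit formula for $\ue^{-V(\eta,\psi)}$, evaluate at $\eta=0$ and $\eta=L$, and read off the two-sided bound. The only difference is that you spell out the weighted-geometric-mean and monotonicity argument that the paper leaves implicit in ``our result naturally follows,'' which is a welcome but not essentially different elaboration.
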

\begin{proof}
We directly compute
\begin{eqnarray}
V(\eta,\psi)=\sin^2\psi\ln\bigg(\frac{\rk_1}{\rk_1-\e\eta}\bigg)
+\cos^2\psi\ln\bigg(\frac{\rk_2}{\rk_2-\e\eta}\bigg),
\end{eqnarray}
and
\begin{eqnarray}
\ue^{-V(\eta,\psi)}=\bigg(\frac{\rk_1-\e\eta}{\rk_1}\bigg)^{\sin^2\psi}
\bigg(\frac{\rk_2-\e\eta}{\rk_2}\bigg)^{\cos^2\psi}.
\end{eqnarray}
Hence, our result naturally follows.
\end{proof}

\subsection{Preliminaries}

It is easy to see $\v(\eta,\phi,\psi)=f(\eta,\phi,\psi)-f_L$ satisfies the equation
\begin{eqnarray}\label{Milne difference problem}
\left\{
\begin{array}{rcl}\displaystyle
\sin\phi\frac{\p \v}{\p\eta}+F(\eta,\psi)\cos\phi\frac{\p
\v}{\p\phi}+\v-\bar\v&=&S(\eta,\phi,\psi),\\
\v(0,\phi,\psi)&=&p(\phi,\psi)\ \ \text{for}\ \ \sin\phi>0,\\\rule{0ex}{1.0em}
\v(L,\phi,\psi)&=&\v(L,\rr\phi,\psi).
\end{array}
\right.
\end{eqnarray}
where
\begin{eqnarray}
p(\phi,\psi)=h(\phi,\psi)-f_L.
\end{eqnarray}
We intend to estimate the normal, tangential and velocity derivative. This idea is motivated by \cite{Guo.Kim.Tonon.Trescases2013} and \cite{AA007}.
Define a distance function $\zeta(\eta,\phi,\psi)$ as
\begin{eqnarray}\label{weight function}
\zeta(\eta,\phi,\psi)=\Bigg(1-\bigg(\ue^{-V(\eta,\psi)}\cos\phi\bigg)^2\Bigg)^{1/2}.
\end{eqnarray}
Note that the closer $(\eta,\phi,\psi)$ is to the grazing set which satisfies $\eta=0$ and $\sin\phi=0$, the smaller $\zeta$ is. In particular, at grazing set, $\zeta=0$. Also, we have $0\leq\zeta\leq 1$.
\begin{lemma}\label{rt lemma 2}
We have
\begin{eqnarray}
\sin\phi\frac{\p \zeta}{\p\eta}+F(\eta,\psi)\cos\phi\frac{\p
\zeta}{\p\phi}=0.
\end{eqnarray}
\end{lemma}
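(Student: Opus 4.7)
The plan is to recognize that $\zeta$ is a function of a single scalar quantity which is itself conserved along the characteristics of the transport operator $\sin\phi\,\p_\eta + F(\eta,\psi)\cos\phi\,\p_\phi$. Set $A(\eta,\phi,\psi) := \ue^{-V(\eta,\psi)}\cos\phi$, so that $\zeta = (1-A^2)^{1/2}$. Since $\psi$ is a passive parameter for this operator, it suffices to show that $A$ is a first integral, i.e.
\[
\sin\phi\,\frac{\p A}{\p\eta} + F(\eta,\psi)\cos\phi\,\frac{\p A}{\p\phi} = 0,
\]
after which the claim follows from the chain rule applied to $\zeta = (1-A^2)^{1/2}$.

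The verification of this identity is a direct computation. Using the defining relation $\p V/\p\eta = -F(\eta,\psi)$, I would first observe $\p_\eta\ue^{-V} = F(\eta,\psi)\ue^{-V}$, whence
\[
\frac{\p A}{\p\eta} = F(\eta,\psi)\,\ue^{-V(\eta,\psi)}\cos\phi = F(\eta,\psi)\,A, \qquad \frac{\p A}{\p\phi} = -\ue^{-V(\eta,\psi)}\sin\phi.
\]
Substituting into the transport expression gives
\[
\sin\phi\cdot F A + F\cos\phi\cdot(-\ue^{-V}\sin\phi) = F\sin\phi\bigl(A - \ue^{-V}\cos\phi\bigr) = 0,
\]
by definition of $A$. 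Differentiating $\zeta^2 = 1-A^2$ and using the identity for $A$ then yields the result for $\zeta$ (the factor $1/\zeta$ formally appears but cancels, and the identity extends continuously to the grazing set $\zeta=0$).

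There is essentially no obstacle here; the content of the lemma is really a conceptual statement, namely that $\ue^{-V(\eta,\psi)}\cos\phi$ is the conserved quantity for the $\psi$-frozen two-dimensional characteristic ODE
\[
\frac{\ud\phi}{\ud\eta} = F(\eta,\psi)\cot\phi,
\]
and $\zeta$ is built out of it. If I wanted to make this structural meaning explicit, I would note that along any characteristic curve $\phi(\eta)$ of the above ODE, one has $\frac{\ud}{\ud\eta}\bigl[\ue^{-V}\cos\phi\bigr] = \ue^{-V}\bigl(F\cos\phi - F\cos\phi\bigr) = 0$, which is exactly what allows $\zeta$ to play the role of the ``invariant kinetic distance'' referenced in the introduction and in the forthcoming $W^{1,\infty}$ estimates.
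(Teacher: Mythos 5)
Your proof is correct and is essentially the paper's own argument: the paper verifies the identity by differentiating $\zeta$ directly in $\eta$ and $\phi$ (obtaining the same $1/\zeta$-weighted expressions) and checking the cancellation, which is exactly your chain-rule computation routed through the conserved quantity $A=\ue^{-V(\eta,\psi)}\cos\phi$ (the energy $E$ the paper itself uses along characteristics). The only cosmetic difference is that you isolate the first-integral property of $A$ before applying the chain rule, rather than differentiating $\zeta$ in one step.
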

\begin{proof}
We may directly compute
\begin{eqnarray}
\frac{\p \zeta}{\p\eta}&=&\frac{1}{2}\Bigg(1-\bigg(\ue^{-V(\eta,\psi)}\cos\phi\bigg)^2\Bigg)^{-1/2}\bigg(-2\ue^{-2V(\eta,\psi)}\cos^2\phi\bigg)F(\eta,\psi)
=-\frac{\ue^{-2V(\eta,\psi)}F(\eta,\psi)\cos^2\phi}{\zeta},\\
\frac{\p\zeta}{\p\phi}&=&\frac{1}{2}\Bigg(1-\bigg(\ue^{-V(\eta,\psi)}\cos\phi\bigg)^2\Bigg)^{-1/2}\bigg(-2\ue^{-2V(\eta,\psi)}\cos\phi\bigg)(-\sin\phi)
=\frac{\ue^{-2V(\eta,\psi)}\cos\phi\sin\phi}{\zeta}.
\end{eqnarray}
Hence, we know
\begin{eqnarray}
\\
\sin\phi\frac{\p \zeta}{\p\eta}+F(\eta,\psi)\cos\phi\frac{\p\zeta}{\p\phi}&=&
\frac{-\sin\phi\bigg(\ue^{-2V(\eta,\psi)}F(\eta,\psi)\cos^2\phi\bigg)+F(\eta,\psi)\cos\phi\bigg(\ue^{-2V(\eta,\psi)}\cos\phi\sin\phi\bigg)}{\zeta}=0.\no
\end{eqnarray}
\end{proof}

As a matter of fact, we are able to prove some preliminary estimates that are based on the characteristics of $\v$ itself instead of the derivative. In the following, let $0<\d_0<<1$ be a small quantity.

\begin{lemma}\label{pt lemma 1}
Assume (\ref{Milne bounded}), (\ref{Milne decay}), (\ref{Milne bounded.}) and (\ref{Milne decay.}). For $\sin\phi>\d_0$, we have
\begin{eqnarray}
\abs{\sin\phi\frac{\p\v}{\p\eta}(\eta,\phi)}\leq C\bigg(1+\frac{1}{\d_0^3}\bigg).
\end{eqnarray}
\end{lemma}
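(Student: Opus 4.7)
The plan is to use the PDE algebraically and then bound the resulting velocity derivative via the characteristic representation. Solving (\ref{Milne difference problem}) for $\sin\phi\,\p_\eta\v$ gives
\[
\sin\phi\,\p_\eta\v \;=\; -F(\eta,\psi)\cos\phi\,\p_\phi\v \;+\; (-\v+\bar\v+S).
\]
Theorems \ref{Milne LI Theorem} and \ref{Milne Decay Theorem}, combined with the hypotheses (\ref{Milne bounded}) and (\ref{Milne decay}), yield $\lnnm{\v}+\lnnm{\bar\v}+\lnnm{S}\le C$. Since $|F|\ls\e\le C$, it therefore suffices to show $|\p_\phi\v|\le C(1+\d_0^{-3})$ on $\{\sin\phi>\d_0\}$.

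Fix a target $(\eta^*,\phi^*,\psi)$ with $\sin\phi^*>\d_0$ and trace the backward characteristic through it. By Lemma \ref{rt lemma 2} the quantity $\ue^{-V(\eta,\psi)}\cos\phi$ is conserved, so the trajectory reaches $\eta=0$ at a unique $\phi_0$ with $\cos\phi_0=\ue^{-V(\eta^*,\psi)}\cos\phi^*$. Because $V\ge 0$, this forces $\cos\phi(\tilde\eta)\le\cos\phi^*$ and hence $\sin\phi(\tilde\eta)\ge\sin\phi^*>\d_0$ for every $\tilde\eta\in[0,\eta^*]$. Parameterizing the characteristic integral by $\tilde\eta$ (which is essential, so that $\bar\v$ and $S$ appear undifferentiated in $\eta$), Duhamel's formula yields
\[
\v(\eta^*,\phi^*,\psi)\;=\;\ue^{-t_0}p(\phi_0,\psi)+\int_0^{\eta^*}\frac{\ue^{-T(\tilde\eta)}}{\sin\phi(\tilde\eta)}\bigl(\bar\v(\tilde\eta)+S(\tilde\eta,\phi(\tilde\eta),\psi)\bigr)d\tilde\eta,
\]
where $p=h-f_L$, $T(\tilde\eta)=\int_{\tilde\eta}^{\eta^*}dr/\sin\phi(r)$, and $t_0=T(0)$.

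Differentiating this identity in $\phi^*$ at fixed $(\eta^*,\psi)$ and bounding term by term is the core of the argument. Implicit differentiation of the conservation law gives $\p_{\phi^*}\phi(\tilde\eta)=\ue^{V(\tilde\eta)-V(\eta^*)}\sin\phi^*/\sin\phi(\tilde\eta)$, which is bounded by $1$ since $V$ is increasing and $\sin\phi(\tilde\eta)\ge\sin\phi^*$. The same conservation identity yields $|\p_{\phi^*}t_0|\le\eta^*/\d_0^2$ and $|\p_{\phi^*}T(\tilde\eta)|\le(\eta^*-\tilde\eta)/\d_0^2$. In the boundary term $\ue^{-t_0}\p_{\phi^*}t_0\cdot p$ the polynomial factor $\eta^*$ is absorbed by $\ue^{-t_0}\le\ue^{-\eta^*}$, giving a contribution of order $1/\d_0^2$. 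In the integral term, the exponential decay $|\bar\v|+|S|\ls\ue^{-K_0\tilde\eta}$ (from Theorem \ref{Milne Decay Theorem} and (\ref{Milne decay})) paired with $\ue^{-T(\tilde\eta)}\le\ue^{-(\eta^*-\tilde\eta)}$ absorbs every factor of $\eta^*-\tilde\eta$ produced by $\p_{\phi^*}T$. Summing the contributions gives $|\p_\phi\v|\le C(1+\d_0^{-3})$, and the lemma follows from the identity in the first paragraph.

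The main obstacle is keeping the final constant uniform in $\e$: several naive estimates generate factors of $L=\e^{-n}$ coming from the length of the strip, and each such factor must be absorbed either by the time weight (using $t_0\ge\eta^*$) or by the spatial decay of $\bar\v-f_L$ and $S$. Parameterizing the integral by $\tilde\eta$ rather than arclength $s$ is also crucial, since the alternative would require bounding $\p_\eta\bar\v$ and $\p_\eta S$, the first of which is precisely the quantity we are trying to estimate.
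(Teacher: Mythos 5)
Your argument is correct, and it belongs to the same circle of ideas as the proof the paper has in mind (which it omits, deferring to \cite{AA007}): a mild/Duhamel formulation along the characteristics in the non-grazing region, differentiation of that formula, and crude bounds in powers of $\d_0$. The facts you lean on all check out: for $\sin\phi^{*}>\d_0$ the backward characteristic stays non-grazing and reaches $\eta=0$ with $\sin\phi(\tilde\eta)\geq\sin\phi^{*}$ (exactly the monotonicity the paper records in Step 0 of Region I), conservation of $\ue^{-V}\cos\phi$ gives $\abs{\p_{\phi^{*}}\phi(\tilde\eta)}\leq1$, $\abs{\p_{\phi^{*}}T(\tilde\eta)}\leq(\eta^{*}-\tilde\eta)\d_0^{-2}$, and $T(\tilde\eta)\geq\eta^{*}-\tilde\eta$ absorbs all polynomial factors, so the worst contribution is indeed $O(\d_0^{-3})$; the routine terms you do not write out (the $\p_\phi p\cdot\p_{\phi^{*}}\phi_0$ piece of the boundary term and the derivative of the $1/\sin\phi(\tilde\eta)$ prefactor) are of lower order in $\d_0$ and covered by (\ref{Milne bounded.}). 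The mild difference from the \cite{AA007}-style route is the order of operations: there one estimates $\sin\phi\,\p_\eta\v$ directly from the characteristic representation, whereas you first bound $\p_\phi\v$ on $\{\sin\phi>\d_0\}$ and then recover $\sin\phi\,\p_\eta\v$ from the equation, paying only the harmless factor $\abs{F}\leq C\e$. A pleasant byproduct of parameterizing by $\tilde\eta$ is that $\bar\v$ and $S$ are never differentiated in $\eta$, so your bound does not involve $\lnnm{\ue^{K\eta}\p_\eta S}$, whereas the version of the lemma the paper actually invokes in Step 1 of the Region I estimate carries that norm; since that norm is finite by (\ref{Milne decay.}), this is a cosmetic strengthening rather than a discrepancy.
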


\begin{lemma}\label{pt lemma 2}
Assume (\ref{Milne bounded}), (\ref{Milne decay}), (\ref{Milne bounded.}) and (\ref{Milne decay.}). For $\sin\phi<0$ with $\abs{E(\eta,\phi)}\leq \ue^{-V(L)}$, if it satisfies $\min_{\phi'}\sin\phi'\geq\d_0$ where $(\eta',\phi')$ are on the same characteristics as $(\eta,\phi)$ with $\sin\phi'\geq0$, then we have
\begin{eqnarray}
\abs{\sin\phi\frac{\p\v}{\p\eta}(\eta,\phi)}\leq C\bigg(1+\frac{1}{\d_0^3}\bigg).
\end{eqnarray}
\end{lemma}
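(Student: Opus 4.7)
The plan is to combine the equation (\ref{Milne difference problem}) with a characteristic analysis of $\p_\phi\v$. First I would clarify the geometry of the characteristic through $(\eta,\phi,\psi)$. By Lemma \ref{rt lemma 2}, the quantity $E := \ue^{-V(\eta,\psi)}\cos\phi$ is conserved along characteristics, and since $V$ is monotone increasing in $\eta$, $\abs{\sin\phi}$ is monotone decreasing in $\eta$ along the characteristic. The assumption $\abs{E(\eta,\phi)}\leq\ue^{-V(L,\psi)}$ rules out turning points inside $[0,L]$, so the backward characteristic from $(\eta,\phi)$ with $\sin\phi<0$ first reaches $\eta = L$, reflects specularly via $\phi\to-\phi$, and then descends to $\eta=0$. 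Thus the full backward trajectory is $(\eta,\phi)\to(L,-\phi_L^\ast)\to(L,\phi_L^\ast)\to(0,\phi_0^\ast)$ with $\cos\phi_0^\ast = E$ and $\sin\phi_0^\ast>0$. The minimum of $\sin\phi'$ on the $\sin\phi'\geq 0$ portion is attained at $\eta=L$, so the hypothesis forces $\sin\phi_L^\ast\geq\d_0$; by reflection symmetry and the monotonicity above, $\abs{\sin\phi'}\geq\d_0$ everywhere on the trajectory.

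Next, rearranging (\ref{Milne difference problem}) yields
\begin{equation*}
\sin\phi\,\p_\eta\v \;=\; S - \v + \bar\v - F(\eta,\psi)\cos\phi\,\p_\phi\v.
\end{equation*}
Theorem \ref{Milne LI Theorem} bounds $\lnnm{\v}$ and hence $\lnnm{\bar\v}$, while (\ref{Milne decay}) bounds $\lnnm{S}$, so the first three terms are $O(1)$. Since $\abs{F}\leq C\e$, it suffices to bound $\abs{\p_\phi\v}$ by $C(1+\d_0^{-3})$ to conclude. To do so, I would write $\v$ along the characteristic via Duhamel,
\begin{equation*}
\v(\eta,\phi,\psi) = p(\phi_0^\ast)\ue^{-s_{\mathrm{fin}}} + \int_0^{s_{\mathrm{fin}}}\ue^{-(s_{\mathrm{fin}}-s')}\bigl(\bar\v(\eta(s')) + S(\eta(s'),\phi(s'),\psi)\bigr)\,ds',
\end{equation*}
observing that the specular reflection at $\eta=L$ preserves $\v$, so the formula is continuous across it. Differentiating in $\phi$ at fixed $\eta,\psi$, a chain rule using $\cos\phi_0^\ast = \ue^{-V(\eta,\psi)}\cos\phi$ gives $\p_\phi\phi_0^\ast = \ue^{-V(\eta,\psi)}\sin\phi/\sin\phi_0^\ast$, while $\p_\phi s_{\mathrm{fin}}$ and the $\phi$-sensitivities of the running $(\eta(s'),\phi(s'))$ are likewise expressible through $1/\sin\phi(\eta')$ factors along the trajectory. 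The bound $\abs{\sin\phi'}\geq\d_0$ ensures each such factor is at most $1/\d_0$; combining with $\lnm{\p_\phi h}\leq M$, $\lnnm{\ue^{K\eta}\p_\phi S}\leq M$ from (\ref{Milne bounded.})--(\ref{Milne decay.}), and the $L^\infty$ bound on $\bar\v$, and tracking that at most three $1/\d_0$ factors accumulate (from $\p_\phi\phi_0^\ast$, from $\p_\phi s_{\mathrm{fin}}$, and from the integrand's dependence on $\phi(s')$), one arrives at $\abs{\p_\phi\v(\eta,\phi)}\leq C(1+\d_0^{-3})$.

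Plugging back into the rearranged equation gives $\abs{\sin\phi\,\p_\eta\v(\eta,\phi)}\leq C + C\e(1+\d_0^{-3})\leq C(1+\d_0^{-3})$, as desired. The main technical obstacle is the careful bookkeeping through the reflection at $\eta=L$: although $\v$ itself is continuous there, the dependence of the reflection-point angle $\phi_L^\ast$ on the terminal $\phi$ (traced via conservation of $E$) must be computed and then combined with the subsequent propagation without producing a higher power than $\d_0^{-3}$. A secondary subtlety is that $\bar\v$ has no explicit $\phi$ dependence, yet its $\phi$-sensitivity through the trajectory $\eta(s')$ must be controlled using only the $L^\infty$ bound from Theorem \ref{Milne LI Theorem} and not an (as yet unavailable) derivative bound on $\bar\v$.
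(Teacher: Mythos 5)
Your geometric reading of the characteristic is correct and matches the intended setup: $E=\ue^{-V(\eta,\psi)}\cos\phi$ is conserved, $V$ is increasing in $\eta$, so $\abs{\sin\phi'}$ decreases along the characteristic, the condition $\abs{E}\leq \ue^{-V(L,\psi)}$ excludes a turning point, the minimum of $\sin\phi'$ on the incoming branch is at $\eta'=L$, and hence $\abs{\sin\phi'}\geq\d_0$ on the whole reflected trajectory, including at $(\eta,\phi)$ itself. The reduction via the equation, $\sin\phi\,\p_\eta\v=S-\v+\bar\v-F\cos\phi\,\p_\phi\v$ together with $\lnnm{\v}\leq C$, $\lnnm{S}\leq C$ and $\abs{F}\leq C\e$, is also legitimate. (The paper itself omits this proof and defers to \cite{AA007}; the intended argument works directly with the mild formulation of $\v$ parametrized by $\eta'$, as set up in Section 4.)

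The genuine gap is in your bound on $\p_\phi\v$. You differentiate a Duhamel formula parametrized by the time variable $s'$, so the chain rule forces you to differentiate $\bar\v(\eta(s'))$ in $\phi$, producing $\p_\eta\bar\v\cdot\p_\phi\eta(s')$. No estimate on $\p_\eta\bar\v$ is available at this stage --- it is exactly the quantity that the later iteration provides (it appears as a \emph{hypothesis} in Lemma \ref{pt lemma 6}), and the entire purpose of these preliminary lemmas is to avoid it. You flag this as a ``secondary subtlety'' to be ``controlled using only the $L^\infty$ bound,'' but no mechanism is given, and none exists for the formula as written: $\lnnm{\bar\v}$ alone does not control $\p_\phi\bigl[\bar\v(\eta(s'))\bigr]$. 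The repair is to parametrize the characteristic integrals by $\eta'$ rather than $s'$ (precisely the form of the Region I/II mild formulations in the paper): then the nonlocal term enters as $\bar\v(\eta')$ with no dependence on the base point $(\eta,\phi)$, and all $\phi$-dependence sits in the Jacobian $1/\sin\phi'(\eta')$, in $\phi'(\eta')$ inside $S$, and in the exponent $G_{\cdot,\cdot,\psi}$, each of whose $\phi$-derivatives is controlled by powers of $1/\d_0$ via your observation $\sin\phi'\geq\d_0$; alternatively, one can integrate by parts in $s'$, but that must be carried out explicitly. A second unaddressed point: when $\p_\phi$ falls on the accumulated exponent $G$, the resulting factor has size of order $(\eta'-\eta)/\d_0^{3}$ with trajectory length up to $2L=2\e^{-n}$; the estimate survives only because $G$ dominates the traversed length, so the damping $\exp(-G)$ absorbs it (e.g.\ $\sup_{x>0}x\ue^{-x}<\infty$). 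Without invoking this damping, the claim that ``at most three $1/\d_0$ factors accumulate'' is not justified. With these two repairs your route (first $\abs{\p_\phi\v}\leq C(1+\d_0^{-3})$, then the equation with the prefactor $\abs{F}\leq C\e$) does close and is a reasonable variant of the characteristic estimate the paper intends.
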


\begin{lemma}\label{pt lemma 6}
Assume (\ref{Milne bounded}), (\ref{Milne decay}), (\ref{Milne bounded.}), (\ref{Milne decay.}) and $\abs{\dfrac{\p\bar\v}{\p\eta}}\leq C(1+\abs{\ln(\e)}+\abs{\ln(\eta)})$. For $\sin\phi\leq0$ and $\abs{E(\eta,\phi)}\geq \ue^{-V(L)}$, we have
\begin{eqnarray}
\abs{\sin\phi\frac{\p\v}{\p\eta}(\eta,\phi)}\leq C(1+\abs{\ln(\e)}).
\end{eqnarray}
\end{lemma}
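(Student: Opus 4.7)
The plan is to use the characteristic representation of $\v$ and differentiate in $\eta$ at fixed $(\phi,\psi)$. The condition $\abs{E(\eta,\phi)}\geq\ue^{-V(L,\psi)}$ combined with $\sin\phi\leq 0$ forces the backward characteristic through $(\eta,\phi)$ to rise to a grazing point at some $\eta_g\in(0,L]$ with $\ue^{-V(\eta_g,\psi)}=\abs{E}$, and then to descend to the inflow boundary at $(0,\phi_0)$ with $\cos\phi_0=E$, without any reflection at $\eta=L$. I parameterize this trajectory by time $s\in[0,T]$, with $s=0$ at inflow, $s=T_g$ at grazing, and $s=T$ at the target.

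Along the characteristic, the equation becomes $\v_s+\v=\bar\v+S$, yielding the Duhamel formula
\[
\v(\eta,\phi,\psi)=\ue^{-T}p(\phi_0,\psi)+\int_0^T \ue^{-(T-\tau)}\bigl[\bar\v(\eta(\tau))+S(\eta(\tau),\phi(\tau),\psi)\bigr]\,d\tau.
\]
Changing variables via $d\tau=d\eta'/\abs{\sin\phi(\eta')}$ on each branch expresses both $T$ and the integral explicitly in terms of $(\eta,\phi,\psi)$ through the conserved $E$. Implicit differentiation using $\p_\eta E=FE$ gives $\p_\eta \phi_0=-FE/\sin\phi_0$ and $\p_\eta \eta_g=F/F(\eta_g,\psi)$, and one can then differentiate the right-hand side term by term.

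Multiplying through by $\sin\phi$, the boundary term $\sin\phi\cdot\ue^{-T}p(\phi_0)$ together with its $\eta$-derivative is controlled by combining the explicit decay $\ue^{-T}$ with the conservation identity relating $\abs{\sin\phi}$ at the target to $\abs{\sin\phi_0}$ at inflow (cf.\ Lemma \ref{rt lemma 1}), giving an $O(1)$ contribution. Inside the integral, the part picked up from $\p_\eta T$ evaluated against $(\bar\v+S)(\eta(\tau))$ is handled by the integrable square-root singularity of $1/\sqrt{1-E^2\ue^{2V(\eta')}}$ near grazing, together with the exponential decay of $S$ and the uniform bound of Theorem \ref{Milne Decay Theorem}. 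The part picked up from $\p_\eta \bar\v(\eta')$ inside the integrand uses the hypothesis $\abs{\p_\eta \bar\v(\eta')}\leq C(1+\abs{\ln\e}+\abs{\ln\eta'})$; after change of variables, $\abs{\ln\eta'}$ remains integrable on $[0,L]$ against the trajectory measure, while the $\abs{\ln\e}$ factor propagates as a uniform constant, producing the claimed bound $\abs{\sin\phi\,\p_\eta\v}\leq C(1+\abs{\ln\e})$.

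The main obstacle is controlling the cancellations between the two branches of the characteristic near the grazing point. The change of variables introduces $1/\abs{\sin\phi(\eta')}$ weights, whose $\eta$-derivatives could produce $1/\abs{\sin\phi(\eta')}^3$ singularities that are a priori non-integrable. The saving feature is that the upper-branch and lower-branch contributions are nearly symmetric about $\eta_g$, so the leading divergent parts cancel once one multiplies by $\sin\phi$ at the target, leaving an integrable $\sqrt{\abs{F(\eta_g,\psi)(\eta_g-\eta')}}$ remainder. Organizing this cancellation carefully, while simultaneously absorbing the logarithmic singularity in $\p_\eta \bar\v$ and the $\eta_g$-dependence of $\p_\eta \eta_g$, constitutes the bulk of the argument.
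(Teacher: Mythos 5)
Your overall route is the one the paper intends: the paper gives no explicit proof of this lemma, deferring to the 2D argument of \cite{AA007} ``with obvious modifications,'' and that argument is indeed a Duhamel representation along the Region-III characteristic (in at $(0,\phi_0)$, grazing at $\eta^+$, back down through $(\eta,\phi)$), differentiated in $\eta$ through the conserved energy, weighted by $\sin\phi$, with the a priori bound on $\partial_\eta\bar{\v}$ and the decay of $S$ fed into the resulting integrals. Your bookkeeping identities $\partial_\eta E=FE$, $\partial_\eta\phi_0=-FE/\sin\phi_0$, $\partial_\eta\eta^+=F(\eta,\psi)/F(\eta^+,\psi)$ are the correct ingredients of that computation.

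However, the step you yourself call the bulk of the argument is misdiagnosed, and as written this is a genuine gap. Term-by-term (Leibniz) differentiation of the mild formulation is not legitimate here: the integrand carries $1/\sin\phi'(\eta')\sim c\,(\eta^+-\eta')^{-1/2}$, so the moving-endpoint contribution at $\eta'=\eta^+$ is formally infinite, while the integral of $\partial_\eta\bigl(1/\sin\phi'(\eta')\bigr)=FE^2\ue^{2V(\eta')}/\sin^3\phi'(\eta')\sim(\eta^+-\eta')^{-3/2}$ diverges. The rescue cannot be a cancellation ``between the upper and lower branches'': near the turning point both branch integrals end at $\eta^+$, their integrands approach the same value $(\bar\v+S)(\eta^+,0)$ with the same exponential weight $\exp(-G_{\eta^+,\eta,\psi})$, and the differentiated weights have the same sign, so the two branches reinforce rather than cancel. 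Nor can multiplying by the target factor $\sin\phi$ help with this, since that factor is constant in $\eta'$ and cannot cure a divergent $\eta'$-integral; what $\sin\phi$ actually controls are the terms proportional to $1/\sin\phi'(\eta)=1/\abs{\sin\phi}$ coming from the lower limit $\eta'=\eta$ and from $\partial_\eta G_{\cdot,\eta,\psi}$. The true cancellation is internal to each branch, between the singular endpoint term and the divergent differentiated-weight integral, and the standard way to realize it is to remove the square-root singularity \emph{before} differentiating, e.g.\ by switching the integration variable from $\eta'$ to $\phi'$ via $d\eta'/\sin\phi'=d\phi'/\bigl(F(\eta',\psi)\cos\phi'\bigr)$ (or to a flattened time variable), so that the turning point becomes a fixed, regular endpoint; only then do the derivatives fall on $\bar\v$, $S$ and smooth coefficients, the hypothesis $\abs{\partial_\eta\bar\v}\leq C(1+\abs{\ln\e}+\abs{\ln\eta})$ can be invoked, and the logarithmic integrals (handled as in the paper's estimates of $I_5$ and $II_4$) yield the stated bound $C(1+\abs{\ln\e})$. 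Until that reorganization is actually carried out, your proposal asserts, rather than proves, precisely the estimate the lemma claims.
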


The proofs of Lemma \ref{pt lemma 1}, Lemma \ref{pt lemma 2} and Lemma \ref{pt lemma 6} are similar to those in \cite{AA007} with obvious modifications, so we omit the details here.

\begin{remark}
Estimates in Lemma \ref{pt lemma 1}, Lemma \ref{pt lemma 2} and Lemma \ref{pt lemma 6} can provide pointwise bounds of derivatives. However, they are not uniform estimates due to presence of $\d_0$ and $\ln(\e)$. We need weighted $L^{\infty}$ estimates of derivatives to close the proof. Also, the estimate $\abs{\dfrac{\p\bar\v}{\p\eta}}\leq C(1+\abs{\ln(\e)}+\abs{\ln(\eta)})$ are not known a priori, so we need an iteration argument.
\end{remark}

\subsection{Mild Formulation of Normal Derivative}

In this and next subsection, we will prove stronger a priori estimates of derivatives. Consider the $\e$-transport problem for $\a=\zeta\dfrac{\p\v}{\p\eta}$ as
\begin{eqnarray}\label{Milne infinite problem LI}
\left\{
\begin{array}{rcl}\displaystyle
\sin\phi\frac{\p\a}{\p\eta}+F(\eta,\psi)\cos\phi\frac{\p
\a}{\p\phi}+\a&=&\tilde\a+S_{\a},\\
\a(0,\phi,\psi)&=&p_{\a}(\phi,\psi)\ \ \text{for}\ \ \sin\phi>0,\\
\a(L,\phi,\psi)&=&\a(L,\rr\phi,\psi),
\end{array}
\right.
\end{eqnarray}
where $p_{\a}$ and $S_{\a}$ will be specified later with
\begin{eqnarray}
\tilde\a(\eta,\phi,\psi)=\frac{1}{4\pi}\int_{-\pi}^{\pi}\int_{-\pi/2}^{\pi/2}\frac{\zeta(\eta,\phi,\psi)}{\zeta(\eta,\phi_{\ast},\psi)}
\a(\eta,\phi_{\ast},\psi)\cos\phi_{\ast}\ud{\phi_{\ast}}\ud{\psi}.
\end{eqnarray}

\begin{lemma}\label{pt lemma 3}
We have
\begin{eqnarray}
\lnnm{\a}&\leq&C\bigg(\lnm{p_{\a}}+\lnnm{S_{\a}}\bigg)\\
&&+C\abs{\ln(\e)}^8\bigg(\lnnm{\v}+\lnm{\frac{\p p}{\p\phi}}+\lnnm{S}+\lnnm{\frac{\p S}{\p\phi}}+\lnnm{\frac{\p S}{\p\eta}}\bigg).\no
\end{eqnarray}
\end{lemma}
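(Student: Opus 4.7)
The approach is the Duhamel-characteristics method already used for the scalar equation in \cite{AA007}, lifted to the derivative equation with the geometric weight $\zeta$. Because $\psi$ is a parameter in (\ref{Milne infinite problem LI}) and $\zeta$ is conserved along characteristics of $\sin\phi\,\p_\eta+F\cos\phi\,\p_\phi$ (Lemma \ref{rt lemma 2}), the characteristic curves in $(\eta,\phi)$ are well-defined: starting from $(\eta,\phi,\psi)$ and running backwards, a curve either exits through the inflow boundary $\{\eta=0,\,\sin\phi>0\}$ or reaches $\eta=L$ and continues by specular reflection $\phi\mapsto\rr\phi$. Integrating the damping $e^{-t}$ along such a curve yields a mild formulation of the form
\[
\a(\eta,\phi,\psi)=e^{-t_{0}}\,p_{\a}(\phi^{0},\psi)+\int_{0}^{t_{0}}e^{-(t_{0}-s)}\bigl(\tilde\a+S_{\a}\bigr)(\eta(s),\phi(s),\psi)\,ds,
\]
in which the boundary and source terms immediately produce the clean contribution $C(\lnm{p_{\a}}+\lnnm{S_{\a}})$. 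Everything else hinges on bounding the nonlocal term $\tilde\a$, whose kernel $\zeta(\eta,\phi,\psi)/\zeta(\eta,\phi_{\ast},\psi)$ is singular as $(\eta,\phi_{\ast},\psi)$ approaches the grazing set.

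The plan for $\tilde\a$ is Guo's double-Duhamel trick. I would substitute the same mild formula once more for $\a(\eta,\phi_{\ast},\psi)$ inside $\tilde\a$. The $p_{\a}$- and $S_{\a}$-contributions from this second iterate again feed the first term on the right-hand side; what remains is an iterated integral containing two $\tilde\a$-kernels together with explicit $\v$-dependent sources coming from the definitions of $p_{\a}$ and $S_{\a}$ (which encode $\p p/\p\phi$ and the $\p_\eta S$, $\p_\phi S$ combinations appearing in the claim). For the residual $\tilde\a\tilde\a$-term I would split the inner $\phi_{\ast}$-integration by a threshold $\sin\phi_{\ast}\gtrless\d_{0}$. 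On the non-grazing piece $|\sin\phi_{\ast}|>\d_{0}$, the preliminary pointwise bounds of Lemma \ref{pt lemma 1} and Lemma \ref{pt lemma 2} control $\zeta\,\p_\eta\v$ by $C(1+\d_{0}^{-3})$ and $\zeta(\eta,\phi_{\ast},\psi)\gs\d_{0}$ tames the denominator. On the grazing piece $|\sin\phi_{\ast}|\leq\d_{0}$, whose $\phi_{\ast}$-measure is $O(\d_{0})$, one invokes Lemma \ref{pt lemma 6} at the cost of a factor $1+\abs{\ln\e}$; its hypothesis $\abs{\p_{\eta}\bar\v}\leq C(1+\abs{\ln\e}+\abs{\ln\eta})$ must be prepared first by a short bootstrap on $\bar\v$ using the already established $\lnnm{\v}\leq C$.

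The main obstacle is the accounting of logarithmic losses. Each Duhamel layer, each use of Lemma \ref{pt lemma 6}, each integration of $\abs{\ln\eta}$ against the exponential damping along a characteristic segment, and each optimization of $\d_{0}$ (chosen as a negative power of $\abs{\ln\e}$) contributes a bounded power of $\abs{\ln\e}$. After a finite number of iterations of the double-Duhamel substitution the surviving $\tilde\a$-piece comes with a coefficient small enough (say $o(1)$ in $\e$) to be absorbed into $\lnnm{\a}$ on the left-hand side, while the remaining terms organize exactly into the stated form $C\abs{\ln\e}^{8}(\lnnm{\v}+\lnm{\p p/\p\phi}+\lnnm{S}+\lnnm{\p S/\p\phi}+\lnnm{\p S/\p\eta})$. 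The power $8$ is not sharp; it is the outcome of summing the geometric losses through the iteration and is the reason the constant is polylogarithmic rather than uniformly bounded.
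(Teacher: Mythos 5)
Your skeleton (mild formulation along characteristics, trivial bounds for the $p_{\a}$- and $S_{\a}$-contributions, a grazing/non-grazing split in $\phi_{\ast}$ with $\d_0$ eventually chosen as a negative power of $\abs{\ln\e}$) matches the paper, but the treatment of the grazing piece — which is the heart of the lemma — does not work as you describe. You propose to control the contribution of $\abs{\sin\phi_{\ast}}\leq\d_0$ by invoking Lemma \ref{pt lemma 6} "at the cost of $1+\abs{\ln\e}$". First, Lemma \ref{pt lemma 6} only applies to Region III points ($\sin\phi\leq0$, $\abs{E}\geq \ue^{-V(L)}$), not to general near-grazing $\phi_{\ast}$. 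Second, even where it applies it bounds $\sin\phi_{\ast}\,\p_{\eta}\v$, so the integrand in $\tilde\a$ is only controlled by $(1+\abs{\ln\e})/\abs{\sin\phi_{\ast}}$, and $\int_{\abs{\sin\phi_{\ast}}\leq\d_0}\abs{\sin\phi_{\ast}}^{-1}\cos\phi_{\ast}\,\ud\phi_{\ast}$ diverges — the "measure $O(\d_0)$" argument does not close. Third, its hypothesis $\abs{\p_{\eta}\bar\v}\leq C(1+\abs{\ln\e}+\abs{\ln\eta})$ cannot be "prepared by a short bootstrap from $\lnnm{\v}\leq C$": an $L^{\infty}$ bound on $\v$ gives no control of $\p_{\eta}\bar\v$; in the paper this bound itself comes from $\lnnm{\a}$ via $\p_{\eta}\bar\v=\frac{1}{4\pi}\iint(\a/\zeta)\cos\phi_{\ast}$ and $\int\zeta^{-1}\ud{\phi_{\ast}}\ls 1+\abs{\ln\e}+\abs{\ln\eta}$, which is exactly the circularity the paper avoids inside this lemma by never using Lemma \ref{pt lemma 6} here. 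What the paper does on the grazing set is keep $\a$ itself, bound the kernel ratio $\zeta(\eta',\phi',\psi)/\zeta(\eta',\phi_{\ast},\psi)$ using $\zeta\ls\sqrt{\e\eta'}+\sin\phi'$ and $\zeta\gs\sqrt{\e\eta'}$, and exploit the characteristic damping (which, for grazing outer angle, yields a factor $\ls\sqrt{\sin\phi}\leq\sqrt{\d_0}$ in the cases $I_5$, $II_4$), producing terms $C\d\lnnm{\a}+C(1+\abs{\ln\e})\sqrt{\d_0}\lnnm{\a}$ that are absorbed after choosing $\d$ small and $\d_0=\d\abs{\ln\e}^{-2}$.

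The second gap is the claimed mechanism for smallness and for the exponent $8$. The double-Duhamel iteration does not by itself make the surviving $\tilde\a$-piece $o(1)$: the operator $\a\mapsto\tilde\a$ is not a contraction on $L^{\infty}$ (its kernel carries the $\zeta^{-1}$ logarithmic singularity), and in the paper no iteration of the mild formula is used in this lemma — smallness comes solely from the grazing-set kernel bounds and damping just described, while the non-grazing inner set is handled not by Lemmas \ref{pt lemma 1}–\ref{pt lemma 2} but by substituting the equation for $\p_{\eta'}\v$ and integrating by parts in $\phi_{\ast}$, which is precisely what generates the $\lnnm{\v}+\lnnm{S}$ terms in the statement (your route via the pointwise lemmas for the inner variable could partially replace this, at the price of an extra $\d_0^{-4}$, but it does not rescue the grazing piece). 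Finally, the power $\abs{\ln\e}^{8}$ is not "the outcome of summing geometric losses through the iteration"; it is simply $\d_0^{-4}=\d^{-4}\abs{\ln\e}^{8}$ coming from the single application of Lemmas \ref{pt lemma 1}–\ref{pt lemma 2} on the set $\sin\phi\geq\d_0$ after the absorption step. As written, the proposal therefore has a genuine gap at the grazing-set estimate and an unsubstantiated absorption mechanism.
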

The rest of this subsection will be devoted to the proof of this lemma. We first introduce some notation.
Define the energy as before
\begin{eqnarray}
E(\eta,\phi,\psi)=\ue^{-V(\eta,\psi)}\cos\phi .
\end{eqnarray}
Along the characteristics, where this energy is conserved and $\zeta$ is a constant, the equation can be simplified as follows:
\begin{eqnarray}
\sin\phi\frac{\ud{\a}}{\ud{\eta}}+\a=\tilde\a+S_{\a}.
\end{eqnarray}
An implicit function
$\eta^+(\eta,\phi,\psi)$ can be determined through
\begin{eqnarray}
\abs{E(\eta,\phi,\psi)}=\ue^{-V(\eta^+,\psi)}.
\end{eqnarray}
which means $(\eta^+,\phi_0,\psi)$ with $\sin\phi_0=0$ is on the same characteristics as $(\eta,\phi,\psi)$.
Define the quantities for $0\leq\eta'\leq\eta^+$ as follows:
\begin{eqnarray}
\phi'(\phi,\eta,\eta',\psi)&=&\cos^{-1}(\ue^{V(\eta',\psi)-V(\eta,\psi)}\cos\phi ),\\
\rr\phi'(\phi,\eta,\eta',\psi)&=&-\cos^{-1}(\ue^{V(\eta',\psi)-V(\eta,\psi)}\cos\phi )=-\phi'(\phi,\eta,\eta',\psi),
\end{eqnarray}
where the inverse trigonometric function can be defined
single-valued in the domain $[0,\pi/2]$ and the quantities are always well-defined due to the monotonicity of $V$. Note that $\sin\phi'\geq0$, even if $\sin\phi<0$.
Finally we put
\begin{eqnarray}
G_{\eta,\eta',\psi}(\phi)&=&\int_{\eta'}^{\eta}\frac{1}{\sin(\phi'(\phi,\eta,\xi,\psi))}\ud{\xi}.
\end{eqnarray}
Similar to $\e$-Milne problem, we can define the solution along the characteristics as follows:
\begin{eqnarray}
\a(\eta,\phi,\psi)=\k[p_{\a}]+\t[\tilde\a+S_{\a}],
\end{eqnarray}
where\\
\ \\
Region I:\\
For $\sin\phi>0$,
\begin{eqnarray}
\k[p_{\a}]&=&p_{\a}(\phi'(0),\psi)\exp(-G_{\eta,0,\psi})\\
\t[\tilde\a+S_{\a}]&=&\int_0^{\eta}\frac{(\tilde\a+S_{\a})(\eta',\phi'(\eta'),\psi)}{\sin(\phi'(\eta'))}\exp(-G_{\eta,\eta',\psi})\ud{\eta'}.
\end{eqnarray}
\ \\
Region II:\\
For $\sin\phi<0$ and $\abs{E(\eta,\phi,\psi)}\leq \ue^{-V(L,\psi)}$,
\begin{eqnarray}
\k[p_{\a}]&=&p_{\a}(\phi'(0),\psi)\exp(-G_{L,0,\psi}-G_{L,\eta,\psi})\\
\t[\tilde\a+S_{\a}]&=&\int_0^{L}\frac{(\tilde\a+S)(\eta',\phi'(\eta'),\psi)}{\sin(\phi'(\eta'))}
\exp(-G_{L,\eta',\psi}-G_{L,\eta,\psi})\ud{\eta'}\\
&&+\int_{\eta}^{L}\frac{(\tilde\a+S)(\eta',\rr\phi'(\eta'),\psi)}{\sin(\phi'(\eta'))}\exp(-G_{\eta',\eta,\psi})\ud{\eta'}\nonumber.
\end{eqnarray}
\ \\
Region III:\\
For $\sin\phi<0$ and $\abs{E(\eta,\phi,\psi)}\geq \ue^{-V(L,\psi)}$,
\begin{eqnarray}
\k[p_{\a}]&=&p_{\a}(\phi'(\phi,\eta,0),\psi)\exp(-G_{\eta^+,0,\psi}-G_{\eta^+,\eta,\psi})\\
\t[\tilde\a+S_{\a}]&=&\int_0^{\eta^+}\frac{(\tilde\a+S_{\a})(\eta',\phi'(\eta'),\psi)}{\sin(\phi'(\eta'))}
\exp(-G_{\eta^+,\eta',\psi}-G_{\eta^+,\eta,\psi})\ud{\eta'}\\
&&+
\int_{\eta}^{\eta^+}\frac{(\tilde\a+S_{\a})(\eta',\rr\phi'(\eta'),\psi)}{\sin(\phi'(\eta'))}\exp(-G_{\eta',\eta,\psi})\ud{\eta'}\nonumber.
\end{eqnarray}
Then we need to estimate $\k[p_{\a}]$ and $\t[\tilde\a+S_{\a}]$ in each region. We assume $0<\d<<1$ and $0<\d_0<<1$ are small quantities which will be determined later.

\subsubsection{Region I: $\sin\phi>0$}

A direct computation reveals
\begin{eqnarray}
\abs{\k[p_{\a}]}&\leq&\lnm{p_{\a}},\\
\abs{\t[S_{\a}]}&\leq&\lnnm{S_{\a}}.
\end{eqnarray}
Hence, we only need to estimate $I=\t[\tilde\a]$. We divide it into several steps:\\
\ \\
Step 0: Preliminaries.\\
We have
\begin{eqnarray}
E(\eta',\phi')=\bigg(\frac{\rk_1-\e\eta'}{\rk_1}\bigg)^{\sin^2\psi}\bigg(\frac{\rk_2-\e\eta'}{\rk_2}\bigg)^{\cos^2\psi}\cos\phi'.
\end{eqnarray}
We can directly obtain
\begin{eqnarray}\label{pt 01}
\zeta(\eta',\phi',\psi)&\leq&\frac{1}{\rk'}\sqrt{\rk'^2-\bigg((\rk'-\e\eta')\cos\phi'\bigg)^2}
=\frac{1}{\rk'}\sqrt{\rk'^2-(\rk'-\e\eta')^2+(\rk'-\e\eta')^2\sin^2\phi'},\\
&\leq& \frac{\sqrt{\rk'^2-(\rk'-\e\eta')^2}+\sqrt{(\rk'-\e\eta')^2\sin^2\phi'}}{\rk'}\leq C\bigg(\sqrt{\e\eta'}+\sin\phi'\bigg),\no
\end{eqnarray}
and
\begin{eqnarray}\label{pt 02}
\zeta(\eta',\phi',\psi)\geq\frac{1}{\rk}\sqrt{\rk^2-(\rk-\e\eta')^2}\geq C\sqrt{\e\eta'}.
\end{eqnarray}
Also, we know for $0\leq\eta'\leq\eta$,
\begin{eqnarray}
\sin\phi'&=&\sqrt{1-\cos^2\phi'}\leq\sqrt{1-\bigg(\frac{\rk'-\e\eta}{\rk'-\e\eta'}\bigg)^2\cos^2\phi}\\
&=&\frac{\sqrt{(\rk'-\e\eta')^2\sin^2\phi+(2\rk'-\e\eta-\e\eta')(\e\eta-\e\eta')\cos^2\phi}}{\rk'-\e\eta'}.
\end{eqnarray}
Since
\begin{eqnarray}
0\leq(2\rk'-\e\eta-\e\eta')(\e\eta-\e\eta')\cos^2\phi\leq 2\rk'\e(\eta-\eta'),
\end{eqnarray}
we have
\begin{eqnarray}
\sin\phi\leq\sin\phi'
\leq2\sqrt{\sin^2\phi+\e(\eta-\eta')},
\end{eqnarray}
which means
\begin{eqnarray}
\frac{1}{2\sqrt{\sin^2\phi+\e(\eta-\eta')}}\leq\frac{1}{\sin\phi'}
\leq\frac{1}{\sin\phi}.
\end{eqnarray}
Therefore,
\begin{eqnarray}\label{pt 03}
-\int_{\eta'}^{\eta}\frac{1}{\sin\phi'(y)}\ud{y}&\leq& -\int_{\eta'}^{\eta}\frac{1}{2\sqrt{\sin^2\phi+\e(\eta-y)}}\ud{y}\\
&=&\frac{1}{\e}\bigg(\sin\phi-\sqrt{\sin^2\phi+\e(\eta-\eta')}\bigg)\no\\
&=&-\frac{\eta-\eta'}{\sin\phi+\sqrt{\sin^2\phi+\e(\eta-\eta')}}\no\\
&\leq&-\frac{\eta-\eta'}{2\sqrt{\sin^2\phi+\e(\eta-\eta')}}.\no
\end{eqnarray}
Define a cut-off function $\chi\in C^{\infty}[-\pi,\pi]$ satisfying
\begin{eqnarray}
\chi(\phi)=\left\{
\begin{array}{ll}
1&\text{for}\ \ \abs{\sin\phi}\leq\d,\\
0&\text{for}\ \ \abs{\sin\phi}\geq2\d,
\end{array}
\right.
\end{eqnarray}
In the following, we will divide the estimate of $I$ into several cases based on the value of $\sin\phi$, $\sin\phi'$, $\e\eta'$ and $\e(\eta-\eta')$. Let $\id$ denote the indicator function. We write
\begin{eqnarray}
I&=&\int_0^{\eta}\id_{\{\sin\phi\geq\d_0\}}+\int_0^{\eta}\id_{\{0\leq\sin\phi\leq\d_0\}}\id_{\{\chi(\phi_{\ast})<1\}}
+\int_0^{\eta}\id_{\{0\leq\sin\phi\leq\d_0\}}\id_{\{\chi(\phi_{\ast})=1\}}\id_{\{ \sqrt{\e\eta'}\geq\sin\phi'\}}\\
&&+\int_0^{\eta}\id_{\{0\leq\sin\phi\leq\d_0\}}\id_{\{\chi(\phi_{\ast})=1\}}\id_{\{\sqrt{\e\eta'}\leq\sin\phi'\}}\id_{\{\sin^2\phi\leq\e(\eta-\eta')\}}\no\\
&&+\int_0^{\eta}\id_{\{0\leq\sin\phi\leq\d_0\}}\id_{\{\chi(\phi_{\ast})=1\}}\id_{\{\sqrt{\e\eta'}\leq\sin\phi'\}}\id_{\{\sin^2\phi\geq\e(\eta-\eta')\}}\no\\
&=&I_1+I_2+I_3+I_4+I_5.\no
\end{eqnarray}
\ \\
Step 1: Estimate of $I_1$ for $\sin\phi\geq\d_0$.\\
Based on Lemma \ref{pt lemma 1}, we know
\begin{eqnarray}
\abs{\sin\phi\frac{\p\v}{\p\eta}}\leq C\bigg(1+\frac{1}{\d_0^3}\bigg)\bigg(\lnnm{\v}+\lnm{\frac{\p p}{\p\phi}}+\lnnm{\frac{\p S}{\p\phi}}+\lnnm{\frac{\p S}{\p\eta}}\bigg).
\end{eqnarray}
Hence, we have
\begin{eqnarray}
\abs{I_1}\leq C\abs{\frac{\p\v}{\p\eta}}\leq \frac{C}{\d_0^4}\bigg(\lnnm{\v}+\lnm{\frac{\p p}{\p\phi}}+\lnnm{\frac{\p S}{\p\phi}}+\lnnm{\frac{\p S}{\p\eta}}\bigg).
\end{eqnarray}
\ \\
Step 2: Estimate of $I_2$ for $0\leq\sin\phi\leq\d_0$ and $\chi(\phi_{\ast})<1$.\\
We have
\begin{eqnarray}
\\
I_2&=&\frac{1}{4\pi}\int_0^{\eta}\bigg(\int_{-\pi}^{\pi}\int_{-\pi/2}^{\pi/2}\frac{\zeta(\eta',\phi',\psi)}{\zeta(\eta',\phi_{\ast},\psi)}(1-\chi(\phi_{\ast}))
\a(\eta',\phi_{\ast},\psi)\cos\phi_{\ast}\ud{\phi_{\ast}}\ud{\psi}\bigg)
\frac{1}{\sin\phi'}\exp(-G_{\eta,\eta',\psi})\ud{\eta'}\no\\
&=&\frac{1}{4\pi}\int_0^{\eta}\bigg(\int_{-\pi}^{\pi}\int_{-\pi/2}^{\pi/2}\zeta(\eta',\phi',\psi)(1-\chi(\phi_{\ast}))
\frac{\v(\eta',\phi_{\ast},\psi)}{\p\eta'}\cos\phi_{\ast}\ud{\phi_{\ast}}\ud{\psi}\bigg)\frac{1}{\sin\phi'}\exp(-G_{\eta,\eta',\psi})\ud{\eta'}.\no
\end{eqnarray}
Based on the $\e$-Milne problem of $\v$ as
\begin{eqnarray}
\sin\phi_{\ast}\frac{\p\v(\eta',\phi_{\ast},\psi)}{\p\eta'}+F(\eta',\psi)\cos\phi_{\ast}\frac{\p\v(\eta',\phi_{\ast},\psi)}{\p\phi_{\ast}}
+\v(\eta',\phi_{\ast},\psi)-\bar\v(\eta')=S(\eta',\phi_{\ast},\psi),
\end{eqnarray}
we have
\begin{eqnarray}
\frac{\p\v(\eta',\phi_{\ast},\psi)}{\p\eta'}=-\frac{1}{\sin\phi_{\ast}}
\bigg(F(\eta',\psi)\cos\phi_{\ast}\frac{\p\v(\eta',\phi_{\ast},\psi)}{\p\phi_{\ast}}+\v(\eta',\phi_{\ast},\psi)-\bar\v(\eta')-S(\eta',\phi_{\ast},\psi)\bigg).
\end{eqnarray}
Hence, we have
\begin{eqnarray}
\tilde\a&=&\int_{-\pi}^{\pi}\int_{-\pi/2}^{\pi/2}\zeta(\eta',\phi',\psi)(1-\chi(\phi_{\ast}))
\frac{\p\v(\eta',\phi_{\ast})}{\p\eta'}\cos\phi_{\ast}\ud{\phi_{\ast}}\ud{\psi}\\
&=&-\int_{-\pi}^{\pi}\int_{-\pi/2}^{\pi/2}\zeta(\eta',\phi',\psi)(1-\chi(\phi_{\ast}))
\frac{1}{\sin\phi_{\ast}}
\bigg(\v(\eta',\phi_{\ast},\psi)-\bar\v(\eta')-S(\eta',\phi_{\ast},\psi)\bigg)\cos\phi_{\ast}\ud{\phi_{\ast}}\ud{\psi}\no\\
&&-\int_{-\pi}^{\pi}\int_{-\pi/2}^{\pi/2}\zeta(\eta',\phi',\psi)(1-\chi(\phi_{\ast}))
\frac{1}{\sin\phi_{\ast}}
F(\eta',\psi)\cos\phi_{\ast}\frac{\p\v(\eta',\phi_{\ast},\psi)}{\p\phi_{\ast}}\cos\phi_{\ast}\ud{\phi_{\ast}}\ud{\psi}\no\\
&=&\tilde\a_1+\tilde\a_2.\no
\end{eqnarray}
We may directly obtain
\begin{eqnarray}
\\
\abs{\tilde\a_1}&\leq&\int_{-\pi}^{\pi}\int_{-\pi/2}^{\pi/2}\zeta(\eta',\phi',\psi)(1-\chi(\phi_{\ast}))
\frac{1}{\sin\phi_{\ast}}
\bigg(\v(\eta',\phi_{\ast},\psi)-\bar\v(\eta')-S(\eta',\phi_{\ast},\psi)\bigg)\cos\phi_{\ast}\ud{\phi_{\ast}}\ud{\psi}\no\\
&\leq&\frac{\rk}{\d}\abs{\int_{-\pi}^{\pi}\int_{-\pi/2}^{\pi/2}
\bigg(\v(\eta',\phi_{\ast},\psi)-\bar\v(\eta')-S(\eta',\phi_{\ast},\psi)\bigg)\cos\phi_{\ast}\ud{\phi_{\ast}}\ud{\psi}}\no\\
&\leq&C(\d)\bigg(\lnnm{\v}+\lnnm{S}\bigg).\no
\end{eqnarray}
On the other hand, an integration by parts yields
\begin{eqnarray}
\tilde\a_2&=&\int_{-\pi}^{\pi}\int_{-\pi/2}^{\pi/2}\frac{\p}{\p\phi_{\ast}}\bigg(\zeta(\eta',\phi',\psi)(1-\chi(\phi_{\ast}))
\frac{1}{\sin\phi_{\ast}}
F(\eta',\psi)\cos\phi_{\ast}\bigg)\v(\eta',\phi_{\ast},\psi)\cos\phi_{\ast}\ud{\phi_{\ast}}\ud{\psi},
\end{eqnarray}
which further implies
\begin{eqnarray}
\abs{\tilde\a_2}&\leq&\frac{C\e}{\d^2}\lnnm{\v}\leq C(\d)\lnnm{\v}.
\end{eqnarray}
Since we can use substitution to show
\begin{eqnarray}
\int_0^{\eta}\frac{1}{\sin\phi'}\exp(-G_{\eta,\eta',\psi})\ud{\eta'}\leq 1,
\end{eqnarray}
we have
\begin{eqnarray}
\abs{I_2}&\leq&C(\d)\bigg(\lnnm{\v}+\lnnm{S}\bigg)\int_0^{\eta}
\frac{1}{\sin\phi'}\exp(-G_{\eta,\eta',\psi})\ud{\eta'}\\
&\leq&C(\d)\bigg(\lnnm{\v}+\lnnm{S}\bigg).\no
\end{eqnarray}
\ \\
Step 3: Estimate of $I_3$ for $0\leq\sin\phi\leq\d_0$, $\chi(\phi_{\ast})=1$ and $\sqrt{\e\eta'}\geq\sin\phi'$.\\
Based on (\ref{pt 01}), this implies
\begin{eqnarray}
\zeta(\eta',\phi',\psi)\leq C\sqrt{\e\eta'}.\no
\end{eqnarray}
Then combining this with (\ref{pt 02}), we can directly obtain
\begin{eqnarray}
\\
\int_{-\pi}^{\pi}\int_{-\pi/2}^{\pi/2}\frac{\zeta(\eta',\phi',\psi)}{\zeta(\eta',\phi_{\ast},\psi)}\chi(\phi_{\ast})
\a(\eta',\phi_{\ast},\psi)\cos\phi_{\ast}\ud{\phi_{\ast}}\ud{\psi}&\leq&C\int_{-\pi}^{\pi}\int_{-\d}^{\d}
\a(\eta',\phi_{\ast},\psi)\cos\phi_{\ast}\ud{\phi_{\ast}}\ud{\psi}\leq C\d\lnnm{\a}.\no
\end{eqnarray}
Hence, we have
\begin{eqnarray}
\abs{I_3}&\leq&C\d\lnnm{\a}\int_0^{\eta}\frac{1}{\sin\phi'}\exp(-G_{\eta,\eta',\psi})\ud{\eta'}\leq C\d\lnnm{\a}.
\end{eqnarray}
\ \\
Step 4: Estimate of $I_4$ for $0\leq\sin\phi\leq\d_0$, $\chi(\phi_{\ast})=1$, $\sqrt{\e\eta'}\leq\sin\phi'$ and $\sin^2\phi\leq\e(\eta-\eta')$.\\
Based on (\ref{pt 01}), this implies
\begin{eqnarray}
\zeta(\eta',\phi',\psi)\leq C\sin\phi'.
\end{eqnarray}
Based on (\ref{pt 03}), we have
\begin{eqnarray}
-G_{\eta,\eta',\psi}=-\int_{\eta'}^{\eta}\frac{1}{\sin\phi'(y)}\ud{y}&\leq&-\frac{\eta-\eta'}{2\sqrt{\e(\eta-\eta')}}\leq-C\sqrt{\frac{\eta-\eta'}{\e}}.
\end{eqnarray}
Hence, we know
\begin{eqnarray}
\abs{I_4}&\leq&C\int_0^{\eta}\bigg(\int_{-\pi}^{\pi}\int_{-\pi/2}^{\pi/2}\frac{\zeta(\eta',\phi',\psi)}{\zeta(\eta',\phi_{\ast},\psi)}\chi(\phi_{\ast})
\a(\eta',\phi_{\ast},\psi)\cos\phi_{\ast}\ud{\phi_{\ast}}\ud{\psi}\bigg)
\frac{1}{\sin\phi'}\exp(-G_{\eta,\eta',\psi})\ud{\eta'}\\
&\leq&C\int_0^{\eta}\bigg(\int_{-\pi}^{\pi}\int_{-\d}^{\d}\frac{1}{\zeta(\eta',\phi_{\ast},\psi)}
\a(\eta',\phi_{\ast},\psi)\cos\phi_{\ast}\ud{\phi_{\ast}}\ud{\psi}\bigg)
\frac{\zeta(\eta',\phi',\psi)}{\sin\phi'}\exp(-G_{\eta,\eta',\psi})\ud{\eta'}\no\\
&\leq&C\d\lnnm{\a}\int_0^{\eta}\frac{1}{\sqrt{\e\eta'}}\exp(-G_{\eta,\eta',\psi})\ud{\eta'}\no\\
&\leq&C\d\lnnm{\a}\int_0^{\eta}\frac{1}{\sqrt{\e\eta'}}\exp\bigg(-C\sqrt{\frac{\eta-\eta'}{\e}}\bigg)\ud{\eta'}\no
\end{eqnarray}
Define $z=\dfrac{\eta'}{\e}$, which implies $\ud{\eta'}=\e\ud{z}$. Substituting this into above integral, we have
\begin{eqnarray}
\abs{I_4}&\leq&C\d\lnnm{\a}\int_0^{\eta/\e}\frac{1}{\sqrt{z}}\exp\bigg(-C\sqrt{\frac{\eta}{\e}-z}\bigg)\ud{z}\\
&=&C\d\lnnm{\a}\Bigg(\int_0^{1}\frac{1}{\sqrt{z}}\exp\bigg(-C\sqrt{\frac{\eta}{\e}-z}\bigg)\ud{z}
+\int_1^{\eta/\e}\frac{1}{\sqrt{z}}\exp\bigg(-C\sqrt{\frac{\eta}{\e}-z}\bigg)\ud{z}\Bigg).\no
\end{eqnarray}
We can estimate these two terms separately.
\begin{eqnarray}
\int_0^{1}\frac{1}{\sqrt{z}}\exp\bigg(-C\sqrt{\frac{\eta}{\e}-z}\bigg)\ud{z}&\leq&\int_0^{1}\frac{1}{\sqrt{z}}\ud{z}=2.
\end{eqnarray}
\begin{eqnarray}
\int_1^{\eta/\e}\frac{1}{\sqrt{z}}\exp\bigg(-C\sqrt{\frac{\eta}{\e}-z}\bigg)\ud{z}&\leq&\int_1^{\eta/\e}\exp\bigg(-C\sqrt{\frac{\eta}{\e}-z}\bigg)\ud{z}
\overset{t^2=\frac{\eta}{\e}-z}{\leq}2\int_0^{\infty}t\ue^{-Ct}\ud{t}<\infty.
\end{eqnarray}
Hence, we know
\begin{eqnarray}
\abs{I_4}&\leq&C\d\lnnm{\a}.
\end{eqnarray}
\ \\
Step 5: Estimate of $I_5$ for $0\leq\sin\phi\leq\d_0$, $\chi(\phi_{\ast})=1$, $\sqrt{\e\eta'}\leq\sin\phi'$ and $\sin^2\phi\geq\e(\eta-\eta')$.\\
Based on (\ref{pt 01}), this implies
\begin{eqnarray}
\zeta(\eta',\phi',\psi)\leq C\sin\phi'.\no
\end{eqnarray}
Based on (\ref{pt 03}), we have
\begin{eqnarray}
-G_{\eta,\eta',\psi}=-\int_{\eta'}^{\eta}\frac{1}{\sin\phi'(y)}\ud{y}&\leq&-\frac{C(\eta-\eta')}{\sin\phi}.
\end{eqnarray}
Hence, we have
\begin{eqnarray}
\abs{I_5}&\leq& C\lnnm{\a}\int_0^{\eta}\bigg(\int_{-\pi}^{\pi}\int_{-\d}^{\d}\frac{1}{\zeta(\eta',\phi_{\ast},\psi)}
\cos\phi_{\ast}\ud{\phi_{\ast}}\ud{\psi}\bigg)
\exp\left(-\frac{C(\eta-\eta')}{\sin\phi}\right)\ud{\eta'}
\end{eqnarray}
Here, we use a different way to estimate the inner integral. We use substitution to find
\begin{eqnarray}
\int_{-\d}^{\d}\frac{1}{\zeta(\eta',\phi_{\ast},\psi)}\cos\phi_{\ast}
\ud{\phi_{\ast}}
&\leq&\int_{-\d}^{\d}\frac{1}{\bigg(\rk^2-(\rk-\e\eta')^2\cos\phi_{\ast}^2\bigg)^{1/2}}
\ud{\phi_{\ast}}\\
&\overset{\sin\phi_{\ast}\ small}{\leq}&C\int_{-\d}^{\d}\frac{\cos\phi_{\ast}}{\bigg(\rk^2-(\rk-\e\eta')^2\cos\phi_{\ast}^2\bigg)^{1/2}}
\ud{\phi_{\ast}}\no\\
&=&C\int_{-\d}^{\d}\frac{\cos\phi_{\ast}}{\bigg(\rk^2-(\rk-\e\eta')^2+(\rk-\e\eta')^2\sin\phi_{\ast}^2\bigg)^{1/2}}
\ud{\phi_{\ast}}\no\\
&\overset{y=\sin\phi_{\ast}}{=}&C\int_{-\d}^{\d}\frac{1}{\bigg(\rk^2-(\rk-\e\eta')^2+(\rk-\e\eta')^2y^2\bigg)^{1/2}}
\ud{y}.\no
\end{eqnarray}
Define
\begin{eqnarray}
p&=&\sqrt{\rk^2-(\rk-\e\eta')^2}=\sqrt{2\rk\e\eta'-\e^2\eta'^2}\leq C\sqrt{\e\eta'},\\
q&=&\rk-\e\eta'\geq C,\\
r&=&\frac{p}{q}\leq C\sqrt{\e\eta'}.
\end{eqnarray}
Then we have
\begin{eqnarray}
\int_{-\d}^{\d}\frac{1}{\zeta(\eta',\phi_{\ast},\psi)}\ud{\phi_{\ast}}&\leq&C\int_{-\d}^{\d}\frac{1}{(p^2+q^2y^2)^{1/2}}\ud{y}\\
&\leq&C\int_{-2}^{2}\frac{1}{(p^2+q^2y^2)^{1/2}}\ud{y}\leq C\int_{-2}^{2}\frac{1}{(r^2+y^2)^{1/2}}\ud{y}\no\\
&\leq&C\int_{0}^{2}\frac{1}{(r^2+y^2)^{1/2}}\ud{y}=\bigg(\ln(y+\sqrt{r^2+y^2})-\ln(r)\bigg)\bigg|_0^{2}\no\\
&\leq&C\bigg(\ln(2+\sqrt{r^2+4})-\ln{r}\bigg)\leq C\bigg(1+\ln(r)\bigg)\no\\
&\leq&C\bigg(1+\abs{\ln(\e)}+\abs{\ln(\eta')}\bigg).\no
\end{eqnarray}
Hence, we know
\begin{eqnarray}
\abs{I_5}&\leq&C\lnnm{\a}\int_0^{\eta}\bigg(1+\abs{\ln(\e)}+\abs{\ln(\eta')}\bigg)
\exp\left(-\frac{C(\eta-\eta')}{\sin\phi}\right)\ud{\eta'}
\end{eqnarray}
We may directly compute
\begin{eqnarray}
\abs{\int_0^{\eta}\bigg(1+\abs{\ln(\e)}\bigg)
\exp\left(-\frac{C(\eta-\eta')}{\sin\phi}\right)\ud{\eta'}}\leq C\sin\phi(1+\abs{\ln(\e)}).
\end{eqnarray}
Hence, we only need to estimate
\begin{eqnarray}
\abs{\int_0^{\eta}\abs{\ln(\eta')}
\exp\left(-\frac{C(\eta-\eta')}{\sin\phi}\right)\ud{\eta'}}.
\end{eqnarray}
If $\eta\leq 2$, using Cauchy's inequality, we have
\begin{eqnarray}
\abs{\int_0^{\eta}\abs{\ln(\eta')}
\exp\left(-\frac{C(\eta-\eta')}{\sin\phi}\right)\ud{\eta'}}
&\leq&\bigg(\int_0^{\eta}\ln^2(\eta')\ud{\eta'}\bigg)^{1/2}\bigg(\int_0^{\eta}
\exp\left(-\frac{2C(\eta-\eta')}{\sin\phi}\right)\ud{\eta'}\bigg)^{1/2}\\
&\leq&\bigg(\int_0^{2}\ln^2(\eta')\ud{\eta'}\bigg)^{1/2}\bigg(\int_0^{\eta}
\exp\left(-\frac{2C(\eta-\eta')}{\sin\phi}\right)\ud{\eta'}\bigg)^{1/2}\no\\
&\leq&\sqrt{\sin\phi}.\no
\end{eqnarray}
If $\eta\geq 2$, we decompose and apply Cauchy's inequality to obtain
\begin{eqnarray}
&&\abs{\int_0^{\eta}\abs{\ln(\eta')}
\exp\left(-\frac{C(\eta-\eta')}{\sin\phi}\right)\ud{\eta'}}\\
&\leq&\abs{\int_0^{2}\abs{\ln(\eta')}
\exp\left(-\frac{C(\eta-\eta')}{\sin\phi}\right)\ud{\eta'}}+\abs{\int_2^{\eta}\ln(\eta')
\exp\left(-\frac{C(\eta-\eta')}{\sin\phi}\right)\ud{\eta'}}\no\\
&\leq&\bigg(\int_0^{2}\ln^2(\eta')\ud{\eta'}\bigg)^{1/2}\bigg(\int_0^{2}
\exp\left(-\frac{2C(\eta-\eta')}{\sin\phi}\right)\ud{\eta'}\bigg)^{1/2}+\ln(2)\abs{\int_2^{\eta}
\exp\left(-\frac{C(\eta-\eta')}{\sin\phi}\right)\ud{\eta'}}\no\\
&\leq&C\bigg(\sqrt{\sin\phi}+\sin\phi\bigg)\leq C\sqrt{\sin\phi}.\no
\end{eqnarray}
Hence, we have
\begin{eqnarray}
\abs{I_5}\leq C(1+\abs{\ln(\e)})\sqrt{\d_0}\lnnm{\a}.
\end{eqnarray}
\ \\
Step 6: Synthesis.\\
Collecting all the terms in previous steps, we have proved
\begin{eqnarray}
\abs{I}&\leq&C(1+\abs{\ln(\e)})\sqrt{\d_0}\lnnm{\a}+C\d\lnnm{\a}\\
&&+\frac{C}{\d_0^4}\bigg(\lnnm{\v}+\lnm{\frac{\p p}{\p\phi}}+\lnnm{\frac{\p S}{\p\phi}}+\lnnm{\frac{\p S}{\p\eta}}\bigg)+C(\d)\bigg(\lnnm{\v}+\lnnm{S}\bigg).\no
\end{eqnarray}
Therefore, we know
\begin{eqnarray}
\abs{\a}_{I}&\leq&\lnm{p_{\a}}+\lnnm{S_{\a}}+C(1+\abs{\ln(\e)})\sqrt{\d_0}\lnnm{\a}+C\d\lnnm{\a}\\
&&+\frac{C}{\d_0^4}\bigg(\lnnm{\v}+\lnm{\frac{\p p}{\p\phi}}+\lnnm{\frac{\p S}{\p\phi}}+\lnnm{\frac{\p S}{\p\eta}}\bigg)+C(\d)\bigg(\lnnm{\v}+\lnnm{S}\bigg).\no
\end{eqnarray}

\subsubsection{Region II: $\sin\phi<0$ and $\abs{E(\eta,\phi,\psi)}\leq \ue^{-V(L)}$}

\begin{eqnarray}
\k[p_{\a}]&=&p_{\a}(\phi'(0),\psi)\exp(-G_{L,0,\psi}-G_{L,\eta,\psi})\\
\t[\tilde\a+S_{\a}]&=&\int_0^{L}\frac{(\tilde\a+S)(\eta',\phi'(\eta'),\psi)}{\sin(\phi'(\eta'))}
\exp(-G_{L,\eta',\psi}-G_{L,\eta,\psi})\ud{\eta'}\\
&&+\int_{\eta}^{L}\frac{(\tilde\a+S)(\eta',\rr\phi'(\eta'),\psi)}{\sin(\phi'(\eta'))}\exp(-G_{\eta',\eta,\psi})\ud{\eta'}\nonumber.
\end{eqnarray}
A direct computation reveals
\begin{eqnarray}
\abs{\k[p_{\a}]}&\leq&\lnm{p_{\a}},\\
\abs{\t[S_{\a}]}&\leq&\lnnm{S_{\a}}.
\end{eqnarray}
Hence, we only need to estimate $II=\t[\tilde\a]$. In particular, we can decompose
\begin{eqnarray}
\t[\tilde\a]&=&\int_0^{L}\frac{\tilde\a(\eta',\phi'(\eta'),\psi)}{\sin(\phi'(\eta'))}\exp(-G_{L,\eta',\psi}-G_{L,\eta,\psi})\ud{\eta'}
+\int_{\eta}^{L}\frac{\tilde\a(\eta',\rr\phi'(\eta'),\psi)}{\sin(\phi'(\eta'))}\exp(-G_{\eta',\eta,\psi})\ud{\eta'}\\
&=&\int_0^{\eta}\frac{\tilde\a(\eta',\phi'(\eta'),\psi)}{\sin(\phi'(\eta'))}\exp(-G_{L,\eta',\psi}-G_{L,\eta,\psi})\ud{\eta'}\no\\
&&+\int_{\eta}^{L}\frac{\tilde\a(\eta',\phi'(\eta'),\psi)}{\sin(\phi'(\eta'))}\exp(-G_{L,\eta',\psi}-G_{L,\eta,\psi})\ud{\eta'}
+\int_{\eta}^{L}\frac{\tilde\a(\eta',\rr\phi'(\eta'),\psi)}{\sin(\phi'(\eta'))}\exp(-G_{\eta',\eta,\psi})\ud{\eta'}.\no
\end{eqnarray}
The integral $\displaystyle\int_0^{\eta}\cdots$ can be estimated as in Region I, so we only need to estimate the integral $\displaystyle\int_{\eta}^L\cdots$. Also, noting that fact that \begin{eqnarray}
\exp(-G_{L,\eta',\psi}-G_{L,\eta,\psi})\leq \exp(-G_{\eta',\eta,\psi}),
\end{eqnarray}
we only need to estimate
\begin{eqnarray}
\int_{\eta}^{L}\frac{\tilde\a(\eta',\rr\phi'(\eta'),\psi)}{\sin(\phi'(\eta'))}\exp(-G_{\eta',\eta,\psi})\ud{\eta'}.
\end{eqnarray}
Here the proof is almost identical to that in Region I, so we only point out the key differences.\\
\ \\
Step 0: Preliminaries.\\
We need to update one key result. For $0\leq\eta\leq\eta'$,
\begin{eqnarray}
\sin\phi'&=&\sqrt{1-\cos^2\phi'}\leq\sqrt{1-\bigg(\frac{\rk-\e\eta}{\rk-\e\eta'}\bigg)^2\cos^2\phi}\\
&=&\frac{\sqrt{(\rk-\e\eta')^2\sin^2\phi+(2\rk-\e\eta-\e\eta')(\e\eta'-\e\eta)\cos^2\phi}}{\rk-\e\eta'}\no\\
&\leq&\abs{\sin\phi}.
\end{eqnarray}
Then we have
\begin{eqnarray}\label{pt 04}
-\int_{\eta}^{\eta'}\frac{1}{\sin\phi'(y)}\ud{y}&\leq&-\frac{\eta'-\eta}{\abs{\sin\phi}}.
\end{eqnarray}
In the following, we will divide the estimate of $II$ into several cases based on the value of $\sin\phi$, $\sin\phi'$ and $\e\eta'$. We write
\begin{eqnarray}
II&=&\int_{\eta}^L\id_{\{\sin\phi\leq-\d_0\}}+\int_{\eta}^L\id_{\{-\d_0\leq\sin\phi\leq0\}}\id_{\{\chi(\phi_{\ast})<1\}}\\
&&+\int_{\eta}^L\id_{\{-\d_0\leq\sin\phi\leq0\}}\id_{\{\chi(\phi_{\ast})=1\}}\id_{\{\sqrt{\e\eta'}\geq\sin\phi'\}}
+\int_{\eta}^L\id_{\{-\d_0\leq\sin\phi\leq0\}}\id_{\{\chi(\phi_{\ast})=1\}}\id_{\{\sqrt{\e\eta'}\leq\sin\phi'\}}\no\\
&=&II_1+II_2+II_3+II_4.\no
\end{eqnarray}
\ \\
Step 1: Estimate of $II_1$ for $\sin\phi\leq-\d_0$.\\
We first estimate $\sin\phi'$. Along the characteristics, we know
\begin{eqnarray}
\ue^{-V(\eta',\psi)}\cos\phi'=\ue^{-V(\eta,\psi)}\cos\phi,
\end{eqnarray}
which implies
\begin{eqnarray}
\cos\phi'&=&\ue^{V(\eta',\psi)-V(\eta,\psi)}\cos\phi\leq \ue^{V(L,\psi)-V(0,\psi)}\cos\phi= \ue^{V(L,\psi)-V(0,\psi)}\sqrt{1-\d_0^2}.
\end{eqnarray}
Based on Lemma \ref{rt lemma 1}, we can further deduce that
\begin{eqnarray}
\cos\phi'\leq \bigg(1-\frac{\e^{1-n}}{\rk'}\bigg)^{-1}\sqrt{1-\d_0^2}.
\end{eqnarray}
Then we have
\begin{eqnarray}
\sin\phi'\geq\sqrt{1-\bigg(1-\frac{\e^{1-n}}{\rk'}\bigg)^{-2}(1-\d_0^2)}\geq \d_0-\e^{\frac{1}{2}-\frac{n}{2}}>\frac{\d_0}{2},
\end{eqnarray}
when $\e$ is sufficiently small.
Based on Lemma \ref{pt lemma 2}, we know
\begin{eqnarray}
\abs{\sin\phi\frac{\p\v}{\p\eta}}\leq C\bigg(1+\frac{1}{\d_0^3}\bigg)\bigg(\lnnm{\v}+\lnnm{\frac{\p S}{\p\phi}}+\lnnm{\frac{\p S}{\p\eta}}\bigg).
\end{eqnarray}
Hence, we have
\begin{eqnarray}
\abs{II_1}\leq  \frac{1}{\abs{\sin\phi}}\abs{\frac{\p\v}{\p\eta}}\leq \frac{C}{\d_0^4}\bigg(\lnnm{\v}+\lnnm{\frac{\p S}{\p\phi}}+\lnnm{\frac{\p S}{\p\eta}}\bigg).
\end{eqnarray}
\ \\
Step 2: Estimate of $II_2$ for $-\d_0\leq\sin\phi\leq0$ and $\chi(\phi_{\ast})<1$.\\
This is similar to the estimate of $I_2$ based on the integral
\begin{eqnarray}
\int_{\eta}^{L}\frac{1}{\sin\phi'}\exp(-G_{\eta',\eta,\psi})\ud{\eta'}\leq 1.
\end{eqnarray}
Then we have
\begin{eqnarray}
\abs{II_2}
&\leq&C(\d)\bigg(\lnnm{\v}+\lnnm{S}\bigg).
\end{eqnarray}
\ \\
Step 3: Estimate of $II_3$ for $-\d_0\leq\sin\phi\leq0$, $\chi(\phi_{\ast})=1$ and $\sqrt{\e\eta'}\geq\sin\phi'$.\\
This is identical to the estimate of $I_4$, we have
\begin{eqnarray}
\abs{II_3}&\leq&C\d\lnnm{\a}.
\end{eqnarray}
\ \\
Step 4: Estimate of $II_4$ for $-\d_0\leq\sin\phi\leq0$, $\chi(\phi_{\ast})=1$ and $\sqrt{\e\eta'}\leq\sin\phi'$.\\
This step is different. We do not need to further decompose the cases.
Based on (\ref{pt 04}), we have,
\begin{eqnarray}
-G_{\eta,\eta'}&\leq&-\frac{\eta'-\eta}{\abs{\sin\phi}}.
\end{eqnarray}
Then following the same argument in estimating $I_5$, we obtain
\begin{eqnarray}
\abs{II_4}&\leq&C\lnnm{\a}\int_{\eta}^{L}\bigg(1+\abs{\ln(\e)}+\abs{\ln(\eta')}\bigg)
\exp\left(-\frac{\eta'-\eta}{\abs{\sin\phi}}\right)\ud{\eta'}
\end{eqnarray}
If $\eta\geq 2$, we directly obtain
\begin{eqnarray}
\abs{\int_{\eta}^{L}\abs{\ln(\eta')}
\exp\left(-\frac{\eta'-\eta}{\abs{\sin\phi}}\right)\ud{\eta'}}&\leq& \abs{\int_{2}^{L}\ln(\eta')
\exp\left(-\frac{\eta'-\eta}{\abs{\sin\phi}}\right)\ud{\eta'}}\\
&\leq&\ln(2)\abs{\int_{2}^{L}
\exp\left(-\frac{\eta'-\eta}{\abs{\sin\phi}}\right)\ud{\eta'}}\no\\
&\leq&C\sqrt{\abs{\sin\phi}}.\no
\end{eqnarray}
If $\eta\leq 2$, we decompose as
\begin{eqnarray}
&&\abs{\int_{\eta}^{L}\abs{\ln(\eta')}
\exp\left(-\frac{\eta'-\eta}{\abs{\sin\phi}}\right)\ud{\eta'}}\\
&\leq&\abs{\int_{\eta}^{2}\abs{\ln(\eta')}
\exp\left(-\frac{\eta'-\eta}{\abs{\sin\phi}}\right)\ud{\eta'}}+\abs{\int_{2}^{L}\abs{\ln(\eta')}
\exp\left(-\frac{\eta'-\eta}{\abs{\sin\phi}}\right)\ud{\eta'}}.\no
\end{eqnarray}
The second term is identical to the estimate in $\eta\geq2$. We apply Cauchy's inequality to the first term
\begin{eqnarray}
\abs{\int_{\eta}^{2}\abs{\ln(\eta')}
\exp\left(-\frac{\eta'-\eta}{\abs{\sin\phi}}\right)\ud{\eta'}}
&\leq&\bigg(\int_{\eta}^{2}\ln^2(\eta')\ud{\eta'}\bigg)^{1/2}\bigg(\int_{\eta}^{2}
\exp\left(-\frac{2(\eta'-\eta)}{\abs{\sin\phi}}\right)\ud{\eta'}\bigg)^{1/2}\\
&\leq&\bigg(\int_0^{2}\ln^2(\eta')\ud{\eta'}\bigg)^{1/2}\bigg(\int_{\eta}^{2}
\exp\left(-\frac{2(\eta'-\eta)}{\abs{\sin\phi}}\right)\ud{\eta'}\bigg)^{1/2}\no\\
&\leq&C\sqrt{\abs{\sin\phi}}.\no
\end{eqnarray}
Hence, we have
\begin{eqnarray}
\abs{II_4}\leq C(1+\abs{\ln(\e)})\sqrt{\d_0}\lnnm{\a}.
\end{eqnarray}
\ \\
Step 5: Synthesis.\\
Collecting all the terms in previous steps, we have proved
\begin{eqnarray}
\abs{II}&\leq&C(1+\abs{\ln(\e)})\sqrt{\d_0}\lnnm{\a}+C\d\lnnm{\a}\\
&&+\frac{C}{\d_0^4}\bigg(\lnnm{\v}+\lnm{\frac{\p p}{\p\phi}}+\lnnm{\frac{\p S}{\p\phi}}+\lnnm{\frac{\p S}{\p\eta}}\bigg)+C(\d)\bigg(\lnnm{\v}+\lnnm{S}\bigg).\no
\end{eqnarray}
Therefore, we know
\begin{eqnarray}
\abs{\a}_{II}&\leq&\lnnm{S_{\a}}+\lnm{p_{\a}}+C(1+\abs{\ln(\e)})\sqrt{\d_0}\lnnm{\a}+C\d\lnnm{\a}\\
&&+\frac{C}{\d_0^4}\bigg(\lnnm{\v}+\lnm{\frac{\p p}{\p\phi}}+\lnnm{\frac{\p S}{\p\phi}}+\lnnm{\frac{\p S}{\p\eta}}\bigg)+C(\d)\bigg(\lnnm{\v}+\lnnm{S}\bigg).\no
\end{eqnarray}

\subsubsection{Region III: $\sin\phi<0$ and $\abs{E(\eta,\phi,\psi)}\geq \ue^{-V(L)}$}

We still ignore $\psi$ dependence. Based on \cite[Lemma 4.7, Lemma 4.8]{AA003}, we still have
\begin{eqnarray}
\abs{\k[p_{\a}]}&\leq&\lnm{p_{\a}},\\
\abs{\t[S_{\a}]}&\leq&\lnnm{S_{\a}}.
\end{eqnarray}
Hence, we only need to estimate $III=\t[\tilde\a]$. Note that $\abs{E(\eta,\phi,\psi)}\geq \ue^{-V(L,\psi)}$ implies
\begin{eqnarray}
\ue^{-V(\eta,\psi)}\cos\phi\geq \ue^{-V(L,\psi)}.
\end{eqnarray}
Hence, based on Lemma \ref{rt lemma 1}, we can further deduce that
\begin{eqnarray}
\cos\phi&\geq&\ue^{V(\eta,\psi)-V(L,\psi)}\geq \ue^{V(0,\psi)-V(L,\psi)}\geq \bigg(1-\frac{\e^{1-n}}{\rk'}\bigg).
\end{eqnarray}
Hence, we know
\begin{eqnarray}
\abs{\sin\phi}\leq\sqrt{1-\bigg(1-\frac{\e^{1-n}}{\rk'}\bigg)^2}\leq \e^{\frac{1}{2}-\frac{n}{2}}.
\end{eqnarray}
Hence, when $\e$ is sufficiently small, we always have
\begin{eqnarray}
\abs{\sin\phi}\leq \e^{\frac{1}{2}-\frac{n}{2}}\leq \d_0.
\end{eqnarray}
This means we do not need to bother with the estimate of $\sin\phi\leq-\d_0$ as Step 1 in estimating $I$ and $II$.
Since we can decompose
\begin{eqnarray}
\t[\tilde\a]&=&\int_0^{\eta}\frac{\tilde\a(\eta',\phi'(\eta'),\psi)}{\sin(\phi'(\eta'))}
\exp(-G_{\eta^+,\eta',\psi}-G_{\eta^+,\eta,\psi})\ud{\eta'}\\
&&\bigg(\int_{\eta}^{\eta^+}\frac{\tilde\a(\eta',\phi'(\eta'),\psi)}{\sin(\phi'(\eta'))}
\exp(-G_{\eta^+,\eta',\psi}-G_{\eta^+,\eta,\psi})\ud{\eta'}\no\\
&&+
\int_{\eta}^{\eta^+}\frac{(\tilde\a+S_{\a})(\eta',\rr\phi'(\eta'),\psi)}{\sin(\phi'(\eta'))}\exp(-G_{\eta',\eta,\psi})\ud{\eta'}\bigg)\nonumber.
\end{eqnarray}
Then the integral $\displaystyle\int_0^{\eta}(\cdots)$ is similar to the argument in Region I, and the integral $\displaystyle\int_{\eta}^{\eta^+}(\cdots)$ is similar to the argument in Region II. Hence, combining the methods in Region I and Region II, we can show the desired result, i.e.
\begin{eqnarray}
\abs{\a}_{III}&\leq&\lnm{p_{\a}}+\lnnm{S_{\a}}+C(1+\abs{\ln(\e)})\sqrt{\d_0}\lnnm{\a}+C\d\lnnm{\a}\\
&&+C(\d)\bigg(\lnnm{\v}+\lnnm{S}\bigg).\no
\end{eqnarray}

\subsubsection{Estimate of Normal Derivative}

Combining the analysis in these three regions, we have
\begin{eqnarray}
\abs{\a}&\leq&\lnm{p_{\a}}+\lnnm{S_{\a}}+C(1+\abs{\ln(\e)})\sqrt{\d_0}\lnnm{\a}+C\d\lnnm{\a}\\
&&+\frac{C}{\d_0^4}\bigg(\lnnm{\v}+\lnm{\frac{\p p}{\p\phi}}+\lnnm{\frac{\p S}{\p\phi}}+\lnnm{\frac{\p S}{\p\eta}}\bigg)+C(\d)\bigg(\lnnm{\v}+\lnnm{S}\bigg).\no
\end{eqnarray}
Taking supremum over all $(\eta,\phi,\psi)$, we have
\begin{eqnarray}\label{pt 05}
\lnnm{\a}&\leq&\lnm{p_{\a}}+\lnnm{S_{\a}}+C(1+\abs{\ln(\e)})\sqrt{\d_0}\lnnm{\a}+C\d\lnnm{\a}\\
&&+\frac{C}{\d_0^4}\bigg(\lnnm{\v}+\lnm{\frac{\p p}{\p\phi}}+\lnnm{\frac{\p S}{\p\phi}}+\lnnm{\frac{\p S}{\p\eta}}\bigg)\no\\
&&+C(\d)\bigg(\lnnm{\v}+\lnnm{S}\bigg).\no
\end{eqnarray}
Then we choose these constants to perform absorbing argument. First we choose $0<\d<<1$ sufficiently small such that
\begin{eqnarray}
C\d\leq\frac{1}{4}.
\end{eqnarray}
Then we take $\d_0=\d\abs{\ln(\e)}^{-2}$ such that
\begin{eqnarray}
C(1+\abs{\ln(\e)})\sqrt{\d_0}\leq 2C\d\leq\frac{1}{2}.
\end{eqnarray}
for $\e$ sufficiently small. Note that this mild decay of $\d_0$ with respect to $\e$ also justifies the assumption in Case III and the proof of Lemma \ref{pt lemma 2} that
\begin{eqnarray}
\e^{\frac{1}{2}-\frac{n}{2}}\leq \frac{\d_0}{2},
\end{eqnarray}
for $\e$ sufficiently small. Here since $\d$ and $C$ are independent of $\e$, there is no circulant argument. Hence, we can absorb all the term related to $\lnnm{\a}$ on the right-hand side of (\ref{pt 05}) to the left-hand side to obtain
\begin{eqnarray}
\lnnm{\a}&\leq&C\bigg(\lnm{p_{\a}}+\lnnm{S_{\a}}\bigg)\\
&&+C\abs{\ln(\e)}^8\bigg(\lnnm{\v}+\lnm{\frac{\p p}{\p\phi}}+\lnnm{S}+\lnnm{\frac{\p S}{\p\phi}}+\lnnm{\frac{\p S}{\p\eta}}\bigg).\no
\end{eqnarray}

\subsection{Mild Formulation of Velocity Derivative}

Consider the general $\e$-Milne problem for $\b=\zeta\dfrac{\p\v}{\p\phi}$ as
\begin{eqnarray}
\left\{
\begin{array}{rcl}\displaystyle
\sin\phi\frac{\p\b}{\p\eta}+F(\eta,\psi)\cos\phi\frac{\p
\b}{\p\phi}+\b&=&S_{\b},\\
\b(0,\phi,\psi)&=&p_{\b}(\phi,\psi)\ \ \text{for}\ \ \sin\phi>0,\\
\b(L,\phi,\psi)&=&\b(L,\rr\phi,\psi),
\end{array}
\right.
\end{eqnarray}
where $p_{\b}$ and $S_{\b}$ will be specified later. This is much simpler than normal derivative, since we do not have $\tilde\b$. Then by a direct argument that
\begin{eqnarray}
\abs{\k[p_{\b}]}&\leq&\lnm{p_{\b}},\\
\abs{\t[S_{\b}]}&\leq&\lnnm{S_{\b}}.
\end{eqnarray}
we can get the desired result.
\begin{lemma}\label{pt lemma 4}
We have
\begin{eqnarray}
\lnnm{\b}&\leq&\lnm{p_{\b}}+\lnnm{S_{\b}}.
\end{eqnarray}
\end{lemma}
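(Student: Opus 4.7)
The plan is to mirror the characteristic decomposition used in the proof of Lemma \ref{pt lemma 3}, but to exploit the key simplification that the right-hand side now has no non-local term $\tilde\b$. Because of Lemma \ref{rt lemma 2}, the weight $\zeta$ is conserved along characteristics, so after dividing by $\sin\phi$ the equation along a characteristic reduces to the scalar linear ODE
\begin{equation*}
\frac{d\b}{d\eta} + \frac{1}{\sin\phi'(\eta)}\b = \frac{S_\b}{\sin\phi'(\eta)},
\end{equation*}
whose Duhamel representation, together with the reflection boundary condition at $\eta=L$ (or the turning point $\eta^+$ in Region III), yields exactly the three-region mild formulation $\b = \k[p_\b] + \t[S_\b]$ already written down for $\a$ (with $p_\b, S_\b$ in place of $p_\a, S_\a+\tilde\a$).

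First I would record the two elementary damping identities that drive everything: for Region I ($\sin\phi>0$),
\begin{equation*}
\int_0^\eta \frac{1}{\sin\phi'(\eta')}\exp(-G_{\eta,\eta',\psi})\,d\eta' \leq 1,
\end{equation*}
and the analogous identities in Regions II and III obtained by splitting the integral at $L$ (resp.\ $\eta^+$) and using that the composite exponential factor $\exp(-G_{L,\eta',\psi}-G_{L,\eta,\psi})$ is dominated by $\exp(-G_{\eta',\eta,\psi})$ on the relevant interval. These are precisely the bounds already invoked at the very beginning of each of the three case analyses in the proof of Lemma \ref{pt lemma 3}, and they are the only ingredient needed here.

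Applying these identities termwise to the explicit formulas for $\k[p_\b]$ and $\t[S_\b]$ gives, uniformly in $(\eta,\phi,\psi)$,
\begin{equation*}
|\k[p_\b](\eta,\phi,\psi)| \leq \lnm{p_\b}, \qquad |\t[S_\b](\eta,\phi,\psi)| \leq \lnnm{S_\b}\cdot\int \frac{1}{\sin\phi'}\exp(-G)\,d\eta' \leq \lnnm{S_\b},
\end{equation*}
in every region. Summing and taking supremum in $(\eta,\phi,\psi)$ yields the desired bound $\lnnm{\b}\leq \lnm{p_\b}+\lnnm{S_\b}$.

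There is essentially no obstacle here: the entire reason Lemma \ref{pt lemma 3} required the delicate $\d$-$\d_0$ bookkeeping, the Milne-equation integration by parts, and the $|\ln(\e)|^8$ loss was the presence of the non-local term $\tilde\a$ forcing a self-referential bound on $\lnnm{\a}$. For $\b$ no such self-coupling occurs, so the trivial Duhamel estimate along characteristics suffices and no absorbing argument is needed. The only minor care is to verify that the reflection boundary condition $\b(L,\phi,\psi)=\b(L,\rr\phi,\psi)$ produces the same factorized exponential kernel in Regions II and III as in the $\a$-analysis; this is immediate since the equation satisfied by $\b$ has the same transport operator as that of $\a$.
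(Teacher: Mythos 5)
Your proposal is correct and is essentially the paper's own argument: the paper likewise observes that the $\b$-equation has no non-local term $\tilde\b$, so the mild formulation along characteristics gives $\abs{\k[p_{\b}]}\leq\lnm{p_{\b}}$ and $\abs{\t[S_{\b}]}\leq\lnnm{S_{\b}}$ directly, yielding the stated bound. Your write-up merely makes explicit the damping integrals (e.g. $\int_0^{\eta}\frac{1}{\sin\phi'}\exp(-G_{\eta,\eta',\psi})\ud{\eta'}\leq 1$ and the domination of the composite exponential kernel in Regions II and III) that the paper invokes implicitly.
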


\subsection{Estimate of Derivatives}

In this subsection, we combine above a priori estimates of normal and velocity derivatives.
\begin{theorem}\label{pt additional 1}
Assume (\ref{Milne bounded}), (\ref{Milne decay}), (\ref{Milne bounded.}) and (\ref{Milne decay.}). The normal and velocity derivatives of $\v$ are well-defined a.e. and satisfy
\begin{eqnarray}
\lnnm{\zeta\frac{\p\v}{\p\eta}}+\lnnm{\zeta\frac{\p\v}{\p\eta}}\leq C\abs{\ln(\e)}^8.
\end{eqnarray}
\end{theorem}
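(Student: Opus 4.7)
The plan is to derive $L^{\infty}$ bounds for $\a = \zeta\dfrac{\p\v}{\p\eta}$ and $\b = \zeta\dfrac{\p\v}{\p\phi}$ by writing down the transport equations they satisfy, identifying the corresponding source and boundary data, and then feeding these into Lemma \ref{pt lemma 3} and Lemma \ref{pt lemma 4} respectively. The weight $\zeta$ plays a double role: by Lemma \ref{rt lemma 2}, multiplication by $\zeta$ leaves the transport operator $\sin\phi\,\p_{\eta} + F\cos\phi\,\p_{\phi}$ invariant, so $\a$ and $\b$ satisfy transport equations of the same form as $\v$; and $\zeta(0,\phi,\psi) = |\sin\phi|$ exactly cancels the $\sin\phi$ in the denominator when one solves for $\dfrac{\p\v}{\p\eta}\big|_{\eta = 0}$ from the original equation.

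Differentiating (\ref{Milne difference problem}) in $\eta$, multiplying by $\zeta$, and using $\sin\phi\,\p_\eta\zeta + F\cos\phi\,\p_\phi\zeta = 0$ produces
\begin{eqnarray*}
\sin\phi\,\p_\eta\a + F\cos\phi\,\p_\phi\a + \a = \zeta\,\p_\eta\bar\v + \zeta\,\p_\eta S - (\p_\eta F)\cos\phi\,\b,
\end{eqnarray*}
where the $\zeta\,\p_\eta\bar\v$ term is exactly the nonlocal operator $\tilde\a$ of Lemma \ref{pt lemma 3}. Hence $S_\a = \zeta\,\p_\eta S - (\p_\eta F)\cos\phi\,\b$, and the in-flow data, obtained by solving (\ref{Milne difference problem}) for $\p_\eta\v$ at $\eta = 0$, is $p_\a(\phi,\psi) = -F(0,\psi)\cos\phi\,\p_\phi p - p + \bar\v(0) + S(0,\phi,\psi)$, which is bounded by $CM$ via (\ref{Milne bounded}) and (\ref{Milne bounded.}). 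Similarly, differentiating in $\phi$ and multiplying by $\zeta$ gives $\sin\phi\,\p_\eta\b + F\cos\phi\,\p_\phi\b + \b = -\cos\phi\,\a + F\sin\phi\,\b + \zeta\,\p_\phi S$, so that $S_\b = -\cos\phi\,\a + F\sin\phi\,\b + \zeta\,\p_\phi S$ and $p_\b = \sin\phi\cdot\p_\phi p$ is bounded by $M$.

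To close the estimates, apply Lemma \ref{pt lemma 4}; using $|F\sin\phi|\leq C\e$ to absorb the $\b$ self-term for $\e$ small yields $\lnnm{\b} \leq 2M + 2\lnnm{\a} + 2\lnnm{\p_\phi S}$. Next, applying Lemma \ref{pt lemma 3} to $\a$ and using Theorem \ref{Milne LI Theorem} for $\lnnm{\v}$, the hypotheses (\ref{Milne bounded})--(\ref{Milne decay.}), and the fact that $|\p_\eta F|\leq C\e^2$ yields
\begin{eqnarray*}
\lnnm{\a} \leq CM|\ln\e|^8 + C\e^2\lnnm{\b} \leq CM|\ln\e|^8 + C\e^2\bigl(M + \lnnm{\a}\bigr).
\end{eqnarray*}
For $\e$ small enough, absorbing $C\e^2\lnnm{\a}$ on the left gives $\lnnm{\a} \leq CM|\ln\e|^8$, and consequently $\lnnm{\b}\leq CM|\ln\e|^8$.

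The claim that $\p_\eta\v$ and $\p_\phi\v$ actually exist almost everywhere rests on a standard approximation argument paralleling \cite{AA007}: one mollifies $(h,S)$ to smooth data, solves the resulting $\e$-Milne problems for which the differentiations are classical, applies the bounds above (which are uniform in the regularization), and extracts a weak limit. The main obstacle is controlling the nonlocal operator $\tilde\a$; this is already resolved inside the proof of Lemma \ref{pt lemma 3} itself via delicate characteristic analysis near the grazing set $\{\sin\phi = 0\}$ and is the source of the $|\ln\e|^8$ factor. The new subtlety here is the coupling between $\a$ and $\b$: the source for $\b$ contains $\a$ with an order-one coefficient $\cos\phi$, but the reciprocal appearance of $\b$ in the source for $\a$ carries only the small prefactor $\p_\eta F\sim\e^2$; this asymmetry is precisely what allows us to estimate $\b$ in terms of $\a$ and then substitute back to close the loop.
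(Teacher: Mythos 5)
Your proposal is correct and follows essentially the same route as the paper: differentiate (\ref{Milne difference problem}) in $\eta$ and $\phi$, multiply by $\zeta$ and invoke Lemma \ref{rt lemma 2} to get transport equations for $\a$ and $\b$ with the identifications of $p_{\a}$, $p_{\b}$, $S_{\a}$, $S_{\b}$, then close via Lemma \ref{pt lemma 3}, Lemma \ref{pt lemma 4} and an absorption argument using $\abs{F}+\abs{\p_\eta F}\ls\e$. Your spelled-out version of the coupling (bound $\b$ by $\a$ first, then use the small prefactor $\p_\eta F$ on $\b$ inside $S_{\a}$ to absorb) is exactly the absorption the paper performs, and the minor sign and $S(0,\phi,\psi)$ discrepancies in the boundary data are immaterial to the magnitude estimates.
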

\begin{proof}
Based on the analysis in \cite{AA007}, derivatives are a.e. well-defined. Collecting the estimates for $\a$ and $\b$ in Lemma \ref{pt lemma 3} and Lemma \ref{pt lemma 4}, we have
\begin{eqnarray}\label{pt 06}
\lnnm{\a}&\leq&C\bigg(\lnm{p_{\a}}+\lnnm{S_{\a}}\bigg)\label{pt 06}\\
&&+C\abs{\ln(\e)}^8\bigg(\lnnm{\v}+\lnm{\frac{\p p}{\p\phi}}+\lnnm{S}+\lnnm{\frac{\p S}{\p\phi}}+\lnnm{\frac{\p S}{\p\eta}}\bigg),\no\\
\lnnm{\b}&\leq&\lnm{p_{\b}}+\lnnm{S_{\b}}.\label{pt 07}
\end{eqnarray}
Taking derivatives on both sides of (\ref{Milne difference problem}) and multiplying $\zeta$, based on Lemma \ref{rt lemma 2}, we have
\begin{eqnarray}
p_{\a}&=&\e\cos\phi\frac{\p p}{\p\phi}+p-\bar\v(0),\\
p_{\b}&=&\sin\phi\frac{\p p}{\p\phi},\\
S_{\a}&=&\frac{\p{F}}{\p{\eta}}\b\cos\phi+\zeta\frac{\p S}{\p\eta},\\
S_{\b}&=&\a\cos\phi+F\b\sin\phi+\zeta\frac{\p S}{\p\phi}.
\end{eqnarray}
Since $\abs{F(\eta)}+\abs{\dfrac{\p F}{\p\eta}}\leq \e$, by absorbing $\a$ and $\b$ on the right-hand side of (\ref{pt 06}) and (\ref{pt 07}), we derive
\begin{eqnarray}
\a&\leq&C\abs{\ln(\e)}^8,\\
\b&\leq&C\abs{\ln(\e)}^8.
\end{eqnarray}
\end{proof}

\begin{theorem}\label{pt additional 2}
Assume (\ref{Milne bounded}), (\ref{Milne decay}), (\ref{Milne bounded.}) and (\ref{Milne decay.}). The normal and velocity derivatives of $\v$ are well-defined a.e. and satisfy
\begin{eqnarray}
\lnnm{\ue^{K_0\eta}\zeta\frac{\p\v}{\p\eta}}+\lnnm{\ue^{K_0\eta}\zeta\frac{\p\v}{\p\phi}}\leq C\abs{\ln(\e)}^8.
\end{eqnarray}
\end{theorem}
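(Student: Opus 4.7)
The plan is to reduce Theorem~\ref{pt additional 2} to the unweighted estimate already proved in Theorem~\ref{pt additional 1} by introducing exponentially weighted unknowns $\a_{\ast}=\ue^{K_0\eta}\a$ and $\b_{\ast}=\ue^{K_0\eta}\b$ and running the mild-formulation analysis of Lemma~\ref{pt lemma 3} and Lemma~\ref{pt lemma 4} on these new quantities. A direct computation shows that $\a_{\ast}$ and $\b_{\ast}$ satisfy transport equations of the same structural form as $\a$ and $\b$, but with the damping coefficient $1$ replaced by $(1-K_0\sin\phi)$, the nonlocal term replaced by
\begin{eqnarray}
\tilde\a_{\ast}(\eta,\phi,\psi)=\frac{1}{4\pi}\int_{-\pi}^{\pi}\int_{-\pi/2}^{\pi/2}\frac{\zeta(\eta,\phi,\psi)}{\zeta(\eta,\phi_{\ast},\psi)}\a_{\ast}(\eta,\phi_{\ast},\psi)\cos\phi_{\ast}\ud{\phi_{\ast}}\ud{\psi},
\end{eqnarray}
and the source terms $S_{\a}$, $S_{\b}$ replaced by their weighted analogues $\ue^{K_0\eta}S_{\a}$ and $\ue^{K_0\eta}S_{\b}$. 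The $\zeta$-invariance along characteristics (Lemma~\ref{rt lemma 2}) is unchanged, so the exact same characteristic trajectories still foliate phase space.

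For $K_0<\frac{1}{2}$ the damping coefficient satisfies $\frac{1}{2}\leq 1-K_0\sin\phi\leq\frac{3}{2}$, so along a characteristic the integrating factor becomes $\exp\bigl(-\int_{\eta'}^{\eta}\frac{1-K_0\sin\phi'(y)}{\sin\phi'(y)}\ud{y}\bigr)=\exp(-G_{\eta,\eta',\psi}+K_0(\eta-\eta'))\leq\exp(-(1-K_0)G_{\eta,\eta',\psi})$, where the last bound uses $|\sin\phi'|\leq 1$ hence $G_{\eta,\eta',\psi}\geq|\eta-\eta'|$. This is the same damping structure as before, merely with a modified constant. I would then rerun each of the five case splits in the Region I, II, and III estimates of Lemma~\ref{pt lemma 3}, verifying that every exponential, every substitution $z=\eta'/\e$, and every $|\ln(\eta')|$ Cauchy-Schwarz bound continues to hold verbatim with the $1-K_0$ damping. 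The non-local term $\tilde\a_{\ast}$ is treated exactly as $\tilde\a$: the integration by parts in $\phi_{\ast}$ using the Milne equation now produces $\ue^{K_0\eta}\v$ rather than $\v$, and this is bounded by Theorem~\ref{Milne Decay Theorem}.

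For the data, at $\eta=0$ the weights reduce to $1$, so $p_{\a_\ast}=p_\a$ and $p_{\b_\ast}=p_\b$ remain controlled by $M$. The driving source terms are
\begin{eqnarray}
S_{\a_{\ast}}=\frac{\p F}{\p\eta}\b_{\ast}\cos\phi+\ue^{K_0\eta}\zeta\frac{\p S}{\p\eta},\qquad
S_{\b_{\ast}}=\a_{\ast}\cos\phi+F\b_{\ast}\sin\phi+\ue^{K_0\eta}\zeta\frac{\p S}{\p\phi},
\end{eqnarray}
where the $\ue^{K_0\eta}\zeta\,\p S/\p\eta$ and $\ue^{K_0\eta}\zeta\,\p S/\p\phi$ pieces are bounded by $M$ thanks to the assumption $K_0<K$. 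The remaining contributions $\frac{\p F}{\p\eta}\b_{\ast}\cos\phi$ and $\a_{\ast}\cos\phi+F\b_{\ast}\sin\phi$ feed $\a_{\ast}$ and $\b_{\ast}$ back into themselves; but $|F|+|\p_{\eta}F|\lesssim\e$, so these couplings carry a small factor.

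Collecting the weighted analogues of (\ref{pt 06})--(\ref{pt 07}) yields a coupled system of the form
\begin{eqnarray}
\lnnm{\a_{\ast}}&\leq& C+C\e\lnnm{\b_{\ast}}+C\abs{\ln(\e)}^{8}\bigl(\lnnm{\ue^{K_0\eta}\v}+M\bigr),\\
\lnnm{\b_{\ast}}&\leq& C+\lnnm{\a_{\ast}}+C\e\lnnm{\b_{\ast}},
\end{eqnarray}
which, after absorbing the $C\e\lnnm{\b_{\ast}}$ terms for $\e$ small and invoking Theorem~\ref{Milne Decay Theorem} to bound $\lnnm{\ue^{K_0\eta}\v}$, closes to give $\lnnm{\a_{\ast}}+\lnnm{\b_{\ast}}\leq C|\ln(\e)|^{8}$, which is the claimed estimate. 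The main obstacle is carefully re-verifying the delicate case splits in Lemma~\ref{pt lemma 3}, in particular the $I_5$ and $II_4$ logarithmic estimates near the grazing set, since these are where the $\sqrt{\d_0}|\ln\e|$ absorbing argument is tight; but since the only modification there is replacing $\exp(-G)$ by $\exp(-(1-K_0)G)$, the same bounds hold with absolute constants renormalized by factors depending only on $K_0$, preserving the ability to absorb through the choice $\d_0=\d|\ln\e|^{-2}$.
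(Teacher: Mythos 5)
Your proposal is correct and takes essentially the same route as the paper, whose proof simply observes that the weighted unknowns $\ue^{K_0\eta}\a$, $\ue^{K_0\eta}\b$ satisfy the same system with $S_{\a}$, $S_{\b}$ augmented by $K_0\a\sin\phi$, $K_0\b\sin\phi$, absorbs these for $K_0$ small, and then closes exactly as in Theorem \ref{pt additional 1}. Your folding of the $K_0\sin\phi$ term into the damping coefficient $1-K_0\sin\phi$ (so the exponential factor becomes $\exp(-(1-K_0)G_{\eta,\eta',\psi})$) is only a cosmetic repackaging of that same absorption argument.
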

\begin{proof}
This proof is almost identical to Theorem \ref{pt additional 1}. The only difference is that $S_{\a}$ is added by $K_0\a\sin\phi$ and $S_{\b}$ added by $K_0\b\sin\phi$. When $K_0$ is sufficiently small, we can also absorb them into the left-hand side. Hence, this is obvious.
\end{proof}

\begin{corollary}\label{pt corollary}
Assume (\ref{Milne bounded}), (\ref{Milne decay}), (\ref{Milne bounded.}) and (\ref{Milne decay.}). We have
\begin{eqnarray}
\lnnm{\ue^{K_0\eta}\sin\phi\frac{\p\v}{\p\eta}}\leq C\abs{\ln(\e)}^8.
\end{eqnarray}
\end{corollary}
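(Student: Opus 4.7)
The plan is to observe that Corollary \ref{pt corollary} is essentially a pointwise consequence of Theorem \ref{pt additional 2}. The key fact is the elementary bound $\abs{\sin\phi}\leq\zeta(\eta,\phi,\psi)$ on the whole domain $[0,L]\times[-\pi/2,\pi/2]\times[-\pi,\pi]$, which reduces the corollary to an immediate consequence of the already established weighted $L^\infty$ estimate on $\zeta\,\p_\eta\v$.

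To justify the pointwise bound, I would first invoke Lemma \ref{rt lemma 1}, which in particular gives
\begin{eqnarray*}
\ue^{-V(\eta,\psi)}\leq\frac{\rk-\e\eta}{\rk}\leq 1
\end{eqnarray*}
for $\eta\in[0,L]$, since $V(0,\psi)=0$ and $\p_\eta V=-F\geq0$. Consequently,
\begin{eqnarray*}
\Big(\ue^{-V(\eta,\psi)}\cos\phi\Big)^2\leq\cos^2\phi,
\end{eqnarray*}
and plugging this into the definition (\ref{weight function}) of $\zeta$ yields
\begin{eqnarray*}
\zeta(\eta,\phi,\psi)=\bigg(1-\Big(\ue^{-V(\eta,\psi)}\cos\phi\Big)^2\bigg)^{1/2}\geq\sqrt{1-\cos^2\phi}=\abs{\sin\phi}.
\end{eqnarray*}

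With the pointwise inequality $\abs{\sin\phi}\leq\zeta$ in hand, I would simply estimate
\begin{eqnarray*}
\abs{\ue^{K_0\eta}\sin\phi\,\frac{\p\v}{\p\eta}(\eta,\phi,\psi)}\leq\ue^{K_0\eta}\zeta(\eta,\phi,\psi)\abs{\frac{\p\v}{\p\eta}(\eta,\phi,\psi)}
\end{eqnarray*}
at every point, take the supremum over $(\eta,\phi,\psi)$, and invoke Theorem \ref{pt additional 2} to conclude
\begin{eqnarray*}
\lnnm{\ue^{K_0\eta}\sin\phi\,\frac{\p\v}{\p\eta}}\leq\lnnm{\ue^{K_0\eta}\zeta\,\frac{\p\v}{\p\eta}}\leq C\abs{\ln(\e)}^8.
\end{eqnarray*}

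There is no real obstacle here, since all the delicate work (the mild formulation, the region decomposition, the absorption of the logarithmic divergence coming from the grazing set) has already been performed in establishing the weighted derivative bound in Theorem \ref{pt additional 2}. The sole purpose of the corollary is to record the useful, coordinate-invariant consequence $\sin\phi\,\p_\eta\v\in L^\infty$ away from the outer endpoint, which is exactly what is needed in the subsequent boundary-layer matching and remainder analysis, as indicated in the remark following Theorem \ref{main 2}.
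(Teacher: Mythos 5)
Your proposal is correct and is essentially the paper's own argument: the corollary is deduced from Theorem \ref{pt additional 2} via the pointwise inequality $\zeta(\eta,\phi,\psi)\geq\abs{\sin\phi}$, which you justify exactly as the paper does (via $\ue^{-V}\leq1$ from Lemma \ref{rt lemma 1} and the definition (\ref{weight function})). The only difference is that you spell out the one-line verification of $\zeta\geq\abs{\sin\phi}$ that the paper leaves implicit.
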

\begin{proof}
This is a natural result of Theorem \ref{pt additional 2} since $\zeta(\eta,\phi,\psi)\geq \abs{\sin\phi}$.
\end{proof}

Now we pull $\tau_i$ for $i=1,2$ and $\psi$ dependence back and study the tangential derivatives and velocity derivative.
\begin{theorem}\label{Milne tangential}
Assume (\ref{Milne bounded}), (\ref{Milne decay}), (\ref{Milne bounded.}) and (\ref{Milne decay.}). We have for $i=1,2$,
\begin{eqnarray}
\lnnm{\ue^{K_0\eta}\frac{\p\v}{\p\tau_i}}&\leq& C\abs{\ln(\e)}^8,\\
\lnnm{\ue^{K_0\eta}\frac{\p\v}{\p\psi}}&\leq& C\abs{\ln(\e)}^8
\end{eqnarray}
\end{theorem}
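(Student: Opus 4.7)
The plan is to differentiate the Milne equation for $\v$ in the geometric parameters $\tau_i$ (for $i=1,2$) and in the azimuthal variable $\psi$, obtaining Milne-type equations for $W_i:=\p_{\tau_i}\v$ and $W_\psi:=\p_\psi\v$, and then invoke the existing $L^\infty$ and decay framework (Theorems \ref{Milne LI Theorem} and \ref{Milne Decay Theorem}). Each $W$ satisfies an equation of the form $\sin\phi\,\p_\eta W + F\cos\phi\,\p_\phi W + W - \bar W = \tilde T$, with in-flow data $\p_{\tau_i}p$ or $\p_\psi p$ at $\eta=0$ (bounded by (\ref{Milne bounded.})) and the specular condition at $\eta=L$. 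The internal source is $\tilde T = \p_{\tau_i}S - (\p_{\tau_i}F)\cos\phi\,\p_\phi\v$ or its $\psi$ analogue. For $W_\psi$, $2\pi$-periodicity in $\psi$ forces $\overline{W_\psi}=0$, so the equation reduces to pure linear transport; for $W_i$ the full Milne structure is retained.

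The main obstacle is controlling $(\p_{\tau_i}F)\cos\phi\,\p_\phi\v$ in $L^\infty$ up to the grazing set, since Theorem \ref{pt additional 2} only yields the weighted bound $\lnnm{\zeta\,\p_\phi\v}\le C\abs{\ln(\e)}^8$, and $\cos\phi/\zeta$ is singular near $\{\eta=0,\sin\phi=0\}$. The crucial observation is to use the original Milne equation (\ref{Milne difference problem}) to rewrite the troublesome product:
\[
F\cos\phi\,\p_\phi\v \;=\; S + \bar\v - \v - \sin\phi\,\p_\eta\v.
\]
The right-hand side is controlled pointwise by $C\abs{\ln(\e)}^8$ with the correct exponential weight, using Theorem \ref{Milne LI Theorem} and Theorem \ref{Milne Decay Theorem} for $\v$ and $\bar\v$, the hypothesis (\ref{Milne decay}) for $S$, and most importantly Corollary \ref{pt corollary} for $\sin\phi\,\p_\eta\v$. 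Since $\abs{F}\ge c\e$ uniformly on $[0,L]\times[-\pi,\pi]$ while $\abs{\p_{\tau_i}F},\abs{\p_\psi F}\le C\e$, the ratios $\p_{\tau_i}F/F$ and $\p_\psi F/F$ are bounded uniformly in $\e$, so
\[
(\p_{\tau_i}F)\cos\phi\,\p_\phi\v \;=\; \frac{\p_{\tau_i}F}{F}\bigl(S+\bar\v-\v-\sin\phi\,\p_\eta\v\bigr),
\]
and $\tilde T$ satisfies the analogue of the decay hypothesis (\ref{Milne decay}) with $M$ replaced by $CM\abs{\ln(\e)}^8$. The same reduction works for the $\psi$ source.

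With source and boundary data under control, Theorem \ref{Milne LI Theorem} and Theorem \ref{Milne Decay Theorem} applied to the $W_i$ and $W_\psi$ equations yield $\lnnm{\ue^{K_0\eta}(W_i-(W_i)_L)}+\lnnm{\ue^{K_0\eta}(W_\psi-(W_\psi)_L)}\le C\abs{\ln(\e)}^8$. To finish, we must identify the constants. By linearity of the Milne theory and uniqueness of the hydrodynamic limit at $\eta=L$, differentiating the relation determining $f_L$ in $\tau_i$ yields $(\p_{\tau_i}f)_L=\p_{\tau_i}f_L$, so $(W_i)_L=(\p_{\tau_i}f)_L-\p_{\tau_i}f_L=0$. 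For $W_\psi$, the absence of a non-local term together with the specular condition forces $(W_\psi)_L=0$ directly. Combining these identifications with the decay estimates gives the stated bounds on $\p_{\tau_i}\v$ and $\p_\psi\v$.
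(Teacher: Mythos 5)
Your proposal is correct and follows essentially the same route as the paper: differentiate (\ref{Milne difference problem}) in $\tau_i$ and $\psi$, observe that the $\psi$-equation has no nonlocal term while the $\tau_i$-equation is again an $\e$-Milne problem, control the commutator source by using the original equation together with Corollary \ref{pt corollary} to bound $\e\cos\phi\,\p_\phi\v$ (the paper's ``$C_1\e\le F\le C_2\e$'' plus $\lnnm{\ue^{K_0\eta}\e\cos\phi\,\p_\phi\v}\le C\abs{\ln(\e)}^8$ is exactly your $\p_{\tau_i}F/F$ reduction), and then apply Theorem \ref{Milne Decay Theorem}, identifying $\w_L=\w'_L=0$. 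No substantive differences.
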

\begin{proof}
Following a similar fashion in proof of Theorem \ref{pt additional 2}, using iteration and characteristics, we can show $\dfrac{\p\v}{\p\tau_i}$ is a.e. well-defined, so here we focus on the a priori estimate. Let $\w=\dfrac{\p\v}{\p\tau_i}$. Taking $\tau_i$ derivative on both sides of (\ref{Milne difference problem}), we have $\w$ satisfies the equation
\begin{eqnarray}\label{Milne tangential problem}
\left\{
\begin{array}{rcl}\displaystyle
\sin\phi\frac{\p \w}{\p\eta}+F(\eta,\psi)\cos\phi\frac{\p
\w}{\p\phi}+\w-\bar\w&=&\dfrac{\p S}{\p\tau_i}+\e\bigg(\dfrac{\p_{\tau_i}\rk_1\sin^2\psi}{(\rk_1-\e\eta)^2}+\dfrac{\p_{\tau_i}\rk_2\cos^2\psi}{(\rk_2-\e\eta)^2}\bigg)\bigg(\cos\phi\dfrac{\p
\v}{\p\phi}\bigg),\\\rule{0ex}{1.5em}
\w(0,\phi,\psi)&=&\dfrac{\p p}{\p\tau_i}(\phi,\psi)\ \ \text{for}\ \ \sin\phi>0,\\\rule{0ex}{1.5em}
\w(L,\phi,\psi)&=&\w(L,\rr\phi,\psi).
\end{array}
\right.
\end{eqnarray}
Our assumptions on $S$ verify
\begin{eqnarray}
\lnnm{\ue^{K_0\eta}\frac{\p S}{\p\tau_i}}&\leq&C.
\end{eqnarray}
For $\eta\in[0,L]$, we have
\begin{eqnarray}
\lnnm{\dfrac{\p_{\tau_i}\rk_1\sin^2\psi}{(\rk_1-\e\eta)^2}+\dfrac{\p_{\tau_i}\rk_2\cos^2\psi}{(\rk_2-\e\eta)^2}}\leq C,
\end{eqnarray}
and
\begin{eqnarray}
C_1\e\leq F(\eta,\psi)\leq C_2\e.
\end{eqnarray}
Based on Corollary \ref{pt corollary} and the equation (\ref{Milne difference problem}), we know
\begin{eqnarray}
\lnnm{\ue^{K_0\eta}\bigg(\e\cos\phi\frac{\p\v}{\p\phi}\bigg)}\leq C\abs{\ln(\e)}^8.
\end{eqnarray}
Therefore, the source term in the equation (\ref{Milne tangential problem}) is in $L^{\infty}$ and decays exponentially. By Theorem \ref{Milne Decay Theorem}, we have
\begin{eqnarray}
\lnnm{\ue^{K_0\eta}(\w-\w_L)}\leq C\abs{\ln(\e)}^8,
\end{eqnarray}
for some constant $\w_L$. It is easy to see this $\w_L$ must be zero due to decay of $\v$. Similarly, let $\w'=\dfrac{\p\v}{\p\psi}$. Taking $\psi$ derivative on both sides of (\ref{Milne difference problem}), we have $\w'$ satisfies the equation
\begin{eqnarray}
\left\{
\begin{array}{rcl}\displaystyle
\sin\phi\frac{\p \w'}{\p\eta}+F(\eta,\psi)\cos\phi\frac{\p
\w'}{\p\phi}+\w'&=&\dfrac{\p S}{\p\psi}+\e\bigg(\dfrac{\sin(2\psi)}{\rk_1-\e\eta}-\dfrac{\sin(2\psi)}{\rk_2-\e\eta}\bigg)\bigg(\cos\phi\dfrac{\p
\v}{\p\phi}\bigg),\\\rule{0ex}{1.5em}
\w'(0,\phi,\psi)&=&\dfrac{\p p}{\p\psi}(\phi,\psi)\ \ \text{for}\ \ \sin\phi>0,\\\rule{0ex}{1.5em}
\w'(L,\phi,\psi)&=&\w'(L,\rr\phi,\psi).
\end{array}
\right.
\end{eqnarray}
We may directly estimate
\begin{eqnarray}
\lnnm{\dfrac{\sin(2\psi)}{\rk_1-\e\eta}-\dfrac{\sin(2\psi)}{\rk_2-\e\eta}}\leq C,
\end{eqnarray}
which means the source term is in $L^{\infty}$ and decays exponentially. This equation does not involve $\bar\w'$ term, which makes it even simpler. Hence, we get
\begin{eqnarray}
\lnnm{\ue^{K_0\eta}\w'}\leq C\abs{\ln(\e)}^8,
\end{eqnarray}
for some constant $\w'_L$. Naturally we have $\w'_L$ must be zero.
\end{proof}
We finally come to the $\e$-Milne problem with diffusive boundary.
\begin{theorem}\label{Milne tangential.}
Assume (\ref{Milne bounded}), (\ref{Milne decay}), (\ref{Milne bounded.}) and (\ref{Milne decay.}). There exists $K_0>0$ such that the unique solution $f(\eta,\phi,\psi)$ to the
$\e$-Milne problem (\ref{Milne problem.}) with the normalization
condition (\ref{Milne normalization}) satisfies for $i=1,2$,
\begin{eqnarray}
\lnnm{\ue^{K_0\eta}\frac{\p(f-f_L)}{\p\tau_i}}&\leq& C\abs{\ln(\e)}^8,\\
\lnnm{\ue^{K_0\eta}\frac{\p(f-f_L)}{\p\psi}}&\leq& C\abs{\ln(\e)}^8
\end{eqnarray}
\end{theorem}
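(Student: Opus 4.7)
The plan is to reduce the diffusive boundary problem to the in-flow problem already handled in Theorem \ref{Milne tangential}. By Lemma \ref{Milne lemma 2.}, under the normalization (\ref{Milne normalization}), the unique solution $f$ of (\ref{Milne problem.}) coincides with the unique solution of the in-flow problem (\ref{Milne problem}) with the same boundary datum $h$ on $\sin\phi>0$ (the compatibility condition (\ref{Milne compatibility condition}) is precisely the solvability condition for (\ref{Milne problem.}), which we are tacitly granted by the assumed existence statement). Thus $\v := f - f_L$ solves (\ref{Milne difference problem}) with boundary datum $p = h - f_L$, and the data $p$ and $S$ retain all of the regularity assumptions (\ref{Milne bounded}), (\ref{Milne decay}), (\ref{Milne bounded.}), (\ref{Milne decay.}).

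With this reduction in hand, I would apply Theorem \ref{Milne tangential} directly to $\v$. That theorem produces the weighted $L^\infty$ bounds on $\ue^{K_0\eta}\p_{\tau_i}\v$ and $\ue^{K_0\eta}\p_\psi \v$ by differentiating (\ref{Milne difference problem}) in $\tau_i$ and $\psi$, treating the coefficient-derivative source terms through Theorem \ref{pt additional 2} and Corollary \ref{pt corollary}, and closing with Theorem \ref{Milne Decay Theorem} to get exponential decay. The $\abs{\ln(\e)}^8$ factor is inherited from the normal-derivative bounds appearing inside these source terms, exactly as in the in-flow case.

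The only subtlety worth flagging is consistency of the boundary conditions under differentiation: the normalization $\pp[f](0)=0$ is imposed identically in the boundary-point parameters $(\tau_1,\tau_2)$, so $\p_{\tau_i}\pp[f](0)=0$, and differentiating the diffusive boundary condition $f(0,\phi,\psi)=h(\phi,\psi)+\pp[f](0)$ in $\tau_i$ (respectively $\psi$) produces precisely $\p_{\tau_i} h$ (respectively $\p_\psi h$), matching the boundary data required by Theorem \ref{Milne tangential}. I do not anticipate any genuine obstacle here, since all the hard weighted-characteristic analysis—partitioning into the three kinetic regions, handling the grazing set via $\zeta$, and absorbing the $\abs{\ln(\e)}$ loss—has already been carried out in the in-flow argument; the step to the diffusive boundary is essentially a bookkeeping reduction through Lemma \ref{Milne lemma 2.}.
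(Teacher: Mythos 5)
Your proposal is correct and is essentially the paper's own route: the paper states Theorem \ref{Milne tangential.} without a separate argument precisely because, under the normalization (\ref{Milne normalization}) and compatibility (\ref{Milne compatibility condition}), Lemma \ref{Milne lemma 2.} identifies the solution of (\ref{Milne problem.}) with that of the in-flow problem (\ref{Milne problem}) with datum $h$, so Theorem \ref{Milne tangential} applies verbatim. Your added remark on differentiating the boundary condition (using $\pp[f](0)=0$ identically in $(\tau_1,\tau_2)$) is a harmless bookkeeping check consistent with the paper.
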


\newpage

\appendix

\makeatletter
\renewcommand \theequation {%
A.%
\ifnum\c@subsection>\z@\@arabic\c@subsection.%
\fi\@arabic\c@equation} \@addtoreset{equation}{section}
\@addtoreset{equation}{subsection} \makeatother

\section{Remainder Estimate}

In this section, we consider the remainder equation for $u(\vx,\vw)$ as
\begin{eqnarray}\label{neutron}
\left\{
\begin{array}{rcl}\displaystyle
\e\vw\cdot\nx u+u-\bar
u&=&f(\vx,\vw)\ \ \text{in}\ \ \Omega,\\
u(\vx_0,\vw)&=&\pp[u](\vx_0)+h(\vx_0,\vw)\ \ \text{for}\ \
\vw\cdot\vn<0\ \ \text{and}\ \ \vx_0\in\p\Omega,
\end{array}
\right.
\end{eqnarray}
where
\begin{eqnarray}
\bar u(\vx)=\frac{1}{4\pi}\int_{\s^2}u(\vx,\vw)\ud{\vw},
\end{eqnarray}
\begin{eqnarray}
\pp[u](\vx_0)=\frac{1}{4\pi}\int_{\vw\cdot\vn>0}u(\vx_0,\vw)(\vw\cdot\vn)\ud{\vw},
\end{eqnarray}
$\vn$ is the outward unit normal vector, with the Knudsen number $0<\e<<1$. To guarantee uniqueness, we need the normalization condition
\begin{eqnarray}\label{normalization.}
\int_{\Omega\times\s^2}u(\vx,\vw)\ud{\vw}\ud{\vx}=0.
\end{eqnarray}
Also, the data $f$ and $h$ satisfy the compatibility condition
\begin{eqnarray}\label{compatibility.}
\int_{\Omega\times\s^2}f(\vx,\vw)\ud{\vw}\ud{\vx}+\e\int_{\p\Omega}\int_{\vw\cdot\vn<0}h(\vx_0,\vw)(\vw\cdot\vn)\ud{\vw}\ud{\vx_0}=0.
\end{eqnarray}
Based on the flow direction, we can divide the boundary $\Gamma=\{(\vx,\vw): \vx\in\p\Omega\}$ into
the in-flow boundary $\Gamma^-$, the out-flow boundary $\Gamma^+$
and the grazing set $\Gamma^0$ as
\begin{eqnarray}
\Gamma^{-}&=&\{(\vx,\vw): \vx\in\p\Omega, \vw\cdot\vn<0\}\\
\Gamma^{+}&=&\{(\vx,\vw): \vx\in\p\Omega, \vw\cdot\vn>0\}\\
\Gamma^{0}&=&\{(\vx,\vw): \vx\in\p\Omega, \vw\cdot\vn=0\}
\end{eqnarray}
It is easy to see $\Gamma=\Gamma^+\cup\Gamma^-\cup\Gamma^0$.
Hence, the boundary condition is only given for $\Gamma^{-}$.
We define the $L^p$ norm with $1\leq p<\infty$ and $L^{\infty}$ norms in $\Omega\times\s^2$ as
usual:
\begin{eqnarray}
\nm{f}_{L^p(\Omega\times\s^2)}&=&\bigg(\int_{\Omega}\int_{\s^2}\abs{f(\vx,\vw)}^p\ud{\vw}\ud{\vx}\bigg)^{1/p},\\
\nm{f}_{L^{\infty}(\Omega\times\s^2)}&=&\sup_{(\vx,\vw)\in\Omega\times\s^2}\abs{f(\vx,\vw)}.
\end{eqnarray}
Define the $L^p$ norm with $1\leq p<\infty$ and $L^{\infty}$ norms on the boundary as follows:
\begin{eqnarray}
\nm{f}_{L^p(\Gamma)}&=&\bigg(\iint_{\Gamma}\abs{f(\vx,\vw)}^p\abs{\vw\cdot\vn}\ud{\vw}\ud{\vx}\bigg)^{1/p},\\
\nm{f}_{L^p(\Gamma^{\pm})}&=&\bigg(\iint_{\Gamma^{\pm}}\abs{f(\vx,\vw)}^p\abs{\vw\cdot\vn}\ud{\vw}\ud{\vx}\bigg)^{1/p},\\
\nm{f}_{L^{\infty}(\Gamma)}&=&\sup_{(\vx,\vw)\in\Gamma}\abs{f(\vx,\vw)},\\
\nm{f}_{L^{\infty}(\Gamma^{\pm})}&=&\sup_{(\vx,\vw)\in\Gamma^{\pm}}\abs{f(\vx,\vw)}.
\end{eqnarray}

The direct application of energy method as in \cite{AA003} and \cite{AA006}, we may obtain
\begin{lemma}\label{LT estimate}
Assume $f(\vx,\vw)\in L^{\infty}(\Omega\times\s^2)$ and $h(x_0,\vw)\in
L^{\infty}(\Gamma^-)$. Then for the transport equation (\ref{neutron}),
there exists a unique solution $u(\vx,\vw)\in L^2(\Omega\times\s^2)$
satisfying
\begin{eqnarray}
\frac{1}{\e}\nm{(1-\pp)[u]}^2_{L^2(\Gamma^+)}+\nm{u}_{L^2(\Omega\times\s^2)}\leq C\bigg(\frac{1}{\e^2}\nm{f}_{L^2(\Omega\times\s^2)}+\frac{1}{\e}\nm{h}_{L^2(\Gamma^-)}\bigg),
\end{eqnarray}
\end{lemma}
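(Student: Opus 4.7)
The plan is to run the standard energy estimate on equation (\ref{neutron}), isolate the microscopic part $u-\bar u$ via the natural $L^2$ orthogonality, then recover control of the hydrodynamic part $\bar u$ by testing against a cleverly chosen elliptic solver. First I would multiply (\ref{neutron}) by $u$ and integrate over $\Omega\times\s^2$. The transport term yields, by the divergence theorem, the boundary integral $\tfrac{\e}{2}\int_\Gamma u^2(\vw\cdot\vn)\,\ud\vw\,\ud\vx_0$, while the collision term collapses to $\int_{\Omega\times\s^2}(u-\bar u)^2$ thanks to the identity $\int_{\s^2}(u-\bar u)\bar u\,\ud\vw=0$. The right-hand side $\int fu$ is handled by Cauchy--Schwarz with a weight of $\e$.

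Next I would dissect the boundary contribution. Split $\Gamma=\Gamma^+\cup\Gamma^-\cup\Gamma^0$ and use the diffuse boundary condition $u=\pp[u]+h$ on $\Gamma^-$ to expand
\begin{eqnarray*}
\int_{\Gamma^-}u^2|\vw\cdot\vn|\,\ud\vw\,\ud\vx_0=\int_{\p\Omega}\pi(\pp[u])^2\,\ud\vx_0+2\int_{\Gamma^-}\pp[u]\,h\,|\vw\cdot\vn|+\int_{\Gamma^-}h^2|\vw\cdot\vn|.
\end{eqnarray*}
On $\Gamma^+$ I would further decompose $u=\pp[u]+(1-\pp)[u]$ and note that by Cauchy--Schwarz
$\pi(\pp[u])^2\leq\tfrac{1}{16}\int_{\vw\cdot\vn>0}u^2(\vw\cdot\vn)\,\ud\vw$, which produces the strict cancellation $\int_{\Gamma^+}u^2(\vw\cdot\vn)-\int_{\Gamma^-}(\pp[u])^2|\vw\cdot\vn|\geq c\int_{\Gamma^+}|(1-\pp)[u]|^2(\vw\cdot\vn)$ for some $c>0$. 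After absorbing the cross term $\int\pp[u]\,h$ by Young's inequality, this gives the desired bound on $\tfrac{1}{\e}\nm{(1-\pp)[u]}^2_{L^2(\Gamma^+)}$ and on $\tfrac{1}{\e^2}\nm{u-\bar u}^2_{L^2(\Omega\times\s^2)}$ in terms of $\tfrac{1}{\e^2}\nm{f}^2_{L^2}$ and $\tfrac{1}{\e}\nm{h}^2_{L^2(\Gamma^-)}$.

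The main obstacle is controlling $\bar u$ itself: the naive energy estimate only sees $u-\bar u$. I would close this by a test-function argument, as in \cite{AA003,AA006}. Specifically, let $\phi\in H^2(\Omega)$ solve the Neumann problem $-\Delta_x\phi=\bar u$ in $\Omega$ with $\p_\vn\phi=0$ on $\p\Omega$ and $\int_\Omega\phi=0$, which is solvable because (\ref{normalization.}) and (\ref{compatibility.}) give $\int_\Omega\bar u=0$; elliptic regularity yields $\nm{\phi}_{H^2}\leq C\nm{\bar u}_{L^2}$. Multiplying (\ref{neutron}) by $\vw\cdot\nx\phi$, integrating, and integrating by parts in $\vx$, the principal term produces $\e\int_\Omega\bar u\cdot(-\Delta_x\phi)=\e\nm{\bar u}^2_{L^2}$ (after integrating in $\vw$ and using $\int_{\s^2}w_iw_j=\tfrac{4\pi}{3}\d_{ij}$), while the remaining terms involve $(u-\bar u)$, $f$, and boundary contributions controlled by the already-bounded quantities and by $\nm{\bar u}_{L^2}$ through $\nm{\phi}_{H^2}$.

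Combining the two estimates and absorbing the $\nm{\bar u}_{L^2}$ on the right, I obtain the stated inequality. Existence then follows by a standard approximation scheme (e.g.\ penalizing the equation, solving with in-flow data and passing to the limit, exactly as in Section~6 of \cite{AA003}), and uniqueness is immediate from the $L^2$ estimate applied to the difference of two solutions together with the normalization (\ref{normalization.}). The delicate point is verifying that the boundary cross-terms produced by the test-function step are genuinely lower order compared to $\e\nm{\bar u}^2_{L^2}$; this is where the compatibility condition (\ref{compatibility.}) is essential, since it ensures that $\bar u$ truly has zero mean so that $\phi$ is well defined.
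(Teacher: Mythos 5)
Your argument --- the basic energy identity combined with the dual test function $\vw\cdot\nx\phi$, where $\phi$ solves a Neumann problem driven by $\bar u$ --- is precisely the ``direct energy method'' of \cite{AA003} and \cite{AA006} that the paper invokes without proof for this lemma, and it is the $m=1$ instance of the paper's own proof of Theorem \ref{LN estimate}, which uses the test function $-\vw\cdot\nx\zeta$ with $\Delta\zeta$ equal to $(\bar u)^{2m-1}$ minus its average and the same treatment of the boundary terms via $\p_{\vn}\zeta=0$ and the splitting $u=\pp[u]+(1-\pp)[u]$. The only small slip is attributing the zero-mean property of $\bar u$ (needed both for solvability of your Neumann problem and to control $\nm{\bar u}_{L^2}$ itself rather than just its fluctuation) to the compatibility condition (\ref{compatibility.}): it is the normalization condition (\ref{normalization.}) that gives $\int_{\Omega}\bar u\,\ud{\vx}=0$, while (\ref{compatibility.}) is what guarantees a solution satisfying that normalization exists.
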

Based on classical $L^2-L^{\infty}$ framework, we are able to show
\begin{eqnarray}
\im{u}{\Omega\times\s^2}&\leq&C(\Omega)\bigg(\frac{1}{\e^{\frac{3}{2}}}\nm{u}_{L^2(\Omega\times\s^2)}+\im{f}{\Omega\times\s^2}+\im{h}{\Gamma^-}\bigg).
\end{eqnarray}
Therefore, it is natural to deduce $L^{\infty}$ estimate.
\begin{theorem}\label{LI estimate.}
Assume $f(\vx,\vw)\in L^{\infty}(\Omega\times\s^2)$ and
$h(x_0,\vw)\in L^{\infty}(\Gamma^-)$. Then the solution $u(\vx,\vw)$ to the transport
equation (\ref{neutron}) satisfies
\begin{eqnarray}
\im{u}{\Omega\times\s^2}\leq C(\Omega)\bigg(\frac{1}{\e^{\frac{7}{2}}}\tm{f}{\Omega\times\s^2}+\frac{1}{\e^{\frac{5}{2}}}\tm{h}{\Gamma^-}+\im{f}{\Omega\times\s^2}+\im{h}{\Gamma^-}\bigg).\no
\end{eqnarray}
\end{theorem}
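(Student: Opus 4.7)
The plan is to derive Theorem \ref{LI estimate.} by composing Lemma \ref{LT estimate} with the $L^2$-$L^\infty$ bootstrap inequality stated in the paragraph immediately preceding the theorem. Lemma \ref{LT estimate} produces the $L^2$ bound $\nm{u}_{L^2(\Omega\times\s^2)}\leq C\e^{-2}\tm{f}{\Omega\times\s^2}+C\e^{-1}\tm{h}{\Gamma^-}$, while the quoted bootstrap gives $\im{u}{\Omega\times\s^2}\leq C(\Omega)\e^{-3/2}\nm{u}_{L^2(\Omega\times\s^2)}+C\im{f}{\Omega\times\s^2}+C\im{h}{\Gamma^-}$. Substituting the first into the second yields exactly the assertion: the product $\e^{-3/2}\cdot\e^{-2}=\e^{-7/2}$ produces the coefficient of $\tm{f}{\Omega\times\s^2}$ and $\e^{-3/2}\cdot\e^{-1}=\e^{-5/2}$ that of $\tm{h}{\Gamma^-}$, while the two $L^\infty$ source terms pass through unchanged.

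All the real labor sits in the quoted bootstrap step, which I would justify via the standard mild-formulation argument in Guo's $L^2$-$L^\infty$ framework. The idea is to rewrite the equation as $\e\vw\cdot\nx u+u=\bar u+f$ and integrate backward along characteristics from the boundary point $\vx_b=\vx-t_b\vw$, obtaining
\begin{eqnarray*}
u(\vx,\vw)=\ue^{-t_b/\e}\Big(\pp[u](\vx_b)+h(\vx_b,\vw)\Big)+\frac{1}{\e}\int_0^{t_b}\ue^{-(t_b-s)/\e}\Big(\bar u+f\Big)(\vx-s\vw)\,\ud{s}.
\end{eqnarray*}
The $h$ and $f$ contributions are controlled directly by their $L^\infty$ norms. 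For the non-local $\bar u$ term, one substitutes the same identity back into the velocity average (double Duhamel) and performs a change of variables from the inner characteristic parameter $(s,\vw_\ast)$ to a volume element $\vx_\ast\in\Omega$; the Jacobian loss together with Cauchy--Schwarz on $\bar u$ produces the $\e^{-3/2}\nm{u}_{L^2(\Omega\times\s^2)}$ factor.

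The principal obstacle in the bootstrap step is the diffusive reflection $\pp[u](\vx_b)$, which is not directly bounded by the data and whose iteration generates a chain of boundary bounces. This is handled by splitting velocities into a grazing subset $\{\abs{\vw\cdot\vn}\leq\d\}$, whose contribution is absorbed into the left-hand side via a small prefactor times $\im{u}{\Omega\times\s^2}$, and a transverse subset on which convexity of $\Omega$ keeps $t_b$ uniformly bounded below so that the exponential damping $\ue^{-t_b/\e}$ forces geometric decay of the reflection chain. Once the bootstrap inequality is in hand, the theorem follows from the one-line composition described in the first paragraph.
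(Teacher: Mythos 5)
Your proposal is correct and follows essentially the paper's own route: the theorem is obtained precisely by composing the $L^2$ bound of Lemma \ref{LT estimate} with the quoted classical $L^2$--$L^{\infty}$ bootstrap ($\e^{-3/2}\cdot\e^{-2}=\e^{-7/2}$, $\e^{-3/2}\cdot\e^{-1}=\e^{-5/2}$), and the paper does not re-derive that bootstrap here but appeals to the standard framework, whose detailed analogue appears later in Theorem \ref{LI estimate} with the same double-Duhamel mild formulation and the change of variables whose Jacobian $\e^3(t_1'-s_1')^2\sin\phi$ produces the $\e^{-3/2}$ loss you describe. The only real difference is cosmetic: in its detailed version the paper controls the diffusive-reflection chain through the stochastic-cycle measure estimate of Lemma \ref{well-posedness lemma 6}, whereas you use a grazing/transverse velocity splitting with the convexity-based damping $\ue^{-c\d/\e}$; both devices yield the needed $o(1)\im{u}{\Omega\times\s^2}$ absorption, yours exploiting $\e\ll1$ and the paper's being uniform in $\e$.
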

However, the estimates here is not strong enough to close the diffusive limit, so we must further improve them.

\subsection{$L^{2m}$ Estimate}

In this subsection, we try to improve previous estimates. In the following, we assume $m$ is an integer and let $o(1)$ denote a sufficiently small constant.
\begin{theorem}\label{LN estimate}
Assume $f(\vx,\vw)\in L^{\infty}(\Omega\times\s^2)$ and
$h(x_0,\vw)\in L^{\infty}(\Gamma^-)$. Then for $\dfrac{3}{2}\leq m\leq 3$,
$u(\vx,\vw)$ satisfies
\begin{eqnarray}
&&\frac{1}{\e^{\frac{1}{2}}}\nm{(1-\pp)[u]}_{L^2(\Gamma^+)}+\nm{
\bar u}_{L^{2m}(\Omega\times\s^2)}+\frac{1}{\e}\nm{u-\bar
u}_{L^2(\Omega\times\s^2)}\\
&\leq&
C\bigg(o(1)\e^{\frac{3}{2m}}\nm{u}_{L^{\infty}(\Gamma^+)}+\frac{1}{\e}\tm{f}{\Omega\times\s^2}+
\frac{1}{\e^2}\nm{f}_{L^{\frac{2m}{2m-1}}(\Omega\times\s^2)}+\frac{1}{\e}\nm{h}_{L^2(\Gamma^-)}+\nm{h}_{L^{\frac{4m}{3}}(\Gamma^-)}\bigg).\nonumber
\end{eqnarray}
\end{theorem}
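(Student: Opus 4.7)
The plan is a duality argument. Lemma~\ref{LT estimate} already controls $\frac{1}{\e}\nm{u-\bar u}_{L^2(\Omega\times\s^2)}$ and $\frac{1}{\e^{1/2}}\nm{(1-\pp)[u]}_{L^2(\Gamma^+)}$ in terms of $\frac{1}{\e^2}\nm{f}_{L^2}+\frac{1}{\e}\nm{h}_{L^2(\Gamma^-)}$, so what remains is the genuinely new $L^{2m}$ bound on the hydrodynamic part $\bar u$. I would obtain this by testing (\ref{neutron}) against a carefully designed function whose $\vw$-average reproduces a Laplacian. Writing $q=2m/(2m-1)$, let $\psi$ solve the Neumann problem
\begin{equation*}
-\Delta_x\psi=\abs{\bar u}^{2m-2}\bar u-\frac{1}{\abs{\Omega}}\int_\Omega\abs{\bar u}^{2m-2}\bar u\,\ud{\vx},\qquad \p_{\vn}\psi\big|_{\p\Omega}=0;
\end{equation*}
the mean-zero side condition $\int_\Omega\bar u=0$, inherited from (\ref{normalization.}) and (\ref{compatibility.}), guarantees solvability, and $W^{2,q}$ elliptic regularity yields $\nm{\psi}_{W^{2,q}(\Omega)}\leq C\nm{\bar u}_{L^{2m}}^{2m-1}$. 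Take $\phi(\vx,\vw)=\vw\cdot\nx\psi(\vx)$ as the test function.

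Multiplying (\ref{neutron}) by $\phi$ and integrating by parts in $\vx$, the identity $\int_{\s^2}w_iw_j\,\ud{\vw}=\frac{4\pi}{3}\d_{ij}$ forces $\int_{\s^2}\vw\cdot\nx\phi\,\ud{\vw}=\frac{4\pi}{3}\Delta\psi$, and since $\int_{\s^2}\vw\,\ud{\vw}=0$ makes $\int_{\s^2}\bar u\phi\,\ud{\vw}=0$, one extracts
\begin{equation*}
\frac{4\pi\e}{3}\nm{\bar u}_{L^{2m}(\Omega)}^{2m}=\int f\phi-\int(u-\bar u)\phi+\e\int(u-\bar u)(\vw\cdot\nx\phi)-\e\iint_{\Gamma}u\phi(\vw\cdot\vn).
\end{equation*}
Each term on the right splits as $C\nm{\bar u}_{L^{2m}}^{2m-1}\times(\text{data norm})$, after which Young's inequality $AB\leq\d A^{2m}+C_{\d}B^{2m/(2m-1)}$ absorbs the $\nm{\bar u}_{L^{2m}}^{2m}$ piece back to the left-hand side. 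For the interior pairings I would use the Sobolev embedding $W^{2,q}(\Omega)\hookrightarrow W^{1,2}(\Omega)$ in 3D to produce $\nm{\phi}_{L^2(\Omega\times\s^2)}+\nm{\vw\cdot\nx\phi}_{L^q(\Omega\times\s^2)}\leq C\nm{\bar u}_{L^{2m}}^{2m-1}$. H\"older in $(L^2,L^2)$ versus $(L^q,L^{2m})$ then produces the precise source dependence $\frac{1}{\e}\nm{f}_{L^2}+\frac{1}{\e^2}\nm{f}_{L^q}$; the microscopic coupling $\e\int(u-\bar u)(\vw\cdot\nx\phi)$ generates a $\frac{1}{\e}\nm{u-\bar u}_{L^2}$ contribution reinvested via Lemma~\ref{LT estimate}; and the $\Gamma^-$ inhomogeneity yields $\nm{h}_{L^2(\Gamma^-)}+\nm{h}_{L^{4m/3}(\Gamma^-)}$ depending on whether $h$ pairs with the $L^2$ trace of $\phi$ or with the $L^{4m/(4m-3)}$ boundary trace of $\nabla\psi\in W^{1,q}$.

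The main obstacle is the outgoing kernel trace $\e\iint_{\Gamma^+}\pp[u]\phi(\vw\cdot\vn)$. Because $\pp[u]$ is velocity-independent it factors out, leaving the residual velocity integral $\int_{\vw\cdot\vn>0}(\vw\cdot\nx\psi)(\vw\cdot\vn)\,\ud{\vw}$ which does \emph{not} vanish by symmetry (in contrast to the full-sphere version) and so cannot be discarded. To handle it I would split the outgoing hemisphere into a grazing subregion $\{\vw\cdot\vn\lesssim\theta\}$ and its complement, bound the non-grazing piece by a standard trace inequality on $\nabla\psi$, and trade the grazing piece against $\nm{u}_{L^{\infty}(\Gamma^+)}$ via interpolation, optimizing the cutoff $\theta$ as a power of $\e$ so as to produce the small factor $o(1)\e^{3/(2m)}$ in the theorem. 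Concatenating the absorbed $L^{2m}$ estimate with the microscopic and $\Gamma^+$-boundary $L^2$ bounds from Lemma~\ref{LT estimate} then delivers the stated inequality.
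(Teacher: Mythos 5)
Your overall skeleton (test the weak formulation with $\phi=\vw\cdot\nx\psi$, where $\psi$ solves a Neumann problem with source $\abs{\bar u}^{2m-2}\bar u$ minus its average, then absorb via Young) is exactly the paper's mechanism, but the execution has a genuine gap, and it is concentrated in the place you call the ``main obstacle.'' That obstacle is illusory: writing $\nx\psi=(\p_{\vn}\psi)\vn+\nabla_{\perp}\psi$, the hemisphere moment you worry about equals
$\p_{\vn}\psi\int_{\vw\cdot\vn>0}(\vw\cdot\vn)^2\ud{\vw}+\nabla_{\perp}\psi\cdot\int_{\vw\cdot\vn>0}\vw_{\perp}(\vw\cdot\vn)\ud{\vw}=0$,
since the first term dies by the Neumann condition you yourself imposed and the second by azimuthal oddness of $\vw_{\perp}$ over the half-sphere. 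So the $\pp[u]$ trace drops out entirely (this is precisely why the Neumann boundary condition is chosen), and your grazing-set cutoff with an optimized angle $\theta$ is machinery aimed at a vanishing term. As a consequence, your proposal has no mechanism left to produce the $o(1)\e^{\frac{3}{2m}}\nm{u}_{L^{\infty}(\Gamma^+)}$ term, because you have attributed it to the wrong source.

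The terms that genuinely force the $L^{\infty}$ contribution are the ones you budget as $L^2$ quantities, and there the exponents do not match. Elliptic regularity only gives $\nabla^2\psi\in L^{q}$ with $q=\frac{2m}{2m-1}<2$, so the coupling $\e\iint(u-\bar u)(\vw\cdot\nx\phi)$ must be paired with $\nm{u-\bar u}_{L^{2m}(\Omega\times\s^2)}$, not with $\frac1\e\nm{u-\bar u}_{L^2}$; likewise the trace of $\nx\psi$ lies only in $L^{\frac{4m}{4m-3}}(\Gamma)$, so the outgoing non-kernel trace needs $\nm{(1-\pp)[u]}_{L^{\frac{4m}{3}}(\Gamma^+)}$, and for $m>\frac32$ this exceeds the $L^2(\Gamma^+)$ information supplied by Lemma \ref{LT estimate}. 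The paper closes exactly these two terms by interpolating between $L^2$ and $L^{\infty}$ with $\e$-weighted Young inequalities, and that is where $o(1)\e^{\frac{3}{2m}}\nm{u}_{L^{\infty}(\Gamma^+)}$ actually originates; without that step your absorption argument does not close. A secondary issue: outsourcing the microscopic and $\Gamma^+$ pieces to Lemma \ref{LT estimate} yields only $\frac{1}{\e^2}\nm{f}_{L^2}$, which is weaker than the stated $\frac1\e\nm{f}_{L^2}$; to get the sharp dependence (which the diffusive-limit application requires) you must rerun the energy estimate with $\phi=u$ and split $\iint fu=\iint f\bar u+\iint f(u-\bar u)$, estimating $\iint f\bar u$ by $\nm{f}_{L^{\frac{2m}{2m-1}}}\nm{\bar u}_{L^{2m}}$ and reabsorbing, as the paper does, rather than quoting the $L^2$ lemma wholesale.
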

\begin{proof}
We divide the proof into several steps:\\
\ \\
Step 1: Kernel Estimate.\\
Applying Green's identity to the
equation (\ref{neutron}). Then for any
$\phi\in L^2(\Omega\times\s^2)$ satisfying $\vw\cdot\nx\phi\in
L^2(\Omega\times\s^2)$ and $\phi\in L^2(\Gamma)$, we have
\begin{eqnarray}\label{well-posedness temp 4.}
\e\int_{\Gamma}u\phi\ud{\gamma}
-\e\iint_{\Omega\times\s^2}(\vw\cdot\nx\phi)u+\iint_{\Omega\times\s^2}(u-\bar
u)\phi=\iint_{\Omega\times\s^2}f\phi.
\end{eqnarray}
Our goal is to choose a particular test function $\phi$. We first
construct an auxiliary function $\zeta$. Naturally $u\in
L^{\infty}(\Omega\times\s^2)$ implies $\bar u\in
L^{2m}(\Omega)$ which further leads to $(\bar u)^{2m-1}\in
L^{\frac{2m}{2m-1}}(\Omega)$. We define $\zeta(\vx)$ on $\Omega$ satisfying
\begin{eqnarray}\label{test temp 1.}
\left\{
\begin{array}{rcl}
\Delta \zeta&=&(\bar u)^{2m-1}-\displaystyle\frac{1}{\abs{\Omega}}\int_{\Omega}(\bar u)^{2m-1}\ud{\vx}\ \ \text{in}\ \
\Omega,\\\rule{0ex}{1.0em} \dfrac{\p\zeta}{\p\vn}&=&0\ \ \text{on}\ \ \p\Omega.
\end{array}
\right.
\end{eqnarray}
In the bounded domain $\Omega$, based on the standard elliptic
estimate, we have a unique $\zeta$ satisfying
\begin{eqnarray}\label{test temp 3.}
\nm{\zeta}_{W^{2,\frac{2m}{2m-1}}(\Omega)}\leq C\nm{(\bar
u)^{2m-1}}_{L^{\frac{2m}{2m-1}}(\Omega)}= C\nm{\bar
u}_{L^{2m}(\Omega)}^{2m-1},
\end{eqnarray}
and
\begin{eqnarray}
\int_{\Omega}\zeta(\vx)\ud{\vx}=0.
\end{eqnarray}
We plug the test function
\begin{eqnarray}\label{test temp 2.}
\phi=-\vw\cdot\nx\zeta
\end{eqnarray}
into the weak formulation (\ref{well-posedness temp 4.}) and estimate
each term there. By Sobolev embedding theorem, we have for $1\leq m\leq3$,
\begin{eqnarray}
\nm{\phi}_{L^2(\Omega)}&\leq& C\nm{\zeta}_{H^1(\Omega)}\leq C\nm{\zeta}_{W^{2,\frac{2m}{2m-1}}(\Omega)}\leq
C\nm{\bar
u}_{L^{2m}(\Omega)}^{2m-1},\label{test temp 6.}\\
\nm{\phi}_{L^{\frac{2m}{2m-1}}(\Omega)}&\leq&C\nm{\zeta}_{W^{1,\frac{2m}{2m-1}}(\Omega)}\leq
C\nm{\bar
u}_{L^{2m}(\Omega)}^{2m-1}.\label{test temp 4.}
\end{eqnarray}
Easily we can decompose
\begin{eqnarray}\label{test temp 5.}
-\e\iint_{\Omega\times\s^2}(\vw\cdot\nx\phi)u_{\l}&=&-\e\iint_{\Omega\times\s^2}(\vw\cdot\nx\phi)\bar
u_{\l}-\e\iint_{\Omega\times\s^2}(\vw\cdot\nx\phi)(u_{\l}-\bar
u_{\l}).
\end{eqnarray}
We estimate the two term on the right-hand side of (\ref{test temp 5.}) separately. By
(\ref{test temp 1.}) and (\ref{test temp 2.}), we have
\begin{eqnarray}\label{wellposed temp 1.}
-\e\iint_{\Omega\times\s^2}(\vw\cdot\nx\phi)\bar
u&=&\e\iint_{\Omega\times\s^2}\bar
u\bigg(w_1(w_1\p_{11}\zeta+w_2\p_{12}\zeta)+w_2(w_1\p_{12}\zeta+w_2\p_{22}\zeta)\bigg)\\
&=&\e\iint_{\Omega\times\s^2}\bar
u\bigg(w_1^2\p_{11}\zeta+w_2^2\p_{22}\zeta\bigg)\nonumber\\
&=&2\e\pi\int_{\Omega}\bar u(\p_{11}\zeta+\p_{22}\zeta)\nonumber\\
&=&\e\nm{\bar u}_{L^{2m}(\Omega)}^{2m}\nonumber.
\end{eqnarray}
In the second equality, above cross terms vanish due to the symmetry
of the integral over $\s^2$. On the other hand, for the second term
in (\ref{test temp 5.}), H\"older's inequality and the elliptic
estimate imply
\begin{eqnarray}\label{wellposed temp 2.}
-\e\iint_{\Omega\times\s^2}(\vw\cdot\nx\phi)(u-\bar
u)&\leq&C\e\nm{u-\bar u}_{L^{2m}(\Omega\times\s^2)}\nm{\nx\phi}_{L^{\frac{2m}{2m-1}}(\Omega)}\\
&\leq&C\e\nm{u-\bar u}_{L^{2m}(\Omega\times\s^2)}\nm{\zeta}_{W^{2,\frac{2m}{2m-1}}(\Omega)}\no\\
&\leq&C\e\nm{u-\bar
u}_{L^{2m}(\Omega\times\s^2)}\nm{\bar
u}_{L^{2m}(\Omega)}^{2m-1}\nonumber.
\end{eqnarray}
Based on (\ref{test temp 3.}), (\ref{test temp 6.}), (\ref{test temp 4.}), Sobolev embedding theorem and the trace theorem, we have
\begin{eqnarray}
\\
\nm{\nx\zeta}_{L^{\frac{4m}{4m-3}}(\Gamma)}\leq C\nm{\nx\zeta}_{W^{\frac{1}{2m},\frac{2m}{2m-1}}(\Gamma)}\leq C\nm{\nx\zeta}_{W^{1,\frac{2m}{2m-1}}(\Omega)}\leq C\nm{\zeta}_{W^{2,\frac{2m}{2m-1}}(\Omega)}\leq
C\nm{\bar
u}_{L^{2m}(\Omega)}^{2m-1}.\no
\end{eqnarray}
We may also decompose $\vw=(\vw\cdot\vn)\vn+\vw_{\perp}$ to obtain
\begin{eqnarray}
\e\int_{\Gamma}u\phi\ud{\gamma}&=&\e\int_{\Gamma}u(\vw\cdot\nx\zeta)\ud{\gamma}\\
&=&\e\int_{\Gamma}u(\vn\cdot\nx\zeta)(\vw\cdot\vn)\ud{\gamma}+\e\int_{\Gamma}u(\vw_{\perp}\cdot\nx\zeta)\ud{\gamma}\no\\
&=&\e\int_{\Gamma}u(\vw_{\perp}\cdot\nx\zeta)\ud{\gamma}.\no
\end{eqnarray}
Based on (\ref{test temp 3.}), (\ref{test temp 4.}) and
H\"older's inequality, we have
\begin{eqnarray}
\e\int_{\Gamma}u\phi\ud{\gamma}&=&\e\int_{\Gamma}u(\vw_{\perp}\cdot\nx\zeta)\ud{\gamma}\\
&=&\e\int_{\Gamma}\pp[u](\vw_{\perp}\cdot\nx\zeta)\ud{\gamma}+\e\int_{\Gamma^+}(1-\pp)[u](\vw_{\perp}\cdot\nx\zeta)\ud{\gamma}
+\e\int_{\Gamma^-}h(\vw_{\perp}\cdot\nx\zeta)\ud{\gamma}\no\\
&=&\e\int_{\Gamma^+}(1-\pp)[u](\vw_{\perp}\cdot\nx\zeta)\ud{\gamma}
+\e\int_{\Gamma^-}h(\vw_{\perp}\cdot\nx\zeta)\ud{\gamma}\no\\
&\leq&C\e\nm{\nx\zeta}_{L^{\frac{4m}{4m-3}}(\Gamma)}\bigg(\nm{(1-\pp)[u]}_{L^{\frac{4m}{3}}(\Gamma^+)}+\nm{h}_{L^{\frac{4m}{3}}(\Gamma^-)}\bigg)\no\\
&\leq&C\e\nm{\bar u}_{L^{2m}(\Omega\times\s^2)}^{2m-1}\bigg(\nm{(1-\pp)[u]}_{L^{\frac{4m}{3}}(\Gamma^+)}+\nm{h}_{L^{\frac{4m}{3}}(\Gamma^-)}\bigg).\no
\end{eqnarray}
Hence, we know
\begin{eqnarray}\label{wellposed temp 3.}
\e\int_{\Gamma}u\phi\ud{\gamma}&\leq&C\e\nm{\bar u}_{L^{2m}(\Omega\times\s^2)}^{2m-1}\bigg(\nm{(1-\pp)[u]}_{L^{\frac{4m}{3}}(\Gamma^+)}
+\nm{h}_{L^{\frac{4m}{3}}(\Gamma^-)}\bigg).
\end{eqnarray}
Also, we have
\begin{eqnarray}\label{wellposed temp 5.}
\iint_{\Omega\times\s^2}(u-\bar u)\phi\leq
C\nm{\phi}_{L^2(\Omega\times\s^2)}\nm{u-\bar
u}_{L^2(\Omega\times\s^2)}\leq
C\nm{\bar
u}_{L^{2m}(\Omega)}^{2m-1}\nm{u-\bar
u}_{L^2(\Omega\times\s^2)},
\end{eqnarray}
\begin{eqnarray}\label{wellposed temp 6.}
\iint_{\Omega\times\s^2}f\phi\leq C\nm{\phi}_{L^2(\Omega\times\s^2)}\nm{f}_{L^2(\Omega\times\s^2)}\leq C\nm{\bar
u}_{L^{2m}(\Omega)}^{2m-1}\nm{f}_{L^2(\Omega\times\s^2)}.
\end{eqnarray}
Collecting terms in (\ref{wellposed temp 1.}), (\ref{wellposed temp
2.}), (\ref{wellposed temp 3.}),
(\ref{wellposed temp 5.}) and (\ref{wellposed temp 6.}), we obtain
\begin{eqnarray}\label{improve temp 1.}
\e\nm{\bar u}_{L^{2m}(\Omega\times\s^2)}&\leq&
C\bigg(\e\nm{u-\bar
u}_{L^{2m}(\Omega\times\s^2)}+\nm{u-\bar
u}_{L^2(\Omega\times\s^2)}+\nm{f}_{L^2(\Omega\times\s^2)}\\
&&+\e\nm{(1-\pp)[u]}_{L^{\frac{4m}{3}}(\Gamma^+)}+\e\nm{h}_{L^{\frac{4m}{3}}(\Gamma^-)}\bigg)\nonumber,
\end{eqnarray}
\ \\
Step 2: Energy Estimate.\\
In the weak formulation (\ref{well-posedness temp 4.}), we may take
the test function $\phi=u$ to get the energy estimate
\begin{eqnarray}\label{well-posedness temp 9.}
\half\e\int_{\Gamma}\abs{u}^2\ud{\gamma}+\nm{u-\bar
u}_{L^2(\Omega\times\s^2)}^2=\iint_{\Omega\times\s^2}fu.
\end{eqnarray}
Hence, by $L^2$ estimate as in Theorem \ref{LT estimate} and \cite{AA007}, this naturally implies
\begin{eqnarray}\label{well-posedness temp 5.}
\e\nm{(1-\pp)[u]}_{L^2(\Gamma^+)}^2+\nm{u-\bar
u}_{L^2(\Omega\times\s^2)}^2\leq \nm{f}_{L^2(\Omega\times\s^2)}^2+\iint_{\Omega\times\s^2}fu+\nm{h}_{L^2(\Gamma^-)}^2.
\end{eqnarray}
On the other hand, we can square on both sides of
(\ref{improve temp 1.}) to obtain
\begin{eqnarray}\label{well-posedness temp 6.}
\e^2\nm{\bar u}_{L^{2m}(\Omega\times\s^2)}^2&\leq&
C\bigg(\e^2\nm{u-\bar
u}_{L^{2m}(\Omega\times\s^2)}^2+\nm{u-\bar
u}_{L^2(\Omega\times\s^2)}^2+\nm{f}_{L^2(\Omega\times\s^2)}^2\\
&&+\e^2\nm{(1-\pp)[u]}_{L^{\frac{4m}{3}}(\Gamma^+)}+\e^2\nm{h}_{L^{\frac{4m}{3}}(\Gamma^-)}^2\bigg)\nonumber,
\end{eqnarray}
Multiplying a sufficiently small constant on both sides of
(\ref{well-posedness temp 6.}) and adding it to (\ref{well-posedness
temp 5.}) to absorb
$\nm{u-\bar u}_{L^2(\Omega\times\s^2)}^2$ and $\e^2\nm{\bar u}_{L^2(\Omega\times\s^2)}^2$, we deduce
\begin{eqnarray}\label{wt 03.}
&&\e\nm{(1-\pp)[u]}_{L^2(\Gamma^+)}^2+\e^2\nm{\bar
u}_{L^{2m}(\Omega\times\s^2)}^2+\nm{u-\bar
u}_{L^2(\Omega\times\s^2)}^2\\
&\leq&
C\bigg(\e^2\nm{u-\bar
u}_{L^{2m}(\Omega\times\s^2)}^2+\e^2\nm{(1-\pp)[u]}_{L^{\frac{4m}{3}}(\Gamma^+)}\no\\
&&+\tm{f}{\Omega\times\s^2}^2+
\iint_{\Omega\times\s^2}fu+\nm{h}_{L^2(\Gamma^-)}^2+\e^2\nm{h}_{L^{\frac{4m}{3}}(\Gamma^-)}^2\bigg).\nonumber
\end{eqnarray}
By interpolation estimate and Young's inequality, for $\dfrac{3}{2}\leq m\leq 3$, we have
\begin{eqnarray}
\nm{(1-\pp)[u]}_{L^{\frac{4m}{3}}(\Gamma^+)}&\leq&\nm{(1-\pp)[u]}_{L^2(\Gamma^+)}^{\frac{3}{2m}}\nm{(1-\pp)[u]}_{L^{\infty}(\Gamma^+)}^{\frac{2m-3}{2m}}\\
&=&\bigg(\frac{1}{\e^{\frac{6m-9}{4m^2}}}\nm{(1-\pp)[u]}_{L^2(\Gamma^+)}^{\frac{3}{2m}}\bigg)
\bigg(\e^{\frac{6m-9}{4m^2}}\nm{(1-\pp)[u]}_{L^{\infty}(\Gamma^+)}^{\frac{2m-3}{2m}}\bigg)\no\\
&\leq&C\bigg(\frac{1}{\e^{\frac{6m-9}{4m^2}}}\nm{(1-\pp)[u]}_{L^2(\Gamma^+)}^{\frac{3}{2m}}\bigg)^{\frac{2m}{3}}+o(1)
\bigg(\e^{\frac{6m-9}{4m^2}}\nm{(1-\pp)[u]}_{L^{\infty}(\Gamma^+)}^{\frac{2m-3}{2m}}\bigg)^{\frac{2m}{2m-3}}\no\\
&\leq&\frac{C}{\e^{\frac{2m-3}{2m}}}\nm{(1-\pp)[u]}_{L^2(\Gamma^+)}+o(1)\e^{\frac{3}{2m}}\nm{(1-\pp)[u]}_{L^{\infty}(\Gamma^+)}\no\\
&\leq&\frac{C}{\e^{\frac{2m-3}{2m}}}\nm{(1-\pp)[u]}_{L^2(\Gamma^+)}+o(1)\e^{\frac{3}{2m}}\nm{u}_{L^{\infty}(\Omega\times\s^2)}.\no
\end{eqnarray}
Similarly, we have
\begin{eqnarray}
\nm{u-\bar u}_{L^{2m}(\Omega\times\s^2)}&\leq&\nm{u-\bar u}_{L^2(\Omega\times\s^2)}^{\frac{1}{m}}\nm{u-\bar u}_{L^{\infty}(\Omega\times\s^2)}^{\frac{m-1}{m}}\\
&=&\bigg(\frac{1}{\e^{\frac{3m-3}{2m^2}}}\nm{u-\bar u}_{L^2(\Omega\times\s^2)}^{\frac{1}{m}}\bigg)\bigg(\e^{\frac{3m-3}{2m^2}}\nm{u-\bar u}_{L^{\infty}(\Omega\times\s^2)}^{\frac{m-1}{m}}\bigg)\no\\
&\leq&C\bigg(\frac{1}{\e^{\frac{3m-3}{2m^2}}}\nm{u-\bar u}_{L^2(\Omega\times\s^2)}^{\frac{1}{m}}\bigg)^{m}+o(1)\bigg(\e^{\frac{3m-3}{2m^2}}\nm{u-\bar u}_{L^{\infty}(\Omega\times\s^2)}^{\frac{m-1}{m}}\bigg)^{\frac{m}{m-1}}\no\\
&\leq&\frac{C}{\e^{\frac{3m-3}{2m}}}\nm{u-\bar u}_{L^2(\Omega\times\s^2)}+o(1)\e^{\frac{3}{2m}}\nm{u-\bar u}_{L^{\infty}(\Omega\times\s^2)}.\no
\end{eqnarray}
We need this extra $\e^{\frac{3}{2m}}$ for the convenience of $L^{\infty}$ estimate.
Then we know for sufficiently small $\e$ and $\dfrac{3}{2}\leq m\leq 3$, ,
\begin{eqnarray}
\e^2\nm{(1-\pp)[u]}_{L^{m}(\Gamma^+)}^2
&\leq&C\e^{2-\frac{2m-3}{m}}\nm{(1-\pp)[u]}_{L^2(\Gamma^+)}^2+o(1)\e^{2+\frac{3}{m}}\nm{u}_{L^{\infty}(\Gamma^+)}^2\\
&\leq&\e^{\frac{3}{m}}\nm{(1-\pp)[u]}_{L^2(\Gamma^+)}^2+o(1)\e^{2+\frac{3}{m}}\nm{u}_{L^{\infty}(\Gamma^+)}^2,\\
&\leq&o(1)\e\nm{(1-\pp)[u]}_{L^2(\Gamma^+)}^2+o(1)\e^{2+\frac{3}{m}}\nm{u}_{L^{\infty}(\Gamma^+)}^2.\no
\end{eqnarray}
Similarly, we have
\begin{eqnarray}
\e^2\nm{u-\bar
u}_{L^{2m}(\Omega\times\s^2)}^2&\leq&\e^{2-\frac{3m-3}{m}}\nm{u-\bar u}_{L^2(\Omega\times\s^2)}^2+o(1)\e^{2+\frac{3}{m}}\nm{u}_{L^{\infty}(\Omega\times\s^2)}^2\\
&\leq& \e^{\frac{3}{m}-1}\nm{u-\bar u}_{L^2(\Omega\times\s^2)}^2+o(1)\e^{2+\frac{3}{m}}\nm{u}_{L^{\infty}(\Omega\times\s^2)}^2,\\
&\leq& o(1)\nm{u-\bar u}_{L^2(\Omega\times\s^2)}^2+o(1)\e^{2+\frac{3}{m}}\nm{u}_{L^{\infty}(\Omega\times\s^2)}^2.\no
\end{eqnarray}
By (\ref{well-posedness temp 5.}), we can absorb $\nm{u-\bar u}_{L^2(\Omega\times\s^2)}$ and $\e\nm{(1-\pp)[u]}_{L^2(\Gamma^+)}^2$ into left-hand side to obtain
\begin{eqnarray}\label{wt 04.}
&&\e\nm{(1-\pp)[u]}_{L^2(\Gamma^+)}^2+\e^2\nm{\bar
u}_{L^{2m}(\Omega\times\s^2)}^2+\nm{u-\bar
u}_{L^2(\Omega\times\s^2)}^2\\
&\leq&
C\bigg(o(1)\e^{2+\frac{3}{m}}\nm{u}_{L^{\infty}(\Omega\times\s^2)}^2+\tm{f}{\Omega\times\s^2}^2+
\iint_{\Omega\times\s^2}fu+\nm{h}_{L^2(\Gamma^-)}^2+\e^2\nm{h}_{L^{\frac{4m}{3}}(\Gamma^-)}^2\bigg).\no
\end{eqnarray}
We can decompose
\begin{eqnarray}
\iint_{\Omega\times\s^2}fu=\iint_{\Omega\times\s^2}f\bar u+\iint_{\Omega\times\s^2}f(u-\bar u).
\end{eqnarray}
H\"older's inequality and Cauchy's inequality imply
\begin{eqnarray}
\iint_{\Omega\times\s^2}f\bar u\leq\nm{f}_{L^{\frac{2m}{2m-1}}(\Omega\times\s^2)}\nm{\bar u}_{L^{2m}(\Omega\times\s^2)}
\leq\frac{C}{\e^{2}}\nm{f}_{L^{\frac{2m}{2m-1}}(\Omega\times\s^2)}^2+o(1)\e^2\nm{\bar u}_{L^{2m}(\Omega\times\s^2)}^2,
\end{eqnarray}
and
\begin{eqnarray}
\iint_{\Omega\times\s^2}f(u-\bar u)\leq C\nm{f}_{L^{2}(\Omega\times\s^2)}^2+o(1)\nm{u-\bar u}_{L^2(\Omega\times\s^2)}^2.
\end{eqnarray}
Hence, absorbing $\e^2\nm{\bar u}_{L^{2m}(\Omega\times\s^2)}^2$ and $\nm{u-\bar u}_{L^2(\Omega\times\s^2)}^2$ into left-hand side of (\ref{wt 04.}), we get
\begin{eqnarray}\label{wt 06.}
&&\e\nm{(1-\pp)[u]}_{L^2(\Gamma^+)}^2+\e^2\nm{\bar
u}_{L^{2m}(\Omega\times\s^2)}^2+\nm{u-\bar
u}_{L^2(\Omega\times\s^2)}^2\\
&\leq&
C\bigg(o(1)\e^{2+\frac{3}{m}}\nm{u}_{L^{\infty}(\Gamma^+)}^2+\tm{f}{\Omega\times\s^2}^2+
\frac{1}{\e^2}\nm{f}_{L^{\frac{2m}{2m-1}}(\Omega\times\s^2)}^2+\nm{h}_{L^2(\Gamma^-)}^2+\e^2\nm{h}_{L^{\frac{4m}{3}}(\Gamma^-)}^2\bigg),\nonumber
\end{eqnarray}
which implies
\begin{eqnarray}\label{wt 07.}
&&\frac{1}{\e^{\frac{1}{2}}}\nm{(1-\pp)[u]}_{L^2(\Gamma^+)}+\nm{
\bar u}_{L^{2m}(\Omega\times\s^2)}+\frac{1}{\e}\nm{u-\bar
u}_{L^2(\Omega\times\s^2)}\\
&\leq&
C\bigg(o(1)\e^{\frac{3}{2m}}\nm{u}_{L^{\infty}(\Gamma^+)}+\frac{1}{\e}\tm{f}{\Omega\times\s^2}+
\frac{1}{\e^2}\nm{f}_{L^{\frac{2m}{2m-1}}(\Omega\times\s^2)}+\frac{1}{\e}\nm{h}_{L^2(\Gamma^-)}+\nm{h}_{L^{\frac{4m}{3}}(\Gamma^-)}\bigg).\nonumber
\end{eqnarray}

\end{proof}

\subsection{$L^{\infty}$ Estimate}

In this subsection, we prove the $L^{\infty}$ estimate. We consider the characteristics that reflect several times on the boundary.
\begin{definition}(Stochastic Cycle)
For fixed point $(t,\vx,\vw)$ with $(\vx,\vw)\notin\Gamma^0$, let
$(t_0,\vx_0,\vw_0)=(0,\vx,\vw)$. For $\vw_{k+1}$ such that
$\vw_{k+1}\cdot\vn(\vx_{k+1})>0$, define the $(k+1)$-component of
the back-time cycle as
\begin{eqnarray}
(t_{k+1},\vx_{k+1},\vw_{k+1})=(t_k+t_b(\vx_k,\vw_k),\vx_b(\vx_k,\vw_k),\vw_{k+1})
\end{eqnarray}
where
\begin{eqnarray}
t_b(\vx,\vw)&=&\inf\{t>0:\vx-\e t\vw\notin\Omega\}\\
x_b(\vx,\vw)&=&\vx-\e t_b(\vx,\vw)\vw\notin\Omega
\end{eqnarray}
Set
\begin{eqnarray}
\xc(s;t,\vx,\vw)&=&\sum_{k}\id_{t_{k+1}\leq s<t_k}\bigg(\vx_k-\e(t_k-s)\vw_k\bigg)\\
\wc(s;t,\vx,\vw)&=&\sum_{k}\id_{t_{k+1}\leq s<t_k}\vw_k
\end{eqnarray}
Define $\mu_{k+1}=\{\vw\in \s^2:\vw\cdot\vn(\vx_{k+1})>0\}$, and
let the iterated integral for $k\geq2$ be defined as
\begin{eqnarray}
\int_{\prod_{k=1}^{k-1}\mu_j}\prod_{j=1}^{k-1}\ud{\sigma_j}=\int_{\mu_1}\ldots\bigg(\int_{\mu_{k-1}}\ud{\sigma_{k-1}}\bigg)\ldots\ud{\sigma_1}
\end{eqnarray}
where $\ud{\sigma_j}=(\vn(\vx_j)\cdot\vw)\ud{\vw}$ is a
probability measure.
\end{definition}
\begin{lemma}\label{well-posedness lemma 6}
For $T_0>0$ sufficiently large, there exists constants $C_1,C_2>0$
independent of $T_0$, such that for $k=C_1T_0^{5/4}$,
\begin{eqnarray}
\int_{\prod_{j=1}^{k-1}\mu_j}{\bf{1}}_{t_k(t,\vx,\vw,\vw_1,\ldots,\vw_{k-1})<T_0}\prod_{j=1}^{k-1}\ud{\sigma_j}\leq
\bigg(\frac{1}{2}\bigg)^{C_2T_0^{5/4}}
\end{eqnarray}
\end{lemma}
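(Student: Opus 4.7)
The plan is to adapt the standard combinatorial stochastic-cycle argument of Guo--Kim: at each bounce the reflection kernel $\ud{\sigma_j}$ places most of its mass on velocities bounded away from tangency, and non-grazing velocities traverse a definite chord before the next bounce. If the total elapsed time stays below $T_0$ while $k$ is large, then a positive fraction of the intermediate reflections must be near-grazing, and the joint probability of this rare event decays rapidly in $k$.

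First I would fix a small parameter $\delta=\delta(T_0)>0$, to be optimized at the end, and split each velocity hemisphere as $\mu_j=V_j^{\delta}\cup W_j^{\delta}$, where $V_j^{\delta}=\{\vw\in\mu_j:\vw\cdot\vn(\vx_j)\geq\delta\}$ and $W_j^{\delta}$ is its near-grazing complement. A direct computation in spherical coordinates with $\ud{\sigma_j}=(\vn\cdot\vw)\ud{\vw}$ yields the measure bound $\int_{W_j^{\delta}}\ud{\sigma_j}\leq C\delta^{2}$, uniformly in $\vx_j\in\p\Omega$. Next, using smoothness and uniform convexity of $\p\Omega$, I would establish a lower bound on the travel time between consecutive bounces for non-grazing velocities: if $\vw_j\in V_j^{\delta}$ then the chord from $\vx_j$ along the transported ray has length at least $c_{\Omega}\delta$, which via the definition of $t_b$ translates into $t_{j+1}-t_j\geq c_{\Omega}\delta$ after absorbing the $\e$-factor into universal constants.

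The combinatorial step partitions $\{1,\ldots,k-1\}=I\sqcup I^{c}$, with $I$ recording the bad indices (those $j$ with $\vw_j\in W_j^{\delta}$). On the event that a given $I$ of size $N$ is the bad set, every good step contributes at least $c_{\Omega}\delta$ to $t_k$, so $\{t_k<T_0\}$ forces $N\geq k-1-T_0/(c_{\Omega}\delta)$. Estimating the good factors trivially by $1$ and each bad factor by $C\delta^{2}$, summation over admissible $I$ gives
\[\int_{\prod_{j=1}^{k-1}\mu_j}{\bf{1}}_{t_k<T_0}\prod_{j=1}^{k-1}\ud{\sigma_j}\leq\sum_{N\geq k-1-T_0/(c_{\Omega}\delta)}\binom{k-1}{N}(C\delta^{2})^{N}.\]
Choosing $\delta=T_0^{-1/4}$ and $k=C_1T_0^{5/4}$ with $C_1$ large enough to enforce $N\geq k/2$, the binomial is bounded by $2^{k-1}$ and each of the $\geq k/2$ bad factors equals $C\delta^{2}=CT_0^{-1/2}$; hence the right-hand side is $\leq(4CT_0^{-1/2})^{k/2}$, which for $T_0$ sufficiently large is bounded by $(1/2)^{C_{2}T_0^{5/4}}$.

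The main obstacle I anticipate is the geometric step: turning uniform convexity of $\Omega$ together with the non-grazing constraint $\vw_j\cdot\vn(\vx_j)\geq\delta$ into a clean, uniform-in-$j$-and-$\vx_j$ lower bound $t_{j+1}-t_j\geq c_{\Omega}\delta$. This rests on a positive lower bound for the radii of principal curvature $R_1,R_2$; once secured, the combinatorial bookkeeping is routine, and the parameter balance $\delta\sim T_0^{-1/4}$, $k\sim T_0^{5/4}$ is forced by matching the $\delta^{2}$ (measure) and $\delta$ (transit-time) exponents to produce the claimed power $5/4$.
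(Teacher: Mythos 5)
Your argument is correct and is essentially the proof the paper invokes: the paper does not prove this lemma itself but cites \cite{Esposito.Guo.Kim.Marra2013} (Lemma 4.1), whose proof is exactly this Guo-type stochastic-cycle argument — split each $\mu_j$ into a near-grazing set of $\sigma_j$-measure $O(\delta^2)$ and its complement, use convexity to get a travel-time lower bound of order $\delta$ per non-grazing bounce (here even easier since $|\vw|=1$ and the $\e$-scaling only lengthens the time), and conclude by the binomial count over the bad index sets. The only cosmetic remark is that in this unit-speed setting the exponent $5/4$ is not forced by the parameter balance (a fixed small $\delta$ already gives decay $(1/2)^{ck}$ with $k\sim T_0^{5/4}$); your choice $\delta=T_0^{-1/4}$ simply mirrors the scaling of the Boltzmann case and works equally well.
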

\begin{proof}
See \cite[Lemma 4.1]{Esposito.Guo.Kim.Marra2013}.
\end{proof}
\begin{theorem}\label{LI estimate}
Assume $f(\vx,\vw)\in L^{\infty}(\Omega\times\s^2)$ and
$h(x_0,\vw)\in L^{\infty}(\Gamma^-)$. Then for the steady neutron
transport equation (\ref{neutron}), there exists a unique solution
$u(\vx,\vw)\in L^{\infty}(\Omega\times\s^2)$ satisfying
\begin{eqnarray}
\im{u}{\Omega\times\s^2}
&\leq& C\bigg(\frac{1}{\e^{1+\frac{3}{2m}}}\tm{f}{\Omega\times\s^2}+
\frac{1}{\e^{2+\frac{3}{2m}}}\nm{f}_{L^{\frac{2m}{2m-1}}(\Omega\times\s^2)}+\im{f}{\Omega\times\s^2}\\
&&+\frac{1}{\e^{1+\frac{3}{2m}}}\nm{h}_{L^2(\Gamma^-)}+\frac{1}{\e^{\frac{3}{2m}}}\nm{h}_{L^{\frac{4m}{3}}(\Gamma^-)}+\im{h}{\Gamma^-}\bigg).\no
\end{eqnarray}
\end{theorem}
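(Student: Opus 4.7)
The plan is to combine the improved $L^{2m}$ estimate of Theorem \ref{LN estimate} with a modified double Duhamel / stochastic cycle argument, so that the $L^{\infty}$ norm of $u$ is controlled by $\nm{\bar u}_{L^{2m}(\Omega\times\s^2)}$ rather than by $\nm{u}_{L^2}$ as in the classical bound. The existence and uniqueness statement follows from Theorem \ref{LI estimate.} (or from Lemma \ref{LT estimate} and a fixed-point argument), so I will focus on the quantitative $L^{\infty}$ bound.

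First, I would write the mild formulation along the backward trajectory $\vx-\e(t_b-s)\vw$ with diffusive reflection, and then iterate the reflection $k-1$ times through the stochastic cycle $(t_j,\vx_j,\vw_j)_{j=0}^k$ introduced before Lemma \ref{well-posedness lemma 6}. This expresses $u(\vx,\vw)$ as a sum of: (i) boundary contributions involving $h$, weighted by $\ue^{-(t_j-s)}$; (ii) volume source contributions involving $f$; (iii) $\bar u$ contributions arising from each leg of the cycle; and (iv) a residual term proportional to $u(\vx_k,\vw_k)$ evaluated after $k$ reflections, localized on $\{t_k<T_0\}$. Choosing $k=C_1 T_0^{5/4}$, Lemma \ref{well-posedness lemma 6} makes the residual piece bounded by $(1/2)^{C_2T_0^{5/4}}\nm{u}_{L^{\infty}}$, which is absorbable.

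Next, for the terms containing $\bar u$, I would apply Duhamel one more time: substitute the same mild representation for the interior value $u(\cdot,\vw')$ appearing inside $\bar u(\cdot)=\frac{1}{4\pi}\int u(\cdot,\vw')\,d\vw'$. The resulting double integral in $(s,\vw')$ along the broken characteristic, together with the Jacobian of the change of variables $(s,\vw')\mapsto\vx''\in\Omega$ (of order $(\e|s|)^{-2}$ on balls of radius $\e t_b$), lets me estimate each $\bar u$-leg by H\"older's inequality:
\begin{eqnarray*}
\Bigl|\int \ue^{-(\cdots)}\bar u(\vx'')\,d(\cdots)\Bigr|
\leq C\,\e^{-\frac{3}{2m}}\nm{\bar u}_{L^{2m}(\Omega\times\s^2)},
\end{eqnarray*}
where the exponent $\tfrac{3}{2m}$ comes from pairing $\nm{\bar u}_{L^{2m}}$ with $\nm{\text{kernel}}_{L^{\frac{2m}{2m-1}}}$ in the three-dimensional integral. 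The source term $f$ and boundary data $h$ contributions are estimated in a similar but more straightforward manner, yielding the prefactors $\e^{-1-3/(2m)}$, $\e^{-2-3/(2m)}$, $\e^{-3/(2m)}$, $\im{f}{\Omega\times\s^2}$, and $\im{h}{\Gamma^-}$ that appear in the statement.

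Combining these ingredients gives the preliminary bound
\begin{eqnarray*}
\im{u}{\Omega\times\s^2}
&\leq& C\,\e^{-\frac{3}{2m}}\nm{\bar u}_{L^{2m}(\Omega\times\s^2)}
+ C\,\im{f}{\Omega\times\s^2}+ C\,\im{h}{\Gamma^-}+o(1)\im{u}{\Omega\times\s^2}.
\end{eqnarray*}
Then I would invoke Theorem \ref{LN estimate} to substitute
\begin{eqnarray*}
\nm{\bar u}_{L^{2m}(\Omega\times\s^2)}\leq C\bigl(o(1)\e^{\frac{3}{2m}}\im{u}{\Omega\times\s^2}+\tfrac{1}{\e}\tm{f}{\Omega\times\s^2}+\tfrac{1}{\e^2}\nm{f}_{L^{\frac{2m}{2m-1}}}+\tfrac{1}{\e}\nm{h}_{L^2(\Gamma^-)}+\nm{h}_{L^{\frac{4m}{3}}(\Gamma^-)}\bigr),
\end{eqnarray*}
whose $o(1)\e^{3/(2m)}\im{u}{}$ term is tailored exactly to cancel against the factor $\e^{-3/(2m)}$ above, producing an $o(1)\im{u}{}$ contribution that can be absorbed into the left-hand side. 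The main obstacle is the careful bookkeeping of the double-Duhamel change of variables across the successive legs of the stochastic cycle: one must verify that on each leg the Jacobian behaves like $(\e t_j)^{-2}$ so that H\"older's inequality with exponent $\tfrac{2m}{2m-1}$ is integrable (requiring $m<\infty$), and that the diffusive reflection does not accumulate $\e^{-1}$ factors at the boundary beyond what the above estimate allows. Once this bookkeeping is done, assembling the pieces yields the claimed inequality.
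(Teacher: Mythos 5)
Your proposal follows essentially the same route as the paper's proof: the mild formulation iterated along the stochastic cycle with $k=C_1T_0^{5/4}$ and Lemma \ref{well-posedness lemma 6} to make the $k$-fold reflected term absorbable, a second application of Duhamel to the $\bar u$ legs with the change of variables $(\phi,\psi,s')\mapsto\vec y$ (Jacobian of size $\e^3(t'-s')^2\sin\phi$) and H\"older's inequality to produce the factor $\e^{-\frac{3}{2m}}\nm{\bar u}_{L^{2m}(\Omega\times\s^2)}$, and finally Theorem \ref{LN estimate}, whose $o(1)\e^{\frac{3}{2m}}$ term is absorbed exactly as you describe. The argument is correct and matches the paper's strategy.
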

\begin{proof}
We divide the proof into several steps:\\
\ \\
Step 1: Mild formulation.\\
We rewrite the equation (\ref{neutron}) along the characteristics as
\begin{eqnarray}
u(\vx,\vw)&=&h(\vx-\e t_1\vw,\vw)\ue^{- t_1}+\pp[u](\vx-\e t_1\vw,\vw)\ue^{- t_1}\\
&&+\int_0^{t_1}f(\vx-\e(t_1-s_1)\vw,\vw)\ue^{- (t_1-s_1)}\ud{s_1}+\int_0^{t_1}\bar u(\vx-\e(t_1-s_1)\vw)\ue^{- (t_1-s_1)}\ud{s_1}.\no
\end{eqnarray}
Note that here $\pp[u]$ is an integral over $\mu_1$ at $\vx_1$, using stochastic cycle, we may rewrite it again along the characteristics to $\vx_2$. This process can continue to arbitrary $\vx_k$. Then we get
\begin{eqnarray}\label{wt 21}
u(\vx,\vw)&=&\ue^{- t_1}H+\sum_{l=1}^{k-1}\bigg(\int_{\prod_{j=1}^l}\ue^{- t_{l+1}}G\prod_{j=1}^l\ud{\sigma_j}\bigg)
+\sum_{l=1}^{k-1}\bigg(\int_{\prod_{j=1}^l}\ue^{- t_{l+1}}\pp[u](\vx_k,\vw_{k-1})\prod_{j=1}^l\ud{\sigma_j}\bigg)\\
&=&I+II+III.\no
\end{eqnarray}
where
\begin{eqnarray}
H&=&h(\vx-\e t_1\vw,\vw)\\
&&+\int_0^{t_1}f(\vx-\e(t_1-s_1)\vw,\vw)\ue^{ s_1}\ud{s_1}+\int_0^{t_1}\bar u(\vx-\e(t_1-s_1)\vw)\ue^{ s_1}\ud{s_1},\no\\
G&=&h(\vx_l-\e t_{l+1}\vw_l,\vw_l)\\
&&+\int_0^{t_l}f(\vx_l-\e(t_{l+1}-s_{l+1})\vw_l,\vw_l)\ue^{ s_{l+1}}\ud{s_{l+1}}+\int_0^{t_l}\bar u(\vx_l-\e(t_{l+1}-s_{l+1})\vw_l)\ue^{ s_{l+1}}\ud{s_{l+1}}.\no
\end{eqnarray}
We need to estimate each term on the right-hand side of (\ref{wt 21}).\\
\ \\
Step 2: Estimate of mild formulation.\\
We first consider $III$. We may decompose it as
\begin{eqnarray}
III&=&\sum_{l=1}^{k-1}\int_{\prod_{j=1}^l}\pp[u](\vx_k,\vw_{k-1})\ue^{- t_{l+1}}\prod_{j=1}^l\ud{\sigma_j}\\
&=&\sum_{l=1}^{k-1}\int_{\prod_{j=1}^l}{\bf{1}}_{t_k\leq T_0}\pp[u](\vx_k,\vw_{k-1})\ue^{- t_{l+1}}\prod_{j=1}^l\ud{\sigma_j}\no\\
&&+\sum_{l=1}^{k-1}\int_{\prod_{j=1}^l}{\bf{1}}_{t_k\geq T_0}\pp[u](\vx_k,\vw_{k-1})\ue^{- t_{l+1}}\prod_{j=1}^l\ud{\sigma_j},\no\\
&=&III_1+III_2,\no
\end{eqnarray}
where $T_0>0$ is defined as in Lemma \ref{well-posedness lemma 6}. Then we take $k=C_1T_0^{5/4}$. By Lemma \ref{well-posedness lemma 6}, we deduce
\begin{eqnarray}
\abs{III_1}&\leq&C\bigg(\frac{1}{2}\bigg)^{C_2T_0^{5/4}}\im{u}{\Omega\times\s^2}.
\end{eqnarray}
Also, we may directly estimate
\begin{eqnarray}
\abs{III_2}&\leq&C\ue^{- T_0}\im{u}{\Omega\times\s^2}.
\end{eqnarray}
Then taking $T_0$ sufficiently large, we know
\begin{eqnarray}\label{wt 22}
\abs{III}\leq \delta\im{u}{\Omega\times\s^2},
\end{eqnarray}
for $\d>0$ small.
On the other hand, we may directly estimate the terms in $I$ and $II$ related to $h$ and $f$, which we denote as $I_1$ and $II_1$. For fixed $T$, it is easy to see
\begin{eqnarray}\label{wt 23}
\abs{I_1}+\abs{II_1}\leq \im{f}{\Omega\times\s^2}+\im{h}{\Gamma^-}.
\end{eqnarray}
\ \\
Step 3: Estimate of $\bar u$ term.\\
The most troubling terms are related to $\bar u$. Here, we use the trick as in \cite{Esposito.Guo.Kim.Marra2013} and \cite{AA003}. Collecting the results in (\ref{wt 22}) and (\ref{wt 23}), we obtain
\begin{eqnarray}
\abs{u}&\leq&A+\abs{\int_0^{t_1}\bar u(\vx-\e(t_1-s_1)\vw)\ue^{- (t_1-s_1)}\ud{s_1}}\\
&&+\abs{\sum_{l=1}^{k-1}\Bigg(\int_{\prod_{j=1}^l}\bigg(\int_0^{t_l}\bar u(\vx_l-\e(t_{l+1}-s_{l+1})\vw_l)\ue^{- (t_{l+1}-s_{l+1})}\ud{s_{l+1}}\bigg)\prod_{j=1}^l\ud{\sigma_j}\Bigg)},\no\\
&=&A+I_2+II_2,\no
\end{eqnarray}
where
\begin{eqnarray}
A=\im{f}{\Omega\times\s^2}+\im{h}{\Gamma^-}+\delta\im{u}{\Omega\times\s^2}.
\end{eqnarray}
By definition, we know
\begin{eqnarray}
\abs{I_2}&=&\abs{\int_0^{t_1}\bigg(\int_{\s^2}u(\vx-\e(t_1-s_1)\vw,\vw_{s_1})\ud{\vw_{s_1}}\bigg)\ue^{- (t_1-s_1)}\ud{s_1}},
\end{eqnarray}
where $\vw_{s_1}\in\s^2$ is a dummy variable.
Then we can utilize the mild formulation (\ref{wt 21}) to rewrite $u(\vx-\e(t_1-s_1)\vw,\vw_{s_1})$ along the characteristics. We denote the stochastic cycle as $(t_k',\vx_k',\vw_k')$ correspondingly and $(t_0',\vx_0',\vw_0')=(0,\vx-\e(t_1-s_1)\vw,\vw_{s_1})$. Then
\begin{eqnarray}
\abs{I_2}&\leq& \abs{\int_0^{t_1}\bigg(\int_{\s^2}A\ud{\vw_{s_1}}\bigg)\ue^{- (t_1-s_1)}\ud{s_1}}\\
&&+\abs{\int_0^{t_1}\bigg(\int_{\s^2}\int_0^{t_1'}\bar u(\vx'-\e(t_1'-s_1')\vw_{s_1})\ue^{- (t_1'-s_1')}\ud{s_1'}\ud{\vw_{s_1}}\bigg)\ue^{- (t_1-s_1)}\ud{s_1}}\no\\
&&+\Bigg\vert\int_0^{t_1}\bigg(\int_{\s^2}\sum_{l'=1}^{k-1}\int_{\prod_{j'=1}^{l'}}\bigg(\int_0^{t_{l'}'}\bar u(\vx_{l'}-\e(t_{l'+1}'-s_{l'+1}')\vw_{l'})\ue^{- (t_{l'+1}'-s_{l'+1}')}\ud{s_{l'+1}'}\bigg)\prod_{j'=1}^{l'}
\ud{\sigma_{j'}}\ud{\vw_{s_1}}\bigg)\no\\
&&\ue^{- (t_1-s_1)}\ud{s_1}\Bigg\vert,\no\\
&=&\abs{I_{2,1}}+\abs{I_{2,2}}+\abs{I_{2,3}}.\no
\end{eqnarray}
It is obvious that
\begin{eqnarray}
\abs{I_{2,1}}&=&\abs{\int_0^{t_1}\bigg(\int_{\s^2}A\ud{\vw_{s_1}}\bigg)\ue^{- (t_1-s_1)}\ud{s_1}}\leq A\\
&\leq& \im{f}{\Omega\times\s^2}+\im{h}{\Gamma^-}+\delta\im{u}{\Omega\times\s^2}.\no
\end{eqnarray}
Then by definition, we know
\begin{eqnarray}
\abs{I_{2,2}}&=&\abs{\int_0^{t_1}\bigg(\int_{\s^2}\int_0^{t_1'}\bar u(\vx'-\e(t_1'-s_1')\vw_{s_1})\ue^{- (t_1'-s_1')}\ud{s_1'}\ud{\vw_{s_1}}\bigg)\ue^{- (t_1-s_1)}\ud{s_1}}.
\end{eqnarray}
We may decompose this integral
\begin{eqnarray}
\int_0^{t_1}\int_{\s^2}\int_0^{t_1'}&=&\int_0^{t_1}\int_{\s^2}\int_{t_1'-s_1'\leq\d}
+\int_0^{t_1}\int_{\s^2}\int_{t_1'-s_1'\geq\d}=I_{2,2,1}+I_{2,2,2}.
\end{eqnarray}
For $I_{2,2,1}$, since the integral is defined in the small domain $[t_1'-\d, t_1']$, it is easy to see
\begin{eqnarray}
\abs{I_{2,2,1}}\leq \delta\im{u}{\Omega\times\s^2}.
\end{eqnarray}
For $I_{2,2,2}$, applying H\"{o}lder's inequality, we get
\begin{eqnarray}
\abs{I_{2,2,2}}&\leq&\abs{\int_0^{t_1}\int_{\s^2}\int_{t_1'-s_1'\geq\d}\bar u(\vx'-\e(t_1'-s_1')\vw_{s_1})\ue^{- (t_1'-s_1')}\ue^{- (t_1-s_1)}\ud{s_1'}\ud{\vw_{s_1}}\ud{s_1}}\\
&\leq&\bigg(\int_0^{t_1}\int_{\s^2}\int_{t_1'-s_1'\geq\d}\ue^{- (t_1'-s_1')}\ue^{- (t_1-s_1)}\ud{s_1'}\ud{\vw_{s_1}}\ud{s_1}\bigg)^{\frac{2m-1}{2m}}\no\\
&&\bigg(\int_0^{t_1}\int_{\s^2}\int_{t_1'-s_1'\geq\d}{\bf{1}}_{\vx'-\e(t_1'-s_1')\vw_{s_1}\in\Omega}\abs{\bar u}^{2m}(\vx'-\e(t_1'-s_1')\vw_{s_1})\ue^{- (t_1'-s_1')}\ue^{- (t_1-s_1)}\ud{s_1'}\ud{\vw_{s_1}}\ud{s_1}\bigg)^{\frac{1}{2m}}\no\\
&\leq&\bigg(\int_0^{t_1}\int_{\s^2}\int_{t_1'-s_1'\geq\d}{\bf{1}}_{\vx'-\e(t_1'-s_1')\vw_{s_1}\in\Omega}\abs{\bar u}^{2m}(\vx'-\e(t_1'-s_1')\vw_{s_1})\ue^{- (t_1'-s_1')}\ue^{- (t_1-s_1)}\ud{s_1'}\ud{\vw_{s_1}}\ud{s_1}\bigg)^{\frac{1}{2m}}.\no
\end{eqnarray}
Since $\vw_{s_1}\in\s^2$, we can express it as $(\sin\phi\cos\psi,\sin\phi\sin\psi,\cos\phi)$. Then considering $\vx'-\e(t_1'-s_1')\vw_{s_1}\in\bar\Omega$, we apply the substitution $(\phi,\psi,s'_1)\rt(y_1,y_2,y_3)$ as
\begin{eqnarray}
\vec y=\vx'-\e(t_1'-s_1')\vw_{s_1},
\end{eqnarray}
whose Jacobian is
\begin{eqnarray}
\abs{\frac{\p(y_1,y_2,y_3)}{\p(\phi,\psi,r')}}&=&\abs{\abs{\begin{array}{ccc}
-\e(t_1'-s_1')\cos\phi\cos\psi&\e(t_1'-s_1')\sin\phi\sin\psi&\e\sin\phi\cos\psi\\
-\e(t_1'-s_1')\cos\phi\sin\psi&-\e(t_1'-s_1')\sin\phi\cos\psi&\e\sin\phi\sin\psi\\
\e(t_1'-s_1')\sin\phi&0&\e\cos\phi
\end{array}}}\\
&=&\e^3(t_1'-s_1')^2\sin\phi.
\end{eqnarray}
Hence, we can further decompose $I_{2,2,2}$ into $\abs{\sin\phi}\leq\d$ and $\abs{\sin\phi}\geq\d$. In the first part, the integral can be bounded by $\delta\im{u}{\Omega\times\s^2}$. Then for the second part, we have
\begin{eqnarray}
\abs{\frac{\p(y_1,y_2,y_3)}{\p(\phi,\psi,r')}}\geq\e^3\d^3.
\end{eqnarray}
Therefore, we know
\begin{eqnarray}
\abs{I_{2,2,2}}&\leq&\frac{1}{\e^{\frac{3}{2m}}\d^{\frac{3}{2m}}}\nm{\bar u}_{L^{2m}(\Omega\times\s^2)}.
\end{eqnarray}
Hence, we have shown
\begin{eqnarray}
\abs{I_{2,2}}\leq \delta\im{u}{\Omega\times\s^2}+\frac{1}{\e^{\frac{3}{2m}}\d^{\frac{3}{2m}}}\nm{\bar u}_{L^{2m}(\Omega\times\s^2)}.
\end{eqnarray}
After a similar but tedious computation, we can show
\begin{eqnarray}
\abs{I_{2,3}}\leq \delta\im{u}{\Omega\times\s^2}+\frac{1}{\e^{\frac{3}{2m}}\d^{\frac{3}{2m}}}\nm{\bar u}_{L^{2m}(\Omega\times\s^2)}.
\end{eqnarray}
Hence, we have proved
\begin{eqnarray}
\abs{I_{2}}\leq \delta\im{u}{\Omega\times\s^2}+\frac{1}{\e^{\frac{3}{2m}}\d^{\frac{3}{2m}}}\nm{\bar u}_{L^{2m}(\Omega\times\s^2)}+\im{f}{\Omega\times\s^2}+\im{h}{\Gamma^-}.
\end{eqnarray}
In a similar fashion, we can show
\begin{eqnarray}
\abs{II_{2}}\leq \delta\im{u}{\Omega\times\s^2}+\frac{1}{\e^{\frac{3}{2m}}\d^{\frac{3}{2m}}}\nm{\bar u}_{L^{2m}(\Omega\times\s^2)}+\im{f}{\Omega\times\s^2}+\im{h}{\Gamma^-}.
\end{eqnarray}
\ \\
Step 4: Synthesis.\\
Summarizing all above, we have shown
\begin{eqnarray}
\abs{u}&\leq& \delta\im{u}{\Omega\times\s^2}+\frac{1}{\d^{\frac{3}{2m}}\e^{\frac{3}{2m}}}\nm{\bar u}_{L^2(\Omega\times\s^2)}+\im{f}{\Omega\times\s^2}+\im{h}{\Gamma^-}\\
&\leq&\delta\im{u}{\Omega\times\s^2}+\frac{1}{\d^{\frac{3}{2m}}\e^{\frac{3}{2m}}}\nm{u}_{L^2(\Omega\times\s^2)}+\im{f}{\Omega\times\s^2}+\im{h}{\Gamma^-}.\no
\end{eqnarray}
Since $(\vx,\vw)$ are arbitrary and $\d$ is small, taking supremum on both sides and applying Lemma \ref{LT estimate}, we have
\begin{eqnarray}
\im{u}{\Omega\times\s^2}\leq C\bigg(
\frac{1}{\e^{\frac{3}{2m}}}\nm{\bar u}_{L^{2m}(\Omega\times\s^2)}+\im{f}{\Omega\times\s^2}+\im{g}{\Gamma^-}\bigg).
\end{eqnarray}
Considering Theorem \ref{LN estimate}, we obtain
\begin{eqnarray}
\im{u}{\Omega\times\s^2}&\leq& C\bigg(\frac{1}{\e^{1+\frac{3}{2m}}}\tm{f}{\Omega\times\s^2}+
\frac{1}{\e^{2+\frac{3}{2m}}}\nm{f}_{L^{\frac{2m}{2m-1}}(\Omega\times\s^2)}+\im{f}{\Omega\times\s^2}\\
&&+\frac{1}{\e^{1+\frac{3}{2m}}}\nm{h}_{L^2(\Gamma^-)}+\frac{1}{\e^{\frac{3}{2m}}}\nm{h}_{L^{\frac{4m}{3}}(\Gamma^-)}+\im{h}{\Gamma^-}\bigg)+o(1)\im{u}{\Gamma^+}.\no
\end{eqnarray}
Absorbing $\im{u}{\Omega\times\s^2}$ into the left-hand side, we obtain
\begin{eqnarray}
\im{u}{\Omega\times\s^2}
&\leq& C\bigg(\frac{1}{\e^{1+\frac{3}{2m}}}\tm{f}{\Omega\times\s^2}+
\frac{1}{\e^{2+\frac{3}{2m}}}\nm{f}_{L^{\frac{2m}{2m-1}}(\Omega\times\s^2)}+\im{f}{\Omega\times\s^2}\\
&&+\frac{1}{\e^{1+\frac{3}{2m}}}\nm{h}_{L^2(\Gamma^-)}+\frac{1}{\e^{\frac{3}{2m}}}\nm{h}_{L^{\frac{4m}{3}}(\Gamma^-)}+\im{h}{\Gamma^-}\bigg).\no
\end{eqnarray}
This is the desired estimate.
\end{proof}

\newpage

\makeatletter
\renewcommand \theequation {%
B.%
\ifnum\c@subsection>\z@\@arabic\c@subsection.%
\fi\@arabic\c@equation} \@addtoreset{equation}{section}
\@addtoreset{equation}{subsection} \makeatother

\section{Diffusive Limit}

\begin{corollary}\label{diffusive limit}
Assume $g(\vx_0,\vw)\in C^2(\Gamma^-)$ satisfying (\ref{compatibility}). Then for the steady neutron
transport equation (\ref{transport}), there exists a unique solution
$u^{\e}(\vx,\vw)\in L^{\infty}(\Omega\times\s^2)$ satisfying (\ref{normalization}). Moreover, for any $0<\d<<1$, the solution obeys the estimate
\begin{eqnarray}
\im{u^{\e}-\u_0}{\Omega\times\s^2}\leq C(\d,\Omega)\e^{\frac{1}{3}-\d},
\end{eqnarray}
where $\u_0$ is defined in (\ref{expansion temp 11}).
\end{corollary}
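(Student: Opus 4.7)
The plan is to follow the standard residual-method approach: build an approximate solution from the expansions in Section~2, derive the equation satisfied by the remainder, and close the estimate with the improved $L^{\infty}$ bound of Theorem~\ref{LI estimate}. Since $\ub_0\equiv 0$ (as noted after (\ref{expansion temp 9})), the natural ansatz is
\begin{equation*}
u_a^{\e}(\vx,\vw)=\u_0(\vx)+\e\u_1(\vx,\vw)+\e^2\u_2(\vx,\vw)+\e\ub_1(\eta,\tau_1,\tau_2,\phi,\psi),
\end{equation*}
where $\u_0,\u_1,\u_2$ come from (\ref{expansion temp 11})--(\ref{expansion temp 12.}) and $\ub_1$ is the truncated Milne corrector (\ref{expansion temp 10}). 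Define the remainder $R^{\e}=u^{\e}-u_a^{\e}$.

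First I would plug $u_a^{\e}$ into (\ref{transport}) and derive the residual equation. The defining relations for $\u_0,\u_1,\u_2$ collapse the interior contribution to $-\e^3\vw\cdot\nx\u_2$, while expanding $\e\vw\cdot\nx$ in local coordinates via (\ref{transport.}) and applying the Milne equation for $\ub_1$ leaves only the tangential and velocity-derivative remainders of the form $\e^2\p_{\tau_i}\ub_1$ and $\e^2\p_{\psi}\ub_1$, together with cutoff artifacts from $\Upsilon_0'(\e^{n}\eta)$; the latter are exponentially small in $\e^{-n}$ thanks to Theorem~\ref{main 1}. Thus $R^{\e}$ satisfies
\begin{equation*}
\e\vw\cdot\nx R^{\e}+R^{\e}-\bar R^{\e}=S^{\e},\qquad R^{\e}|_{\Gamma^-}=\pp[R^{\e}]+H^{\e},
\end{equation*}
where $H^{\e}$ arises from the absence of a second-order layer $\ub_2$ and has size $\e^2$. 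The compatibility condition (\ref{compatibility.}) is verified from the Neumann data selected in (\ref{expansion temp 12.}).

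Next I would quantify $S^{\e}$ and $H^{\e}$ in the norms appearing in Theorems~\ref{LN estimate} and \ref{LI estimate}. By the tangential and velocity estimate (\ref{it 001}) of Theorem~\ref{main 2}, $\lnnm{\ue^{K_0\eta}\p_{\tau_i}\ub_1}+\lnnm{\ue^{K_0\eta}\p_{\psi}\ub_1}\ls|\ln\e|^8$, so $\nm{S^{\e}}_{L^{\infty}(\Omega\times\s^2)}\ls\e^2|\ln\e|^8$. Because the Knudsen-layer piece of $S^{\e}$ is exponentially confined in a physical layer of thickness $O(\e)$, the rescaling $\mu\rt\eta=\mu/\e$ contributes a Jacobian factor $\e$, yielding the concentrated bound
\begin{equation*}
\nm{S^{\e}}_{L^p(\Omega\times\s^2)}\ls\e^{2+1/p}|\ln\e|^8,\qquad 1\leq p\leq\infty,
\end{equation*}
so in particular $\nm{S^{\e}}_{L^2}\ls\e^{5/2}|\ln\e|^8$ and $\nm{S^{\e}}_{L^{6/5}}\ls\e^{17/6}|\ln\e|^8$, while $\nm{H^{\e}}_{L^p(\Gamma^-)}\ls\e^2$ for all $p$.

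Finally, applying Theorem~\ref{LI estimate} with $m=3$ (so that $\frac{2m}{2m-1}=\frac{6}{5}$, $\frac{4m}{3}=4$, and $\frac{3}{2m}=\frac{1}{2}$) yields
\begin{equation*}
\nm{R^{\e}}_{L^{\infty}}\ls\frac{1}{\e^{3/2}}\nm{S^{\e}}_{L^2}+\frac{1}{\e^{5/2}}\nm{S^{\e}}_{L^{6/5}}+\nm{S^{\e}}_{L^{\infty}}+(\text{boundary terms})\ls\e^{1/3}|\ln\e|^8,
\end{equation*}
where the controlling term is $\e^{-5/2}\nm{S^{\e}}_{L^{6/5}}\simeq\e^{1/3}|\ln\e|^8$; absorbing the logarithm into $\e^{-\d}$ produces the stated rate $C(\d,\Omega)\e^{1/3-\d}$. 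Existence and uniqueness of $u^{\e}\in L^{\infty}$ follow directly from Theorem~\ref{LI estimate} applied to (\ref{transport}). The main obstacle is the bookkeeping of every geometric-correction residual generated by the coordinate expansion in Section~2 -- especially the second-fundamental-form contributions entering the $\p_{\psi}$ operator and the $\e$-dependent cross terms coming from the Jacobian factors $P_i(1-\e\kappa_i\eta)^{-1}$ -- and verifying that each has $L^{6/5}$ norm no worse than the leading boundary-layer term above; this is precisely where the sharp weighted $W^{1,\infty}$ bound of Theorem~\ref{main 2}, valid uniformly up to the grazing set $\sin\phi=0$, becomes indispensable, since naive estimates on tangential derivatives would blow up near grazing and destroy the $L^{6/5}$ control.
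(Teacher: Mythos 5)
Your proposal follows essentially the same route as the paper's own proof: the identical ansatz $\u_0+\e\u_1+\e^2\u_2+\e\ub_1$ (with $\ub_0=0$), the same residual sizes ($\e^3$ from the interior, $\e^2\abs{\ln\e}^8$ in $L^\infty$, $\e^{5/2}\abs{\ln\e}^8$ in $L^2$, $\e^{17/6}\abs{\ln\e}^8$ in $L^{6/5}$ via Theorem \ref{Milne tangential.}, and $O(\e^2)$ boundary data from the missing $\ub_2$), and the same closure via Theorem \ref{LI estimate} with $m=3$, where the paper instead lets $2<m\leq3$ vary to absorb the logarithm into $\e^{-\d}$. The argument and the dominant term $\e^{-5/2}\nm{S^{\e}}_{L^{6/5}}\sim\e^{1/3}\abs{\ln\e}^8$ agree with the paper, so the proposal is correct.
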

\begin{proof}
Based on Theorem \ref{LI estimate}, we know there exists a unique $u^{\e}(\vx,\vw)\in L^{\infty}(\Omega\times\s^2)$, so we focus on the diffusive limit. We can divide the proof into several steps:\\
\ \\
Step 1: Remainder definitions.\\
We define the remainder as
\begin{eqnarray}\label{pf 1_}
R&=&u^{\e}-\sum_{k=0}^{2}\e^k\u_k-\sum_{k=0}^{1}\e^k\ub_k=u^{\e}-\q-\qb,
\end{eqnarray}
where
\begin{eqnarray}
\q&=&\u_0+\e\u_1+\e^2\u_2,\\
\qb&=&\ub_0+\e\ub_1.
\end{eqnarray}
Noting the equation (\ref{transport temp}) is equivalent to the
equation (\ref{transport}), we write $\ll$ to denote the neutron
transport operator as follows:
\begin{eqnarray}
\ll[u]&=&\e\vw\cdot\nx u+ u-\bar u\\
&=&\sin\phi\frac{\p u}{\p\eta}-\e\bigg(\dfrac{\sin^2\psi}{R_1-\e\eta}+\dfrac{\cos^2\psi}{R_2-\e\eta}\bigg)\cos\phi\dfrac{\p u}{\p\phi}
+u-\bar u+G[u],\nonumber
\end{eqnarray}
where
\begin{eqnarray}
G[u]&=&\e\bigg(\dfrac{\cos\phi\sin\psi}{P_1(1-\e\kappa_1\eta)}\dfrac{\p u}{\p\tau_1}+\dfrac{\cos\phi\cos\psi}{P_2(1-\e\kappa_2\eta)}\dfrac{\p u}{\p\tau_2}\bigg)\\\rule{0ex}{2.0em}
&&+\e\bigg(\dfrac{\sin\psi}{1-\e\kappa_1\eta}(\vt_2\times(\p_{21}\vr\times\vt_2))\cdot\vt_2
+\dfrac{\cos\psi}{1-\e\kappa_2\eta}(\vt_1\times(\p_{12}\vr\times\vt_1))\cdot\vt_1\bigg)\dfrac{\cos\phi}{P_1P_2}\dfrac{\p u}{\p\psi}.\no
\end{eqnarray}
\ \\
Step 2: Estimates of $\ll[\q]$.\\
The interior contribution can be estimated as
\begin{eqnarray}
\ll[\q]=\e\vw\cdot\nx \q+ \q-\bar
\q&=&\e^{3}\vw\cdot\nx \u_2.
\end{eqnarray}
Based on classical elliptic estimates, we have
\begin{eqnarray}
\im{\ll[\q]}{\Omega\times\s^2}&\leq&\im{\e^{3}\vw\cdot\nx \u_2}{\Omega\times\s^2}\leq C\e^{3}\im{\nx\u_2}{\Omega\times\s^2}\leq
C\e^{3}.
\end{eqnarray}
This implies
\begin{eqnarray}\label{pf 2_}
\tm{\ll[\q]}{\Omega\times\s^2}&\leq& C\e^{3},\\
\nm{\ll[\q]}_{L^{\frac{2m}{2m-1}}(\Omega\times\s^2)}&\leq& C\e^{3},\\
\im{\ll[\q]}{\Omega\times\s^2}&\leq& C\e^{3}.
\end{eqnarray}
\ \\
Step 3: Estimates of $\ll \qb$.\\
Since $\ub_0=0$, we only need to estimate $\ub_1=(f_1^{\e}-f^{\e}_{1,L})\cdot\psi_0=\v\psi_0$ where
$f_1^{\e}(\eta,\tau_1,\tau_2,\phi,\psi)$ solves the $\e$-Milne problem and $\v=f_1^{\e}-f^{\e}_{1,L}$. The boundary layer contribution can be
estimated as
\begin{eqnarray}\label{remainder temp 1}
\ll[\e\ub_1]&=&\sin\phi\frac{\p
(\e\ub_1)}{\p\eta}-\e\bigg(\dfrac{\sin^2\psi}{R_1-\e\eta}+\dfrac{\cos^2\psi}{R_2-\e\eta}\bigg)\cos\phi\frac{\p
(\e\ub_1)}{\p\phi}+ (\e\ub_1)-
(\e\bar\ub_1)+G[\e\ub_1]\\
&=&\e\Bigg(\sin\phi\bigg(\psi_0\frac{\p
\v}{\p\eta}+\v\frac{\p\psi_0}{\p\eta}\bigg)-\e\psi_0\bigg(\dfrac{\sin^2\psi}{R_1-\e\eta}+\dfrac{\cos^2\psi}{R_2-\e\eta}\bigg)\cos\phi\frac{\p
\v}{\p\phi}+\frac{\p \v}{\p\tau}+ \psi_0\v-\psi_0\bar\v+\psi_0G[\v]\Bigg)\nonumber\\
&=&\e\psi_0\bigg(\sin\phi\frac{\p
\v}{\p\eta}-\e\bigg(\dfrac{\sin^2\psi}{R_1-\e\eta}+\dfrac{\cos^2\psi}{R_2-\e\eta}\bigg)\cos\phi\frac{\p
\v}{\p\phi}+\v-\bar\v\bigg)+\e\Bigg(\sin\phi
\frac{\p\psi_0}{\p\eta}\v-\e\psi_0G[\v]\Bigg)\nonumber\\
&=&\e\Bigg(\sin\phi
\frac{\p\psi_0}{\p\eta}\v-\e\psi_0G[\v]\Bigg)\nonumber.
\end{eqnarray}
Since $\psi_0=1$ when $\eta\leq R_{\min}/(4\e^{n})$, the effective region
of $\px\psi_0$ is $\eta\geq R_{\min}/(4\e^{n})$ which is further and further
from the origin as $\e\rt0$. By Theorem \ref{Milne theorem 3.}, the
first term in (\ref{remainder temp 1}) can be bounded as
\begin{eqnarray}
\im{\e\sin\phi\frac{\p\psi_0}{\p\eta}\v}{\Omega\times\s^2}&\leq&
C\e \ue^{-\frac{K_0}{\e^{n}}}\leq C\e^3.
\end{eqnarray}
Then we turn to the crucial estimate in the second term of (\ref{remainder temp 1}), by Theorem \ref{Milne tangential.}, we have
\begin{eqnarray}
\im{\e\psi_0G[\v]}{\Omega\times\s^2}&\leq&C\e^2\bigg(\im{\frac{\p \v}{\p\tau_1}}{\Omega\times\s^2}+\im{\frac{\p \v}{\p\tau_2}}{\Omega\times\s^2}
+\im{\frac{\p \v}{\p\psi}}{\Omega\times\s^2}\bigg)\\
&\leq& C\e^{2}\abs{\ln(\e)}^8.\no
\end{eqnarray}
Also, the exponential decay of $\dfrac{\p\v}{\p\tau}$ by Theorem \ref{Milne tangential.} and the rescaling $\eta=\mu/\e$ implies
\begin{eqnarray}
\tm{\e\psi_0G[\v]}{\Omega\times\s^2}&\leq& \e^2\bigg(\tm{\frac{\p \v}{\p\tau_1}}{\Omega\times\s^2}+\tm{\frac{\p \v}{\p\tau_2}}{\Omega\times\s^2}
+\tm{\frac{\p \v}{\p\psi}}{\Omega\times\s^2}\bigg)\\
&\leq&\e^2\Bigg(\int_{-\pi}^{\pi}\int_0^1(1-\mu)\Big(\lnm{\frac{\p\v}{\p\tau_1}(\mu,\tau)}^2
+\lnm{\frac{\p\v}{\p\tau_2}(\mu,\tau)}^2+\lnm{\frac{\p\v}{\p\psi}(\mu,\tau)}^2\Big)\ud{\mu}\ud{\tau}\Bigg)^{1/2}\no\\
&\leq&\e^{\frac{5}{2}}\Bigg(\int_{-\pi}^{\pi}\int_0^{1/\e}(1-\e\eta)\Big(\lnm{\frac{\p\v}{\p\tau_1}(\eta,\tau)}^2
+\lnm{\frac{\p\v}{\p\tau_2}(\eta,\tau)}^2+\lnm{\frac{\p\v}{\p\psi}(\eta,\tau)}^2\Big)\ud{\eta}\ud{\tau}\Bigg)^{1/2}\no\\
&\leq&C\e^{\frac{5}{2}}\Bigg(\int_{-\pi}^{\pi}\int_0^{1/\e}\ue^{-2K_0\eta}\abs{\ln(\e)}^{16}\ud{\eta}\ud{\tau}\Bigg)^{1/2}\no\\
&\leq& C\e^{\frac{5}{2}}\abs{\ln(\e)}^8.\no
\end{eqnarray}
Similarly, we have
\begin{eqnarray}
\nm{\e\psi_0G[\v]}_{L^{\frac{2m}{2m-1}}(\Omega\times\s^2)}&\leq&C\e^{3-\frac{1}{2m}}\abs{\ln(\e)}^8.
\end{eqnarray}
In total, we have
\begin{eqnarray}
\tm{\ll[\qb]}{\Omega\times\s^2}&\leq& C\e^{\frac{5}{2}}\abs{\ln(\e)}^8,\\
\nm{\ll[\qb]}_{L^{\frac{2m}{2m-1}}(\Omega\times\s^2)}&\leq& C\e^{3-\frac{1}{2m}}\abs{\ln(\e)}^8,\\
\im{\ll[\qb]}{\Omega\times\s^2}&\leq& C\e^{2}\abs{\ln(\e)}^8.
\end{eqnarray}
\ \\
Step 4: Diffusive Limit.\\
In summary, since $\ll[u^{\e}]=0$, collecting estimates in Step 2 and Step 3, we can prove
\begin{eqnarray}
\tm{\ll[R]}{\Omega\times\s^2}&\leq& C\e^{\frac{5}{2}}\abs{\ln(\e)}^8,\\
\nm{\ll[R]}_{L^{\frac{2m}{2m-1}}(\Omega\times\s^2)}&\leq& C\e^{3-\frac{1}{2m}}\abs{\ln(\e)}^8,\\
\im{\ll[R]}{\Omega\times\s^2}&\leq& C\e^{2}\abs{\ln(\e)}^8.
\end{eqnarray}
Also, based on our construction, it is easy to see
\begin{eqnarray}
R-\pp[R]=-\e^2(\vw\cdot\nx\u_1-\pp[\vw\cdot\nx\u_1]),
\end{eqnarray}
which further implies
\begin{eqnarray}
\tm{R-\pp[R]}{\Gamma^-}&\leq& C\e^2,\\
\nm{R-\pp[R]}_{L^{m}(\Gamma^-)}&\leq&C\e^2,\\
\im{R-\pp[R]}{\Gamma^-}&\leq& C\e^2
\end{eqnarray}
Hence, the remainder $R$ satisfies the equation
\begin{eqnarray}
\left\{
\begin{array}{rcl}
\e \vw\cdot\nabla_x R+R-\bar R&=&\ll[R]\ \ \text{for}\ \ \vx\in\Omega,\\
R-\pp[R]&=&R-\pp[R]\ \ \text{for}\ \ \vw\cdot\vn<0\ \ \text{and}\ \
\vx_0\in\p\Omega.
\end{array}
\right.
\end{eqnarray}
It is easy to verify $R$ satisfies the normalization condition (\ref{normalization.}) and the data satisfies the compatibility condition (\ref{compatibility.}). By Theorem \ref{LI estimate}, we have for $2<m\leq3$,
\begin{eqnarray}
\im{R}{\Omega\times\s^2}
&\leq& C\bigg(\frac{1}{\e^{1+\frac{3}{2m}}}\tm{\ll[R]}{\Omega\times\s^2}+
\frac{1}{\e^{2+\frac{3}{2m}}}\nm{\ll[R]}_{L^{\frac{2m}{2m-1}}(\Omega\times\s^2)}+\im{\ll[R]}{\Omega\times\s^2}\\
&&+\frac{1}{\e^{1+\frac{3}{2m}}}\nm{R-\pp[R]}_{L^2(\Gamma^-)}+\frac{1}{\e^{\frac{3}{2m}}}\nm{R-\pp[R]}_{L^{\frac{4m}{3}}(\Gamma^-)}+\im{R-\pp[R]}{\Gamma^-}\bigg)\no\\
&\leq& C\Bigg(\frac{1}{\e^{1+\frac{3}{2m}}}\bigg(\e^{\frac{5}{2}}\abs{\ln(\e)}^8\bigg)+
\frac{1}{\e^{2+\frac{3}{2m}}}\bigg(\e^{3-\frac{1}{2m}}\abs{\ln(\e)}^8\bigg)+\bigg(\e^{2}\abs{\ln(\e)}^8\bigg)\no\\
&&+\frac{1}{\e^{1+\frac{3}{2m}}}(\e^2)+\frac{1}{\e^{\frac{3}{2m}}}(\e^2)+(\e^2)\Bigg)\no\\
&\leq&C\e^{1-\frac{2}{m}}\abs{\ln(\e)}^8\leq C\e^{\frac{1}{3}-\d}
\end{eqnarray}
Note that the constant $C$ might depend on $m$ and thus depend on $\d$.
Since it is easy to see
\begin{eqnarray}
\im{\sum_{k=1}^{2}\e^k\u_k+\sum_{k=0}^{1}\e^k\ub_k}{\Omega\times\s^2}\leq C\e,
\end{eqnarray}
our result naturally follows. This completes the proof of diffusive limit.
\end{proof}

\newpage


\bibliographystyle{siam}
\bibliography{Reference}

\end{document}